
\documentclass[a4paper,12pt]{article}

\usepackage{amsmath}
\usepackage{multirow,bigdelim}
\usepackage{amscd}
\usepackage{amssymb,amsfonts,amsmath,amsthm}
\usepackage{verbatim}

\usepackage{amsmath}
\usepackage{amsthm}
\usepackage{amssymb}

\usepackage{latexsym}
\usepackage{array}
\usepackage{enumerate}

\pagestyle{plain}

\setlength{\topmargin}{-45pt}
\setlength{\oddsidemargin}{0cm}
\setlength{\evensidemargin}{0cm}
\setlength{\textheight}{23.7cm}
\setlength{\textwidth}{16cm}

\numberwithin{equation}{section}

\usepackage{amsmath,amssymb,mathrsfs,amsthm}
\usepackage[all]{xy}
\usepackage{graphicx}
\usepackage{ulem}
\usepackage{ascmac}
\usepackage{mathtools}

\usepackage[alphabetic,initials]{amsrefs}
    \BibSpec{arXiv}{
    +{}{\PrintAuthors}{author}
    +{,}{ \textit}{title}
    +{,}{ }{date}
    +{,}{ arXiv:}{eprint}}
\usepackage{mleftright}
    \mleftright
\usepackage{tikz}
    \usetikzlibrary{cd,decorations,decorations.pathreplacing,arrows,calc,intersections}


\newcommand{\BDC}{{\mathbf{D}}^{\mathrm{b}}}

\newcommand{\Mod}{\mathrm{Mod}}

\newcommand{\sect}{\Gamma}
\newcommand{\rsect}{{\mathrm{R}}\Gamma}
\newcommand{\CC}{\mathbb{C}}

\newcommand{\RR}{\mathbb{R}}
\newcommand{\QQ}{\mathbb{Q}}
\newcommand{\ZZ}{\mathbb{Z}}

\newcommand{\PP}{{\mathbb P}}

\renewcommand{\(}{\left(}
\renewcommand{\)}{\right)}

\newcommand{\Cal}{\cal}

\newcommand{\Ker}{{\rm Ker}}

\newcommand{\rd}{{\rm rd}}
\newcommand{\an}{{\rm an}}

\renewcommand{\dim}{{\rm dim}}

\newcommand{\id}{{\rm id}}

\newcommand{\Int}{\operatorname{Int}}

\newcommand{\grad}{\operatorname{grad}}

\newcommand{\Sol}{{\rm Sol}}

\newcommand{\tl}[1]{\widetilde{#1}}

\newcommand{\simto}{\overset{\sim}{\longrightarrow}}

\newcommand{\op}{\mbox{\scriptsize op}}
\newcommand{\SD}{\mathcal{D}}

\newcommand{\SO}{\mathcal{O}}

\newcommand{\SM}{\mathcal{M}}
\newcommand{\SN}{\mathcal{N}}
\newcommand{\SL}{\mathcal{L}}

\newcommand{\SE}{\mathcal{E}}
\newcommand{\SF}{\mathcal{F}}
\newcommand{\SG}{\mathcal{G}}
\newcommand{\SC}{\mathcal{C}}
\newcommand{\Modcoh}{\mathrm{Mod}_{\mbox{\scriptsize coh}}}
\newcommand{\Modhol}{\mathrm{Mod}_{\mathrm{hol}}}
\newcommand{\Modrh}{\mathrm{Mod}_{\mbox{\scriptsize rh}}}

\newcommand{\BDCcoh}{{\mathbf{D}}^{\mathrm{b}}_{\mbox{\scriptsize coh}}}

\newcommand{\BDChol}{{\mathbf{D}}^{\mathrm{b}}_{\mathrm{hol}}}

\newcommand{\DD}{\mathbb{D}}
\newcommand{\Lotimes}[1]{\overset{L}{\otimes}_{#1}}
\newcommand{\Dotimes}{\overset{D}{\otimes}}

\newcommand{\Potimes}{\overset{+}{\otimes}}

\newcommand{\rhom}{{\rm R}{\mathcal{H}}om}

\newcommand{\Prihom}{{\rm R}{\mathcal{I}}hom^+}
\newcommand{\Prhom}{\rhom^+}
\newcommand{\I}{{\rm I}}

\newcommand{\var}[1]{\overline{#1}}
\newcommand{\BEC}{{\mathbf{E}}^{\mathrm{b}}}

\newcommand{\EE}{\mathbb{E}}

\newcommand{\inj}{``\varinjlim"}

\newcommand{\bfR}{\mathbf{R}}
\newcommand{\bfL}{\mathbf{L}}
\newcommand{\bfD}{\mathbf{D}}
\newcommand{\rmR}{{\mathrm{R}}}
\newcommand{\rmE}{{\mathrm{E}}}

\newcommand{\bfE}{\mathbf{E}}

\renewcommand{\Re}{\operatorname{Re}}

\newcommand{\SP}{\mathcal{P}} 
\newcommand{\SB}{\mathcal{B}}
\newcommand{\sfE}{\mathsf{E}}

\newcommand{\sfL}{\mathsf{L}}

\newcommand{\rmI}{\mathrm{I}}
\newcommand{\bfQ}{\mathbf{Q}}
\newcommand{\pt}{\mathrm{pt}}
\renewcommand{\Ker}{\operatorname{Ker}}
\newcommand{\ord}{\operatorname{ord}}
\newcommand{\dR}{\mathrm{dR}}
\newcommand{\rmb}{\mathrm{b}}

\newcommand{\reg}{\mathrm{reg}}
\newcommand{\irr}{\mathrm{irr}}
\newcommand{\red}{\mathrm{red}}
\DeclareMathOperator{\rk}{rk}
\DeclareMathOperator{\mult}{mult}

\newcommand{\rcEC}{\mathbf{E}_{\mathbb{R}{\text -}\mathrm c}}
\newcommand{\CCirr}{\mathrm{CC}_{\mathrm{irr}}} 

\newcommand{\bs}{\left.\right\backslash} 
\newcommand{\vbar}{\left.\right|} 
\newcommand{\xyoplus}{\bigoplus^{{\phantom{A}}}} 
\DeclareRobustCommand{\longtwoheadrightarrow}{\relbar\joinrel\twoheadrightarrow}

\DeclarePairedDelimiter{\abs}{\lvert}{\rvert} 
\DeclarePairedDelimiter{\inprod}{\langle}{\rangle} 
\DeclarePairedDelimiterX{\Set}[2]{\lbrace}{\rbrace}{#1\ \delimsize\vert\ #2}

\newcommand\blfootnote[1]{%
  \begingroup
  \addtocounter{footnote}{-1}%
  \renewcommand\thefootnote{}\footnote{#1}%
  \endgroup
}


\newtheorem{theorem}{Theorem}[section]

\newtheorem{corollary}[theorem]{Corollary}
\newtheorem{lemma}[theorem]{Lemma}
\newtheorem{proposition}[theorem]{Proposition}

\theoremstyle{definition}
\newtheorem{definition}[theorem]{Definition}
\theoremstyle{remark}
\newtheorem{remark}[theorem]{\sc Remark}
\newtheorem{example}[theorem]{\sc Example}




\title{A Morse theoretical approach to Fourier transforms
of holonomic {$\SD$}-modules in dimension one
\footnote{{\bf 2010 Mathematics Subject Classification:
}32C38, 32S40, 34M35, 34M40, 35A27.}
\blfootnote{{\bf Keywords:} Characteristic cycles, D-modules, Fourier transforms,
Irregularity, Riemann-Hilbert correspondence.
}}

\author{Kazuki KUDOMI
\footnote{Mathematical Institute, Tohoku University,
Aramaki Aza-Aoba 6-3, Aobaku, Sendai, 980-8578, Japan.
E-mail: kazuki.kudomi.q3@dc.tohoku.ac.jp}
and Kiyoshi TAKEUCHI
\footnote{Mathematical Institute, Tohoku University,
Aramaki Aza-Aoba 6-3, Aobaku, Sendai, 980-8578, Japan.
E-mail: takemicro@nifty.com} }

\begin{document}

\maketitle

\begin{abstract}
We study Fourier transforms of holonomic D-modules on the complex affine
line and show that their enhanced solution complexes are described by a 
twisted Morse theory. We thus recover and even strengthen the well-known 
formula for their exponential factors i.e. the stationary phase method. 
Moreover, we define a Lagrangian cycle that we call the irregular 
characteristic cycle and describe the enhanced solution complex of the 
Fourier transform by it. In this way, we obtain a new perspective, from
which we can geometrically see how the standard properties of holonomic 
D-modules are transformed via the Fourier transform. In the course of our 
study, a formula for the (classical) characteristic cycles of the Fourier
transforms will be also obtained and natural bases of their holomorphic 
solutions will be constructed via rapid decay homology cycles.
\end{abstract}

\section{Introduction}\label{uni-sec:0}

The study of Fourier transforms of $\SD$-modules is an
important subject in both algebraic analysis and algebraic
geometry. Despite its long history, we do not
know so far much of the global properties of the Fourier
transforms, such as the monodromies and the Stokes
matrices of their holomorphic solutions, even in
the simplest case of dimension one (see
for example, D'Agnolo-Hien-Morando-Sabbah
\cite{DHMS20} and Hohl \cite{Hoh22}).
For the results in higher dimensions, see
Brylinski \cite{Bry86}, Daia \cite{Dai00},
Ito-Takeuchi \cite{IT20a}, \cite{IT20b}, 
Kashiwara-Schapira \cite {KS97} 
and Takeuchi \cite{Tak22}. After some
pioneering works by Malgrange in \cite{Mal88} and
\cite{Mal91}, inspired from the theory of
Fourier transforms of $l$-adic sheaves in positive
characteristic, Bloch-Esnault \cite{BE04b} and
Garc{\'i}a L{\'o}pez \cite{Gar04} introduced
independently the so-called local Fourier transforms of
algebraic holonomic
$\SD$-modules $\SM$ on the complex affine line $X= \CC_z$. 
In what follows, we assume that $\SM$ is an algebraic
meromorphic connection i.e. a localized algebraic
holonomic $\SD$-module on $X= \CC_z$. Let 
$Y= \CC_w$ be the dual of $X= \CC_z$ and 
$\var{X}\simeq\PP^1$ (resp. $\var{Y}\simeq\PP^1$)
the projective compactification of $X$ (resp. $Y$). 
Then the Fourier transform
$\SM^\wedge$ of $\SM$ is an algebraic holonomic 
$\SD$-module on $Y= \CC_w$ and the new
method of \cite{BE04b} and \cite{Gar04} 
enables us to describe the formal structure
i.e. the exponential factors of 
$\SM^\wedge$ in terms of that of $\SM$. 
Such an explicit description was obtained by Fang \cite{Fan09}, 
Graham-Squire \cite{Gra13} 
and Sabbah \cite{Sab08}. 
We thus now know that the exponential factors
of $\SM^\wedge$ are obtained by the Legendre
transform from those of $\SM$ 
(for the definition, see Section \ref{legendre}). We call it 
the stationary phase method. 
Subsequently, based on this 
result, Mochizuki \cite{Mochi10}, \cite{Mochi18} gave also
a description of the Stokes structure of $\SM^\wedge$ at infinity. 
Moreover, by using 
the new theories of the irregular Riemann-Hilbert
correspondence established by D'Agnolo-Kashiwara
\cite{DK16} and the enhanced Fourier-Sato transforms
of Kashiwara-Schapira \cite{KS16a} adapted to it,
in the two papers \cite{DK18} and \cite{DK23}
D'Agnolo and Kashiwara reformulated and reproved
the stationary phase method more elegantly. 
For this purpose, in \cite{DK18} they used
some microlocal notions, such as enhanced micro-supports
and multiplicity test functors, to treat the
exponential factors of $\SM$ other than the
linear ones. As we do not see any information of 
the linear exponential factors by the enhanced micro-support, 
in \cite{DK23} they developed a
theory of nearby and vanishing cycles of
enhanced ind-sheaves to treat them separately. 

In this paper, we apply the Morse theoretical method in
Ito-Takeuchi \cite{IT20a} to
give a unified proof to the results in the two
papers \cite{DK18} and \cite{DK23}. Moreover
we obtain not only the exponential factors but also
the enhanced solution complex 
$Sol_{\var{Y}}^{\rmE}(\tl{\SM^\wedge})$ of $\tl{\SM^\wedge}$, where 
by the inclusion map $i_Y : 
Y = \CC_w \xhookrightarrow{\ \ \ }\var{Y}=\PP^1$
we set $\tl{\SM^\wedge}
:= i_{Y\ast}\SM^\wedge \simeq\bfD i_{Y\ast} \SM^\wedge 
\in \Modhol(\SD_{\var{Y}})$. 
Specifically, we describe it 
on a sufficiently small punctured disk centered at 
a singular point of $\SM^\wedge$ or at 
infinity. Note that in \cite{DK18} the authors 
determined only the multiplicities of the 
exponential factors of $\SM^\wedge$ 
via  the ``microlocal shadow" of $\SM^\wedge$ i.e. the 
enhanced micro-support associated to it. We thus 
have upgraded such an indirect proof of the 
stationary phase method to a more concrete and 
manipulatable one, which enables us to extract 
deeper informations of the Fourier 
transform $\SM^\wedge$. For its 
application to the monodromies at infinity of $\SM^\wedge$ 
see \cite{KT24}. Let us explain the outline of our proof. 
Recall that in 
\cite[Proposition 5.4.5]{DK18} a local description
of the enhanced solution complex of
the original $\SD$-module $\SM$ was obtained.
We refine this result to get also a global explicit
description of it (see Section \ref{uni-sec:T1} for the details)
and apply the Morse theoretical method in \cite{IT20a} to it. 
More precisely, by the inclusion map $i_X : 
X=\CC_z \xhookrightarrow{\ \ \ }\var{X}=\PP^1$
we set $\tl{\SM} := i_{X\ast}\SM\simeq\bfD i_{X\ast}\SM
\in\Modhol(\SD_{\var{X}})$ and construct an 
$\RR$-constructible enhanced sheaf 
$G\in\rcEC^0(\CC_{\var{X}^{\an}})$ on 
the underlying complex manifold $\var{X}^{\an}$ of 
$\var{X} = \PP^1$ such that for 
the enhanced solution complex $Sol_{\var{X}}^{\rmE}(\tl{\SM})$ 
of $\tl{\SM}$ we have an isomorphism 
\begin{align}
Sol_{\var{X}}^{\rmE}(\tl{\SM})
\simeq \CC_{\var{X}^{\an}}^{\rmE}\Potimes G.
\end{align}
This construction is important, because for the 
Fourier-Sato (Fourier-Laplace) transform 
${}^\sfL G\in\BEC(\CC_{\var{Y}^{\an}})$ of $G$ 
(see Definition \ref{FS-trans}) 
there exists an isomorphism
\begin{align}
Sol_{\var{Y}}^{\rmE}(\tl{\SM^\wedge})
\simeq \CC_{\var{Y}^{\an}}^{\rmE}\Potimes{}^\sfL G
\end{align}
and hence all the informations of the Fourier 
transform $\SM^\wedge$ of $\SM$ are 
contained in ${}^\sfL G$. Then our remaining 
task is to describe ${}^\sfL G$ and it turns out 
that the stalk of (an $\RR$-constructible sheaf on 
$\var{Y}^{\an} \times \RR$ representing) 
the enhanced sheaf ${}^\sfL G\in\BEC(\CC_{\var{Y}^{\an}})$ 
at a point $(w,t) \in Y^{\an} \times \RR$ is isomorphic to the complex 
\begin{align}
\rsect_c \(X^{\an}; G(w,t) [1]\)
\end{align}
associated to an $\RR$-constructible sheaf 
$G(w,t)$ on $X^{\an}$ (for the definition, 
see Section \ref{sec:T4}). Next, we study 
very carefully how their cohomology groups 
\begin{align}
H^j_c \(X^{\an}; G(w,t) [1]\)
\simeq H^{j+1}_c \(X^{\an}; G(w,t) \) \qquad (j \in \ZZ )
\end{align}
change as $t \in \RR$ increases. This is what we call here 
``the Morse theoretical method" and it leads us to a 
very precise description of the enhanced sheaf ${}^\sfL G$ 
underlying the enhanced solution complex 
$Sol_{\var{Y}}^{\rmE}(\tl{\SM^\wedge})$ of $\tl{\SM^\wedge}$. 
Then as in \cite{Tak22}, by the exponential 
factors of $\SM$ we define a Lagrangian cycle 
$\CCirr(\SM)$ in the cotangent bundle 
$T^*X^{\an}$ of $X^{\an}$ that we call the 
irregular characteristic cycle of $\SM$ and describe 
the enhanced solution complex of $\tl{\SM^\wedge}$ by it 
(see Figure \ref{fig:CCirr} below). In this way, we 
obtain a new perspective of the Fourier transform, 
from which we can geometrically see how the exponential factors of 
$\SM$ and $\SM^\wedge$ are related and 
the criteria for $\SM^\wedge$ to be 
regular or monodromic follow 
immediately (see Corollaries 
\ref{cor:monod} and \ref{cor:regular}). In particular, 
we see that the generic rank ${\rm rk} \SM^\wedge$
of $\SM^\wedge$ is equal to the covering degree 
at infinity of
the restriction of the projection 
$T^*X^{\an} \simeq X^{\an} \times Y^{\an}
\longrightarrow Y^{\an}$ to $\CCirr(\SM)$ 
(see Figure \ref{fig:CCirr} below). 
To the best of our knowledge, we do not find
such a geometric expression of ${\rm rk} \SM^\wedge$
in the literature. Finally, we also obtain a formula 
for the (classical) characteristic cycle 
${\rm CC} ( \SM^\wedge )$ of $\SM^\wedge$, 
which has never been studied successfully before.

In order to introduce our results more precisely, we 
prepare some notations. For the algebraic meromorphic connection  
$\SM\in\Modhol(\SD_X)$ on the affine line $X=\CC_z$ 
we denote its singular support $\mathrm{sing.supp}(\SM)$ 
by $D\subset X$ and set $U:= X\bs D$. 
Then there exists an isomorphism $\SM \simto \sect_U(\SM)$. 
We denote the generic rank of $\SM$ by ${\rm rk} \SM$. 
Define the analytification 
$\tl{\SM}^{\an}\in\Modhol(\SD_{\var{X}^{\an}})$
of $\tl{\SM}$ by
$\tl{\SM}^{\an} = \SO_{\var{X}^{\an}}
\otimes_{\SO_{\var{X}}}\tl{\SM}$. Then for 
the divisor
$\tl{D} :=D^{\an}\sqcup\{\infty\}\subset\var{X}^{\an}$ in 
$\var{X}^{\an}$ 
we obtain an isomorphism 
$(\tl{\SM})^{\an} \simto (\tl{\SM})^{\an}(\ast\tl{D})$. 
We define the enhanced solution complex $Sol_{\var X}^\rmE(\tl\SM)$ 
of $\tl\SM$ by 
\begin{align}
Sol_{\var X}^\rmE(\tl\SM) := 
Sol_{\var{X}^{\an}}^\rmE(\tl{\SM}^{\an})
\qquad \in\BEC(\I\CC_{\var{X}^{\an}})
\end{align}
(see Section \ref{uni-sec:K1} for the details). 
Similarly, for the Fourier transform 
$\SM^\wedge\in\Modhol(\SD_Y)$ of $\SM$ 
we define the enhanced solution complex 
$Sol_{\var Y}^\rmE(\tl{\SM^\wedge})
\in\BEC(\I\CC_{\var{Y}^{\an}})$ of $\tl{\SM^\wedge}$. 
Now our objective here 
is to describe $Sol_{\var Y}^\rmE(\tl{\SM^\wedge})$ 
in terms of $\SM$. 
Let $D^{\an} = \{ a_1,a_2,\dots,a_l \} 
\subset X^{\an}$
and set $a_\infty\coloneq\infty\in\var{X}^{\an}$
so what we have
\begin{align}
\tl{D} = \mathrm{sing.supp}(\tl{\SM})^{\an}
= \{a_1,a_2,\dots,a_l,a_\infty\}.
\end{align}
For a point $a_i\in D^{\an}$ 
and a Puiseux germ $f(z)\in\SP_{S_{a_i}X^{\an}}^\prime$ 
(see Section \ref{sec:K6}  
for the definition) which is holomorphic on a  sector $S \subset 
X^{\an}$ along $a_i$, we define a complex Lagrangian 
submanifold $ \Lambda_i^f$ of 
$T^\ast X^{\an} \simeq X^{\an}\times Y^{\an}=X^{\an}\times\CC_w$ by 
\begin{align}
\Lambda_i^f \coloneq \Set*{(z,f^\prime(z))}{z\in S}
\ \subset T^\ast X^{\an}.
\end{align}
In this paper, we always assume that sectors 
are connected and open. 
For the point $a_i\in D^{\an}$, let
\begin{align}
N_i =N(a_i) \colon \SP_{S_{a_i}X^{\an}}^\prime
\longrightarrow (\ZZ_{\geq 0})_{S_{a_i}X^{\an}}
\end{align}
be the multiplicity for which 
the enhanced solution complex $Sol_{\var X}^\rmE(\tl\SM)= 
Sol_{\var{X}^{\an}}^\rmE(\tl{\SM}^{\an})
 \in\BEC(\I\CC_{\var{X}^{\an}})$ 
of $(\tl{\SM})^{\an}$ has a normal form at it 
(see Definitions \ref{def-mul} and \ref{nf-ind} and Proposition \ref{prop-K4}) 
and set 
\begin{align}
r_i \coloneq N_i(0) \ \in\ZZ_{\geq0}. 
\end{align}
We call $r_i \geq 0$ the regular rank of $(\tl{\SM})^{\an}$ at 
the point $a_i\in D^{\an}$. 
The sections of 
$N_i^{>0}:=N_i^{-1}( \ZZ_{>0} )\subset\SP_{S_{a_i}X}^\prime$ are called 
the exponential factors of 
$(\tl{\SM})^{\an}$ at $a_i$. Assume that 
any exponential factor $f \in N_i^{>0}$ of $(\tl{\SM})^{\an}$ at $a_i$ 
is a (possibly multi-valued) holomorphic function 
on a sufficiently small punctured disk
\begin{align}
B(a_i)^\circ \coloneq
\Set*{z\in X^{\an}=\CC}{0<\abs*{z-a_i}<\varepsilon} 
\quad (0< \varepsilon \ll 1)
\end{align}
centered at the point $a_i$. 
Then the analytic continuation of an 
exponential factor along a loop in $B(a_i)^\circ$ 
is again an exponential factor, 
as the multiplicity $N_i \colon \SP_{S_{a_i}X^{\an}}^\prime
\longrightarrow (\ZZ_{\geq 0})_{S_{a_i}X^{\an}}$ 
is defined on the whole circle $S_{a_i}X^{\an} \simeq S^1$. 
This implies that we can set 
\begin{align}
\CCirr(\SM)_i \coloneq
\left\{\sum_{f\in N_i^{>0}}N_i(f)\cdot[\Lambda_i^f]\right\}
+ r_i\cdot[T_{\{a_i\}}^\ast X^{\an}]
\end{align}
(see Figure \ref{fig:CCirr} below), where the Lagrangian cycle
$\sum_{f\in N_i^{>0}}N_i(f)\cdot[\Lambda_i^f]$ 
over the punctured disk $B(a_i)^\circ$ is defined 
by gluing the ones defined over some sectors 
along the point $a_i$. 
We call $\CCirr(\SM)_i$ the irregular characteristic cycle of
$\SM$ at the point $a_i\in D^{\an}$.
Also for the point $a_\infty=\infty\in\tl{D}$
we can define a Lagrangian cycle
$\CCirr(\SM)_\infty$ in $T^\ast X^{\an}$ as follows. Let
\begin{align}
N_\infty = N(a_\infty)
\colon \SP_{S_{\infty}\var{X}^{\an}}^\prime
\longrightarrow (\ZZ_{\geq 0})_{S_{\infty}\var{X}^{\an}}
\end{align}
be the multiplicity of the analytic meromorphic connection
$(\tl{\SM}^{\an})$ at $a_\infty=\infty\in\var X^{\an}$.
Then by $N_{\infty}^{>0}:=N_{\infty}^{-1}( \ZZ_{>0} )\subset
\SP_{S_{\infty}\var{X}^{\an}}^\prime$ we set
\begin{align}
\CCirr(\SM)_\infty \coloneq
\sum_{f\in N_\infty^{>0}}N_\infty(f)\cdot[\Lambda_\infty^f],
\end{align}
where the complex Lagrangian submanifolds 
$\Lambda_\infty^f\subset
X^{\an}\times\CC\simeq T^\ast  X^{\an}$ are 
defined similarly on a sufficiently small punctured disk 
$B(a_{\infty})^{\circ}$ in 
$\var{X}^{\an}=(\PP^1)^{\an}$ centered at $a_\infty=\infty$.
We thus obtain a Lagrangian cycle
\begin{align}
\CCirr(\SM) \coloneq
\CCirr(\SM)_\infty + \sum_{i=1}^{l}\CCirr(\SM)_i
\end{align}
in $T^\ast X^{\an}$ (see Figure \ref{fig:CCirr}). 
We call it the irregular characteristic cycle of
the meromorphic connection $\SM\in\Modhol(\SD_X)$.
Now let us consider the symplectic transform
$\chi\colon T^\ast X^{\an}\simto T^\ast Y^{\an}$ in 
\cite{DK18} defined by
\begin{align}
T^\ast X^{\an}=X^{\an}\times Y^{\an}\ni(z,w)
\longmapsto (w,-z)\in Y^{\an}\times X^{\an}=T^\ast Y^{\an}.
\end{align}
Let $\Lambda(\SM)\subset T^\ast X^{\an}$ be
the support of the irregular characteristic cycle $\CCirr(\SM)$. 
Set $b_{\infty}:= \infty\in\var{Y}^{\an}$. 
Then for a sufficiently small punctured disk
\begin{align}
B( b_\infty )^\circ \coloneq
\Set*{w\in Y^{\an}=\CC}{\abs{w}>\frac{1}{\varepsilon}}
\quad(0<\varepsilon\ll1)
\end{align}
in $\var{Y}^{\an}=(\PP^1)^{\an}$ centered at
$b_{\infty}= \infty\in\var{Y}^{\an}$ the restriction of the projection
\begin{align}
T^\ast X^{\an}=X^{\an}\times Y^{\an}
\longtwoheadrightarrow Y^{\an}
\end{align}
to $\Lambda(\SM)\cap(X^{\an}\times B( b_\infty )^\circ )$ is
an unramified finite covering over $B( b_\infty )^\circ \subset Y^{\an}$.
\begin{figure}
    \centering
\begin{tikzpicture}[scale=0.7]
    \coordinate(O)at(0,0);
    \coordinate(Ym)at(0.5,-2);
    \coordinate(YM)at(7.7,-2);
    \coordinate(Yan)at(7.9,-2);

    \coordinate(Xm)at(0,-1.5);
    \coordinate(XM)at(0,2.7);
    \coordinate(Xan)at(0,2.9);

    \coordinate(Yinfty)at(7.75,-2);
    \coordinate(Xinfty)at(0,2.75);
    \coordinate(CCirr)at(7.75,0);

    \fill[lightgray] (7.25,3) rectangle (7.75,-2);

    \draw[dashed] (Xinfty) -- (8,2.75);
    \draw[dashed] (Yinfty)++(-0.5,0) -- (7.25,3);
    \draw[dashed] (Yinfty) -- (7.75,3);
    
    
    \coordinate(S)at(7.5,-2);
    \draw (5.5,-1) node (T) {$B(b_\infty)^\circ$};
    \draw (S)--(T);

    \draw[very thick] (7.75,-2) -- (7.25,-2);
    \filldraw[black]  (7.25,-2) circle[radius=0.05];
    \draw  (7.25,-2) circle[radius=0.05];

    \draw[semithick,->,>=stealth](Ym)--(YM) node[at={(Yan)},above,right]{$Y^\an$};
    \draw[semithick,->,>=stealth](Xm)--(XM) node[at={(Xan)},above]{$X^\an$};
    
    \node[rotate=-90] at(7.5,-1.1) {$\twoheadrightarrow$};
    \node[rotate=-90] at(7.5,1.3) {$\twoheadrightarrow$};

    \draw (-0.25,0) -- (8,0);
    \draw (1,1.3-0.2) .. controls (1.2,0.6-0.2) and (2,0.43-0.2)  .. (7.75,0.05) node[pos=0.6,above]{$\CCirr(\SM)_i$};
    \draw (1,-1.3+0.2) .. controls (1.2,-0.6+0.2) and (2,-0.43+0.2) .. (7.75,-0.05);
    \draw (0.5,2.85-1.3) .. controls (1.2,2.85-0.6) and (2,2.85-0.5) .. (7.75,2.75) node[pos=0.7,below right]{$\CCirr(\SM)_\infty$};

    \filldraw[white]  (Xinfty) circle[radius=0.075];
    \draw  (Xinfty) circle[radius=0.075] node[left]{$\infty$};
    \filldraw[white]  (Yinfty) circle[radius=0.075];
    \draw  (Yinfty) circle[radius=0.075] node[below]{$\infty$};

    \filldraw[white]  (CCirr) circle[radius=0.075];
    \draw  (CCirr) circle[radius=0.075];
    \filldraw[white]  (7.75,2.75) circle[radius=0.075];
    \draw  (7.75,2.75) circle[radius=0.075];
    \filldraw[black]  (0,0) circle[radius=0.075];
    \draw (-0.25,0) node[left]{$a_i$};
\end{tikzpicture} 
\caption{The irregular characteristic cycle of $\SM$.}
\label{fig:CCirr}
\end{figure}
The same is true also for the Lagrangian cycle 
$\chi( \CCirr(\SM) )$ in $T^\ast Y^{\an} \simeq T^\ast X^{\an}$.
We denote by $d(\SM)\in\ZZ_{\geq0}$ its covering degree over 
$B( b_\infty )^\circ \subset Y^{\an}$. 
Let $V\subset B( b_\infty )^\circ$ be
a simply connected (open) sector along the point $\infty
\in\var{Y}^{\an}$. Since $\chi( \CCirr(\SM) )$ is a Lagrangian 
cycle, then there exist (mutually distinct) Puiseux germs
\begin{align}
g_1,g_2,\dots,g_n\in\SP^{\prime}_{S_\infty\var{Y}^{\an}}
\end{align}
which are holomorphic on $V\subset B( b_\infty )^\circ$
and positive integers $d_1, d_2,\dots, d_n>0$ such that we have
\begin{align}\label{eq:d_ig_i}
\chi(\CCirr(\SM))\cap(V \times X^{\an} )
= \sum_{i=1}^{n}d_i\cdot[\Lambda^{g_i}],
\end{align}
where we set
\begin{align}
\Lambda^{g_i} \coloneq \Set*{( w, g_i^\prime(w))}{w\in V}
\subset V \times X^{\an} \simeq T^\ast V
\end{align}
(see Lemmas \ref{lem-T7} and \ref{lem-T2-1}). 
Comparing the covering degrees of the both sides 
over the sector $V\subset B( b_\infty )^\circ$, we find
\begin{align}
d(\SM) = \sum_{i=1}^{n}d_i. 
\end{align} 
Note that 
we can explicitly calculate $g_i\in\SP^{\prime}_{S_\infty \var{Y}^{\an}}$
and $d_i>0$ $(1\leq i\leq n)$ (see 
(the proof of) Lemma \ref{lem-T2-1}). 
Then our first main result in this paper is the following theorem.

\begin{theorem}\label{Thm-T3}
For any simply connected open sector $V\subset B( b_\infty )^\circ$
along the point $b_{\infty}= \infty\in\var{Y}^{\an}$, 
there exists an isomorphism
\begin{align}
\pi^{-1}\CC_{V}\otimes Sol_{\var{Y}}^{\rmE}(\tl{\SM^\wedge})
\simeq \bigoplus_{i=1}^n
\(\EE_{V\vbar\var{Y}^{\an}}^{\Re g_i}\)^{\oplus d_i}.
\end{align}
In particular, the generic rank ${\rm rk} \SM^\wedge$ of
the Fourier transform $\SM^\wedge$ of $\SM$ is
equal to the covering degree $d(\SM)$ of
the irregular characteristic cycle $\CCirr(\SM)$ over
the punctured disk $B( b_\infty )^\circ \subset Y^{\an}$
centered at $b_{\infty}= \infty\in\var{Y}^{\an}$.
\end{theorem}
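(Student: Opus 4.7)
The plan is to leverage the factorization
\[
Sol^\rmE_{\var X}(\tl\SM) \simeq \CC^\rmE_{\var X^{\an}} \Potimes G
\]
announced in the introduction, where $G \in \rcEC^0(\CC_{\var X^{\an}})$ is an explicit $\RR$-constructible enhanced sheaf built from the normal-form data $(N_i, r_i)$ of $\tl\SM$ at its singular points. Applying the enhanced Fourier-Sato transform and invoking its compatibility with the $\SD$-module Fourier transform (\cite{KS16a}, \cite{DK18}) gives
\[
Sol^\rmE_{\var Y}(\tl{\SM^\wedge}) \simeq \CC^\rmE_{\var Y^{\an}} \Potimes {}^\sfL G,
\]
so the theorem reduces to establishing
\[
\pi^{-1}\CC_V \otimes {}^\sfL G \simeq \bigoplus_{i=1}^n \bigl(\EE_{V\vbar\var Y^{\an}}^{\Re g_i}\bigr)^{\oplus d_i}
\]
in $\BEC(\CC_{\var Y^{\an}})$; tensoring with $\CC^\rmE_{\var Y^{\an}}$ on both sides then yields the stated isomorphism.

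To prove the latter I would combine the stalk formula
\[
({}^\sfL G)_{(w,t)} \simeq \rsect_c\bigl(X^{\an}; G(w,t)[1]\bigr) \qquad (w \in Y^{\an},\ t \in \RR)
\]
from Section \ref{sec:T4} with a Morse-theoretic inspection as $t$ varies for $w\in V$. The sheaf $G(w,t)$ is obtained by truncating $G$ along the level set $\{-\Re(zw) + t = 0\}$, so $H^\ast_c(X^{\an}; G(w,t))$ can only jump as $t$ crosses a critical value of the family of real-valued phases $\varphi_w(z) = \Re\bigl(f(z) - zw\bigr)$, with $f$ running over the exponential factors of $\tl\SM$ at $a_1,\dots,a_l,a_\infty$. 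A point $z$ is critical for $\varphi_w$ if and only if $f'(z) = w$, i.e.\ if and only if $(z,w)\in\Lambda_i^f$; under $\chi$ these points are precisely those of $\chi(\CCirr(\SM))\cap(V\times X^{\an})$, which by \eqref{eq:d_ig_i} (Lemmas \ref{lem-T7} and \ref{lem-T2-1}) fibres over $V$ with branches $(w,g_i'(w))$ of multiplicity $d_i$ and corresponding critical values $-\Re g_i(w)$, related to the exponential factors by the Legendre transform $g_i(w) = f(z_i(w)) - w\cdot z_i(w)$ with $f'(z_i(w)) = w$. Applying the Morse argument of \cite{IT20a} branch by branch, each critical value contributes a $d_i$-fold jump supported on $\{t + \Re g_i(w) \ge 0\}$, producing the desired direct-sum decomposition. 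The rank assertion then follows, since the generic rank of $\SM^\wedge$ equals the total multiplicity of $Sol^\rmE_{\var Y}(\tl{\SM^\wedge})$ at a generic point of $V$, namely $\sum_{i=1}^n d_i = d(\SM)$.

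The main obstacle will be executing the Morse analysis cleanly. Two issues deserve special care. First, at each singular point $a_i$ (including $a_\infty \in \var X^{\an}$) the exponential factors $f$ may be genuinely ramified Puiseux germs, so one must run the local Morse analysis on a covering of a sector along $a_i$ and then check that the global assembly over $V$ reproduces the unramified covering of $\chi(\CCirr(\SM))$ given by Lemma \ref{lem-T2-1}, with no cancellation between branches belonging to different $a_i$. Second, one must rule out any spurious contributions to $\rsect_c(X^{\an}; G(w,t))$ coming from the behaviour of $G$ near $a_\infty$ itself beyond those already encoded in $\CCirr(\SM)_\infty$; this uses the explicit normal form of $G$ at $a_\infty$ (Proposition \ref{prop-K4}) together with the fact that compactly supported cohomology of the relevant constructible truncations only sees the critical locus of the phase. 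Once both points are handled, the decomposition of $\pi^{-1}\CC_V\otimes{}^\sfL G$ falls out and the rank formula is immediate.
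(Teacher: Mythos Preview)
Your overall strategy matches the paper's: build $G$, pass to ${}^\sfL G$, and analyse the stalks $\rsect_c(X^{\an};G(w,t)[1])$ by Morse theory in $t$. But your reduction
\[
\pi^{-1}\CC_V\otimes{}^\sfL G \;\simeq\; \bigoplus_{i=1}^n\bigl(\EE_{V\vbar\var Y^{\an}}^{\Re g_i}\bigr)^{\oplus d_i}
\]
is not what actually holds, and your diagnosis of the ``spurious contributions'' is off. The paper computes $\pi^{-1}\CC_V\otimes{}^\sfL G$ explicitly (Proposition~\ref{prop-infty-enh}) and finds, in addition to the critical-value terms $(\sfE^{-c_{ij}})^{N_{ij}}$ you anticipate, extra summands
\[
\bigl(\sfE_{V\vbar\var Y^{\an}}^{-A_{ik}^-\vartriangleright-L_i^-}[1]\bigr)^{N_i^-(k)}
\quad\text{and}\quad
\bigl(\sfE_{V\vbar\var Y^{\an}}^{-A_{\infty k}^+\vartriangleright-L_\infty^+}\bigr)^{N_\infty^+(k)}
\]
coming from the boundaries $\partial D(a_i)$ of \emph{every} disk, not just from $a_\infty$. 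These arise because $G$ is glued from sector pieces over the annuli $A(a_i)$, and the sublevel sets of $\Re f^w$ on $D(a_i)^\circ$ interact with the sublevel sets of the auxiliary function $\xi^w(z)=\Re(zw)+c$ on the complement; the cohomology genuinely jumps at the values $A_{ik}^\pm(w)$ and $L_i^\pm(w)$ (cf.\ Figures~\ref{fig:Aminus}, \ref{fig:Lminus}). So the statement that compactly supported cohomology ``only sees the critical locus of the phase'' is false at the level of enhanced sheaves. These boundary terms are killed only \emph{after} applying $\CC_{\var Y^{\an}}^\rmE\Potimes(\cdot)$, using that $L_i^--A_{ik}^-$ and $L_\infty^+-A_{\infty k}^+$ are bounded on $V$ (Lemma~\ref{lem-bounded}) together with Lemma~\ref{lem-K1}.

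A second point you skate over: the regular-rank pieces $r_i\cdot[T^*_{\{a_i\}}X^{\an}]$ of $\CCirr(\SM)_i$ correspond to the zero exponential factor $f=0$, for which $f^w(z)=zw$ has no critical point near $a_i$. Their contribution $(\sfE^{-c_i})^{r_i}$ with $c_i(w)=\Re(a_iw)$ does not arise from your Morse picture ``$f'(z)=w$'' at all; the paper handles this by a separate direct computation on a small punctured disk around $a_i$ (last case in the proof of Proposition~\ref{prop-infty-enh}). Only once all three kinds of terms are assembled and the bounded ones discarded does Lemma~\ref{lem-T7} convert the remaining $(\EE^{-c_{ij}})^{N_{ij}}\oplus(\EE^{-c_i})^{r_i}$ into $\bigoplus(\EE^{\Re g_i})^{d_i}$.
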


Although as we see in \cite[Lemma 4.4.1]{DK18} D'Agnolo and 
Kashiwara avoided some sectors in the proof 
of their main theorem in \cite{DK18}, one may 
deduce also a result similar to Theorem \ref{Thm-T3} from 
the results in \cite{DK18} and \cite{DK23} 
by relying on the general properties of the 
enhanced solution complexes to holonomic 
$\SD$-modules. The point is that 
we do not use such an indirect argument in 
our proof. We have also another expression of 
the generic rank ${\rm rk} \SM^\wedge$ 
(see Corollary \ref{Cor-T8}). 
For a generalization of Theorem \ref{Thm-T3} to arbitrary 
holonomic $\SD$-modules on $X= \CC$ see Section \ref{sec:holD}. 
If an exponential factor $f \in N_i^{>0}$ at a point $a_i \in 
\tl{D}$ is holomorphic on a sector $S$ along it, 
by abuse of notations we write $f \in N_i^{>0}(S)$. 
For such $f\in N_i^{>0}(S)$ and $w \in Y^{\an}= \CC$ 
we define a holomorphic function $f^w$ on $S$ by
\begin{align}\label{funcfw} 
f^w(z) \coloneq zw-f(z) \quad \(z\in S \).
\end{align}
Then to prove Theorem \ref{Thm-T3}, for $w \in V$ 
we consider the real-valued functions
$\Re f^w\colon S \longrightarrow\RR\
 (f\in N_i^{>0}(S))$ as Morse functions
and apply a Morse theory associated to them.
In contrast to the usual Morse theory,
that we use here relies on
the sublevel sets of ``several" Morse functions.
See Section \ref{sec:T4} for the details. 

We obtain similar results also for the other singular 
points of $\SM^{\wedge}$ as follows. 
For a point $b\in Y^{\an}=\CC$ we set 
\begin{equation}
N_{\infty,b}^{>0}\coloneq\{f\in N_\infty^{>0}\mid f(z)=
bz+(\textit{lower order terms})\} \quad \subset N_\infty^{>0}.
\end{equation}
Then for the sufficiently small punctured disk $B(a_\infty)^\circ\subset 
X^{\an}$ centered at the point $a_\infty=\infty\in\var{X}^{\an}$, 
we define a Lagrangian cycle $\CCirr(\SM)^b$ in $T^\ast 
B(a_\infty)^\circ\simeq B(a_\infty)^\circ\times Y^{\an}\subset T^\ast X^{\an}$ by
\begin{equation}
\CCirr(\SM)^b\coloneq\sum_{f\in N_{\infty,b}^{>0}}N_\infty(f)\cdot[\Lambda_\infty^f].
\end{equation}
Note that $\CCirr(\SM)^b$ is a part of $\CCirr(\SM)_{\infty}$ 
which does not affect the covering degree $d(\SM)$ in 
Theorem \ref{Thm-T3}. 
Let $\Lambda(\SM)^b\subset T^\ast X^{\an}$ be 
the support of $\CCirr(\SM)^b$.
Then for a sufficiently small punctured disk $B(b)^\circ\in Y^{\an}$ 
centered at the point $b\in Y^{\an}$ the restriction of the projection
\begin{equation}
T^\ast X^{\an}=X^{\an}\times Y^{\an}\longtwoheadrightarrow Y^{\an}
\end{equation} 
to $\Lambda(\SM)^b\cap(X^{\an} \times B(b)^\circ)$ is an unramified finite covering 
over $B(b)^\circ\subset Y^{\an}$. The same is true also 
for the Lagrangian cycle 
$\chi( \CCirr(\SM)^b )$ in $T^\ast Y^{\an} \simeq T^\ast X^{\an}$.
We denote by $d(\SM)^b\in\ZZ_{\geq0}$ its covering degree over 
$B(b)^\circ\subset Y^{\an}$. 
Let $W\subset B(b)^\circ$ be a simply connected (open) sector 
along the point $b\in Y^{\an}$. 
Since $\chi( \CCirr(\SM)^b )$ is a Lagrangian cycle in $T^\ast Y^{\an}$, then there 
exist (mutually distinct) Puiseux germs 
\begin{equation}
h_1,h_2,\dots,h_m\in\SP^{\prime}_{S_bY^{\an}}
\end{equation} 
which are holomorphic on $W\subset B(b)^\circ$ and 
positive integers $e_1,e_2,\dots,e_m >0$ such that 
\begin{equation}\label{eq:e_ih_i}
\chi(\CCirr(\SM)^b)\cap(W \times X^{\an} )=\sum_{i=1}^{m}e_i\cdot[\Lambda^{h_i}]
\end{equation}
(see Lemmas \ref{lem-T7} and \ref{lem-T2-1}). 
Note that we have $d(\SM)^b=\sum_{i=1}^{m}e_i$.
Then, as in the proof of Theorem \ref{Thm-T3}, we obtain the following result.

\begin{theorem}\label{thm-on-bnd}
For any simply connected  
open sector $W\subset B(b)^\circ$ along the point $b\in Y^{\an}$, 
there exists an isomorphism
\begin{equation}
\pi^{-1}\CC_W\otimes Sol_{\var{Y}}^{\rmE}(\tl{\SM^\wedge}) 
\quad \simeq \quad 
\Bigl\{\bigoplus_{i=1}^m\bigl(
\EE_{W\vbar\var{Y}^{\an}}^{\Re h_i}\bigr)^{\oplus e_i}\Bigr\}
\oplus\bigl(\EE_{W\vbar\var{Y}^{\an}}^0\bigr)^{d(\SM)-d(\SM)^b}.
\end{equation}
If we have moreover that $b\neq0$ and $N_{\infty,b}^{>0}=\emptyset$, then 
for the open disk $B(b):= B(b)^{\circ} \sqcup \{ b \}$ 
centered at 
$b\in Y^{\an}$ there exists an isomorphism 
\begin{equation}
\pi^{-1}\CC_{B(b)}\otimes Sol_{\var{Y}}^{\rmE}(\tl{\SM^\wedge})
\simeq\bigl(\EE_{B(b)\vbar\var{Y}^{\an}}^0\bigr)^{d(\SM)},
\end{equation}
which implies that $\SM^\wedge$ is an integrable 
connection on a neighborhood of the point $b\in Y^{\an}$.
\end{theorem}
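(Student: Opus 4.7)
The plan is to follow the Morse theoretical strategy of Theorem \ref{Thm-T3}, replacing the punctured neighborhood $B(b_\infty)^\circ$ of infinity in $\var{Y}^{\an}$ by the punctured neighborhood $B(b)^\circ$ of the finite singular point $b$. First, I would use the $\RR$-constructible enhanced sheaf $G\in\rcEC^0(\CC_{\var{X}^{\an}})$ together with the compatibility
\begin{align*}
Sol_{\var{Y}}^{\rmE}(\tl{\SM^\wedge}) \simeq \CC_{\var{Y}^{\an}}^{\rmE}\Potimes {}^{\sfL}G,
\end{align*}
so that the problem reduces to computing ${}^{\sfL}G$ on $W\times\RR\subset Y^{\an}\times\RR$ and to showing that the corresponding stalks at a point $(w,t)$ are represented by the compactly supported cohomology complexes $\rsect_c(X^{\an}; G(w,t)[1])$ of an $\RR$-constructible sheaf $G(w,t)$ on $X^{\an}$, exactly as in Section \ref{sec:T4}.

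Next I would fix $w\in W$ and apply the Morse theory of the real-valued functions $\Re f^w(z) = \Re(zw-f(z))$, where $f$ runs over the exponential factors of $\tl{\SM}$ at the various points of $\tl{D}$. The analysis splits according to where $f$ is supported. For an exponential factor $f\in N_i^{>0}$ at a finite point $a_i\in D^{\an}$, the function $\Re f^w$ stays Morse-theoretically tame on a sector along $a_i$ as $w$ varies in $W$, and its contribution to ${}^{\sfL}G_{(w,t)}$ as $t$ varies produces only an $\EE^0_{W\vbar\var{Y}^{\an}}$-summand. For an exponential factor $f\in N_\infty^{>0}$ at $a_\infty=\infty$, write $f(z)=b_fz+(\textit{lower order terms})$; when $w\in W$ is close to $b$, the critical points of $\Re f^w$ escaping to $\infty$ contribute factors of the form $\EE^{\Re h}_{W\vbar\var{Y}^{\an}}$ where $h$ is the Legendre transform of $f$ in the shifted variable around $b$. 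Precisely, those $f$ with $b_f=b$, that is, $f\in N_{\infty,b}^{>0}$, produce exactly the $m$ Puiseux germs $h_1,\dots,h_m$ of the decomposition \eqref{eq:e_ih_i} with the correct multiplicities $e_i$, while those with $b_f\neq b$ contribute no critical point at $\infty$ for $w$ close to $b$ and so merge into the trivial $\EE^0_{W\vbar\var{Y}^{\an}}$ summands. Matching multiplicities with the covering degrees via $d(\SM)=d(\SM)^b+(d(\SM)-d(\SM)^b)$ then yields the claimed isomorphism on $W$.

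For the second statement, under the hypotheses $b\neq 0$ and $N_{\infty,b}^{>0}=\emptyset$, the $\EE^{\Re h_i}$-summands are absent from every sector $W\subset B(b)^\circ$, and the Morse analysis moreover shows that no critical point of $\Re f^w$ escapes to infinity uniformly as $w$ ranges over the full $B(b)^\circ$. By covering $B(b)^\circ$ by simply connected sectors $W$ and gluing the local decompositions on their overlaps (which is straightforward since only $\EE^0$-summands occur), the local isomorphism extends to the entire disk $B(b)$. Because $\CC_{\var{Y}^{\an}}^{\rmE}\Potimes (\EE_{B(b)\vbar\var{Y}^{\an}}^0)^{d(\SM)}$ is the enhanced solution complex of the trivial integrable connection of rank $d(\SM)$ on $B(b)$, the irregular Riemann--Hilbert correspondence of \cite{DK16} then identifies $\SM^\wedge$ on a neighborhood of $b$ with such a connection.

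I expect the main technical obstacle to be establishing cleanly the dichotomy between exponential factors at $\infty$ with linear part $bz$ and those with linear part $b_fz$ for $b_f\neq b$: one has to verify that the critical points of $\Re f^w(z)$ for $f$ with $b_f\neq b$ remain in a bounded region of $X^{\an}$ as $w$ varies in $W$ near $b$, so that their contribution via compactly supported cohomology collapses to a constant $\EE^0$-summand rather than an $\EE^{\Re h}$-summand. This dichotomy is exactly what makes the splitting $d(\SM)=d(\SM)^b+(d(\SM)-d(\SM)^b)$ appear in the form claimed, and what distinguishes the local picture at a finite point $b\in Y^{\an}$ from the one at infinity treated in Theorem \ref{Thm-T3}.
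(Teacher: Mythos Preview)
Your overall strategy---reduce to computing ${}^\sfL G$ over $W$ via the Morse theory of $\rsect_c(X^{\an};G(w,t))$---is the same as the paper's, and your identification of the dichotomy between factors in $N_{\infty,b}^{>0}$ and the rest is correct. However, the execution has a genuine gap that the paper's proof makes explicit.

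Unlike the situation at $b_\infty=\infty$, for $w$ near a finite $b$ the direct Morse analysis does \emph{not} yield a clean direct-sum decomposition of $\pi^{-1}\CC_W\otimes{}^\sfL G$; the paper states outright that it does not know how to compute this object directly. Instead, the authors excise an open set $U(W)\subset\Omega$ near $\partial D(a_\infty)$ and work with the distinguished triangle $\sfE_W''\to\sfE_W\to\sfE_W'\xrightarrow{+1}$ coming from the decomposition $X^{\an}=(X^{\an}\setminus U(W))\cup U(W)$. The Morse method then applies to each piece separately: $\sfE_W'$ decomposes as a large direct sum of bounded-exponent terms together with the genuinely irregular pieces $(\sfE^{-c_{\infty j}^b})^{N_{\infty j}^b}$, while $\sfE_W''[1]\simeq(\sfE^{-L_\infty^+})^{\rk\SM}$. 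After applying $\CC_{Y^{\an}}^\rmE\Potimes(\cdot)$ this produces a short exact sequence with $\pi^{-1}\CC_W\otimes Sol_{\var{Y}}^\rmE(\tl{\SM^\wedge})$ as kernel, and two further ingredients you omit are needed to finish: a combinatorial identity (the paper's \eqref{eq:natural} and \eqref{eq:star}) showing that the number $\nu_b$ of bounded summands satisfies $\nu_b-\rk\SM=d(\SM)-d(\SM)^b$, and the multiplicity test functor of \cite[\S6.3]{DK18} together with Lemma~\ref{lem-ITa} to extract the exponential factors of that kernel.

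There is also a smaller inaccuracy in your account of the finite singular points. For a nonzero $f\in N_i^{>0}$ at $a_i\in D^{\an}$ the derivative $f'$ has a pole at $a_i$, so for $w$ in a bounded set the equation $f'(z)=w$ has no solution in a small enough $D(a_i)^\circ$: there are \emph{no} Morse critical points there. The $\EE^0$ contributions indexed by such $f$ come entirely from the behavior of the sublevel sets along $\partial D(a_i)$ (the local maxima $B_{ik}^+(w)$ of $\Re f^w\vbar_{\partial D(a_i)}$ and their interaction with $L_i^-$), which is precisely the bookkeeping carried out in the computation of $\sfE_W'$. Your phrase ``Morse-theoretically tame'' hides this mechanism and, with it, the reason why the count of $\EE^0$ summands is $d(\SM)-d(\SM)^b$ rather than something else.
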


For a generalization of Theorem \ref{thm-on-bnd} to arbitrary 
holonomic $\SD$-modules on $X= \CC$ see Section \ref{sec:holD}. 
In the situation of Theorem \ref{thm-on-bnd}, 
note that for the irregularity 
$\irr_b(\Gamma_{Y\setminus\{b\}}(\SM^\wedge)) \in \ZZ_{\geq 0}$ of 
the meromorphic connection 
$\Gamma_{Y\setminus\{b\}}(\SM^\wedge)$ 
at the point $b\in Y$ we have 
\begin{equation}
 \irr_b(\Gamma_{Y\setminus\{b\}}(\SM^\wedge))
=\sum_{i=1}^m e_i \cdot \ord_b(h_i).
\end{equation}
Finally, we prove the following formula from 
which we obtain the 
(classical) characteristic cycle ${\rm CC} ( \SM^\wedge )$ of the 
Fourier transform $\SM^\wedge$. 
For the point $b \in Y$ we denote the 
multiplicity of $\SM^\wedge$ along $T_b^\ast Y\subset T^\ast Y$ 
by $\mult_{T_b^\ast Y}(\SM^\wedge) \in \ZZ_{\geq 0}$. 

\begin{theorem}\label{thm-CharCyc}
In the situation of Theorem \ref{thm-on-bnd}, we have 
\begin{align}
\mult_{T_b^\ast Y}(\SM^\wedge) 
&= d(\SM)^b+\irr_b(\Gamma_{Y\setminus\{b\}}(\SM^\wedge)) 
+ N_{\infty}(bz)
\notag \\
 &= d(\SM)^b + \sum_{i=1}^m e_i\cdot \ord_b(h_i) 
+ N_{\infty}(bz) 
\notag \\
 &=  \sum_{i=1}^m e_i \cdot \Bigl\{ 1+ \ord_b(h_i) \Bigr\} 
 + N_{\infty}(bz).  
\end{align}
Moreover if $N_\infty^{>0}$ does not contain the linear factor $bz$, 
 then there exist isomorphisms
\begin{equation}
\Gamma_{\{b\}}(\SM^\wedge)\simeq0, \quad 
H^1_{\{b\}}(\SM^\wedge)
\simeq\{H^1_{\{b\}}(\SO_Y)\}^{d(\SM)-d(\SM)^b}.
\end{equation} 
\end{theorem}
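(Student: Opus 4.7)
The plan is to deduce the multiplicity formula from the local cohomology exact sequence for $\SM^\wedge$ at $b$, combined with the explicit description of $Sol_{\var{Y}}^{\rmE}(\tl{\SM^\wedge})$ furnished by Theorem~\ref{thm-on-bnd}; characteristic cycle multiplicities will be combined additively in the Grothendieck group of holonomic $\SD$-modules. First I would consider the distinguished triangle $\bfR\Gamma_{\{b\}}(\SM^\wedge) \longrightarrow \SM^\wedge \longrightarrow \Gamma_{Y\setminus\{b\}}(\SM^\wedge) \overset{+1}{\longrightarrow}$ and extract the associated four-term exact sequence
\begin{equation*}
0 \to \Gamma_{\{b\}}(\SM^\wedge) \to \SM^\wedge \to \Gamma_{Y\setminus\{b\}}(\SM^\wedge) \to H^1_{\{b\}}(\SM^\wedge) \to 0,
\end{equation*}
which by additivity of characteristic cycles yields
\begin{equation*}
\mult_{T_b^\ast Y}(\SM^\wedge) = \mult_{T_b^\ast Y}(\Gamma_{\{b\}}(\SM^\wedge)) - \mult_{T_b^\ast Y}(H^1_{\{b\}}(\SM^\wedge)) + \mult_{T_b^\ast Y}(\Gamma_{Y\setminus\{b\}}(\SM^\wedge)).
\end{equation*}
Since $\Gamma_{Y\setminus\{b\}}(\SM^\wedge)$ is a meromorphic connection of generic rank $d(\SM)$ whose irregularity at $b$ equals $\sum_i e_i \ord_b(h_i)$ (read off from Theorem~\ref{thm-on-bnd}), the classical ``rank plus irregularity'' formula gives $\mult_{T_b^\ast Y}(\Gamma_{Y\setminus\{b\}}(\SM^\wedge)) = d(\SM) + \sum_i e_i \ord_b(h_i)$.

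The next task is to identify the two local cohomology terms using the irregular Riemann--Hilbert correspondence. The submodule $\Gamma_{\{b\}}(\SM^\wedge)$, being the maximal piece of $\SM^\wedge$ supported at $\{b\}$, is a direct sum of copies of $\delta_b$; by the stationary phase principle these $\delta_b$-components correspond under the Fourier transform to the linear exponential factor $bz$ at $\infty$ in $X$, whence $\Gamma_{\{b\}}(\SM^\wedge) \simeq \delta_b^{N_\infty(bz)}$. For $H^1_{\{b\}}(\SM^\wedge)$, the key claim is that the summand $(\EE_{W|\var{Y}^\an}^0)^{\oplus(d(\SM)-d(\SM)^b)}$ appearing in Theorem~\ref{thm-on-bnd} actually arises from an integrable connection on the full disk $B(b)$ rather than merely on $B(b)^\circ$; granting this, that regular part is isomorphic to $\SO_{B(b)}^{d(\SM)-d(\SM)^b}$ and contributes $\{H^1_{\{b\}}(\SO_Y)\}^{d(\SM)-d(\SM)^b} \simeq \delta_b^{d(\SM)-d(\SM)^b}$ to $H^1_{\{b\}}$, while the irregular summand $\bigoplus_i (\EE_{W|\var{Y}^\an}^{\Re h_i})^{\oplus e_i}$ contributes $0$, being the enhanced solution complex of a meromorphic connection already localized at $b$. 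Substituting into the additivity formula yields
\begin{equation*}
\mult_{T_b^\ast Y}(\SM^\wedge) = N_\infty(bz) - (d(\SM)-d(\SM)^b) + d(\SM) + \sum_i e_i \ord_b(h_i) = d(\SM)^b + \irr_b(\Gamma_{Y\setminus\{b\}}(\SM^\wedge)) + N_\infty(bz),
\end{equation*}
and the equivalent rewritings follow from $d(\SM)^b = \sum_i e_i$; the second assertion of the theorem, valid when $bz\notin N_\infty^{>0}$ and hence $N_\infty(bz) = 0$, is immediate from the identifications of $\Gamma_{\{b\}}(\SM^\wedge)$ and $H^1_{\{b\}}(\SM^\wedge)$ just established.

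The main obstacle will be establishing that the regular summand $(\EE_{W|\var{Y}^\an}^0)^{\oplus(d(\SM)-d(\SM)^b)}$ in Theorem~\ref{thm-on-bnd} extends as an integrable connection across $b$, rather than defining a local system on the punctured disk with possibly non-trivial monodromy. I would globalize the sector-wise statement by revisiting its Morse-theoretic underpinning: for every exponential factor $f$ of $\SM$ at $\infty$ in $X$ that is not of the form $bz + (\text{lower order terms})$, the critical points of $\Re f^w(z) = \Re(zw - f(z))$ vary holomorphically in $w$ without coalescing as $w$ crosses through $b$, so the corresponding local Morse contributions extend smoothly through $w=b$ and assemble into an integrable-connection structure on a full neighborhood of $b$, delivering the required global form of the regular summand.
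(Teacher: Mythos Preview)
Your overall framework (the four-term exact sequence, additivity of characteristic cycles, and the rank-plus-irregularity formula for $\Gamma_{Y\setminus\{b\}}(\SM^\wedge)$) matches the paper's, but there is a genuine gap in how you determine the local-cohomology terms. The assertion $\Gamma_{\{b\}}(\SM^\wedge)\simeq\delta_b^{N_\infty(bz)}$ is not justified: the stationary phase method describes the \emph{formal} (enhanced-solution) structure of $\SM^\wedge$, not its submodule lattice. Inverse Fourier transform of a submodule $\delta_b^{k_0}\hookrightarrow\SM^\wedge$ yields a submodule $(\SO_X e^{bz})^{k_0}\hookrightarrow\SM$, whence $k_0\le N_\infty(bz)$; but the reverse inequality would require that every occurrence of the factor $bz$ in the formal decomposition of $\tl\SM$ at $\infty$ be realised by an actual global submodule of $\SM$, and Stokes data together with the finite singularities $a_1,\dots,a_l$ obstruct this in general. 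Your parallel claim $k_1=d(\SM)-d(\SM)^b$ has the same problem: the regular summand in Theorem~\ref{thm-on-bnd} is not produced solely by the factors $f\notin N_{\infty,b}^{>0}$ whose critical points you track, but arises from a delicate cancellation involving the finite-singularity contributions and the auxiliary piece $\sfE_W^{\prime\prime}$; and even if the enhanced solution complex split over the full disk, that alone would not split the $\SD$-module.

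The paper sidesteps this by computing only the \emph{difference} $k_1-k_0=\chi_b\bigl(Sol_Y(\rsect_{\{b\}}(\SM^\wedge))\bigr)$, which together with Kashiwara's index theorem and Proposition~\ref{prop:IT20a-3.14} already yields the multiplicity formula. That difference is obtained by a direct Morse-theoretic calculation of $\pi^{-1}\CC_{\{b\}}\otimes{}^\sfL G$ at the single point $w=b$ (not on a sector): for $f\in N_{\infty,b}^{>0}$ with $f\neq bz$ one has $\Re f^b=-\Re f_\red$ with $0<\ord_\infty(f_\red)<1$, hence no critical points on $D(a_\infty)^\circ$; for $f=bz$ one has $\Re f^b\equiv 0$, so its sublevel set is empty or the whole punctured disk and contributes nothing. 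This gives $k_1-k_0=d(\SM)-d(\SM)^b-N_\infty(bz)$ and hence the formula. The individual values of $k_0$ and $k_1$ are determined only under the extra hypothesis $bz\notin N_\infty^{>0}$, where the same Morse analysis shows $Sol_Y^\rmE(\rsect_{\{b\}}(\SM^\wedge))$ is concentrated in degree zero, forcing $k_0=0$.
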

Since the Fourier transform is an exact functor, 
Theorem \ref{thm-CharCyc} holds true also for 
any holonomic $\SD$-module on $X= \CC_z$ (see 
the discussions at the end of Section \ref{sec:T3}).

In Section \ref{uni-sec:T5}, we will 
apply a similar Morse theoretical method also to
obtain a natural basis of the stalk $Sol_Y( \SM^\wedge )_b$ of
the solution complex $Sol_Y( \SM^\wedge )$
to $\SM^\wedge$ at a generic point $b \in Y^{\an}$
of $Y^{\an}$. Specifically, we identify
$Sol_Y( \SM^\wedge )_b$ with some rapid decay homology
group in the sense of Bloch-Esnault \cite{BE04a}
and Hien \cite{Hi07}, \cite{Hi09} and construct a 
basis of the latter. This construction is
applicable to any algebraic meromorphic connection
$\SM$ on $X= \CC$. Then for such $\SM$ it would be
possible to calculate the monodromies and the Stokes
matrices of the holomorphic solutions to
$\SM^\wedge$ explicitly by looking at how
the rapid decay $1$-cycles in the basis deform as the
point $b \in Y^{\an}$ moves.

\section{Preliminaries}\label{uni-sec:K1}

In this section, we recall some basic notions 
and results which will be used in this paper.
We assume here that the reader is familiar with 
the theory of sheaves and functors in the 
framework of derived categories.
For them we follow the terminologies in \cite{KS90} etc.
For a topological space $X$ denote by $\BDC(\CC_X)$ 
the derived category consisting of bounded 
complexes of sheaves of $\CC$-vector spaces on it.

\subsection{Enhanced sheaves}\label{sec:K2}
We refer to \cite{Tam18}, \cite{KS16b}, 
and \cite{DK16} for the details of this subsection.
Let $X$ be a complex manifold and 
we consider the maps 
\begin{align}
X\times\RR^2 \xrightarrow{p_1,p_2,\mu} 
X\times\RR \overset{\pi}{\longrightarrow} X,
\end{align}
where $p_1,p_2,\pi$ are the projections and we set 
$\mu(x,t_1,t_2)\coloneq(x,t_1+t_2)$.
Then we define the bounded derived 
category of enhanced sheaves $\BEC(\CC_X)$ on $X$ by
\begin{align}
\BEC(\CC_X) \coloneq \BDC(\CC_{X\times\RR})/\pi^{-1}\BDC(\CC_X).
\end{align}
The convolution functors $\Potimes$ and $\Prhom$ in 
$\BDC(\CC_{X\times\RR})$ are defined by 
\begin{align}
F\Potimes G &\coloneq \rmR\mu_!(p_1^{-1}F\otimes p_2^{-1}G), \\
\Prhom(F,G) &\coloneq \rmR p_{1\ast}\rhom(p_2^{-1}F,\mu^!G),
\end{align} 
and they induce convolution functors in $\BEC(\CC_X)$, 
which we denote also by $\Potimes$ and $\Prhom$, respectively.
The quotient functor 
\begin{align}
\bfQ \colon \BDC(\CC_{X\times\RR})\longrightarrow\BEC(\CC_X)
\end{align}
has fully faithful left and right adjoints 
$\bfL^\rmE,\bfR^\rmE$ defined by 
\begin{align}
\bfL^\rmE(\bfQ F) &\coloneq 
(\CC_{\{t\geq0\}}\oplus\CC_{\{t\leq0\}})\Potimes F, \\
\bfR^\rmE(\bfQ F) &\coloneq 
\Prhom(\CC_{\{t\geq0\}}\oplus\CC_{\{t\leq0\}},F),  
\end{align}
where $\{t\geq0\}$ stands for 
$\Set*{\(x,t\)\in X\times\RR}{t\geq0}$ 
and $\{t\leq0\}$ is defined similarly.
In this paper, we sometimes identify a sheaf $F$ on $X\times\RR$ with the enhanced sheaf $\bfQ F$ on $X$ associated to it.
The functor 
$\bfL^\rmE\colon\BEC(\CC_X)\longrightarrow
\BDC(\CC_{X\times\RR})$ induces 
a $t$-structure of $\BEC(\CC_X)$ by the standard 
$t$-structure of $\BDC(\CC_{X\times\RR})$.
We denote by $\bfE^0(\CC_X)$ its heart.
For a morphism of complex manifolds $f\colon X\rightarrow Y$,
we define the direct image functor $\bfE f_\ast\colon 
\BEC(\CC_X)\rightarrow\BEC(\CC_Y)$
so that the following diagram commutes:
\begin{equation}
\vcenter{
\xymatrix@M=7pt@C=36pt@R=24pt{
\BDC(\CC_{X\times\RR}) \ar[r]^-{\rmR(f\times\id_{\RR})_\ast} 
\ar[d]^-{\bfQ} & \BDC(\CC_{Y\times\RR}) \ar[d]^-{\bfQ} & \\
\BEC(\CC_X) \ar[r]^-{\bfE f_\ast} & \BEC(\CC_Y). &
}}
\end{equation} 
The proper direct image and (proper) inverse image functors
\begin{align}
\bfE f_! &\colon 
\BEC(\CC_X)\longrightarrow\BEC(\CC_Y), \\
\bfE f^{-1},\bfE f^! &\colon 
\BEC(\CC_Y)\longrightarrow\BEC(\CC_X)
\end{align}
are defined similarly.
We have a natural embedding 
$\epsilon\colon\BDC(\CC_X)\rightarrow\BEC(\CC_X)$ defined by
\begin{align}
\epsilon(L) \coloneq \bfQ(\CC_{\{t\geq0\}}\otimes\pi^{-1}L),
\end{align}
and a bifunctor $\pi^{-1}(\cdot)\otimes(\cdot)\colon
\BDC(\CC_X)\times\BEC(\CC_X)\rightarrow\BEC(\CC_X)$ defined by
\begin{align}
\pi^{-1}L\otimes F \coloneq 
\bfQ(\pi^{-1}L\otimes\bfL^\rmE(F)).
\end{align}

\subsection{Enhanced ind-sheaves}\label{sec:K3}

We briefly recall some notions and results on 
enhanced ind-sheaves without giving detailed definitions.
We refer to \cite{KS01} and \cite{KS06} for ind-sheaves, 
to \cite{DK16} for ind-sheaves on bordered spaces, and 
to \cite{DK16} and \cite{KS16b} for enhanced ind-sheaves.
Let $X$ be a complex manifold and $\RR_\infty$ 
the bordered space $(\RR, \var{\RR}\coloneq\RR\sqcup\{\pm\infty\})$.
Denote by $\BDC(\rmI\CC_X)$ and $\BDC(\rmI\CC_{X\times\RR_\infty})$ 
the bounded derived categories of ind-sheaves on $X$ and 
ind-sheaves on the bordered space $X\times\RR_\infty$, respectively.
We define the the bounded derived 
category of enhanced ind-sheaves $\BEC(\rmI\CC_X)$ on $X$ by 
\begin{equation}
\BEC(\rmI\CC_X) \coloneq 
\BDC(\rmI\CC_{X\times\RR_\infty})/\pi^{-1}\BDC(\rmI\CC_X),
\end{equation}
where $\pi\colon X\times\RR_\infty\rightarrow X$ 
is the projection of bordered spaces.
The quotient functor 
\begin{align}
\bfQ \colon \BDC(\rmI\CC_{X\times\RR_\infty})
\longrightarrow\BEC(\rmI\CC_X)
\end{align}
has fully faithful left and right adjoints
\begin{align}
\bfL^\rmE,\bfR^\rmE \colon \BEC(\rmI\CC_X)\longrightarrow
\BDC(\rmI\CC_{X\times\RR_\infty}).
\end{align}
A $t$-structure of $\BEC(\rmI\CC_X)$ is induced by 
the standard $t$-structure of 
$\BDC(\rmI\CC_{X\times\RR_\infty})$ and the functor 
$\bfL^\rmE$ as in $\BEC(\CC_X)$. 
Denote by $\bfE^0(\rmI\CC_X)$ its heart.
Furthermore, as in the case of enhanced sheaves 
we can define the convolution functors
\begin{align}
\Potimes &\colon \BEC(\rmI\CC_X)\times\BEC(\rmI\CC_X)
\longrightarrow\BEC(\rmI\CC_X), \\
\Prihom &\colon \BEC(\rmI\CC_X)^{\op}\times\BEC(\rmI\CC_X)
\longrightarrow\BEC(\rmI\CC_X),
\end{align}
and the operations of (proper) direct and inverse images
\begin{align}
\bfE f_\ast,\bfE f_{!!} &\colon \BEC(\rmI\CC_X)
\longrightarrow\BEC(\rmI\CC_Y), \\
\bfE f^{-1},\bfE f^! &\colon \BEC(\rmI\CC_Y)
\longrightarrow\BEC(\rmI\CC_X),    
\end{align} 
for a morphism of complex manifolds $f\colon X\rightarrow Y$.
We have a natural embedding $\varepsilon\colon\BDC(\rmI\CC_X)
\rightarrow\BEC(\rmI\CC_X)$ and a bifunctor 
$\pi^{-1}(\cdot)\otimes(\cdot)
\colon\BDC(\rmI\CC_X)\times\BEC(\rmI\CC_X)
\rightarrow\BEC(\rmI\CC_X)$ defined by
\begin{align}
\varepsilon(\SL) \coloneq 
\bfQ(\CC_{\{t\geq0\}}\otimes\pi^{-1}\SL), \\
\pi^{-1}\SL\otimes\SF \coloneq 
\bfQ(\pi^{-1}\SL\otimes\bfL^\rmE(\SF)).
\end{align}
We set $\CC_X^\rmE\coloneq\bfQ\Bigl(\underset{\alpha \to+\infty}
{\inj}\CC_{\{t\geq \alpha \}}\Bigr)\in\BEC(\rmI\CC_X)$, where 
the symbol $\inj$ stands for the inductive limit  
in $\rmI\CC_{X\times\var{\RR}}$.

\subsection{Exponential enhanced (ind-)sheaves}\label{sec:K4}

Let $X$ be a complex manifold.
Denote by $\rcEC^{\rmb}(\CC_X)$ and $\rcEC^{\rmb}(\rmI\CC_X)$ 
the triangulated categories of $\RR$-constructible 
enhanced (ind-)sheaves on $X$
(see \cite{DK16}).
Let $U\subset X$ be an open subset and $\phi,\phi^+,\phi^-\colon U\to\RR$ 
continuous functions such that 
$\phi^+\geq\phi^-$ on it.
For a locally closed subset $Z\subset U$, we define the exponential 
enhanced (ind-)sheaves $\sfE_{Z\vbar X}^\phi,
\sfE_{Z\vbar X}^{\phi^+\vartriangleright\phi^-}\in\bfE^0(\CC_X)$ 
and $\EE_{Z\vbar X}^\phi,\EE_{Z\vbar X}
^{\phi^+\vartriangleright\phi^-}\in\bfE^0(\rmI\CC_X)$ by 
\begin{align}
\sfE_{Z\vbar X}^\phi &\coloneq \bfQ(\CC_{\{t+\phi\geq0\}}), \\
\sfE_{Z\vbar X}^{\phi^+\vartriangleright\phi^-} 
&\coloneq \bfQ(\CC_{\{-\phi^+\leq t<-\phi^-\}}), \\
\EE_{Z\vbar X}^\phi &\coloneq \CC_X^\rmE\Potimes\sfE_{Z\vbar X}^\phi, \\
\EE_{Z\vbar X}^{\phi^+\vartriangleright\phi^-} 
&\coloneq \CC_X^\rmE\Potimes\sfE_{Z\vbar X}
^{\phi^+\vartriangleright\phi^-},
\end{align}
where $\{t+\phi\geq0\}$ and $\{-\phi^+\leq t<-\phi^-\}$ stand for 
$\Set*{(x,t)\in X\times\RR}{x \in Z, t+\phi(x)\geq0}$ and 
$\Set*{(x,t)\in X\times\RR}{x \in Z, -\phi^+(x)\leq t<-\phi^-(x)}$, respectively.
Note that we have exact sequences 
\begin{align}
0 \longrightarrow \sfE_{Z\vbar X}^{\phi^+\vartriangleright\phi^-} 
\longrightarrow
\sfE_{Z\vbar X}^{\phi^+} \longrightarrow \sfE_{Z\vbar X}^{\phi^-} 
\longrightarrow 0, \\
0 \longrightarrow \EE_{Z\vbar X}^{\phi^+\vartriangleright\phi^-} 
\longrightarrow
\EE_{Z\vbar X}^{\phi^+} \longrightarrow \EE_{Z\vbar X}^{\phi^-} 
\longrightarrow 0
\end{align}
in $\bfE^0(\CC_X)$ and $\bfE^0(\rmI\CC_X)$,
respectively.
Note also that if $Z$ is subanalytic and 
$f\colon U\to\CC$ is holomorphic 
then we have $\sfE_{Z\vbar X}^{\Re f}\in\rcEC^0(\CC_X)$ and 
$\EE_{Z\vbar X}^{\Re f}\in\rcEC^0(\rmI\CC_X)$.
For these exponential enhanced ind-sheaves we have 
the following basic property (see \cite[Corollary 3.2.3]{DK18}).

\begin{lemma}\label{lem-K1}
Let $\phi^+,\phi^-\colon U\to\RR$ be 
continuous functions such that $\phi^+\geq\phi^-$ on $U$.
If $\phi^+-\phi^-$ is bounded on $U$, 
then there exists an isomorphism
\begin{align}
\EE_{U\vbar X}^{\phi^+\vartriangleright\phi^-} \simeq0.
\end{align}
\end{lemma}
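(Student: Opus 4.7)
The definition unwinds to
$$\EE_{U\vbar X}^{\phi^+\vartriangleright\phi^-} \;=\; \CC_X^\rmE \Potimes \bfQ(\CC_S), \qquad S := \{(x,t)\in U\times\RR : -\phi^+(x)\le t<-\phi^-(x)\},$$
and since $\CC_X^\rmE = \bfQ\bigl(\inj_{\alpha \to +\infty} \CC_{\{t\geq\alpha\}}\bigr)$, my plan is to analyze the filtered family of sheaf-level convolutions $\CC_{\{t\geq\alpha\}} \Potimes \CC_S$ on $X\times\RR$ and then pass to the ind-limit to show it vanishes.

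First I would compute $\CC_{\{t\geq\alpha\}} \Potimes \CC_S = \mu_!(p_1^{-1}\CC_{\{t\geq\alpha\}}\otimes p_2^{-1}\CC_S)$ stalkwise. At a point $(x,t)\in X\times\RR$, the $\mu$-fiber (parametrized by $t_1$) has support $\{t_1\geq\alpha\}\cap (t+\phi^-(x), t+\phi^+(x)]$ when $x\in U$, and is empty otherwise. A case analysis on the position of $\alpha$ relative to $t+\phi^\pm(x)$ produces only three possibilities: the fiber is empty; it is a half-open interval of the form $(a,b]$ (for which $\rsect_c$ vanishes); or it is a compact closed interval $[\alpha,t+\phi^+(x)]$ (for which $\rsect_c = \CC$ in degree $0$). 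This yields the identification
$$\CC_{\{t\geq\alpha\}} \Potimes \CC_S \;\simeq\; \CC_{T_\alpha}, \qquad T_\alpha := \{(x,t)\in U\times\RR : \alpha-\phi^+(x)\le t < \alpha-\phi^-(x)\},$$
concentrated in degree $0$.

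Next I would exploit the hypothesis that $\phi^+-\phi^-\le C$ on $U$ for some constant $C>0$. I claim that for any $\alpha' \geq \alpha + C$ the sets $T_\alpha$ and $T_{\alpha'}$ are disjoint: indeed, at each $x\in U$, any $t\in T_\alpha$ satisfies $t < \alpha-\phi^-(x)\leq \alpha+C-\phi^+(x)\leq \alpha'-\phi^+(x)$, while any $t'\in T_{\alpha'}$ satisfies $t'\geq \alpha'-\phi^+(x)$. Since these sheaves then have disjoint supports, the transition morphism $\CC_{T_\alpha} \to \CC_{T_{\alpha'}}$ obtained from $\CC_{\{t\geq\alpha\}} \to \CC_{\{t\geq\alpha'\}}$ by functoriality of convolution must vanish identically.

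Finally, since for every index $\alpha$ the transition morphism into the index $\alpha + C$ is already zero, the inductive limit $\inj_{\alpha\to+\infty}\CC_{T_\alpha}$ is isomorphic to $0$ in $\BDC(\rmI\CC_{X\times\RR_\infty})$, and applying $\bfQ$ gives $\EE_{U\vbar X}^{\phi^+\vartriangleright\phi^-}\simeq 0$ in $\BEC(\rmI\CC_X)$. The main technical point is the sheaf-level convolution computation of the first step, which rests on the vanishing of $\rsect_c$ for half-open intervals in $\RR$; once this identification is established, the rest is straightforward bookkeeping with the boundedness assumption.
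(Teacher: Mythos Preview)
Your argument is correct. The paper does not give its own proof of this lemma but simply cites \cite[Corollary 3.2.3]{DK18}, so there is nothing to compare against; your direct computation via the filtered system $\CC_{\{t\geq\alpha\}}\Potimes\CC_S$ and the observation that boundedness of $\phi^+-\phi^-$ forces the transition maps to have disjoint supports (hence vanish) is exactly the standard route to this result.

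One small point of rigor: the stalkwise case analysis in your first step shows that $\CC_{\{t\geq\alpha\}}\Potimes\CC_S$ is concentrated in degree $0$ with the correct stalks, but stalks alone do not pin down the sheaf. A clean way to close this is to use the short exact sequence $0\to\CC_S\to\CC_{\{t+\phi^+\geq0,\,x\in U\}}\to\CC_{\{t+\phi^-\geq0,\,x\in U\}}\to0$ and the standard identity $\CC_{\{t\geq\alpha\}}\Potimes\CC_{\{t+\phi\geq0,\,x\in U\}}\simeq\CC_{\{t\geq\alpha-\phi,\,x\in U\}}$; the induced map on the right is the surjection of constant sheaves on nested closed sets, so its kernel is exactly $\CC_{T_\alpha}$. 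With that adjustment the proof is complete.
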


\subsection{$\SD$-modules}\label{sec:K5}

Let us recall some notions and results on 
$\SD$-modules on a complex manifold $X$
(we refer to \cite{Kas03} and \cite{HTT08} etc.).
Denote by $\SO_X,\Omega_X$ and $\SD_X$ the sheaves 
of holomorphic functions, holomorphic differential forms 
of top degree and holomorphic differential operators on $X$, respectively.
Let $\Mod(\SD_X)$ be the abelian category of left $\SD_X$-modules.
Then we can define $\Modcoh(\SD_X)$ (resp. $\Modhol(\SD_X)$) 
to be the subcategory of $\Mod(\SD_X)$ consisting of 
coherent (resp. holonomic) $\SD_X$-modules.
We write $\BDC(\SD_X)$ for the bounded derived category of 
left $\SD_X$-modules and denote by $\BDCcoh(\SD_X)$ and 
$\BDChol(\SD_X)$ its full triangulated subcategories of 
objects which have coherent and 
holonomic cohomologies, respectively.
The symbols $\Dotimes,\bfD f_\ast,\bfD f^\ast$ stand for 
the standard operations for $\SD$-modules associated to 
a morphism of complex manifolds $f\colon X\to Y$.
The (classical) solution functor is defined by
\begin{align}
Sol_X \colon \BDCcoh(\SD_X)^{\op}\longrightarrow
\BDC(\CC_X), \quad \SM\longmapsto\rhom_{\SD_X}(\SM,\SO_X).
\end{align}
Let $D\subset X$ be a closed hypersurface and 
denote by $\SO(\ast D)$ the sheaf of 
meromorphic functions on $X$ with poles in $D$.
Then for $\SM\in\BDC(\SD_X)$, we set 
\begin{align}
\SM(\ast D)\coloneq\SM\Dotimes\SO_X(\ast D)
\end{align}
and for $f\in\SO_X(\ast D)$ and $U\coloneq X\bs D$, set
\begin{align}
\SD_X e^f &\coloneq \SD_X / 
\Set{P\in\SD_X}{P e^f\vbar_U=0}, \\
\SE_{U\vbar X}^f &\coloneq \SD_X e^f(\ast D).
\end{align}
Note that $\SE_{U\vbar X}^f$ is a holonomic $\SD_X$-module.
In \cite{DK16}, the authors constructed 
the enhanced solution functor on a complex manifold $X$
\begin{align}
Sol_X^\rmE\colon\BDChol(\SD_X)^{\op}
\longrightarrow\rcEC^{\rmb}(\rmI\CC_X)
\end{align}
and proved that it is fully faithful.
Instead of giving its definition, we recall some 
of its properties.

\begin{proposition}\label{prop-K2}
Let $D\subset X$ be a closed hypersurface in 
$X$ and set $U\coloneq X\bs D$.
\begin{enumerate}
\item[{\rm(i)}] If $\SM\in\BDChol(\SD_X)$, 
then there exists an isomorphism in $\BEC(\rmI\CC_X)$
\begin{align}Sol_X^{\rmE}(\SM(\ast D)) \simeq 
\pi^{-1}\CC_U\otimes Sol_X^{\rmE}(\SM).
\end{align}
\item[{\rm(ii)}] Let $f\colon X\to Y$ 
be a morphism of complex manifolds.
If $\SN\in\BDChol(\SD_Y)$, then there 
exists an isomorphism in $\BEC(\rmI\CC_X)$
\begin{equation}
Sol_X^{\rmE}(\bfD f^\ast\SN)\simeq\bfE f^{-1}Sol_Y^{\rmE}(\SN).
\end{equation}
\item[{\rm(iii)}] If $f\in\SO_X(\ast D)$, 
then there exists an isomorphism in $\BEC(\rmI\CC_X)$
\begin{align}
Sol_X^{\rmE}(\SE_{U\vbar X}^f) 
\simeq \EE_{U\vbar X}^{\Re f}.
\end{align}
\item[{\rm(iv)}] Let $\SM$ be 
a regular holonomic $\SD_X$-module and 
set $L\coloneq Sol_X(\SM)$.
Then we have an isomorphism in $\BEC(\rmI\CC_X)$
\begin{align}
Sol_X^{\rmE}(\SM)\simeq\CC_X^{\rmE}\Potimes\epsilon(L).
\end{align}
\end{enumerate}
\end{proposition}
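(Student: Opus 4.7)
The four assertions are compatibility properties of the enhanced solution functor established in D'Agnolo-Kashiwara \cite{DK16}, and the plan is to outline the strategy by which each one is obtained from the general theory, rather than to reprove them in detail here.

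Part (iii) serves as the basic computation: starting from the definition $\SE_{U\vbar X}^f = \SD_X e^f(\ast D)$, one observes that the classical solution sheaf on $U$ is generated by $e^{-f}$. In the enhanced setting, the growth condition $|e^{-f(x)}| \leq e^t$ on the enhancement variable $t$ translates into the inequality $t + \Re f(x) \geq 0$ defining $\EE_{U\vbar X}^{\Re f}$. One verifies the isomorphism in a local model (say, $X$ a disk with $D = \{0\}$ and $f$ a meromorphic germ) and globalizes by functoriality. Part (i) then follows by identifying $\SM(\ast D) = \SM \Dotimes \SO_X(\ast D) = \SM \Dotimes \SE_{U\vbar X}^{0}$, applying (iii) with $f = 0$ to obtain $Sol_X^{\rmE}(\SO_X(\ast D)) \simeq \pi^{-1}\CC_U \otimes \CC_X^{\rmE}$, and invoking the compatibility of $Sol^{\rmE}$ with derived tensor products of holonomic $\SD$-modules.

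Part (ii) is the enhanced analogue of the classical inverse image compatibility for solution complexes; the proof in \cite{DK16} proceeds by factoring $f$ through its graph and separately handling a closed embedding and a smooth projection, mirroring the classical argument but with the additional care needed to control convolutions in $\BEC(\rmI\CC_X)$. Finally, (iv) reduces by dévissage to the case $\SM = \SO_X$, where both sides evaluate to $\CC_X^{\rmE}$: since both functors $Sol_X^{\rmE}(-)$ and $L \mapsto \CC_X^{\rmE} \Potimes \epsilon(L)$ are exact on the regular holonomic subcategory and commute with the standard six operations, the comparison propagates from this base case via the classical Riemann-Hilbert correspondence. The main technical obstacle throughout is the bookkeeping required by the quotient defining $\BEC(\rmI\CC_X)$ when manipulating convolutions and growth conditions in $t$, which is the principal reason we defer to \cite{DK16} for the complete verification.
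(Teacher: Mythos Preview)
The paper does not prove this proposition; it is stated without proof as a recollection of known results from \cite{DK16} (note the sentence ``Instead of giving its definition, we recall some of its properties'' immediately preceding the statement). Your outline is a reasonable sketch of how these compatibilities are established there, so there is nothing substantive to compare against and no discrepancy to report. One small caution: your derivation of (i) via ``compatibility of $Sol^{\rmE}$ with derived tensor products of holonomic $\SD$-modules'' presupposes a K\"unneth-type isomorphism $Sol_X^{\rmE}(\SM\Dotimes\SN)\simeq Sol_X^{\rmE}(\SM)\Potimes Sol_X^{\rmE}(\SN)$ that is not available in that generality in \cite{DK16}; the localization statement there is obtained more directly from the behaviour of enhanced tempered holomorphic functions along $D$. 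This does not affect the correctness of the conclusion, only the suggested route.
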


\subsection{Puiseux germs and normal forms of 
enhanced (ind-)sheaves}\label{sec:K6}

In this subsection, we recall some definitions on 
Puiseux germs and 
normal forms of enhanced (ind-)sheaves 
in \cite[Section 5]{DK18} to describe 
the Hukuhara-Levelt-Turrittin theorem 
in terms of enhanced ind-sheaves.
Let $X$ be a complex manifold of dimension one.
For $a\in X$, denote by $\varpi_a\colon \tl{X_a}\to X$ 
the real oriented blow-up of $X$ along $a$ and consider 
the commutative diagram
\begin{equation}
\vcenter{
\xymatrix@M=5pt{
S_aX \ar[r]^-{\tl{\imath}_a} & \tl{X_a} \ar[d]^-{\varpi_a}& \\
X\bs\{a\} \ar[r]^-{j_a} \ar[ur]^-{\tl{\jmath}_a} & X, 
}}\end{equation}
where we set $S_aX\coloneq\varpi_a^{-1}(a)\simeq 
S^1$ and $\tl{\imath}_a,\tl{\jmath}_a,j_a$ 
are the natural embeddings.
We denote by $\SP_{\tl{X_a}}$ the subsheaf of 
$\tl{\jmath}_{a\ast} j_a^{-1}\SO_X$ whose sections are defined by 
\begin{multline*}
\Gamma(\Omega;\SP_{\tl{X_a}}) \coloneq 
\{f\in\Gamma(\Omega;\tl{\jmath}_{a\ast} 
j_a^{-1}\SO_X)\mid \textit{For any $\theta\in\Omega\cap S_aX$,} \\ 
\textit{$f$ admits a Puiseux expansion at $\theta$.}\}
\end{multline*}
for open subsets $\Omega\subset\tl{X_a}$.
Then we define the sheaf of 
Puiseux germs $\SP_{S_aX}$ on $S_aX$ to be  
\begin{align}
\SP_{S_aX}\coloneq\tl{\imath}_a^{\,-1}\SP_{\tl{X_a}}.
\end{align}
For a rational number $\lambda\in\QQ$, 
denote by 
$\SP_{S_aX}^{\lambda},
\SP_{S_aX}^{\leq\lambda}\subset\SP_{S_aX}$ 
the subsheaf of $\SP_{S_aX}$ consisting of sections 
$f$ whose pole order $\ord_a(f)$ at $a$ is 
$\lambda$ and $\leq\lambda$, respectively. 
For an interval $I\subset\RR$, we set 
$\SP_{S_aX}^I\coloneq
\bigsqcup_{\lambda\in I\cap\QQ}\SP_{S_aX}^\lambda$.
Let $z_a$ be a local coordinate centered at $a$ and  
denote by $\SP_{S_aX}^\prime$ the subsheaf of 
$\SP_{S_aX}$ consisting of sections locally contained in
\begin{align}
\bigcup_{p\in\ZZ_{\geq1}}z_a^
{-\frac{1}{p}}\CC[z_a^{-\frac{1}{p}}]
\end{align}
for some (hence, any) branch of $z_a^{1/p}$.
Note that there exists a canonical isomorphism 
$\SP_{S_aX}^\prime\simto\SP_{S_aX}/\SP_{S_aX}^{\leq0}$.

Let $\SM$ be a holonomic $\SD_X$-module. For $a\in X$
the enhanced solution complex $Sol_X^{\rmE}(\SM)$ has 
the following decomposition by some 
exponential enhanced ind-sheaves on 
sufficiently small open sectors along $a$.

\begin{lemma}[see e.g. {\cite[Corollary 3.7]{IT20a}}]\label{lem-ITa}
Let $\SM$ be a holonomic $\SD_X$-module and let $a\in X$.
Then for any $\theta\in S_aX$, 
there exist its sectorial neighborhood 
$V_\theta \subset X \setminus \{ a \} \subset \tl{X_a}$ 
and holomorphic functions 
$f_1,\dots,f_m\in\Gamma(V_\theta;\SP_{\tl{X_a}})$ such that
\begin{align}
\pi^{-1}\CC_{V_\theta}\otimes Sol_X^{\rmE}(\SM) \simeq 
\bigoplus_{i=1}^m\EE_{V_\theta\vbar X}^{\Re f_i}.
\end{align}
\end{lemma}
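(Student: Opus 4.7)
The plan is to derive the lemma by combining the classical Hukuhara-Levelt-Turrittin theorem (including its analytic lift, the Hukuhara-Turrittin asymptotic existence theorem) with the enhanced Riemann-Hilbert correspondence of D'Agnolo-Kashiwara. By the former, for any $\theta \in S_aX$ there exists a sufficiently narrow sectorial neighborhood $V_\theta \subset X\setminus\{a\}$ of $\theta$, a common denominator $p \in \ZZ_{\geq 1}$, and Puiseux germs $f_1, \dots, f_m \in \Gamma(V_\theta; \SP_{\tl{X_a}})$ (well-defined after a choice of branch of $z_a^{1/p}$ on $V_\theta$), together with regular meromorphic connections $\R_1, \dots, \R_m$ on $V_\theta$, such that there is an analytic isomorphism of $\SD_X$-modules
\begin{align*}
\SM|_{V_\theta} \simeq \bigoplus_{i=1}^{m} \SE_{V_\theta \vbar X}^{f_i} \Dotimes \R_i.
\end{align*}
The narrowness condition on $V_\theta$ is precisely that $V_\theta$ is contained in a Stokes sector of $\SM$ at $a$, i.e.\ does not cross any Stokes line of the pairwise differences $f_i - f_j$.

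Next I would apply the enhanced solution functor $Sol_X^{\rmE}$. Shrinking $V_\theta$ if necessary so that it is simply connected, each $\R_i$ is analytically isomorphic to $\SO_{V_\theta}^{\oplus r_i}$, where $r_i$ denotes its generic rank. Since $Sol_X^{\rmE}$ is triangulated and therefore commutes with finite direct sums, and using Proposition \ref{prop-K2}(ii) for compatibility with the open restriction to $V_\theta$ together with Proposition \ref{prop-K2}(iii) which identifies $Sol_X^{\rmE}(\SE_{V_\theta \vbar X}^{f_i})$ with $\EE_{V_\theta \vbar X}^{\Re f_i}$, we obtain
\begin{align*}
\pi^{-1}\CC_{V_\theta}\otimes Sol_X^{\rmE}(\SM)
\simeq \bigoplus_{i=1}^{m} \bigl(\EE_{V_\theta \vbar X}^{\Re f_i}\bigr)^{\oplus r_i},
\end{align*}
and relisting each $f_i$ with multiplicity $r_i$ yields the statement of the lemma.

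The main obstacle is the input from the Hukuhara-Levelt-Turrittin theory, specifically the analytic lifting of the formal decomposition on a Stokes sector. This is a delicate but classical result, and one must carefully control how narrow $V_\theta$ must be in terms of the Stokes directions of the pairwise differences $f_i - f_j$. Once this decomposition is in hand, the remaining translation via the enhanced Riemann-Hilbert correspondence and Proposition \ref{prop-K2} is essentially formal.
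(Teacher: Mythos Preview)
The paper gives no proof of this lemma, citing \cite[Corollary~3.7]{IT20a} instead. Your overall plan---Hukuhara--Levelt--Turrittin plus the enhanced Riemann--Hilbert dictionary---is indeed what that reference carries out, but there is a gap in the way you have formulated the bridge between the two.

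Writing the sectorial lift as an isomorphism $\SM|_{V_\theta}\simeq\bigoplus_i\SE_{V_\theta\vbar X}^{f_i}\Dotimes\R_i$ of $\SD$-modules on the open sector $V_\theta\subset X\setminus\{a\}$ loses all the content: since $V_\theta$ misses the singular point, every term on either side is a trivial integrable connection of the same rank, and such an isomorphism exists for \emph{any} choice of holomorphic $f_i$ on $V_\theta$. Correspondingly, Proposition~\ref{prop-K2}(iii) as stated requires $f\in\SO_X(\ast D)$, a meromorphic function with an honest pole along $D$, not a Puiseux germ defined only on a sector, so you cannot invoke it as written. The actual output of the Hukuhara--Turrittin asymptotic lift is an isomorphism over a sectorial neighborhood of $\theta$ in the real blow-up $\tl{X_a}$, in the category of $\SA_{\tl{X_a}}$-modules with connection (equivalently: after a ramified cover of order $p$ making the $f_i$ genuinely meromorphic, it is an isomorphism whose formalization at $a'$ is the prescribed Levelt--Turrittin decomposition). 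The passage from this to the decomposition of $\pi^{-1}\CC_{V_\theta}\otimes Sol_X^{\rmE}(\SM)$ goes through the description of the enhanced solution functor via the real blow-up developed in \cite[\S 9]{DK16}; this is the step your sketch elides, and it is not a formal consequence of Proposition~\ref{prop-K2}(ii)--(iii) applied on $V_\theta$.
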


In \cite{DK18}, D'Agnolo and Kashiwara introduced some 
notions on normal forms of enhanced sheaves to refine the above decomposition.
We recall their definitions in a slightly modified form. 
In particular, as we see in Definition \ref{def-mul} below, 
our multiplicities are defined on $\SP_{S_aX}^\prime$ 
and not on $\SP_{S_aX}$ as in \cite[Section 5.3 and 5.4]{DK18}. 
In this way, we can eliminate the ambiguities of 
exponential factors. 

\begin{definition}\label{def-mul} 
Let $a\in X$ and let 
$N\colon\SP_{S_aX}^\prime\to(\ZZ_{\geq0})_{S_aX}$ be 
a morphism of sheaves of sets on $S_aX \simeq S^1$.
Then the morphism $N$ is said to be a multiplicity at $a$ if 
$N_\theta^{>0}\coloneq N_\theta^{-1}(\ZZ_{>0})
\subset\SP_{S_aX,\theta}^\prime$ is a finite set  
for any $\theta\in S_aX$.
\end{definition}

\begin{definition}[{\cite[Definition 5.3.1]{DK18}}]\label{nf-enhsh} 
Let $F\in\rcEC^{\rmb}(\CC_X)$ be 
an $\RR$-constructible enhanced sheaf. 
Then we say that $F$ has a normal form at $a\in X$ 
if there exists a multiplicity 
$N\colon\SP_{S_aX}^\prime\to(\ZZ_{\geq0})_{S_aX}$ at it 
and any $\theta\in S_aX$ has 
a sectorial open neighborhood $V_\theta \subset X \setminus \{ a \} \subset \tl{X_a}$ 
for which we have an isomorphism
\begin{align}
\pi^{-1}\CC_{V_\theta}\otimes F\simeq 
\bigoplus_{f\in N_\theta^{>0}}
(\sfE_{V_\theta\vbar X}^{\Re f})^{N_\theta(f)}.
\end{align}
\end{definition}

\begin{definition}[{\cite[Definition 5.4.2]{DK18}}]\label{nf-ind} 
Let $\SF\in\rcEC^{\rmb}(\rmI\CC_X)$ be an $\RR$-constructible 
enhanced ind-sheaf. Then we say that $\SF$ has 
a normal form at $a\in X$ if there exists a multiplicity 
$N\colon\SP_{S_aX}^\prime\to(\ZZ_{\geq0})_{S_aX}$ at it 
and any $\theta\in S_aX$ has 
a sectorial open neighborhood $V_\theta \subset 
X \setminus \{ a \} \subset \tl{X_a}$ for which we have 
an isomorphism
\begin{align}
\pi^{-1}\CC_{V_\theta}\otimes \SF\simeq 
\bigoplus_{f\in N_\theta^{>0}}
(\EE_{V_\theta\vbar X}^{\Re f})^{N_\theta(f)}.
\end{align}
\end{definition}

Note that if $F\in\rcEC^{\rmb}(\CC_X)$ 
(resp. $\SF\in\rcEC^{\rmb}(\rmI\CC_X)$) has 
a normal form at $a\in X$, then the multiplicity $N$
for which $F$ (resp. $\SF$) has a normal form
at $a$ is uniquely determined by the following lemma.

\begin{lemma}[{\cite[Lemma 7.15]{Mochi22}}, 
{\cite[Corollary 5.2.3]{DK18}} and {\cite[Proposition 3.16]{IT20a}}]\label{lem-K3}
Let $a\in X$ and 
let $I\subset S_aX$ be an open subset of $S_aX$, 
$f_1,\dots,f_n,g_1,\dots,g_m\in\SP_{S_aX}^\prime(I)$ 
Puiseux germs on $I$ and 
$V \subset X \setminus \{ a \} \subset \tl{X_a}$ 
a sectorial open neighborhood of $I$ 
on which the holomorphic functions 
$f_1,\dots,f_n,g_1,\dots,g_m$ are defined.
Assume that there exists an isomorphism
\begin{align}
\bigoplus_{i=1}^n\EE_{V\vbar X}^{\Re f_i}\simeq
\bigoplus_{i=1}^m\EE_{V\vbar X}^{\Re g_i}.
\end{align}
Then $n=m$ and there exists a bijection
$\sigma\colon\{1,\dots,n\}\simto\{1,\dots,n\}$
such that 
\begin{align}
f_i(z) = g_{\sigma(i)}(z) \quad (z\in V)
\end{align}
for any $1\leq i\leq n$.
\end{lemma}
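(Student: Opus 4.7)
The plan is to prove the uniqueness of the direct sum decomposition by extracting each Puiseux factor intrinsically via Hom computations on small subsectors, in the spirit of a Krull--Schmidt argument adapted to enhanced ind-sheaves.

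First, I would pick an arbitrary point $\theta\in I$ and shrink $V$ to a small sectorial subneighborhood $V'\subset V$ of $\theta$. Since any nonzero germ in $\SP_{S_aX}^\prime$ has a leading term of the form $c\, z_a^{-\lambda}$ with $\lambda>0$, after further shrinking $V'$ one can arrange that for every $h$ in the finite collection of differences $\{f_i-f_{i'},\, g_j-g_{j'},\, f_i-g_j\}$ (all of which again lie in $\SP_{S_aX}^\prime$), the real part $\Re h$ tends uniformly to $+\infty$ or to $-\infty$ on $V'$ as $z\to a$. In particular, $\Re(h_2-h_1)$ is bounded on $V'$ only when $h_1=h_2$.

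Second, I would use this to compute Hom groups between exponential enhanced ind-sheaves on $V'$. Applying Lemma \ref{lem-K1} on the vanishing of $\EE^{\phi^+\vartriangleright\phi^-}$ when $\phi^+-\phi^-$ is bounded, together with the standard convolution adjunctions, one establishes the intrinsic formula
\[
\Hom_{\BEC(\rmI\CC_X)}\!\left(\EE_{V'\vbar X}^{\Re h_1},\, \EE_{V'\vbar X}^{\Re h_2}\right)
\simeq
\begin{cases}\CC & \text{if } h_1=h_2,\\ 0 & \text{if } h_1\neq h_2,\end{cases}
\]
for any two Puiseux germs $h_1,h_2\in\SP_{S_aX}^\prime(V')$. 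In particular, the objects $\EE_{V'\vbar X}^{\Re f}$ for distinct Puiseux germs $f$ are pairwise non-isomorphic and indecomposable with endomorphism ring $\CC$.

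Finally, I would apply this intrinsic formula to both sides of the given isomorphism restricted to $V'$: for any Puiseux germ $h$ defined on $V'$, the dimension of $\Hom(\EE_{V'\vbar X}^{\Re h},\cdot)$ applied to the direct sum reads off the multiplicity of $h$ among $\{f_i\}$ on one side and among $\{g_j\}$ on the other. Equating these multiplicities yields $n=m$ and the desired bijection $\sigma$ on $V'$. Since this multiplicity is an intrinsic invariant of the enhanced ind-sheaf, it is independent of the choice of $\theta\in I$, and the bijection extends uniquely to the whole sector $V$ by uniqueness of Puiseux expansions on overlaps. The main obstacle is the Hom computation of step two: although Lemma \ref{lem-K1} directly handles the vanishing direction when $h_1\neq h_2$, the one-dimensionality when $h_1=h_2$ requires a delicate analysis of $\operatorname{End}_{\BEC(\rmI\CC_X)}(\EE_{V'\vbar X}^{\Re f})$ via the inductive limit definition of $\CC_X^\rmE$, and care is needed to avoid spurious morphisms coming from the asymptotic behavior at the boundary point $a$.
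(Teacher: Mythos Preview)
The paper does not give its own proof of this lemma; it is cited from \cite{Mochi22}, \cite{DK18} and \cite{IT20a}. That said, your argument has a genuine error in step two: the asserted Hom formula is false. On a small sector $V'$ with $h_1\neq h_2$ and, say, $\Re h_1-\Re h_2\to+\infty$ as $z\to a$, one has $\{t+\Re h_1\ge 0\}\supset\{t+\Re h_2\ge 0\}$, and restriction gives a \emph{nonzero} surjection $\EE_{V'\vbar X}^{\Re h_1}\to\EE_{V'\vbar X}^{\Re h_2}$ with kernel $\EE_{V'\vbar X}^{\Re h_1\vartriangleright\Re h_2}$. Lemma~\ref{lem-K1} does \emph{not} kill this kernel: that lemma applies when $\Re h_1-\Re h_2$ is bounded, whereas here it is unbounded by your own first step. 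The correct computation is one-sided, namely $\Hom\bigl(\EE_{V'\vbar X}^{\Re h_1},\EE_{V'\vbar X}^{\Re h_2}\bigr)\simeq\CC$ when $\Re h_1\ge\Re h_2$ on $V'$ and $\simeq 0$ when $\Re h_1<\Re h_2$ on $V'$. This triangularity (not orthogonality) is exactly what the paper uses in Remark~\ref{rem-K5-2} to obtain block upper-triangular transition matrices, so your multiplicity count via $\dim\Hom(\EE^{\Re h},\,\cdot\,)$ over-counts and the Krull--Schmidt step fails as written.

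Your strategy can be repaired using the one-sidedness. Order the distinct germs appearing among the $f_i,g_j$ as $h_1,\dots,h_k$ with $\Re h_1<\cdots<\Re h_k$ on $V'$; then $\dim\Hom\bigl(\EE_{V'\vbar X}^{\Re h_p},\,\cdot\,\bigr)$ applied to either side of the given isomorphism returns $\#\{i:f_i\in\{h_1,\dots,h_p\}\}$ on the left and $\#\{j:g_j\in\{h_1,\dots,h_p\}\}$ on the right, and taking successive differences in $p$ recovers the individual multiplicities. This is essentially the mechanism behind \cite[Corollary~5.2.3]{DK18}.
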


By Lemma \ref{lem-ITa} and  Lemma \ref{lem-K3}, 
we obtain the following proposition.

\begin{proposition}[{\cite[Lemma 5.4.4]{DK18}}]\label{prop-K4}
Let $\SM$ be a holonomic $\SD_X$-module.
Then for any point $a\in X$ the enhanced solution complex 
$Sol_X^{\rmE}(\SM)\in\rcEC^{\rmb}(\rmI\CC_X)$ of $\SM$ 
has a normal form at $a$.
\end{proposition}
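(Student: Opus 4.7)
The plan is to combine the local decomposition provided by Lemma \ref{lem-ITa} with the uniqueness statement of Lemma \ref{lem-K3}: Lemma \ref{lem-ITa} supplies the required decomposition at every $\theta \in S_aX$ on some sectorial open neighborhood $V_\theta$, and Lemma \ref{lem-K3} ensures that these local data fit together into a well-defined multiplicity on the whole circle $S_aX$.

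Fix $a\in X$, and for each $\theta\in S_aX$ apply Lemma \ref{lem-ITa} to obtain a sectorial open neighborhood $V_\theta \subset X \setminus \{a\} \subset \tl{X_a}$ of $\theta$ together with holomorphic functions $f_1^\theta,\dots,f_{m_\theta}^\theta \in \Gamma(V_\theta;\SP_{\tl{X_a}})$ and an isomorphism
$$
\pi^{-1}\CC_{V_\theta}\otimes Sol_X^\rmE(\SM) \ \simeq\ \bigoplus_{i=1}^{m_\theta}\EE_{V_\theta\vbar X}^{\Re f_i^\theta}.
$$
Via the canonical isomorphism $\SP_{S_aX}^\prime \simeq \SP_{S_aX}/\SP_{S_aX}^{\leq 0}$, each $f_i^\theta$ determines a class $[f_i^\theta]\in\SP_{S_aX,\theta}^\prime$, and by Lemma \ref{lem-K1} the exponential ind-sheaf $\EE_{V_\theta\vbar X}^{\Re f}$ depends on $f$ only through this class (after shrinking $V_\theta$ if necessary). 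I would then set
$$
N_\theta(f) \ := \ \#\bigl\{\, i\in\{1,\dots,m_\theta\} \mid [f_i^\theta]=f\,\bigr\}, \qquad f\in\SP_{S_aX,\theta}^\prime,
$$
so that $N_\theta^{>0}$ is automatically a finite subset of $\SP_{S_aX,\theta}^\prime$.

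The remaining task is to verify that $N_\theta$ is independent of the chosen local decomposition and that the stalks $\{N_\theta\}_{\theta\in S_aX}$ glue to a morphism of sheaves $N\colon\SP_{S_aX}^\prime\to(\ZZ_{\geq0})_{S_aX}$. Both follow from Lemma \ref{lem-K3}: applied on an overlap $V_\theta\cap V_{\theta'}$, which is a sectorial neighborhood of an interval in $S_aX$, it identifies the two multisets $\{[f_i^\theta]\}_i$ and $\{[f_j^{\theta'}]\}_j$ after restriction, yielding at once the well-definedness of each $N_\theta$ and the compatibility along $S_aX$. Once $N$ is constructed, the normal form decomposition of Definition \ref{nf-ind} on $V_\theta$ is obtained by re-grouping the original decomposition of Lemma \ref{lem-ITa} according to equal Puiseux classes.

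The chief difficulty is organizational rather than analytic: the $f_i^\theta$ supplied by Lemma \ref{lem-ITa} are honest holomorphic functions on $V_\theta$, so two locally matching decompositions may have representatives differing by elements of $\SP_{S_aX}^{\leq 0}$ on overlaps, and Lemma \ref{lem-K1} is precisely what absorbs this ambiguity into isomorphisms of exponential ind-sheaves, enabling the rigidity statement of Lemma \ref{lem-K3} to be invoked. Since the genuine Hukuhara-Levelt-Turrittin content is already encoded in Lemma \ref{lem-ITa}, the proof of the proposition reduces to this sheaf-theoretic formalization.
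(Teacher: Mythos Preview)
Your proposal is correct and follows exactly the approach the paper indicates: the paper's own ``proof'' is simply the sentence ``By Lemma \ref{lem-ITa} and Lemma \ref{lem-K3}, we obtain the following proposition,'' and you have faithfully unpacked that sentence, including the use of Lemma \ref{lem-K1} to pass from the holomorphic representatives supplied by Lemma \ref{lem-ITa} to their classes in $\SP_{S_aX}^\prime$ so that Lemma \ref{lem-K3} applies.
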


We shall explain some classical notions on meromorphic 
connections in terms of enhanced solution complexes.
Let $\SM$ be a holonomic $\SD_X$-module and $N$ the 
multiplicity for which $Sol_X^{\rmE}(\SM)$ has a normal form at $a\in X$.
We call the sections of 
$N^{>0}:=N^{-1}( \ZZ_{>0} )_{S_aX} \subset\SP_{S_aX}^\prime$ 
the exponential factors of $\SM$ at $a$. 
Then obviously the 
analytic continuation of an exponential factor  
of $\SM$ along the loop $S_aX \simeq S^1$ is 
again an exponential factor. 
The regular rank $r_a\in\ZZ_{\geq0}$ of 
$\SM$ at $a\in X$ is defined by
\begin{equation}
r_a\coloneq N(0) \ \in\ZZ_{\geq0}.
\end{equation}
For two exponential factors 
$f_1,f_2\in N^{>0}$ such that $f_1 \not= f_2$ 
which are holomorphic on 
a sector $S$ along the point $a \in X$, the Stokes curves of 
the pair $(f_1,f_2)$ (over $S$) 
are the irreducible components of 
the real analytic set 
\begin{equation}
\Set*{z\in S }{\Re f_1(z)-\Re f_2(z)=0} \quad \subset S
\end{equation}
and a ray tangent to a Stokes curve at $a$ is 
called a Stokes line of $(f_1,f_2)$.
Denote by $\mathrm{St}_a(f_1,f_2)$ the set of 
the Stokes lines of the pair $(f_1,f_2)$ (over $S$).
Then by taking a union over all the sectors $S$ along 
$a \in X$ and the exponential factors $f_1 \not= f_2$ 
on them, we set
\begin{equation}
\mathrm{St}_a(\SM)\coloneq\bigcup_{f_1,f_2\in N^{>0}, \ f_1 \not= f_2} 
\mathrm{St}_a(f_1,f_2).
\end{equation}
We call the elements of $\mathrm{St}_a(\SM)$ the Stokes lines of $\SM$ at $a\in X$.

\begin{proposition}[{\cite[Proposition 5.4.5]{DK18}}]\label{prop:hariaw}
Let $\SM\in\Modhol(\SD_X)$ and $a\in X$.
If $Sol_X^{\rmE}(\SM)\in\rcEC^{\rmb}(\rmI\CC_X)$ has a normal 
form at $a\in X$ for a multiplicity $N\colon
\SP_{S_aX}^\prime\to(\ZZ_{\geq0})_{S_aX}$, then there exist an open neighborhood 
$\Omega$ of $a$ in $X$ and
an enhanced sheaf $F(a)\in\rcEC^0(\CC_X)$ such that 
$F(a)$ has a normal form at $a$ for the multiplicity $N$ and there exists an isomorphism
\begin{align}
\pi^{-1}\CC_{\Omega\bs\{a\}}\otimes Sol_X^{\rmE}(\SM) 
\simeq \CC_X^{\rmE}\Potimes F(a).
\end{align}
\end{proposition}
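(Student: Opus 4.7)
The plan is to assemble $F(a)$ by gluing the local exponential enhanced sheaves dictated by the normal form of $Sol_X^{\rmE}(\SM)$ along a sectorial cover of a punctured neighborhood of $a$.

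First, by the hypothesis and the compactness of $S_aX\simeq S^1$, I extract a finite cover: choose sectorial open subsets $V_1,\ldots,V_n\subset X\bs\{a\}$ whose traces on $S_aX$ cover it, together with isomorphisms
\[
\varphi_j:\pi^{-1}\CC_{V_j}\otimes Sol_X^{\rmE}(\SM)\simto
\bigoplus_{f\in N^{>0}(V_j)}(\EE_{V_j\vbar X}^{\Re f})^{N(f)}.
\]
Set $\Omega:=\{a\}\cup\bigcup_j V_j$. On each $V_j$ define the candidate enhanced sheaf
\[
F_j:=\bigoplus_{f\in N^{>0}(V_j)}(\sfE_{V_j\vbar X}^{\Re f})^{N(f)}\in\rcEC^0(\CC_X),
\]
which by the very definition $\EE_{V_j\vbar X}^{\Re f}=\CC_X^{\rmE}\Potimes\sfE_{V_j\vbar X}^{\Re f}$ satisfies $\CC_X^{\rmE}\Potimes F_j\simeq\bigoplus_f(\EE_{V_j\vbar X}^{\Re f})^{N(f)}$.

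Second, I construct gluing data on overlaps $V_{jk}:=V_j\cap V_k$. On $V_{jk}$ the family of exponential factors is determined by the sheaf $N$, so the restrictions $F_j|_{V_{jk}}$ and $F_k|_{V_{jk}}$ share the same direct summands indexed by $f\in N^{>0}(V_{jk})$. The composite $\alpha_{jk}:=\varphi_k\circ\varphi_j^{-1}$ is an automorphism of $\bigoplus_f(\EE_{V_{jk}\vbar X}^{\Re f})^{N(f)}$ in $\BEC(\rmI\CC_X)$, and by Lemma~\ref{lem-K3} (which forbids non-zero cross-morphisms between $\EE^{\Re f}$ and $\EE^{\Re g}$ on a connected sector when $f\neq g$) it is block-diagonal, given by an element of $\prod_{f\in N^{>0}(V_{jk})}GL_{N(f)}(\CC)$. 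The same matrix data defines an automorphism $\psi_{jk}$ of $F_j|_{V_{jk}}=F_k|_{V_{jk}}$, and the cocycle identity $\psi_{kl}\circ\psi_{jk}=\psi_{jl}$ is inherited from the tautological identity $\alpha_{kl}\circ\alpha_{jk}=\alpha_{jl}$.

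Third, by standard descent for $\RR$-constructible enhanced sheaves, the data $(F_j,\psi_{jk})$ glue to a single enhanced sheaf on $\Omega\bs\{a\}$, and extending by zero across $\{a\}$ produces $F(a)\in\rcEC^0(\CC_X)$ on $\Omega$. By construction $F(a)$ has a normal form at $a$ for the multiplicity $N$, and since $\CC_X^{\rmE}\Potimes\psi_{jk}=\alpha_{jk}$, the local isomorphisms $\varphi_j$ patch to the desired global isomorphism $\pi^{-1}\CC_{\Omega\bs\{a\}}\otimes Sol_X^{\rmE}(\SM)\simeq\CC_X^{\rmE}\Potimes F(a)$. The main obstacle is the lifting step in the second paragraph: verifying that an automorphism at the $\EE$-level is realized by one at the $\sfE$-level with the same matrix coefficients. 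This reduces to the identification $\End(\EE_{V\vbar X}^{\Re f})=\End(\sfE_{V\vbar X}^{\Re f})=\CC$ on each connected sector $V$, which can be extracted from the explicit model $\sfE^{\Re f}=\bfQ(\CC_{\{t+\Re f\geq0\}})$, the definition of $\EE^{\Re f}$ as its convolution with $\CC_X^{\rmE}$, and Lemma~\ref{lem-K3} applied to cross-terms for distinct exponential factors.
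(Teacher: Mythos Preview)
Your overall architecture (sectorial cover, local models $F_j$, glue, extend by zero) matches the paper's, but there is a genuine gap in the gluing step.

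You claim that Lemma~\ref{lem-K3} ``forbids non-zero cross-morphisms between $\EE^{\Re f}$ and $\EE^{\Re g}$ on a connected sector when $f\neq g$'', and deduce that each $\alpha_{jk}$ is block-diagonal. This is false. Lemma~\ref{lem-K3} only says that an \emph{isomorphism} of direct sums forces the multisets of exponents to agree; it says nothing about individual Hom-spaces. In fact, whenever $\Re f\geq\Re g$ on $V$ there is a nonzero morphism $\sfE_{V\vbar X}^{\Re f}\to\sfE_{V\vbar X}^{\Re g}$ (the restriction map coming from $\{t+\Re g\geq0\}\subset\{t+\Re f\geq0\}$; see the exact sequence $0\to\sfE^{\phi^+\vartriangleright\phi^-}\to\sfE^{\phi^+}\to\sfE^{\phi^-}\to0$), and the same holds at the $\EE$-level. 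These off-diagonal morphisms are precisely the Stokes phenomenon, and they do occur in the transition automorphisms $\alpha_{jk}$.

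The paper fixes this by refining the cover so that on each overlap $V_j\cap V_{j'}$ the real parts of the exponential factors are \emph{totally ordered} (condition (iv) in Remark~\ref{rem-K5-2}; this holds once each overlap contains at most one Stokes line). Then $\alpha_{jj'}$ is block \emph{upper triangular} rather than block diagonal. For the lifting from the $\EE$-level to the $\sfE$-level the paper invokes \cite[Lemma~5.2.1~(ii)]{DK18}, which gives full faithfulness of $\CC_X^{\rmE}\Potimes(-)$ on the relevant class of enhanced sheaves; your endomorphism computation $\End(\EE^{\Re f})=\End(\sfE^{\Re f})=\CC$ handles only the diagonal blocks and does not by itself lift the off-diagonal entries. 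Once you add the ordering condition on overlaps and replace the block-diagonal claim by block upper triangular together with the cited full-faithfulness lemma, your argument becomes the paper's.
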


\begin{remark}\label{rem-K5-2}
For our use in the next section, we briefly recall the proof 
of Proposition \ref{prop:hariaw}. In fact, here we 
slightly modify the construction of the
$\RR$-constructible enhanced sheaf $F(a)$ in 
the proof of \cite[Proposition 5.4.5]{DK18} as follows.
First, we take sufficiently small open sectors 
$V_1,\dots,V_d$ along $a\in X$ placed in 
the counter-clockwise direction and 
satisfying the following conditions.
\begin{enumerate}
\item[{\rm(i)}] $\Omega\coloneq \{a\}\cup\bigcup_{j=1}^d V_j$ 
is an open neighborhood of $a$.
\item[{\rm(ii)}] For $1\leq j<j^\prime\leq d$, 
$V_j\cap V_{j^\prime}\neq\emptyset$ if and only if 
$j^\prime=j+1$ or $(j,j^\prime)=(1,d)$.
\item[{\rm(iii)}] For $1\leq j\leq d$, 
there exists an isomorphism
\begin{equation}
\pi^{-1}\CC_{V_j}\otimes Sol_X^{\rmE}(\SM)\simeq
\bigoplus_{f\in N^{>0}(V_j)}
\bigl(\EE_{V_j\vbar X}^{\Re f}\bigr)^{N(f)}.
\end{equation}  
\item[{\rm(iv)}] If $V_j\cap V_{j^\prime}\neq\emptyset$, 
then after renumbering the exponential factors  
$f_1,\dots,f_m\in N^{>0}(V_j\cap V_{j^\prime})$ on 
$V_j\cap V_{j^\prime}$ we have 
\begin{equation}
\Re f_1(z)<\Re f_2(z)<\dots<\Re f_m(z)
\quad \(z\in V_j\cap V_{j^\prime}\).  
\end{equation}
\end{enumerate}
Indeed, if each sector $V_j$ contains 
at most one Stokes line of $\SM$ 
and no two different sectors contain the same Stokes line, 
then the conditions (i) and (ii) imply the one (iv). 
Now we set 
\begin{equation}
F_j\coloneq\bigoplus_{f\in N^{>0}(V_j)}
\bigl(\sfE_{V_j\vbar V_j}^{\Re f}\bigr)^{N(f)} 
\quad \in\rcEC^0(\CC_{V_j})
\end{equation}
for $1\leq j\leq d$.
Then there exists an isomorphism in $\BEC(\rmI\CC_X)$
\begin{equation}
\Phi_j^{\rmE}\colon
\pi^{-1}\CC_{V_j}\otimes Sol_X^{\rmE}(\SM)\simto 
\CC_X^{\rmE}\Potimes\bfE\iota_{j!}F_j,
\end{equation}
where $\iota_j\colon V_j\hookrightarrow X$ 
is the natural embedding.
Thus we obtain an isomorphism in $\BEC(\rmI\CC_X)$
\begin{equation}\label{eq-K6}
\begin{split}
\(\pi^{-1}\CC_{V_j\cap V_{j^\prime}}\otimes
\Phi_{j^\prime}^{\rmE}\)&\circ\(\pi^{-1}
\CC_{V_j\cap V_{j^\prime}}\otimes\Phi_j^{\rmE}\)^{-1} \colon \\
&\ \CC_X^{\rmE}\Potimes
\(\pi^{-1}\CC_{V_j\cap V_{j^\prime}}
\otimes\bfE\iota_{j!}F_j\)\simto
\CC_X^{\rmE}\Potimes\(\pi^{-1}\CC_{V_j\cap V_{j^\prime}}
\otimes\bfE\iota_{j^\prime!}F_{j^\prime}\)
\end{split}
\end{equation}
for $1\leq j<j^\prime\leq d$ such that 
$V_j\cap V_{j^\prime}\neq\emptyset$.
Moreover by \cite[Lemma 5.2.1 (ii)]{DK18}, an isomorphism in $\BEC(\CC_X)$
\begin{equation}
\Phi_{jj^\prime}\colon\pi^{-1}\CC_{V_j\cap V_{j^\prime}} 
\otimes\bfE\iota_{j!}F_j\simto
\pi^{-1}\CC_{V_j\cap V_{j^\prime}}
\otimes\bfE\iota_{j^\prime!}F_{j^\prime}
\end{equation}
is induced by \eqref{eq-K6}. 
We set $R=\sum_{f\in N^{>0}(V_j\cap V_{j^\prime})} N(f)$.
Then by the condition (iv), 
the isomorphism $\Phi_{jj^\prime}$ 
is induced by a block upper triangular matrix 
$A_{jj^\prime}\in\mathrm{GL}_R(\CC)$ with respect to 
the decomposition $R=\sum_{f\in N^{>0}(V_j\cap V_{j^\prime})} N(f)$ of $R$. 
Gluing $F_j\vbar_{V_j\cap V_{j^\prime}}$'s by 
$\Phi_{jj^\prime}\vbar_{V_j\cap V_{j^\prime}}$'s, 
we obtain $F^\prime\in\rcEC^0(\CC_{\Omega\bs\{a\}})$.
Let $\iota\colon\Omega\bs\{a\}\hookrightarrow X$ 
be the natural embedding and set 
$F(a)\coloneq\bfE\iota_!F^\prime\in\rcEC^0(\CC_X)$.
Then one verifies that the $\RR$-constructible 
enhanced sheaf $F(a)$ satisfies the conditions in Proposition \ref{prop:hariaw}. 
\end{remark}

\begin{remark}\label{rem-K616}
By the proof of Proposition \ref{prop:hariaw}, we can 
also show that the 
enhanced sheaf $F(a)$ in it is unique up to isomorphisms. 
Indeed, this follows also from \cite[Lemma 5.2.1 (ii)]{DK18}. 
In particular, 
the isomorphism class of $F(a)$ does not depend 
on the choice of the open sectors $V_j$'s. 
\end{remark}

For $\SM\in\Modhol(\SD_X)$ and $a\in X$ such that 
$\SM(\ast a)\simeq\SM$ we say that $\SM$ is 
a meromorphic connection along $a\in X$. 
In this case, if $Sol_X^\rmE(\SM)$ has a normal form at 
$a\in X$ for a multiplicity 
$N\colon\SP_{S_aX}^\prime\to (\ZZ_{\geq0})_{S_aX}$, 
then for any $\theta\in S_aX$ the irregularity 
$\irr_a(\SM) \in \ZZ_{\geq 0}$ of the meromorphic 
connection $\SM$ is equal to 
\begin{equation}
    \sum_{f\in N^{>0}_{\theta}} N_{\theta}(f) \cdot \ord_a(f)
\end{equation}
(see \cite{Sab93} for an excellent review on this notion).
By the proofs of Proposition \ref{prop:hariaw} and 
\cite[Proposition 3.14]{IT20a} we obtain a purely topological proof of the 
following classical result (see \cite[Proposition 3.15]{IT20a}). 
For the meromorphic connection $\SM$ along $a\in X$ we set 
\begin{equation}
    \chi_a(Sol_X(\SM)) \coloneq 
    \sum_{j\in\ZZ}(-1)^j \dim_\CC H^j Sol_X(\SM)_a
\end{equation}
and call it the local Euler-Poincar\'{e} index of 
$Sol_X(\SM)$ at the point $a\in X$.

\begin{proposition}\label{prop:IT20a-3.14}
    In the situation as above, we have 
    \begin{equation}
        \chi_a(Sol_X(\SM)) = -\irr_a(\SM).
    \end{equation}
\end{proposition}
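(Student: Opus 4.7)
The plan is to compute $\chi_a(Sol_X(\SM))$ topologically by extracting the classical solution sheaf from the normal form decomposition of $Sol_X^{\rmE}(\SM)$ supplied by Proposition \ref{prop:hariaw}, and then performing a Mayer--Vietoris calculation over a sectorial cover around $a$.

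First, I would invoke Proposition \ref{prop:hariaw} and Remark \ref{rem-K5-2} to fix a small open neighborhood $\Omega$ of $a$, a sectorial cover $V_1,\dots,V_d$ of $\Omega\setminus\{a\}$ in counter-clockwise order, and an $\RR$-constructible enhanced sheaf $F(a)\in\rcEC^0(\CC_X)$ with
\begin{equation*}
\pi^{-1}\CC_{\Omega\setminus\{a\}}\otimes Sol_X^{\rmE}(\SM)\simeq \CC_X^{\rmE}\Potimes F(a),
\end{equation*}
and $\pi^{-1}\CC_{V_j}\otimes F(a)\simeq\bigoplus_{f\in N^{>0}(V_j)}(\sfE_{V_j|X}^{\Re f})^{N(f)}$, the Stokes gluings on $V_j\cap V_{j+1}$ being block upper triangular in the ordering $\Re f_1<\dots<\Re f_m$.

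Next, I would recover the classical solution complex $Sol_X(\SM)$ on $\Omega\setminus\{a\}$ from the enhanced one, following essentially the argument of \cite[Proposition 3.14]{IT20a}. The key observation is that on each sector $V_j$ the Hukuhara--Levelt--Turrittin decomposition of $\SM|_{V_j}$ into $\bigoplus_{f\in N^{>0}(V_j)} \SE_{V_j|X}^{f}\otimes R_f$ identifies $Sol_X(\SM)|_{V_j}$ with a trivial local system of rank $\rk\SM=\sum_{f\in N^{>0}_\theta}N_\theta(f)$, while the block upper triangular Stokes gluings of Remark \ref{rem-K5-2} provide the patching data and the resulting Stokes filtration of $Sol_X(\SM)|_{\Omega\setminus\{a\}}$.

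Then I would compute $\chi_a(Sol_X(\SM))$ via the \v{C}ech / Mayer--Vietoris spectral sequence for the cover $\{V_j\}$. Since any local system on the punctured disk has vanishing Euler characteristic, all contributions come from the interplay between the Stokes gluings and the arrangement of the real hypersurfaces $\{\Re f=0\}$ on the circle $S_aX\simeq S^1$. For each exponential factor $f$ of pole order $k=\ord_a(f)$, this arrangement produces exactly $2k$ Stokes lines on $S^1$, and a direct sign-count shows that each copy of $f$ contributes precisely $-k$ to the local Euler--Poincar\'e index, while the regular part (multiplicity $r_a=N(0)$) contributes $0$. Summing with multiplicities yields
\begin{equation*}
\chi_a(Sol_X(\SM))=-\sum_{f\in N^{>0}_\theta}N_\theta(f)\cdot\ord_a(f)=-\irr_a(\SM).
\end{equation*}

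The main obstacle lies in the second step: correctly identifying the classical solution sheaf as a subquotient structure on the normal form data, and verifying that the upper triangular Stokes gluings produce exactly the Stokes filtration expected from Hukuhara--Levelt--Turrittin. Once this sheaf-level correspondence is in place, the Euler characteristic computation reduces to the purely combinatorial fact that each exponential factor of pole order $k$ contributes $-k$ via its $2k$ Stokes lines on $S^1$.
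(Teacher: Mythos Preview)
Your proposal follows essentially the same route as the paper: the sectorial cover of Remark~\ref{rem-K5-2}, the input from \cite[Proposition~3.14]{IT20a}, and Mayer--Vietoris on $\Omega\setminus\{a\}=\bigcup_j V_j$.

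One point in step~3 needs correction, however. You say the contributions to $\chi_a$ come from the ``interplay between the Stokes gluings and the arrangement of $\{\Re f=0\}$'', but the Stokes gluings contribute nothing: the role of Mayer--Vietoris here, as the paper puts it, is precisely to \emph{kill the monodromies} of $F(a)$ (block upper-triangular transitions being invisible for Euler characteristics), reducing to a direct sum in which each enhanced factor $\sfE^{\Re f}$ contributes $-\ord_a(f)$ separately. More seriously, the cover $\{V_j\}$ misses $a$, so a \v{C}ech computation with the classical local system on $\Omega\setminus\{a\}$ cannot produce the stalk $Sol_X(\SM)_a$; your remark that this local system has vanishing Euler characteristic is correct but beside the point. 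The computation must stay with the enhanced sheaf $F(a)$ and recover $Sol_X(\SM)$ on all of $\Omega$ via $Sol_X(\SM)\simeq\alpha_X i_0^!\bfR^\rmE(Sol_X^\rmE(\SM))$ (cf.\ the proof of Lemma~\ref{lem-T1}); passing to the classical sheaf on the punctured disk in step~2 discards exactly the information at $a$ that you need. The real obstacle is thus in step~3, not step~2.
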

\begin{proof}
    In view of Remark \ref{rem-K5-2}, 
    the proof is very similar to 
    that of \cite[Proposition 3.14]{IT20a}. 
    To kill the monodromies of the enhanced sheaf $F(a)$ 
    in Remark \ref{rem-K5-2}, 
    it suffices to use Mayer-Vietoris exact sequences 
    associated to the covering 
    $\Omega\setminus\{a\}=\bigcup_{j=1}^d V_j$.
\end{proof}

\subsection{Legendre transform for Puiseux germs}
\label{legendre}
Let $X=\CC_z$ be the complex affine line and $Y=\CC_w$ its dual. 
We denote by $\var{X}\simeq\PP^1$ (resp. $\var{Y}\simeq\PP^1$) 
the projective compactification of $X$ (resp. $Y$) and $X^\an$ (resp. 
$Y^\an$, $\var{X}^\an$ and $\var{Y}^\an$) the underlying complex 
manifold of $X$ (resp. $Y$, $\var{X}$ and $\var{Y}$).
We define the symplectic transform $\chi\colon T^\ast X^\an
\simto T^\ast Y^\an$ by
\begin{equation}
T^\ast X^\an= X^\an\times Y^\an\ni(z,w)\longmapsto 
(w,-z)\in Y^\an\times X^\an=T^\ast Y^\an.
\end{equation}
In this subsection, we recall an important correspondence between 
the Puiseux germs on $X^\an$ and $Y^\an$. 
Let $a\in X^\an$ be a point in $X^\an$, $W\subset 
X^\an\setminus\{a\}$ an (open) sector 
along $a$ and $f\colon W\to\CC$ a holomorphic function on $W$. 
Then we define a Lagrangian submanifold $\Lambda_a^f$ of $T^\ast X^\an$ by 
\begin{equation}
\Lambda_a^f\coloneq\{(z,f^\prime(z))\mid z\in W\} \quad \subset T^\ast X^\an.
\end{equation}
Let $g\colon V\rightarrow\CC$ be a holomorphic function 
on an (open) sector $V\subset Y^\an$ along the point $\infty\in 
\var{Y}^\an$ and set $\Lambda^g\coloneq\{(w,g^\prime(w))\mid w\in 
V\}\subset T^\ast Y^\an\simeq Y^\an\times X^\an$. 
Then we have the following result due to D'Agnolo-Kashiwara
(see the proof of \cite[Lemma-Definition 7.4.2]{DK18}). 
For the convenience of the 
readers, we recall also their proof. 

\begin{lemma}[{\cite[Lemma-Definition 7.4.2]{DK18}}]\label{lem-T7}
\begin{enumerate}
\item[\rm{(i)}] Assume that
$\Lambda^{g}\subset\chi(T_{\{a\}}^\ast X^{\an})$.
Then on $V$ we have 
\begin{align}
g(w) \equiv -aw 
\end{align}
modulo constant functions on the open sector $V$.
\item[\rm{(ii)}] 
Assume that $\Lambda^{g}\subset\chi(\Lambda_a^f)$.
Then there exists a unique holomorphic function
$\zeta\colon V\rightarrow W \subset\CC=X^{\an}$
such that
\begin{align} \label{triveq} 
\chi^{-1}(\Lambda^{g})= \Set*{(\zeta(w),w)}{w\in V}
\subset \Lambda_a^f\subset X^{\an}\times Y^{\an}
\end{align}
and on $V$ we have 
\begin{align}
g(w) \equiv f(\zeta(w))-\zeta(w)\cdot w
=-f^w(\zeta(w))
\end{align}
modulo constant functions on the sector $V$, where the 
holomorphic function $f^w$ on $W$ is defined by
\begin{equation}
f^w(z)\coloneq zw-f(z) \quad (z\in W).
\end{equation}
\end{enumerate}
\end{lemma}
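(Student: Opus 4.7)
The plan is to read off everything directly from the explicit formulas
$\chi(T_{\{a\}}^\ast X^{\an}) = \{(w,-a)\mid w\in Y^{\an}\}$ and
$\chi(\Lambda_a^f) = \{(f^\prime(z),-z)\mid z\in W\}$
obtained from $\chi(z,w)=(w,-z)$, and then to integrate. Throughout, the open sector $V$ is connected, so that two holomorphic functions on $V$ with the same derivative differ by a constant.

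For part (i), the inclusion $\Lambda^{g}\subset\chi(T_{\{a\}}^\ast X^{\an})$ forces $g^\prime(w)=-a$ for every $w\in V$. Integrating on the connected open sector $V$ yields $g(w)=-aw+c$ for some constant $c\in\CC$, which is the claimed identity modulo constants.

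For part (ii), I first extract $\zeta$. The hypothesis $\Lambda^{g}\subset\chi(\Lambda_a^f)$ means that for every $w\in V$ the point $(w,g^\prime(w))$ lies in $\{(f^\prime(z),-z)\mid z\in W\}$, so there exists $z\in W$ with $f^\prime(z)=w$ and $-z=g^\prime(w)$. The second equation forces $z=-g^\prime(w)$, so setting
\begin{equation}
\zeta(w) \coloneq -g^\prime(w) \qquad (w\in V)
\end{equation}
defines a holomorphic map $\zeta\colon V\to \CC$ whose image lies in $W$ and which satisfies $f^\prime(\zeta(w))=w$ on $V$. This immediately gives the parametrization \eqref{triveq}, and uniqueness of $\zeta$ is built in, since any holomorphic $\zeta\colon V\to W$ satisfying \eqref{triveq} must send $w$ to the second coordinate of the unique preimage of $(w,g^\prime(w))$ under the projection $\chi(\Lambda_a^f)\to Y^{\an}$, which equals $-g^\prime(w)$.

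To obtain the Legendre identity, define the candidate antiderivative $h(w)\coloneq f(\zeta(w))-\zeta(w)\cdot w$ on $V$ and differentiate using the chain rule together with $f^\prime(\zeta(w))=w$:
\begin{equation}
h^\prime(w) = f^\prime(\zeta(w))\zeta^\prime(w)-\zeta^\prime(w)\cdot w-\zeta(w) = -\zeta(w) = g^\prime(w).
\end{equation}
Connectedness of $V$ then gives $g(w)=h(w)+c=f(\zeta(w))-\zeta(w)\cdot w+c=-f^w(\zeta(w))+c$ for some constant, which is the asserted congruence modulo constants. The only point requiring attention is that $\zeta$ genuinely lands in $W$ and defines a single-valued inverse of $f^\prime$, but both are forced by the inclusion hypothesis itself, so no branch issues for $f^\prime$ or for multi-valued Puiseux expansions enter the argument; this is why I regard the proof as essentially a one-line integration of the explicit Lagrangian parametrization.
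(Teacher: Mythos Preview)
Your proof is correct and follows essentially the same route as the paper's: construct $\zeta$ from the inclusion hypothesis, define the candidate $h(w)=f(\zeta(w))-\zeta(w)w$, and differentiate using $f'(\zeta(w))=w$ to match $g'(w)$. The only difference is cosmetic: you write down $\zeta(w)=-g'(w)$ explicitly, while the paper declares the existence and uniqueness of $\zeta$ ``straightforward'' and then reads off $g'(w)=-\zeta(w)$ from \eqref{triveq}; the subsequent integration argument is identical.
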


\begin{proof}
Since (i) is trivial, we only prove (ii).
The unique existence of
$\zeta\colon V\rightarrow W$ 
is also straightforward. 
Recall that the condition $(\zeta(w),w)\in\Lambda_a^f$ is
equivalent to the ones
\begin{align}\label{crieqi}   
f^\prime(\zeta(w))=w
 \quad
\Longleftrightarrow \quad 
(f^w)^\prime(\zeta(w))=0.
\end{align}
This means that $\zeta(w)\in W$ is
a critical point of the holomorphic function $f^w$.
Let us set
\begin{align}u(w) \coloneq f(\zeta(w))-\zeta (w)\cdot w
\quad(w\in V).\end{align}
Then by \eqref{triveq} and \eqref{crieqi} we obtain 
\begin{align}u^\prime(w)
= -\zeta(w)
= g^\prime(w) \quad(w\in V),\end{align}
from which the assertion immediately follows.
\end{proof}
We call the correspondence between the holomorphic 
functions $f$ and $g$ in Lemma \ref{lem-T7} (ii) 
a Legendre transform. Also for holomorphic functions 
defined on sectors along the point $\infty\in\var{X}^\an$ 
we have a similar correspondence and call it 
a Legendre transform. 
In \cite{DK18}, D'Agnolo and Kashiwara refined 
this result for Puiseux germs as follows. 
For a point $a\in\var{X}^{\an}$, we define the \'{e}tal\'{e} space
$\mathrm{\acute{e}t}(\SP_{S_a\var{X}^{\an}})$
endowed with the natural topology by
\begin{equation}
\mathrm{\acute{e}t}(\SP_{S_a\var{X}^{\an}})\coloneq
\bigsqcup_{\theta\in S_a\var{X}^{\an}}
\SP_{S_a\var{X}^{\an},\theta}.
\end{equation}

\begin{lemma}[{\cite[Lemma 7.4.5]{DK18}}]\label{lem-T2-1}
The Legendre transform induces homeomorphisms of
\'{e}tal\'{e} spaces as follows. 
\begin{enumerate}
\item[{\rm(i)}] If $a\neq\infty$, then it induces 
a homeomorphism 
\begin{equation}
\sfL_{(a,\infty)}\colon\mathrm{\acute{e}t}(\SP_{S_a 
\var{X}^{\an}}^{(0,+\infty)})\simto
\mathrm{\acute{e}t}(-aw+\SP_{S_\infty\var{Y}^{\an}}^{(0,1)})
\end{equation}
such that for $f\in\SP_{S_a\var{X}^{\an},\theta}$ 
$(\theta\in S_a\var{X}^{\an})$ we have $\chi(\Lambda_a^f)=
\Lambda^{\sfL_{(a,\infty)}(f)}$. 
\item[{\rm(ii)}] 
It induces homeomorphisms 
\begin{align}
\sfL_{(\infty,b)}&\colon\mathrm{\acute{e}t}(bz+
\SP_{S_\infty\var{X}^{\an}}^{(0,1)})\simto
\mathrm{\acute{e}t}(\SP_{S_bY^{\an}}^{(0,+\infty)}), \\
\sfL_{(\infty,\infty)}&\colon\mathrm{\acute{e}t}(
\SP_{S_\infty\var{X}^{\an}}^{(1,+\infty)})\simto
\mathrm{\acute{e}t}(\SP_{S_\infty \var{Y}^{\an}}^{(1,+\infty)}),
\end{align}
which satisfy the conditions similar to the one in (i).
\end{enumerate}
\end{lemma}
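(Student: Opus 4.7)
The plan is to define $\sfL_{(a,\infty)}$ stalk-by-stalk using the construction of Lemma \ref{lem-T7}(ii), verify that pole orders fall in the advertised range, and produce a two-sided inverse via the symmetric reverse Legendre transform. For a fixed $\theta \in S_a\var{X}^{\an}$ and $f \in \SP^{(0,+\infty)}_{S_a\var{X}^{\an},\theta}$ with $\lambda := \ord_a(f) \in \QQ_{>0}$, I would choose a small open arc $I \ni \theta$ and an open sector $W \subset X^\an \setminus \{a\}$ based on $I$ on which $f$ is holomorphic. Since $f'$ has pole order $\lambda + 1$ at $a$, the holomorphic inverse function theorem produces $\zeta(w) := (f')^{-1}(w)$ on an open sector $V \subset Y^\an$ along a direction $\phi \in S_\infty \var{Y}^{\an}$ canonically attached to $\theta$. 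Setting $g(w) := f(\zeta(w)) - \zeta(w)\, w$ on $V$, Lemma \ref{lem-T7}(ii) already gives $\chi(\Lambda_a^f) = \Lambda^g$, so I would declare $\sfL_{(a,\infty)}(f)$ to be the germ of $g$ at $\phi$.

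A direct pole-order computation verifies the codomain: writing $f(z) = c(z-a)^{-\lambda} + (\text{lower order})$ with $c\neq 0$ and solving $f'(\zeta) = w$ yields $\zeta(w) - a \sim \alpha\, w^{-1/(\lambda+1)}$ for a nonzero $\alpha$, from which
\begin{equation*}
g(w) + aw \;=\; f(\zeta(w)) - (\zeta(w) - a)\, w \;\sim\; \beta\, w^{\lambda/(\lambda+1)}
\end{equation*}
for some $\beta\neq 0$. The sub-leading Puiseux terms of $f$ contribute only terms of strictly smaller pole order to $g+aw$, so $g \in -aw + \SP^{(0,1)}_{S_\infty\var{Y}^{\an},\phi}$; here one uses that $\lambda \mapsto \lambda/(\lambda+1)$ is a bijection $(0,+\infty)\cap\QQ \to (0,1)\cap\QQ$. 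For the inverse, I would apply the same construction to $g$: since $-g'(w)\to a$ as $w\to\infty$, one can invert $-g'$ to get a holomorphic $\eta(z)$ on a sector at $a$ and set $f(z):=g(\eta(z))+\eta(z)\,z$. The identity $\chi^2=-\id$ on $T^\ast X^\an$ together with the symmetric form of the Legendre construction makes this a two-sided inverse, and the pole-order bijection $\lambda \leftrightarrow \lambda/(\lambda+1)$ is itself involutive on $(0,+\infty)$. Continuity of $\theta\mapsto\phi$ (and its inverse) in the \'{e}tal\'{e} topology follows from the continuous dependence of the implicit function solution on parameters, so $\sfL_{(a,\infty)}$ is a homeomorphism.

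Part (ii) is handled by exactly the same argument, modulated by the different leading behavior at $\infty \in \var{X}^\an$. For $\sfL_{(\infty,b)}$ one starts with $f = bz + h(z)$ where $h \in \SP^{(0,1)}_{S_\infty\var{X}^\an}$, so $f'\to b$ and hence $\zeta:=(f')^{-1}$ is defined near $b\in Y^\an$; the relevant pole-order bijection $\mu\mapsto \mu/(1-\mu)$ maps $(0,1)$ onto $(0,+\infty)$. For $\sfL_{(\infty,\infty)}$ one starts with $f \in \SP^{(1,+\infty)}$, so $f'\to\infty$, and the involution $\mu\mapsto\mu/(\mu-1)$ on $(1,+\infty)$ yields the claimed codomain. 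The main technical obstacle throughout is the consistent bookkeeping of branches: one must track a definite $(\lambda+1)$-th root of a Puiseux quantity in the inversion of $f'$ in order to conclude that $\zeta$ is itself a Puiseux germ (not just a holomorphic one) and that the direction assignment $\theta \mapsto \phi$ is a well-defined local homeomorphism between the appropriate subsets of $S_a\var{X}^\an$ and $S_\infty\var{Y}^\an$. Once this bookkeeping is fixed, the remaining pole-order estimates and the verification of the \'{e}tal\'{e} structure are routine.
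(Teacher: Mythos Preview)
The paper does not give its own proof of this lemma: it is stated with a citation to \cite[Lemma 7.4.5]{DK18} and no argument is supplied. Your sketch is the natural approach and matches what one finds in \cite{DK18}: invert $f'$ on a sector to produce $\zeta$, set $g(w)=f(\zeta(w))-\zeta(w)w$, and read off the pole order of $g+aw$ from the leading Puiseux term of $f$. Your leading-term computation is correct (one checks the coefficient $c\alpha^{-\lambda}-\alpha=-\alpha(\lambda+1)/\lambda\neq 0$, so there is no accidental cancellation), and the inverse construction via $z=-g'(w)$ is exactly right.

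One small correction of wording: the map $\lambda\mapsto\lambda/(\lambda+1)$ is not ``involutive on $(0,+\infty)$''; it is a bijection $(0,+\infty)\to(0,1)$ whose inverse $\mu\mapsto\mu/(1-\mu)$ is what you actually use when running the reverse Legendre transform from $g$ back to $f$. This does not affect the argument, since your explicit inverse construction is correct, but the phrasing should be fixed. The branch bookkeeping you flag is indeed the only point requiring care, and the \'etal\'e-space formulation is precisely what absorbs it cleanly.
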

We call the correspondence in Lemma \ref{lem-T2-1} 
the Legendre transform for Puiseux germs.

\section{Enhanced solution complexes of 
holonomic $\SD$-modules in dimension one}
\label{uni-sec:T1}

In this section, we focus our attention on
holonomic $\SD$-modules in dimension one
and give an explicit description of
their enhanced solution complexes,
which will be effectively used in subsequent sections.
Let $X$ be a closed Riemann surface
i.e. a compact complex manifold of dimension one
and $\SM$ an analytic holonomic $\SD$-module on it.
For simplicity, we denote by $D\subset X$
the singular support $\mathrm{sing.supp}(\SM)$ of $\SM$
and set $\SN\coloneq\SM(\ast D)\in\Modhol(\SD_X)$.
The complex curve $X$ being compact,
$D\subset X$ is a finite subset of $X$.
Our objective here is to give a global
and explicit description of the enhanced solution complex
$Sol_X^{\rmE}(\SN)\in \rcEC^{\rmb}(\mathrm{I}\CC_X)$ of
the meromorphic connection $\SN$.
Recall that by Proposition \ref{prop:hariaw}
for each point $a\in D$ there exist a multiplicity
\begin{align}
N(a)\colon\SP_{S_aX}^{\prime}
\longrightarrow \(\ZZ_{\geq 0}\)_{S_aX}
\end{align}
and an enhanced sheaf $F(a)\in \rcEC^0(\CC_X)$ with
normal form at $a\in D$ for the multiplicity $N(a)$ such that
for a sufficiently small closed disk $D(a)\subset X$
centered at $a\in D$
there exists an isomorphism
\begin{align}
\pi^{-1}\CC_{D(a) \setminus \{ a \}} \otimes Sol_X^{\rmE}(\SN)
\simeq \CC_X^{\rmE}\Potimes F(a).
\end{align}
Then by Proposition \ref{prop-K2} (i) we obtain an isomorphism 
\begin{align}
\pi^{-1}\CC_{D(a)} \otimes Sol_X^{\rmE}(\SN)
\simeq \CC_X^{\rmE}\Potimes F(a).
\end{align}
By Remark \ref{rem-K5-2}
we may assume that
there exist open sectors $V_1,V_2,\dots,V_d\subset X$
along $a\in D$ placed in
the counter-clockwise direction such that
\begin{align}
D(a)^{\circ}:= 
D(a)\bs\{a\} \subset V_1\cup V_2\cup\dots\cup V_d
\end{align}
and for $1\leq j<j^\prime\leq d$
we have $V_j\cap V_{j^\prime}\neq\emptyset$
if and only if $j^\prime=j+1$ or $(j,j^\prime)=(1,d)$
for which we have isomorphisms
\begin{align}
\Phi_j \colon\pi^{-1}\CC_{V_j}\otimes F(a)
\simto \bigoplus_{f\in N(a)^{>0}(V_j)}
\(\sfE_{V_j\cap D(a)\vbar X}^{\Re f}\)^{N(a)(f)}
\quad \(1\leq j\leq d\)
\end{align}
of enhanced sheaves on $X$.
Note that the integer $\sum_{f \in N(a)^{>0}(V_j)} N(a)(f) 
\geq 0$ does not depend on $j$. We denote it by $R$. 
It is clear that $R$ is equal to the rank of 
the meromorphic connection $\SN$. Let us call it 
the generic rank of $\SM$. 
Moreover by Remark \ref{rem-K5-2} we may assume also that
for any $1\leq j<j^\prime\leq d$ such that 
$V_j\cap V_{j^\prime}\neq\emptyset$
after renumbering the exponential factors 
$f_1,f_2,\dots,f_m\in N(a)^{>0}(V_j\cap V_{j^\prime})$ of $\SN$
on the open sector $V_j \cap V_{j^\prime}$
they satisfy the condition
\begin{align}
\Re f_1(z)<\Re f_2(z)<\dots<\Re f_m(z)
\quad \(z\in V_j\cap V_{j^\prime}\)
\end{align}
and the automorphism
\begin{equation}
\begin{split}
\(\pi^{-1}\CC_{V_j\cap V_{j^\prime}}\otimes\Phi_{j^\prime}\)
&\circ \(\pi^{-1}\CC_{V_j\cap V_{j^\prime}}\otimes\Phi_j\)^{-1} 
\colon \\
&\ 
\bigoplus_{i=1}^m
\(\sfE_{V_j\cap V_{j^\prime}\cap D(a)\vbar X}^{\Re f_i}\)^{N(a)(f_i)}
\simto
\bigoplus_{i=1}^m
\(\sfE_{V_j\cap V_{j^\prime}\cap D(a)\vbar X}^{\Re f_i}\)^{N(a)(f_i)}
\end{split}
\end{equation}
of the enhanced sheaf
\begin{align}
\bigoplus_{i=1}^m
\(\sfE_{V_j\cap V_{j^\prime}\cap D(a)\vbar X}^{\Re f_i}\)^{N(a)(f_i)}
\simeq 
\bigoplus_{i=1}^m
\CC_{\left\{ z\in V_j\cap V_{j^\prime}\cap D(a),\,
t+\Re f_i(z)\geq 0\right\}}
^{\oplus N(a)(f_i)}
\end{align}
is induced by
a block upper triangular matrix $A_{j j^\prime}\in \mathrm{GL}_R(\CC)$
with respect to the decomposition $R= \sum_{i=1}^m N(a)(f_i)$
of $R$. From now on, we shall explicitly construct
an enhanced sheaf $G\in \rcEC^0(\CC_X)$ on $X$ such that
\begin{align}
Sol_X^{\rmE}(\SN) \simeq \CC_X^{\rmE}\Potimes G
\end{align}
and for any $a\in D$ we have
\begin{align}
\pi^{-1}\CC_{D(a)}\otimes G
\simeq \pi^{-1}\CC_{D(a)}\otimes F(a).
\end{align}
For this purpose, set $U \coloneq X\bs D\subset X$
and let
\begin{align}
L \coloneq Sol_U(\SN\vbar_U)
\simeq Sol_U(\SM\vbar_U)
\end{align}
be the local system associated to
the integrable connection $\SN\vbar_U\simeq \SM\vbar_U$ on $U$.
For each point $a\in D$
we take a closed disk $D(a)^{\prime}\subset X$ centered at it
such that
$D(a)^{\prime}\subset \Int(D(a))$
and define a closed subset $E\subset X$ by
\begin{align}
E \coloneq X\bs\left\{\bigcup_{a\in D}
\Int(D(a)^{\prime})\right\}.
\end{align}
Then for $a\in X$ the closed subset
\begin{align}
A(a) \coloneq D(a) \cap E
= D(a)\bs\Int(D(a)^{\prime})
\quad \subset X
\end{align}
of $X$ is an annulus centered at it.
Since $\SN$ is an integrable connection and 
hence regular on a neighborhood of $E$ in $X$,
by Proposition \ref{prop-K2} (iv) we obtain an isomorphism
\begin{align}
\pi^{-1}\CC_{E}\otimes Sol_X^{\rmE}(\SN)
\simeq \CC_X^{\rmE}\Potimes \epsilon(L_E).
\end{align}
For $a\in D$ let $L(a)$ be the local system on $D(a)^{\circ}$
defined by gluing the constant sheaves
\begin{align}
\bigoplus_{f \in N(a)^{>0}(V_j)} 
\CC_{V_j\cap D(a)}^{\oplus N(a)(f)}
\end{align}
on $V_j \cap D(a) \subset D(a)^{\circ}$ $\(1\leq j\leq d\)$ by
the block upper triangular matrices 
$A_{j j^{\prime}}\in\mathrm{GL}_R(\CC)$ 
($V_j \cap V_{j^{\prime}} \not= \emptyset$) 
and $j(a) \colon D(a)^{\circ} 
\hookrightarrow X$ the inclusion map.
Then by our construction of
the enhanced sheaf $F(a)\in\rcEC^0(\CC_X)$
there exists a surjective morphism
\begin{align}
\pi^{-1}j(a)_!L(a) \longrightarrow F(a).
\end{align}
Let us take a sufficiently large $c\gg0$
satisfying the condition
\begin{align}
c > \max_{f\in N(a)^{>0}(V_j)}
\left\{  \max_{z\in A(a)\cap\overline{V_j}} \(-\Re f(z)\)\right\}
\end{align}
for any $1\leq j\leq d$.
Then there exists a surjective morphism
\begin{align}
\pi^{-1}\CC_{A(a)}\otimes F(a)
\longrightarrow
\CC_{\{t\geq c\}}\otimes \pi^{-1} \Bigl( j(a)_!L(a) \Bigr)_{A(a)}
\end{align}
and applying $\pi_{\ast}$ to it
we obtain an isomorphism
\begin{equation}\label{eq-T1}
\pi_{\ast}F(a)\vbar_{A(a)} \simto L(a)\vbar_{A(a)}
\end{equation}
of sheaves on the annulus $A(a)$.

\begin{lemma}\label{lem-T1}
There exists an isomorphism
\begin{align}
L\vbar_{A(a)} \simto L(a)\vbar_{A(a)}\end{align}
of sheaves on $A(a)$.
\end{lemma}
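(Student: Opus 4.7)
\emph{Proof proposal.} The plan is to exploit that both local systems arise from the same enhanced sheaf, in different ways, and reconcile the two descriptions on each sector of the covering $\{V_j\cap A(a)\}_{j=1}^d$ of $A(a)$.

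Since $A(a)\subset E$ and $\SN$ is regular on a neighborhood of $E$, Proposition \ref{prop-K2}(iv) yields $\pi^{-1}\CC_{A(a)}\otimes Sol_X^{\rmE}(\SN)\simeq\CC_X^{\rmE}\Potimes\epsilon(L\vbar_{A(a)})$, while the defining property of $F(a)$ gives $\pi^{-1}\CC_{A(a)}\otimes Sol_X^{\rmE}(\SN)\simeq\CC_X^{\rmE}\Potimes F(a)\vbar_{A(a)}$. Combining, one obtains an isomorphism
\begin{equation}
\CC_X^{\rmE}\Potimes F(a)\vbar_{A(a)}\simeq\CC_X^{\rmE}\Potimes\epsilon(L\vbar_{A(a)})
\end{equation}
in $\BEC(\rmI\CC_X)$.

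Next I would work sector by sector. On each $V_j\cap A(a)$, the isomorphism $\Phi_j$ decomposes $F(a)$ as $\bigoplus_{f\in N(a)^{>0}(V_j)}(\sfE_{V_j\cap D(a)\vbar X}^{\Re f})^{N(a)(f)}$. Applying $\pi_\ast$, each summand $\sfE^{\Re f}$ is sent to the constant sheaf $\CC$ on $V_j\cap A(a)$ (the function $\Re f$ being finite and continuous there, so that the $t$-fibers of $\{t+\Re f\geq0\}$ are contractible closed half-lines). Via \eqref{eq-T1} this gives a trivialization of $L(a)\vbar_{V_j\cap A(a)}\simeq\CC^R$, while via the identification above the same sectorial data trivializes $L\vbar_{V_j\cap A(a)}\simeq\CC^R$. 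On each overlap $V_j\cap V_{j'}\cap A(a)$, the transition matrix of $L(a)$ is tautologically the block upper triangular matrix $A_{jj'}\in\mathrm{GL}_R(\CC)$ of Remark \ref{rem-K5-2}, and the same matrix describes the transition for $L$, because the $\Phi_j$'s are extracted directly from $Sol_X^{\rmE}(\SN)$ in the construction of $F(a)$. Gluing these matching sectorial data yields the desired isomorphism $L\vbar_{A(a)}\simto L(a)\vbar_{A(a)}$.

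The main obstacle will be justifying rigorously the transport of trivialization from $L(a)$ to $L$ via the identification of enhanced sheaves above. Concretely, one has to verify that the isomorphism $\CC_X^{\rmE}\Potimes F(a)\vbar_{A(a)}\simeq\CC_X^{\rmE}\Potimes\epsilon(L\vbar_{A(a)})$ can be lifted, up to bounded shifts in the $t$-direction, to an isomorphism of $\RR$-constructible representatives in $\BDC(\CC_{X\times\RR})$ in a way compatible with $\pi_\ast$. This should follow from fully faithfulness of the enhanced solution functor applied to $\SN$ on a neighborhood of $A(a)$ together with Proposition \ref{prop-K2}(iv), so that on the regular locus $A(a)$ both representatives are determined by the same underlying local system up to such bounded data.
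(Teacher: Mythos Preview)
Your overall strategy is close to the paper's, but the paper takes a cleaner and more direct route that avoids precisely the ``main obstacle'' you flag at the end.

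The paper does not attempt to match transition matrices sector by sector. Instead it invokes \cite[Lemma~9.5.5]{DK18}, which provides a canonical recovery functor
\[
Sol_X(\SN)\;\simeq\;\alpha_X\, i_0^{!}\,\bfR^{\rmE}\bigl(Sol_X^{\rmE}(\SN)\bigr),
\]
where $i_0\colon X\hookrightarrow X\times\RR$ is $x\mapsto(x,0)$. Applying this functor to the isomorphism $\pi^{-1}\CC_{D(a)}\otimes Sol_X^{\rmE}(\SN)\simeq\CC_X^{\rmE}\Potimes F(a)$, and computing $\bfR^{\rmE}(\CC_X^{\rmE}\Potimes F(a))$ explicitly on each $V_j\times\RR$ (as an ind-limit of sheaves $\CC_{\{t<\alpha-\Re f\}}[1]$), one obtains directly $L\vbar_{A(a)}\simeq\pi_\ast F(a)\vbar_{A(a)}$. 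Composing with \eqref{eq-T1} finishes the proof. Because the recovery is \emph{functorial}, the isomorphism transports automatically; no lifting argument is needed.

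Your argument, by contrast, has a genuine gap at the step you yourself identify. The assertion that ``the same matrix $A_{jj'}$ describes the transition for $L$'' is not justified: the sectorial trivializations $\Phi_j$ were chosen for $F(a)$, and you have no a priori reason that the induced trivializations of $L$ (via the ind-sheaf isomorphism and $\pi_\ast$) coincide with those coming from $\Phi_j$ up to the \emph{same} transition matrices. Appealing to ``full faithfulness of the enhanced solution functor'' does not help here, since that is a statement about $\SD$-modules, not about lifting isomorphisms from $\CC_X^{\rmE}\Potimes(\cdot)$ back to $\RR$-constructible enhanced sheaves in a $\pi_\ast$-compatible way. One could try to rescue this via \cite[Lemma~5.2.1(ii)]{DK18}, but the paper's use of $\alpha_X i_0^{!}\bfR^{\rmE}$ is both shorter and conceptually cleaner.
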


\begin{proof}
Let $i_0\colon X\hookrightarrow X\times\RR$
be the inclusion map defined by $i_0(x) \coloneq(x,0)$.
Then by \cite[Lemma 9.5.5]{DK18} there exists an isomorphism
\begin{align}
Sol_X(\SN)
\simeq \alpha_Xi_0^!\bfR^{\rmE}\(Sol_X^{\rmE}(\SN)\). 
\end{align}
For $1\leq j\leq d$ the restriction of
$\bfR^{\rmE}(\CC_X^{\rmE}\Potimes F(a))$ to $V_j \times \RR$
is isomorphic to
\begin{align}
\underset{\alpha \to+\infty}{\inj}
\(\bigoplus_{f\in N(a)^{>0}(V_j)}
\CC_{\{z\in V_j\cap D(a),\,t< \alpha -\Re f(z)\}}^{\oplus N(a)(f)}[1]\)
\end{align}
(see e.g. the proof of \cite[Corollary 4.5]{IT20a}). 
Since the functor $i_0^!$ commutes with limits $\inj$, 
as in the proof of \cite[Corollary 4.5]{IT20a} 
we can easily see that there exist isomorphisms
\begin{align} 
L\vbar_{A(a)}
= Sol_X(\SN)\vbar_{A(a)}
\simeq \pi_{\ast}F(a)\vbar_{A(a)}. 
\end{align}
Then the assertion immediately follows from
the isomorphism \eqref{eq-T1}.
\end{proof}

By Lemma \ref{lem-T1} we obtain a surjective morphism 
\begin{equation}
\pi^{-1}(L \vbar_{A(a)}) \simeq 
\pi^{-1}(L(a) \vbar_{A(a)}) \longrightarrow 
F(a) \vbar_{\pi^{-1} A(a)}
\end{equation}
of sheaves on $\pi^{-1} A(a)=A(a) \times \RR$ obtained 
by cutting the support of the local system $\pi^{-1}(L \vbar_{A(a)})$. 
Namely $F(a) \vbar_{\pi^{-1} A(a)}$ is a quotient of 
$\pi^{-1}(L \vbar_{A(a)})$. Similarly, for 
sufficiently large $c\gg0$ we can also 
construct a quotient $G_0 \in\rcEC^0(\CC_E)$ of 
the local system $\pi^{-1}(L \vbar_{E})$ satisfying 
the conditions 
\begin{equation}
G_0 \vbar_{\pi^{-1}(X \setminus \cup_{a \in D} D(a))} \simeq 
\CC_{\{t\geq c\}} \otimes \pi^{-1} 
(L \vbar_{(X \setminus \cup_{a \in D} D(a))}) 
\end{equation}
and 
\begin{equation}
G_0 \vbar_{\pi^{-1}A(a)} \simeq 
F(a) \vbar_{\pi^{-1} A(a)} \qquad (a \in D). 
\end{equation}
Gluing $G_0 \in\rcEC^0(\CC_E)$ and 
$F(a) \vbar_{\pi^{-1} D(a)} \in\rcEC^0(\CC_{D(a)})$ 
over the sets $\pi^{-1} A(a)=A(a) \times \RR$ 
($a \in D$) we obtain an enhanced sheaf 
$G\in\rcEC^0(\CC_X)$ globally defined on the whole $X$.
Now let us define a closed subset $E^{\prime}\subset X$ by
\begin{align}
E^{\prime}
\coloneq \bigsqcup_{a\in D}D(a).
\end{align}
Note that we have $X=E^\prime\cup E$ and $E^\prime\cap 
E=\displaystyle\sqcup_{a\in D}A(a)$.
Then there exist distinguished triangles 
\begin{align}
    Sol_X^\rmE(\SN)\longrightarrow\bigl(\pi^{-1}
\CC_E \otimes Sol_X^\rmE(\SN)\bigr)
    &\oplus\bigl(\pi^{-1}\CC_{E^\prime}\otimes 
Sol_X^\rmE(\SN)\bigr) \notag \\
    &\longrightarrow\bigoplus_{a\in D}\(\pi^{-1}\CC_{A(a)}
\otimes Sol_X^\rmE(\SN)\)\overset{+1}{\longrightarrow}
\end{align}
and
\begin{align}
    \CC_X^\rmE\Potimes G\longrightarrow\bigl(\pi^{-1}
\CC_E\otimes (\CC_X^\rmE\Potimes G)\bigr)
    &\oplus\bigl(\pi^{-1}\CC_{E^\prime}\otimes
 (\CC_X^\rmE\Potimes G)\bigr) \notag \\
    &\longrightarrow\bigoplus_{a\in D}\bigl(\pi^{-1}\CC_{A(a)}
\otimes (\CC_X^\rmE\Potimes G)\bigr)\overset{+1}{\longrightarrow}
\end{align}
associated to the exact sequence
\begin{equation}
    0\longrightarrow\CC_X\longrightarrow\CC_E\oplus\CC_{E^\prime}
    \longrightarrow\bigoplus_{a\in D}\CC_{A(a)}\longrightarrow0.
\end{equation}
Moreover, by the proof of Lemma \ref{lem-T1} and Lemma \ref{lem-K1} 
there exists a commutative diagram
\begin{equation}
    \vcenter{
    \xymatrix@M=5pt@R=18pt@C=14pt{
    \bigl(\pi^{-1}\CC_E \otimes Sol_X^\rmE(\SN)\bigr)
    \oplus\bigl(\pi^{-1}\CC_{E^\prime}\otimes Sol_X^\rmE(\SN)\bigr)
    \ar[r] \ar[d]^-[@!-90]{\sim} &
    \displaystyle\xyoplus_{a\in D}\bigl(
    \pi^{-1}\CC_{A(a)}\otimes Sol_X^{\rmE}(\SN)\bigr)
    \ar[d]^-[@!-90]{\sim}  \\
    \bigl(\pi^{-1}\CC_E\otimes (\CC_X^\rmE\Potimes G)\bigr)
    \oplus\bigl(\pi^{-1}\CC_{E^\prime}\otimes (\CC_X^\rmE\Potimes G)\bigr)
    \ar[r] &
    \displaystyle\xyoplus_{a\in D}\bigl(
    \pi^{-1}\CC_{A(a)}\otimes(\CC_X^{\rmE}\Potimes G)\bigr).
    }}
\end{equation}
Then by the axiom (TR 4) (see \cite[Definition 1.5.1]{KS90}) 
we obtain an isomorphism
\begin{equation}
    Sol_X^\rmE(\SN)\simeq \CC_X^\rmE\Potimes G.
\end{equation}

\section{Fourier transforms of holonomic $\SD$-modules 
in dimension one and their characteristic cycles}
\label{uni-sec:T2}

In this section, we give some refinements of
the results of D'Agnolo-Kashiwara \cite{DK18} and \cite{DK23} on
the exponential factors of Fourier transforms of
holonomic $\SD$-modules in dimension one.
With our explicit description of
their enhanced solution complexes obtained in
Section \ref{uni-sec:T1} at hands,
we can now apply a twisted Morse theory
(with several Morse functions)
to obtain more precise structures of their Fourier transforms.

\subsection{Fourier transforms of 
holonomic $\SD$-modules}\label{sec:T3}

First of all, we recall the definition of 
Fourier transforms of $\SD$-modules and their basic properties 
and explain their relations with 
the irregular Riemann-Hilbert correspondence of 
D'Agnolo-Kashiwara \cite{DK16}.
Let $X=\CC_z^N$ be the $N$-dimensional complex vector space 
and $Y=\CC_w^N$ its dual space.
Here we first consider them as 
smooth algebraic varieties over $\CC$ 
endowed with the Zariski topology.
We use the notations $\SD_X$ and $\SD_Y$ for 
the rings of ``algebraic'' differential operators on them.
Denote by $\Modcoh(\SD_X)$ (resp. $\Modhol(\SD_X)$, 
$\Modrh (\SD_X)$) 
the category of coherent (resp. holonomic, 
regular holonomic) $\SD_X$-modules. 
Let $W_N := \CC[z, \partial_z]\simeq\Gamma(X; \SD_X)$ and
$W^\ast_N := \CC[w, \partial_w]\simeq\Gamma(Y; \SD_Y)$
be the Weyl algebras over $X$ and $Y$, respectively.
Then by the ring isomorphism
\begin{equation}
W_N\simto W^\ast_N\hspace{30pt}
(z_i\mapsto-\partial_{w_i},\ \partial_{z_i}\mapsto w_i) 
\end{equation}
we can endow a left $W_N$-module $M$ with 
a structure of a left $W_N^\ast$-module.
We call it the Fourier transform of $M$
and denote it by $M^\wedge$.
For a ring $R$ we denote by $\Mod_f(R)$
the category of finitely generated $R$-modules.
Recall that for the affine algebraic varieties $X$
and $Y$ we have the equivalences of categories
\begin{align} 
\Modcoh(\SD_X)
&\simeq
\Mod_f(\Gamma(X; \SD_X)) = \Mod_f(W_N),\\
\Modcoh(\SD_Y)
&\simeq
\Mod_f(\Gamma(Y; \SD_Y)) = \Mod_f(W^\ast_N)
\end{align}
(see e.g. \cite[Propositions 1.4.4 and 1.4.13]{HTT08}).
For a coherent $\SD_X$-module $\SM\in\Modcoh(\SD_X)$
we thus can define its Fourier 
transform $\SM^\wedge\in\Modcoh(\SD_Y)$.
It follows that we obtain an equivalence of categories
\begin{equation*}
( \cdot )^\wedge : \Modhol(\SD_X)\simto \Modhol(\SD_Y)
\end{equation*}
between the categories of holonomic $\SD$-modules 
(see \cite[Proposition 3.2.7]{HTT08}). Let
\begin{align}
X\overset{p}{\longleftarrow}X\times 
Y\overset{q}{\longrightarrow}Y
\end{align}
be the projections.
Then by Katz-Laumon \cite{KL85}, we have the following lemma.

\begin{lemma}\label{lem-DFS}
For a holonomic $\SD_X$-module
$\SM\in\Modhol(\SD_X)$, we have an isomorphism
\begin{align}
\SM^\wedge\simeq
\bfD q_\ast(\bfD p^\ast\SM\Dotimes \SO_{X\times Y}
e^{- \langle z, w \rangle }),
\end{align}
where $\bfD p^\ast, \bfD q_\ast, \Dotimes$ are
the operations for algebraic $\SD$-modules
and $\SO_{X\times Y}
e^{- \langle z, w \rangle }$ stands for the integral connection
of rank one on $X\times Y$ associated to the canonical
paring $\langle \cdot ,  \cdot \rangle : X\times Y\to\CC$. 
In particular the right hand side is concentrated in degree zero.
\end{lemma}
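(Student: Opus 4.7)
The plan is to reduce the statement to an explicit computation with Weyl algebras, using the equivalence $\Modhol(\SD_X) \simeq \Mod_f(W_N)$ recalled just above the lemma. Set $M \coloneq \Gamma(X;\SM)$, viewed as a finitely generated $W_N$-module; by definition $\SM^\wedge$ corresponds to $M$ equipped with the $W_N^\ast$-action obtained from the Fourier ring isomorphism $z_i \mapsto -\partial_{w_i}$, $\partial_{z_i}\mapsto w_i$. Since $X$, $Y$ and $X\times Y$ are all affine, each of the $\SD$-module operations $\bfD p^\ast$, $\Dotimes$ and $\bfD q_\ast$ can be computed at the level of global sections as a concrete operation on Weyl algebra modules, so it suffices to identify the $W_N^\ast$-module given by the right-hand side with the Fourier-twisted $M$.

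First I would note that $\bfD p^\ast \SM$ corresponds to $M \otimes_\CC \CC[w_1,\dots,w_N]$ equipped with the natural $W_{2N}$-action (where $z_i,\partial_{z_i}$ act on the $M$-factor and $w_i,\partial_{w_i}$ act on the polynomial factor). Tensoring with the integrable connection $\SO_{X\times Y}e^{-\langle z,w\rangle}$ leaves the underlying $\CC$-vector space unchanged but conjugates the action of every derivation by $e^{-\langle z,w\rangle}$, so that $\partial_{z_i}$ now acts as $\partial_{z_i}-w_i$ and $\partial_{w_i}$ as $\partial_{w_i}-z_i$. The direct image $\bfD q_\ast$ is then computed by the relative de Rham (i.e.\ Koszul) complex for the pairwise commuting operators $\partial_{z_i}-w_i$ ($i=1,\dots,N$) acting on $M\otimes_\CC\CC[w]$.

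The heart of the lemma — and the main obstacle — is to show that this Koszul complex is concentrated in a single cohomological degree, and that its cohomology is $M$ endowed with the Fourier-twisted $W_N^\ast$-action. For the concentration I would filter $M\otimes_\CC\CC[w]$ by total degree in $w$: the associated graded of each differential $\partial_{z_i}-w_i$ is its top-degree part $-w_i$, and $w_1,\dots,w_N$ forms a regular sequence on the graded module $M\otimes_\CC\CC[w]$, since the latter is free as a $\CC[w]$-module. Hence the Koszul complex on the associated graded has cohomology only in top degree, equal to $(M\otimes_\CC\CC[w])/(w_1,\dots,w_N)\simeq M$, and a standard spectral sequence degeneration promotes this concentration to the unfiltered complex. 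Finally, reading off the $W_N^\ast$-action on the cokernel: since $\partial_{z_i}\equiv w_i$ modulo the image of the Koszul differential, $w_i$ acts on the quotient by the original action of $\partial_{z_i}$ on $M$, while $\partial_{w_i}$ acts by $-z_i$; this is precisely the Fourier ring isomorphism, and identifies the result with $\SM^\wedge$ concentrated in degree zero.
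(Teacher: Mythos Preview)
Your argument is correct. The paper does not actually prove this lemma: it simply attributes the result to Katz--Laumon \cite{KL85} and states it without argument. What you have written is essentially the classical direct proof, reducing everything to Weyl-algebra computations via the affine equivalences $\Modcoh(\SD_X)\simeq\Mod_f(W_N)$ and $\Modcoh(\SD_Y)\simeq\Mod_f(W_N^\ast)$, and then analysing the Koszul complex for the commuting operators $\partial_{z_i}-w_i$ on $M\otimes_\CC\CC[w]$. The concentration step via the $w$-degree filtration is sound; one small point you might make explicit is that to obtain an honest filtered complex you shift the filtration on $M\otimes_\CC\CC[w]\otimes\wedge^k$ by $k$, so that the filtration is bounded below and exhaustive (the exterior degree $k$ running only over $\{0,\dots,N\}$) and the spectral sequence converges. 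Once concentration is established, your identification of the induced $W_N^\ast$-action on the cokernel with the Fourier twist $w_i\mapsto\partial_{z_i}$, $\partial_{w_i}\mapsto -z_i$ is exactly right and matches the ring isomorphism used in the paper's definition of $\SM^\wedge$.
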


Let $\var{X}\simeq\PP^N$ (resp. $\var{Y}\simeq\PP^N$)
be the projective compactification of $X$ (resp. $Y$).
By the inclusion map $i_X : 
X=\CC^N\xhookrightarrow{\ \ \ }\var{X}=\PP^N$
we extend a holonomic $\SD_X$-module $\SM\in\Modhol(\SD_X)$
to $\tl{\SM} := i_{X\ast}\SM\simeq\bfD i_{X\ast}\SM 
\in\Modhol(\SD_{\var{X}})$. 
Denote by $\var{X}^{\an}$ the underlying complex manifold
of $\var{X}$ and define the analytification
$\tl{\SM}^{\an}\in\Modhol(\SD_{\var{X}^{\an}})$
of $\tl{\SM}$ by
$\tl{\SM}^{\an} = \SO_{\var{X}^{\an}}
\otimes_{\SO_{\var{X}}}\tl{\SM}$. 
Then we set 
\begin{align}
Sol_{\var X}^\rmE(\tl\SM) := 
Sol_{\var{X}^{\an}}^\rmE(\tl{\SM}^{\an})
\qquad \in\BEC(\I\CC_{\var{X}^{\an}}).
\end{align}
Similarly for the Fourier transform $\SM^\wedge\in\Modhol(\SD_Y)$ 
we define $Sol_{\var Y}^\rmE(\tl{\SM^\wedge})
\in\BEC(\I\CC_{\var{Y}^{\an}})$.
Let
\begin{align}
\var{X}^{\an}\overset{\var{p}}{\longleftarrow}
\var{X}^{\an}\times\var{Y}^{\an}\overset{\var{q}}{\longrightarrow}
\var{Y}^{\an}
\end{align}
be the projections. 

\begin{definition}\label{FS-trans} 
For $F\in\BEC(\CC_{\var{X}^{\an}})$ and 
$\SF\in\BEC(\I\CC_{\var{X}^{\an}})$, 
we define their Fourier-Sato 
(Fourier-Laplace) transforms 
${}^\sfL F\in\BEC(\CC_{\var{Y}^{\an}})$ and 
${}^\sfL \SF\in\BEC(\rmI\CC_{\var{Y}^{\an}})$ by
\begin{align}
{}^\sfL F &\coloneq \bfE\var{q}_{*}(\bfE\var{p}^{-1}F\Potimes
\sfE_{X\times Y|\var{X}\times\var{Y}}^{
-{\Re} \langle z, w \rangle}[N])
\qquad \in\BEC(\CC_{\var{Y}^{\an}}), \\
{}^\sfL\SF &\coloneq \bfE\var{q}_{*}(\bfE\var{p}^{-1}\SF
\Potimes\EE_{X\times Y|\var{X}\times\var{Y}}^{
-{\Re} \langle  z, w \rangle }[N])
\qquad \in\BEC(\I\CC_{\var{Y}^{\an}} )
\end{align}
respectively, where we denote $X^{\an}\times Y^{\an}$ etc.
by $X\times Y$ etc. for short.
\end{definition}

Note that these transforms preserve 
the $\RR$-constructibility.
Namely we obtain functors 
${}^{\sfL}(\cdot)\colon$
$\rcEC^{\rmb}(\CC_{\var{X}^{\an}})\to
\rcEC^{\rmb}(\CC_{\var{Y}^{\an}})$ and 
${}^{\sfL}(\cdot)\colon
\rcEC^{\rmb}(\rmI\CC_{\var{X}^{\an}})\to
\rcEC^{\rmb}(\rmI\CC_{\var{Y}^{\an}})$. 
The following lemma is essentially due to
Kashiwara-Schapira \cite{KS16a} and D'Agnolo-Kashiwara \cite{DK18} 
(see also \cite[Theorem 4.2]{IT20a}). 

\begin{lemma}\label{lem-FS}
For $\SM\in\Mod_{\rm hol}(\SD_X)$ there exists an isomorphism
\begin{align}
Sol_{\var Y}^\rmE(\tl{\SM^\wedge})
\simeq{}^\sfL Sol_{\var X}^\rmE(\tl{\SM}).
\end{align}
\end{lemma}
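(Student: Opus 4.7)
The plan is to apply the enhanced solution functor $Sol^{\rmE}$ to the integral representation of $\SM^\wedge$ given in Lemma \ref{lem-DFS} and identify the resulting object term-by-term with the Fourier--Sato transform of $Sol^{\rmE}_{\var{X}}(\tl{\SM})$ as in Definition \ref{FS-trans}.

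I would first use Lemma \ref{lem-DFS}, which gives
\begin{equation*}
\SM^\wedge \simeq \bfD q_\ast\bigl(\bfD p^\ast \SM \Dotimes \SO_{X\times Y}\,e^{-\langle z,w\rangle}\bigr)
\end{equation*}
on the algebraic side. To pass to the projective compactifications, I would apply $i_{Y\ast}$ and use the commuting relations $i_Y\circ q=\var{q}\circ i_{X\times Y}$ and $i_X\circ p=\var{p}\circ i_{X\times Y}$, together with base change and the projection formula for algebraic $\SD$-modules. After analytification, and invoking Proposition \ref{prop-K2}(i) to absorb the meromorphic localization along the boundary divisor of $X\times Y$ in $\var{X}\times\var{Y}$, this should yield
\begin{equation*}
\tl{\SM^\wedge}^{\an} \simeq \bfD \var{q}_\ast\bigl(\bfD \var{p}^\ast \tl{\SM}^{\an} \Dotimes \SE^{-\langle z,w\rangle}_{X\times Y|\var{X}\times\var{Y}}\bigr),
\end{equation*}
where $\SE^{-\langle z,w\rangle}_{X\times Y|\var{X}\times\var{Y}}$ is the holonomic $\SD$-module on $\var{X}\times\var{Y}$ corresponding to the exponential factor $-\langle z,w\rangle$ on $X\times Y$ with poles along the boundary.

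Next I would apply $Sol^{\rmE}_{\var{Y}}$ to both sides, and four compatibilities would assemble the conclusion. Proposition \ref{prop-K2}(ii) turns $\bfD \var{p}^\ast$ into $\bfE \var{p}^{-1}$; Proposition \ref{prop-K2}(iii) converts the $\SD$-exponential $\SE^{-\langle z,w\rangle}_{X\times Y|\var{X}\times\var{Y}}$ into the enhanced exponential $\EE^{-\Re\langle z,w\rangle}_{X\times Y|\var{X}\times\var{Y}}$; Proposition \ref{prop-K2}(i) handles the localization along the boundary; and since $\var{q}$ is proper, the compatibility of $Sol^{\rmE}$ with proper direct image established in \cite{KS16a} and \cite{DK16} (and used already in \cite[Theorem 4.2]{IT20a}) gives $Sol^{\rmE}_{\var{Y}}\bigl(\bfD \var{q}_\ast(-)\bigr)\simeq\bfE \var{q}_\ast Sol^{\rmE}_{\var{X}\times\var{Y}}(-)[N]$ with shift $[N]=\dim_\CC(\var{X}\times\var{Y})-\dim_\CC\var{Y}$. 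Combining these,
\begin{equation*}
Sol^{\rmE}_{\var{Y}}(\tl{\SM^\wedge}) \simeq \bfE \var{q}_\ast\bigl(\bfE \var{p}^{-1}Sol^{\rmE}_{\var{X}}(\tl\SM) \Potimes \EE^{-\Re\langle z,w\rangle}_{X\times Y|\var{X}\times\var{Y}}[N]\bigr),
\end{equation*}
which is exactly ${}^{\sfL}Sol^{\rmE}_{\var{X}}(\tl\SM)$ by Definition \ref{FS-trans}.

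The main obstacle is not the overall strategy but the bookkeeping: tracking the cohomological shift $[N]$, justifying the use of $\bfE \var{q}_\ast$ instead of $\bfE \var{q}_{!!}$ (for which properness of $\var{q}$ is essential), and verifying that the meromorphic extension of the exponential kernel across the boundary divisor is compatible with Proposition \ref{prop-K2}(i) and (iii). Once these compatibilities are in place, all remaining steps are applications of the standard functorialities recorded in Proposition \ref{prop-K2}, so the proof reduces to a careful assembly of results from \cite{KS16a}, \cite{DK16}, and \cite{IT20a}.
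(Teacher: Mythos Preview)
Your proposal is correct and follows exactly the standard argument from \cite{KS16a}, \cite{DK18}, and \cite[Theorem 4.2]{IT20a} that the paper cites in lieu of a proof; the paper itself does not supply an independent argument for this lemma. One small omission in your list of compatibilities is the identification $Sol^{\rmE}(\SM_1\Dotimes\SM_2)\simeq Sol^{\rmE}(\SM_1)\Potimes Sol^{\rmE}(\SM_2)$, which is not part of Proposition~\ref{prop-K2} but is likewise a standard result from \cite{DK16} and is needed alongside the direct-image compatibility you mention.
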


Note that by \cite[Proposition 4.7.14]{DK16} we have 
\begin{align}
{}^{\sfL}\Bigl(\CC_{\var{X}^{\an}}^{\rmE}
\Potimes(\cdot)\Bigr)\simeq\CC_{\var{Y}^{\an}}^{\rmE}
\Potimes{}^{\sfL}(\cdot).
\end{align}
Therefore, if there exists 
$G\in\rcEC^{\rmb}(\CC_{\var{X}^{\an}})$ such that 
\begin{align}
Sol_{\var{X}}^{\rmE}(\tl{\SM})\simeq
\CC_{\var{X}^{\an}}^{\rmE}\Potimes G,
\end{align}
then we obtain an isomorphism
\begin{align}\label{undls} 
Sol_{\var{Y}}^{\rmE}(\tl{\SM^\wedge})
\simeq \CC_{\var{Y}^{\an}}^{\rmE}\Potimes{}^\sfL G
\end{align}
and hence 
for the study of $Sol_{\var{X}}^{\rmE}(\tl{\SM^\wedge})$ 
it suffices to study ${}^\sfL G\in
\rcEC^{\rmb}(\CC_{\var{Y}^{\an}})$.

From now on, we assume that
the dimension $N$ of $X$ and $Y$ is one.
Let $\SM\in\Modhol(\SD_X)$ be
an algebraic holonomic $\SD$-module on
the affine line $X=\CC_z$
and denote by $D\subset X$
its singular support $\mathrm{sing.supp}(\SM)$.
Then there exists a distinguished triangle
\begin{equation}
\rsect_D(\SM) \longrightarrow \SM \longrightarrow
\rsect_{X\bs D}(\SM) \overset{+1}{\longrightarrow}.
\end{equation}
Since $U:= X\bs D$ is an affine open subset of $X$,
we have
\begin{align}
H^j\rsect_U(\SM) \simeq
\begin{cases}
\ \sect_U(\SM) & (j=0), \\
\ 0 & (j\neq0)
\end{cases}
\end{align}
and $\sect_U(\SM)$ is nothing but
the localization of $\SM$
along the divisor $D\subset X$.
We thus obtain an exact sequence
\begin{equation}
0 \longrightarrow \sect_D(\SM) \longrightarrow
\SM \longrightarrow \sect_U(\SM) \longrightarrow
H^1\rsect_D(\SM) \longrightarrow 0.
\end{equation}
Note that $\sect_D(\SM)$ and $H^1\rsect_D(\SM)$
are supported in the finite set $D\subset X$
and hence their Fourier transforms are 
integrable connections on $X= \CC$. 
Since the Fourier transform $(\cdot)^\wedge$ is an exact functor,
this implies that for the initial study of $\SM^\wedge$
it suffices to study the Fourier transform of $\sect_U(\SM)$.
For this reason, in this paper we first assume that 
\begin{align}\label{eq:localize}
\SM \simto \sect_U(\SM)
\end{align}
and discuss the general case in Section \ref{sec:holD}. 
We call $\SM\in \Modhol(\SD_X)$ satisfying (\ref{eq:localize})
a localized holonomic $\SD$-module
or a meromorphic connection on $X=\CC_z$. 
We denote its generic rank by ${\rm rk} \SM$.

\subsection{The proof of Theorem \ref{Thm-T3} 
and related results}
\label{sec:T4}

For the proof of Theorem \ref{Thm-T3},
recall the notation in Section \ref{uni-sec:0}. 
If an exponential factor $f \in N_i^{>0}$ at a point $a_i \in 
\tl{D}$ is holomorphic on a sector $S$ along it, 
by abuse of notations we write $f \in N_i^{>0}(S)$. 
For such $f\in N_i^{>0}(S)$ and $w \in Y^{\an}= \CC$ 
we define a holomorphic function $f^w$ on $S$ by
\begin{align}
f^w(z) \coloneq zw-f(z) \quad \(z\in S \).
\end{align}
For $a_i\in D^{\an}$ let $D(a_i)$ be
a closed disk centered at it 
such that $D(a_i)^\circ := D(a_i) \setminus \{ a_i \}
\subset B(a_i)^\circ$. Shrinking $B(a_i)^\circ$ if 
necessary, we may assume that 
for any non-zero exponential 
factor $f \in N_i^{>0}$ of $\SM^{\an}$ at $a_i$ 
the morphism $f^{\prime}: B(a_i)^{\circ} \longrightarrow 
Y^{\an}= \CC$ is an unramified finite covering 
over the punctured disk $B(b_{\infty})^{\circ}$. 
Recall that we have 
\begin{equation}
(f^w)^\prime(z)=0  \quad \iff \quad 
f^\prime(z)=w.
\end{equation} 
Then we can take the disk 
$D(a_i)$ so that for any non-zero $f \in  N_i^{>0}$ 
and $w \in V \subset B(b_{\infty})^{\circ}$ 
all the critical points of the 
(possibly multi-valued) holomorphic 
function $f^w$ on $B(a_i)^\circ$ are contained in 
${\rm Int} D(a_i)^{\circ}$. 

Then as in Section \ref{uni-sec:T1}
we can explicitly construct an enhanced sheaf
$F_i=F(a_i)\in\rcEC^0(\CC_{\var{X}^{\an}})$ on
$\var{X}^{\an}$ such that
\begin{align}
\pi^{-1}\CC_{D(a_i)^\circ}\otimes
Sol_{\var{X}}^{\rmE}(\tl{\SM})
\simeq \CC_{\var{X}^{\an}}^{\rmE}\Potimes F_i.
\end{align}
Similarly, for the point $a_\infty=\infty\in\tl{D}$
we take a closed disk
$D(a_\infty)$ centered at it such that 
$D(a_{\infty})^\circ := D(a_{\infty}) \setminus \{ a_{\infty} \}
\subset B(a_{\infty})^\circ$, 
and construct 
$F_\infty=F(a_\infty)\in\rcEC^0(\CC_{\var{X}^{\an}})$ 
for which we have an isomorphism
\begin{align}
\pi^{-1}\CC_{D(a_{\infty})^\circ}\otimes
Sol_{\var{X}}^{\rmE}(\tl{\SM})
\simeq \CC_{\var{X}^{\an}}^{\rmE}\Potimes F_\infty.
\end{align}
We set $L\coloneq Sol_U(\SM\vbar_U)$.
As in Section \ref{uni-sec:T1},
then by gluing $F_i$ $(1\leq i\leq l)$, $F_\infty$ and 
$\CC_{\{t\geq c\}}\otimes\pi^{-1}L$ ($c\gg0$),
we obtain an enhanced sheaf
$G\in\rcEC^0(\CC_{\var{X}^{\an}})$ such that
\begin{align}
Sol_{\var{X}}^{\rmE}(\tl{\SM})
\simeq \CC_{\var{X}^{\an}}^{\rmE}\Potimes G.
\end{align}
It follows from the results in Section \ref{sec:T3}  
also that there exists an isomorphism
\begin{align}\label{undls} 
Sol_{\var{Y}}^{\rmE}(\tl{\SM^\wedge})
\simeq \CC_{\var{Y}^{\an}}^{\rmE}\Potimes{}^\sfL G.
\end{align}
Let
\begin{align}
\(X^{\an} \times\RR_s\) \overset{\ p_1}{\longleftarrow}     
\(X^{\an} \times\RR_s\)\times\(Y^{\an} \times\RR_t\)
\overset{p_2}{\longrightarrow} \(Y^{\an} \times\RR_t\)
\end{align}
be the projections and set $G^{\circ}:= 
G|_{X^{\an} \times\RR_s}$. 
Then by \cite[Lemma 7.2.1]{DK18} on
$Y^{\an}\subset\var{Y}^{\an}$ there exists an isomorphism
\begin{align}\label{qisoms} 
{}^\sfL G \simeq \bfQ\(\rmR p_{2!}(p_1^{-1} G^{\circ}
\otimes\CC_{\{t-s-\Re zw\geq0\}}[1])\),
\end{align}
where $\bfQ\colon\BDC(\CC_{Y^{\an}\times\RR_t})
\rightarrow\BEC(\CC_{Y^{\an}})$
is the quotient functor.
Moreover, for a point
$(w,t)\in V\times\RR\subset Y^{\an}\times\RR$
we have an isomorphism
\begin{align}\label{stalkfs} 
\(\rmR p_{2!}(p_1^{-1}G^{\circ} \otimes
\CC_{\{t-s-\Re zw\geq0\}}[1])\)_{(w,t)}
\simeq \rsect_c \(X^{\an};
\rmR\pi_!(G^{\circ} \otimes\CC_{\{t-s-\Re zw\geq0\}})[1]\).
\end{align}
In view of \eqref{undls}, \eqref{qisoms} and 
\eqref{stalkfs}, for the proof of Theorem \ref{Thm-T3} 
it suffices to calculate the right hand side of 
\eqref{stalkfs} for each point $(w,t)\in V\times\RR$. 

Let us fix a point $(w,t)\in V\times\RR$ and describe the structure of
\begin{align}
G(w,t) \coloneq
\rmR\pi_!(G^{\circ} \otimes\CC_{\{t-s-\Re zw\geq0\}} [1])\
\in\BDC(\CC_{X^{\an}}).
\end{align}
By our construction of $G$, we can easily see that 
the restriction of $G(w,t)$ to
$D^{\an}=\{a_1,\dots,a_l\}\subset X^{\an}$ is zero 
and the restriction of $\pi$ to the support of 
$G^{\circ} \otimes\CC_{\{t-s-\Re zw\geq0\}}$ is 
proper on $U^{\an}=X^{\an} \setminus D^{\an}$. 
Then we obtain an isomorphism 
\begin{align}
\rmR\pi_!(G^{\circ} \otimes\CC_{\{t-s-\Re zw\geq0\}}) 
\simto 
\rmR\pi_*(G^{\circ} \otimes\CC_{\{t-s-\Re zw\geq0\}})  
\end{align}
on $U^{\an} \subset X^{\an}$ 
and hence a morphism 
\begin{align}\label{surmorp} 
L \simeq H^0( \rmR\pi_* G^{\circ})|_{U^{\an}}  \longrightarrow 
H^{-1}G(w,t)|_{U^{\an}} 
\end{align}
of $\RR$-constructible sheaves on $U^{\an}$ 
is induced by the canonical morphism $G^{\circ} \longrightarrow 
G^{\circ} \otimes\CC_{\{t-s-\Re zw\geq0\}}$. 
We will see below that it is surjective. Namely 
we show that $H^{-1}G(w,t)|_{U^{\an}}$ 
is a quotient sheaf of the local system $L$. 
For a point $a_i\in D^{\an}$ by our construction of
the enhanced sheaf $F_i=F(a_i)\in\rcEC^0(\CC_{X^{\an}})$
there exist open sectors
$V_1,V_2,\dots,V_d$
along $a_i\in D^{\an}$ placed in
the counter-clockwise direction such that
\begin{align}
D(a_i)^\circ  \subset V_1\cup V_2\cup\dots\cup V_d 
\subset B(a_i)^\circ
\end{align}
and for $1\leq j< j^{\prime} \leq d$
we have $V_j\cap V_{j^{\prime}} \not= \emptyset$ if and only if
$j^{\prime} =j+1$ or $(j, j^{\prime})=(1,d)$
for which we have isomorphisms
\begin{align}
\Phi_j \colon \pi^{-1}\CC_{V_j}\otimes F_i
\simto \bigoplus_{f\in N_i^{>0}(V_j)}
\( \CC_{ \{ (z,s)  |  z \in V_j \cap D(a_i)^\circ, \ 
s+ \Re f(z) \geq 0 \} } \)^{N_i(f)}
\quad \(1\leq j\leq d\).
\end{align}
We also impose the condition (iv) in 
Remark \ref{rem-K5-2} on the covering 
$D(a_i)^\circ  \subset V_1\cup\dots\cup V_d$. 
Let $f_1,f_2, \ldots, f_{m_i} \in N_i^{>0}(V_j)$ be
the exponential factors of $\SM^{\an}$ on the
sector $V_j$. 
Then we can easily show that
the restriction of $G(w,t)$ to 
$V_j \cap D(a_i)^\circ  \subset  D(a_i)^\circ$
is isomorphic to
\begin{align}\label{locstru} 
\bigoplus_{k=1}^{m_i}
\Bigl(\CC_{\left\{z\in V_j \cap D(a_i)^\circ  \mid 
\Re(f_k^w)(z) \leq t\right\}} [1] \Bigr)^{N_i(f_k)}
\end{align}
(see \eqref{funcfw}). 
Moreover the restriction of $G(w,t)$ to
the punctured disk $D(a_i)^\circ$
is obtained by gluing these $\RR$-constructible sheaves
on $V_j \cap D(a_i)^\circ$ ($1 \leq j \leq d$) by the transition
matrices $A_{j j^{\prime}} \in {\rm GL}_{R}( \CC )$
($V_j \cap V_{j^{\prime}} \not= \emptyset$), where we set
 $R:= {\rm rk} \SM$. Recall that if 
$V_j \cap V_{j^{\prime}} \not= \emptyset$ after renumbering the 
exponential factors on $V_j \cap V_{j^{\prime}}$ 
the matrix $A_{j j^{\prime}}$ becomes block upper triangular. 
For $j \not= j^{\prime}$ such that $V_j \cap V_{j^{\prime}} 
\not= \emptyset$ and $V_j \cup V_{j^{\prime}}$ is 
simply connected, we can easily show that there exists an isomorphism 
\begin{align}\label{twolocdesc} 
G(w,t)|_{V_j \cup V_{j^{\prime}}} \simeq 
\bigoplus_{f \in N_i^{>0}(V_j \cup V_{j^{\prime}})} 
\Bigl(\CC_{\left\{z\in V_j \cup V_{j^{\prime}} \mid 
\Re f^w(z) \leq t\right\}} [1] \Bigr)^{N_i(f)}.
\end{align}
Let $S \subset D(a_i)^\circ$ be a simply connected 
sector in $D(a_i)^\circ$. Then, the restriction of 
$L$ to $S$ being isomorphic to the constant sheaf 
$\CC_S^R$, similarly we obtain an isomorphism 
\begin{align}\label{locdesc} 
G(w,t)|_S \simeq 
\bigoplus_{f \in N_i^{>0}(S)} 
\Bigl(\CC_{\left\{z\in S  \mid 
\Re f^w(z) \leq t\right\}} [1] \Bigr)^{N_i(f)}.
\end{align}
In the following example, for the reader's understanding of the proofs 
of Lemma \ref{lem-vanish} and Theorem \ref{Thm-T3}, we illustrate how the 
sublevel sets $\{\Re f^w(z)\leq t\}$ of $\Re f^w$ change 
as $t\in\RR$ increases.

\begin{example}\label{ex:levelset}
Set 
\begin{equation}
D(0) \coloneq\Set*{z\in X^{\an}}{ \abs*{z}\leq\varepsilon}, \quad        
D(0)^\circ\coloneq\Set*{z\in X^{\an}}{0<\abs*{z}\leq\varepsilon} \quad (0<\varepsilon\ll1)
\end{equation}
and let $\alpha>0$ be a positive real number and $f(z)=-\alpha/z$ 
the holomorphic function on $D(0)^\circ$ associated to it. 
We consider $f$ as an exponential factor at the origin $0 \in X^{\an}= \CC$. 
Then for sufficiently large $w\gg0$, 
the critical points of the holomorphic function 
\begin{equation}
f^w(z)=zw+\frac{\alpha}{z} \quad (z\in D(0)^\circ)
\end{equation}
are 
\begin{equation}
\gamma_1(w)\coloneq-\alpha^{\frac{1}{2}}w^{-\frac{1}{2}},
\quad
\gamma_2(w)\coloneq\alpha^{\frac{1}{2}}w^{-\frac{1}{2}}
\ \in D(0)^\circ
\end{equation}
and hence the critical values of the real-valued function $\Re f^w$ are
\begin{align}
c_1(w)&\coloneq\Re(f^w)(\gamma_1(w))= -2\alpha^{\frac{1}{2}}w^{\frac{1}{2}}, \\
c_2(w)&\coloneq\Re(f^w)(\gamma_2(w))=2\alpha^{\frac{1}{2}}w^{\frac{1}{2}}.
\end{align}
Let $A^+(w)\in\RR$ (resp. $A^-(w)$) be the maximal value (resp. minimal value) 
of the real-valued function $\Re(f^w\vbar_{\partial D(0)})$ on $\partial D(0) \simeq S^1$.
Then the sublevel sets of the real-valued function $\Re f^w$ at 
$t<A^-(w)$ and $t=A^-(w), c_1(w), c_2(w), A^+(w)$ 
are shown in gray in Figure \ref{fig:level}. 
\begin{figure}[h]
    \centering
    \begin{tikzpicture}[scale=0.4]
        \coordinate (O) at (0,0);
        \coordinate (Xm) at(-5,0);
        \coordinate (XM) at(5,0);
        \coordinate(Ym) at (0,-5);
        \coordinate(YM) at (0,5);
        \draw (O) circle[radius=3];
        \draw (2,2) node[above right]{$\partial D(0)$};
        \draw (0,-6) node{(i) $t<A^-(w)$};
        \begin{scope}
            \clip (5,5)--(5,-5)--(-5,-5)--(-5,5)--(5,5);
            \fill[lightgray] (O)..controls (-0.6,1) and (-0.95,0.6)..(-1,0);
            \fill[lightgray] (O)..controls (-0.6,-1) and (-0.95,-0.6)..(-1,0);
            \draw (O)..controls (-0.6,1) and (-0.95,0.6)..(-1,0);
            \draw (O)..controls (-0.6,-1) and (-0.95,-0.6)..(-1,0);
            \draw[dash pattern=on 3pt off 3pt] (-5,6)..controls (-4,3) and (-4,-3)..(-5,-6);
        \end{scope}
        \draw[semithick,->,>=stealth](Ym)--(YM);
        \draw[semithick,->,>=stealth](Xm)--(XM);
        \fill[white] (O) circle[radius=0.1] ;
        \draw (O) circle[radius=0.1] node[above right]{$0$};

        \begin{scope}[xshift=12cm]
            \draw (-6,-6) node{$\longrightarrow$};
            \coordinate (O) at (0,0);
            \coordinate (Xm) at(-5,0);
            \coordinate (XM) at(5,0);
            \coordinate(Ym) at (0,-5);
            \coordinate(YM) at (0,5);
            \draw (O) circle[radius=3];
            \draw (2,2) node[above right]{$\partial D(0)$};
            \draw (0,-6) node{(ii) $t=A^-(w)$};
            \begin{scope}
                \clip (5,5)--(5,-5)--(-5,-5)--(-5,5)--(5,5);
                \fill[lightgray] (O)..controls (-0.6,1.25) and (-1.4,1)..(-1.5,0);
                \fill[lightgray] (O)..controls (-0.6,-1.25) and (-1.4,-1)..(-1.5,0);
                \draw (O)..controls (-0.6,1.25) and (-1.4,1)..(-1.5,0);
                \draw (O)..controls (-0.6,-1.25) and (-1.4,-1)..(-1.5,0);
                \draw[dash pattern=on 3pt off 3pt] (-4,6)..controls (-2.68,-3) and (-2.68,3)..(-4,-6);
            \end{scope}
            \coordinate (T2) at (-3,-0);
            \fill[black] (T2) circle[radius=0.2]; 
            \draw[semithick,->,>=stealth](Ym)--(YM);
            \draw[semithick,->,>=stealth](Xm)--(XM);
            \fill[white] (O) circle[radius=0.1] ;
            \draw (O) circle[radius=0.1] node[above right]{$0$};
        \end{scope}

        \begin{scope}[xshift=24cm]
            \draw (-6,-6) node{$\longrightarrow$};
            \coordinate (O) at (0,0);
            \coordinate (Xm) at(-5,0);
            \coordinate (XM) at(5,0);
            \coordinate(Ym) at (0,-5);
            \coordinate(YM) at (0,5);
            \draw (2,2) node[above right]{$\partial D(0)$};
            \draw (0,-6) node{(iii) $t=c_1(w)$};
            \begin{scope}
                \clip (5,5)--(5,-5)--(-5,-5)--(-5,5)--(5,5);
                \fill[lightgray] (O)..controls (-0.6,1.5) and (-1.5,1.5)..(-2,0);
                \fill[lightgray] (O)..controls (-0.6,-1.5) and (-1.5,-1.5)..(-2,0);
                \draw (O)..controls (-0.6,1.5) and (-1.5,1.5)..(-2,0);
                \draw (O)..controls (-0.6,-1.5) and (-1.5,-1.5)..(-2,0);
                \draw[dash pattern=on 3pt off 3pt] (-3,6)..controls (-1.68,-5) and (-1.68,5)..(-3,-6);
                \clip (O) circle[radius=3];
                \fill[lightgray] (-3,6)..controls (-1.68,-5) and (-1.68,5)..(-3,-6);
                \draw (-3,6)..controls (-1.68,-5) and (-1.68,5)..(-3,-6);
            \end{scope}
            \draw (-3,-2) node[below left] (S) {$\gamma_1(w)$};
            \coordinate (T) at (-2,0);
            \draw (S)--(T);
            \fill[black] (T) circle[radius=0.1];
            \draw (O) circle[radius=3];
            \draw[semithick,->,>=stealth](Ym)--(YM);
            \draw[semithick,->,>=stealth](Xm)--(XM);
            \fill[white] (O) circle[radius=0.1] ;
            \draw (O) circle[radius=0.1] node[above right]{$0$};
        \end{scope}

        \begin{scope}[xshift=6cm,yshift=-13cm]
            \draw (-6,-6) node{$\longrightarrow$};
            \coordinate (O) at (0,0);
            \coordinate (Xm) at(-5,0);
            \coordinate (XM) at(5,0);
            \coordinate(Ym) at (0,-5);
            \coordinate(YM) at (0,5);
            \draw (0,-6) node{(iv) $t=c_2(w)$};
            \begin{scope}
                \clip (5,5)--(5,-5)--(-5,-5)--(-5,5)--(5,5);
                \fill[lightgray] (O) circle[radius=3];
                \fill[white] (O)..controls (0.6,1.5) and (1.5,1.5)..(2,0);
                \fill[white] (O)..controls (0.6,-1.5) and (1.5,-1.5)..(2,0);
                \draw (O)..controls (0.6,1.5) and (1.5,1.5)..(2,0);
                \draw (O)..controls (0.6,-1.5) and (1.5,-1.5)..(2,0);
                \draw[dash pattern=on 3pt off 3pt] (3,6)..controls (1.68,-5) and (1.68,5)..(3,-6);
                \clip (O) circle[radius=3];
                \fill[white] (3,6)..controls (1.68,-5) and (1.68,5)..(3,-6);
                \draw (3,6)..controls (1.68,-5) and (1.68,5)..(3,-6);
            \end{scope}
            \draw (3,-2) node[below right] (S) {$\gamma_2(w)$};
            \coordinate (T) at (2,0);
            \draw (S)--(T);
            \fill[black] (T) circle[radius=0.1];
            
            \draw (-2,2) node[above left]{$\partial D(0)$};
            \draw (O) circle[radius=3];
            \draw[semithick,->,>=stealth](Ym)--(YM);
            \draw[semithick,->,>=stealth](Xm)--(XM);
            \fill[white] (O) circle[radius=0.1] ;
            \draw (O) circle[radius=0.1] node[above left]{$0$};
        \end{scope}

        \begin{scope}[xshift=18cm,yshift=-13cm]
            \draw (-6,-6) node{$\longrightarrow$};
            \coordinate (O) at (0,0);
            \coordinate (Xm) at(-5,0);
            \coordinate (XM) at(5,0);
            \coordinate(Ym) at (0,-5);
            \coordinate(YM) at (0,5);
            \draw (0,-6) node{(v) $t=A^+(w)$};
            \begin{scope}
                \clip (5,5)--(5,-5)--(-5,-5)--(-5,5)--(5,5);
                \fill[lightgray] (O) circle[radius=3];
                \fill[white] (O)..controls (0.6,1.25) and (1.4,1)..(1.5,0);
                \fill[white] (O)..controls (0.6,-1.25) and (1.4,-1)..(1.5,0);
                \draw (O)..controls (0.6,1.25) and (1.4,1)..(1.5,0);
                \draw (O)..controls (0.6,-1.25) and (1.4,-1)..(1.5,0);
                \draw[dash pattern=on 3pt off 3pt] (4,6)..controls (2.68,-3) and (2.68,3)..(4,-6);
            \end{scope}
            \draw (-2,2) node[above left]{$\partial D(0)$};
            \draw (O) circle[radius=3];
            \draw[semithick,->,>=stealth](Ym)--(YM);
            \draw[semithick,->,>=stealth](Xm)--(XM);
            \fill[white] (O) circle[radius=0.1] ;
            \draw (O) circle[radius=0.1] node[above left]{$0$};
        \end{scope}
    \end{tikzpicture}
    \caption{The sublevel set $\{z\in D(0)^\circ\mid\Re f^w(z)\leq t\}$ in gray.}
    \label{fig:level}
\end{figure}
\end{example}

Also for the point $a_\infty=\infty\in\tl{D}$
we have a similar description of $G(w,t)$ on
a neighborhood of it and can define
holomorphic functions $f^w\ (f\in N_\infty^{>0})$ defined on
some open sectors along it. 
For $w\in V$ we define a real-valued function $\xi^w$ 
on the open subset 
\begin{align}
\Omega \coloneq X^{\an}\bs \bigl(D(a_i)\cup\dots\cup D(a_l)\cup 
D(a_\infty)\bigr)\ \subset U^{\an}
\end{align} 
of $U^{\an}$ by 
\begin{align}
\xi^w(z)\coloneq\Re(zw)+c \quad (z\in\Omega),
\end{align}
where $c\in\RR$ is the sufficiently large number used 
in the construction of the enhanced sheaf $G$ 
(see Section \ref{uni-sec:T1}).
Then it is easy to see that for any $w\in V$ and 
$t\in\RR$  the restriction of $G(w,t)$ to 
$\Omega\subset U^{\an}$ is isomorphic to 
\begin{align}
(L\vbar_\Omega)\otimes
\CC_{\{ z\in \Omega | \xi^w(z)\leq t \}} [1].
\end{align}
From these local descriptions of $G(w,t)$, we see 
that the morphism in \eqref{surmorp} is surjective.  

\begin{lemma}\label{lem-vanish}
For $t\ll0$ we have the vanishing
\begin{align}
\rsect_c\(X^{\an};G(w,t)\) \simeq 0.
\end{align}
\end{lemma}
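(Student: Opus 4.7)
The plan is to combine the explicit local description of $G(w,t)$ obtained just above with a direct topological analysis of the sublevel sets $\{z:\Re f^w(z)\leq t\}$ as $t\to-\infty$.

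On the bounded region $\Omega$ one has $G(w,t)|_\Omega\simeq (L|_\Omega)\otimes\CC_{\{\xi^w\leq t\}}[1]$ with $\xi^w(z)=\Re(zw)+c$; since $\xi^w$ is bounded on the compact set $\Omega$, the sublevel set is empty for $t<\min_\Omega\xi^w$ and the sheaf vanishes. On each punctured disk $D(a_i)^\circ$ with $a_i\in D^{\an}$, by \eqref{locdesc} the sheaf is, on each sector $V_j$, a direct sum of $\CC_{W_f(t)}^{N_i(f)}[1]$ with $W_f(t):=\{z\in V_j\cap D(a_i)^\circ:\Re f^w(z)\leq t\}$. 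The $f=0$ summand is empty for $t\ll 0$ because $\Re(zw)$ is bounded on the bounded set $D(a_i)^\circ$, while for $f\neq 0$ (which has a pole at $a_i$) a Puiseux analysis of $\Re f^w$ near $a_i$ shows that $W_f(t)$ is a small wedge approaching $a_i$ in the anti-Stokes directions of $f$ and contained in $\Int D(a_i)$. In the model case $f(z)=c/(z-a_i)$ the inequality $\Re(c/(z-a_i))\geq -t$ defines a closed disk in $V_j$ tangent to $a_i$; removing the tangent point $a_i$ and applying a M\"obius transformation identifies this set with a closed half-plane $\RR\times[0,\infty)$, whose $H^*_c$ vanishes by K\"unneth. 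The general Puiseux case reduces to this after a local coordinate change adapted to the leading term of $f$, and an analogous analysis on $D(a_\infty)^\circ$ identifies each support component as a closed half-plane or a flared horn $I\times[0,\infty)$ at infinity, again with vanishing $H^*_c$.

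For $t$ sufficiently negative these supports are mutually disjoint and contained strictly inside their respective disks, so in particular $G(w,t)$ also vanishes on each overlap annulus $A(a)$ by the same boundedness argument. A Mayer-Vietoris argument for an open refinement of the cover $X^{\an}=\Omega\cup\bigcup_{a\in\tl D}D(a)^\circ$ therefore reduces the global $\rsect_c$ to contributions from the $D(a)^\circ$'s individually. On each $D(a_i)^\circ$ the block upper-triangular sectorial gluings filter $G(w,t)$ by the ordering of $\Re f^w$, with graded pieces $\CC_{W_f(t)}^{N_i(f)}[1]$; since each graded piece has vanishing $\rsect_c$, iterating the associated long exact sequences of compactly supported cohomology yields $\rsect_c(D(a_i)^\circ;G(w,t))=0$, and similarly for $D(a_\infty)^\circ$, giving the desired $\rsect_c(X^{\an};G(w,t))\simeq 0$. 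The main technical difficulty is the uniform topological analysis of the wedges $W_f(t)$ for arbitrary Puiseux germs (including at infinity); the treatment of the upper-triangular transition matrices is controlled by the observation that they respect the filtration by $\Re f^w$-ordering, so each graded piece can be handled independently.
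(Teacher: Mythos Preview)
Your reduction to the disks $D(a_i)$ and the analysis of the sublevel sets are in the right spirit, but the step where you ``filter $G(w,t)$ by the ordering of $\Re f^w$'' on all of $D(a_i)^\circ$ has a genuine gap. The block upper-triangular form of each transition $A_{jj'}$ is with respect to the ordering $\Re f_1<\cdots<\Re f_m$ on that particular overlap $V_j\cap V_{j'}$, and this ordering changes from one overlap to the next as one crosses the Stokes lines. Consequently the sectorial filtrations do not glue to a single filtration of $G(w,t)|_{D(a_i)^\circ}$, and there are no globally defined graded pieces $\CC_{W_f(t)}^{N_i(f)}[1]$ to feed into your long-exact-sequence argument. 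For a concrete obstruction take $f_1=1/z$ and $f_2=i/z$: the two tangent-disk sublevel sets overlap precisely near the Stokes direction $\theta=\pi/4$, where the ordering flips, so neither summand extends to a global subobject. A related imprecision is that your half-plane model computes $H^*_c$ of the \emph{full} tangent disk in the punctured neighbourhood, whereas your $W_f(t)$ was defined as its intersection with a single $V_j$; the full tangent disk is not a direct summand of $G(w,t)$ globally, so that computation does not apply to the sheaf as written.

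The paper sidesteps this by running Mayer--Vietoris one level finer, on the sectorial cover $\{V_j\}$ of $D(a_i)^\circ$ rather than on the cover by disks. After perturbing and refining the sectors (legitimate by Remark~\ref{rem-K616}) one arranges that for $t\ll 0$ each set $\{z\in V_j:\Re f^w(z)\le t\}$, and likewise each $\{z\in V_j\cap V_{j'}:\Re f^w(z)\le t\}$, is empty or homeomorphic to one of the half-open boxes $(0,1)\times(0,1]$ or $(0,1]\times(0,1]$. On each $V_j$ and each pairwise overlap the direct-sum decomposition \eqref{locstru} \emph{does} hold, every such box has vanishing compactly supported cohomology, and the Mayer--Vietoris sequence for the cover $D(a_i)^\circ\subset V_1\cup\cdots\cup V_d$ then gives $\rsect_c(D(a_i)^\circ;G(w,t))\simeq 0$ directly---no global filtration or splitting is needed.
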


\begin{proof}
By the above local descriptions of $G(w,t)$, 
it is clear that for $t\ll0$
the support of $G(w,t)$ is contained in
\begin{align}
D(a_1) \cup D(a_2)\cup\dots\cup D(a_l)\cup D(a_{\infty}). 
\end{align}
Hence it suffices to show that for $t\ll0$ we have
\begin{align}
\rsect_c\( D(a_i);G(w,t)\)\simeq 0 \quad (1\leq i\leq l)
\end{align}
and
\begin{align}
\rsect_c\( D(a_{\infty});G(w,t)\) \simeq 0
\end{align}
We only show the vanishing for $D(a_i)$ $(1\leq i\leq l)$.
The proof for $D(a_{\infty})$ is similar.
For a point $a_i \in D^{\an}$
we can slightly perturb
the boundaries of the sectors $V_j$ along it and
replace the covering
$D(a_i)^\circ \subset V_1\cup\dots\cup V_d$ 
by its refinement 
if necessary and may assume that for $t\ll0$ the set
\begin{align}
\Set*{z\in V_j}{\Re f^w(z)\leq t}\subset V_j
\end{align}
is empty or isomorphic to one of $(0,1)\times(0,1]$ 
and $(0,1]\times(0,1]$
for any $1\leq j\leq d$ and $f\in N_i^{>0}(V_j)$ 
(see Remark \ref{rem-K616} and Example \ref{ex:levelset}). 
Moreover we may assume that for $t\ll0$ the same is 
true also for the set 
\begin{align}
\Set*{z\in V_j \cap V_{j^{\prime}} }{\Re f^w(z)\leq t}\subset V_j \cap V_{j^{\prime}}
\end{align}
for any $1\leq j < j^{\prime} \leq d$ and 
$f\in N_i^{>0}(V_j \cap V_{j^{\prime}} )$. 
Then by a Mayer-Vietoris exact sequence associated to
the covering 
$D(a_i)^\circ  \subset V_1\cup\dots\cup V_d$
the assertion immediately follows from
the above descriptions of $G(w,t)$ on the 
open subsets $V_j \cap D(a_i)^\circ, V_j \cap V_{j^{\prime}} \cap D(a_i)^\circ 
 \subset D(a_i)^\circ$.
\end{proof}

To prove Theorem \ref{Thm-T3}, for $w \in V$ 
we consider the real-valued functions
$\Re f^w\colon V_j  \cap D(a_i)^\circ  \rightarrow\RR\
 (f\in N_i^{>0}(V_j))$
on the open subsets $V_j \cap D(a_i)^\circ  \subset D(a_i)^\circ$ as Morse functions
and apply a Morse theory associated to them.
In contrast to the usual Morse theory,
that we use here relies on
the sublevel sets of ``several" Morse functions.
Note that the critical points of
$\Re f^w\colon V_j\rightarrow\RR$ are
those of the holomorphic function
$f^w\colon V_j\rightarrow\CC$.
For $w \in V$ and $a_i\in D^{\an}$ let
$\gamma_{i j}(w)\in {\rm Int}D (a_i)^{\circ}$ $(1\leq j\leq n_i)$
be the points in the punctured disk
${\rm Int}D(a_i)^{\circ}$ such that
\begin{align}
(f^w)^\prime( \gamma_{i j}(w)) = 0
\quad \Longleftrightarrow \quad 
f^\prime ( \gamma_{i j}(w)) = w 
\end{align}
for some non-zero exponential factor
$f\in N_i^{>0}$ of $\SM^{\an}$ at $a_i$.
Note that 
for each point $\gamma_{i j}(w)$ 
the non-zero exponential factor $f \in N_i^{>0}$ satisfying 
the condition $f^\prime ( \gamma_{i j}(w)) = w$ 
is unique.
Then by using such $f\in N_i^{>0}$ we set
\begin{align}
c_{i j}(w) \coloneq \Re(f^w)(\gamma_{i j}(w))
\ \in\RR \quad (1\leq j\leq n_i)
\end{align}
and
\begin{align}
N_{i j} \coloneq N_i(f) \ >0 \quad (1\leq j\leq n_i).
\end{align}
Namely $c_{i j}(w)$ $(1\leq j\leq n_i)$ are
the critical values of the (possibly multi-valued) functions
$\Re(f^w)$ $(f\in N_i^{>0}, f\neq0)$.  Also for $w \in V$ and $a_\infty=\infty\in\tl{D}$
we can define points
$\gamma_{\infty j}(w)\in {\rm Int}D(a_{\infty})^{\circ}
\subset X^{\an}=\CC_z$ $(1\leq j\leq n_\infty)$ and
$c_{\infty j}(w)\in\RR$, $N_{\infty j}$ 
$(1\leq j\leq n_\infty)$ similarly.

\begin{lemma}\label{lem-Morseind}
For any point $w\in V$
all the critical points of the functions
$\Re f^w\colon V_j\rightarrow\RR$ $(f\in N_i^{>0}(V_j))$ 
are (Morse) non-degenerate and have the Morse index 1.
\end{lemma}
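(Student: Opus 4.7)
The plan is to reduce the lemma to a standard computation for the real part of a holomorphic function and then invoke the unramified covering hypothesis on $f'$ that was set up earlier in this section.

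First I would write $h := f^w$, a holomorphic function on the sector $V_j \subset D(a_i)^\circ$, and recall that $\Re h$ is harmonic. A critical point of $\Re h$ (viewed as a real-valued function on the real two-dimensional manifold underlying $V_j$) coincides with a zero of $h'(z)$: indeed, by the Cauchy--Riemann equations one has $|\nabla \Re h|^2 = |h'(z)|^2$. Thus the critical points of $\Re f^w$ on $V_j$ are precisely the points $\gamma_{ij}(w)\in V_j\cap {\rm Int}D(a_i)^\circ$ satisfying $f'(\gamma_{ij}(w))=w$, which is exactly how those points were defined.

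Second I would compute the Hessian at such a point. Writing $h = u + iv$ and using $u_{xx}+u_{yy}=0$ together with $u_{xy} = -v_{xx}$, the Hessian of $u=\Re h$ is
\begin{equation}
\mathrm{Hess}(u) \;=\; \begin{pmatrix} u_{xx} & u_{xy} \\ u_{xy} & -u_{xx} \end{pmatrix},
\end{equation}
whose determinant equals $-(u_{xx}^2+u_{xy}^2) = -|h''(z)|^2$. Hence the critical point is Morse non-degenerate if and only if $h''(\gamma_{ij}(w))\neq 0$, and whenever it is non-degenerate the Hessian has signature $(1,1)$, so the Morse index equals $1$. Since $(f^w)''(z) = -f''(z)$, what remains is to check that $f''(\gamma_{ij}(w)) \neq 0$.

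The last step is where the geometric hypothesis comes in. Recall that at the beginning of Section~\ref{sec:T4} the punctured disks $B(a_i)^\circ$ were shrunk so that for every non-zero exponential factor $f\in N_i^{>0}$ the map $f'\colon B(a_i)^\circ \to Y^\an=\CC$ is an unramified finite covering over $B(b_\infty)^\circ$, and in particular $f''(z)\neq 0$ throughout $B(a_i)^\circ$. Since $\gamma_{ij}(w)\in {\rm Int}D(a_i)^\circ \subset B(a_i)^\circ$, we get $f''(\gamma_{ij}(w))\neq 0$, which completes the proof for $a_i\in D^\an$. The case of $a_\infty=\infty$ is strictly analogous, applied on a small punctured disk $B(a_\infty)^\circ$ in a local chart at infinity. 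I do not expect any serious obstacle here; the only point requiring care is to make explicit that the ``unramified covering'' statement already established in the setup of this section is exactly the non-vanishing of $f''$ needed for Morse non-degeneracy.
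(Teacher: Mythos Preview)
Your proof is correct and follows essentially the same route as the paper: both identify the critical points of $\Re f^w$ with the zeros of $(f^w)'$, compute the Hessian determinant via the Cauchy--Riemann equations as $-|(f^w)''(z)|^2$, and then invoke the unramified covering hypothesis on $f'$ set up earlier in the section to conclude $f''(\gamma_{ij}(w))\neq 0$, whence the Hessian is non-degenerate with negative determinant and Morse index $1$. The only minor imprecision is the claim that $f''\neq 0$ ``throughout $B(a_i)^\circ$''---strictly speaking the hypothesis guarantees this on $(f')^{-1}(B(b_\infty)^\circ)$, but since $f'(\gamma_{ij}(w))=w\in V\subset B(b_\infty)^\circ$ this is exactly what you need.
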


\begin{proof}
Note that for any $f \in N_i^{>0}$ 
the morphism 
$f^\prime \colon 
B(a_i)^{\circ} \cap (f^\prime )^{-1}(V) 
\longrightarrow V ( \subset Y^{\an}= \CC ) 
$ is locally biholomorphic. 
This implies that for any $w\in V$ and any 
critical point $\gamma \in V_j$ of $f^w$ 
($f\in N_i^{>0}(V_j)$) we have $f^{\prime}( \gamma )=w$ and 
\begin{equation}
(f^w)^{\prime\prime}( \gamma )= - 
f^{\prime\prime}( \gamma ) \not= 0. 
\quad (z\in V_j),
\end{equation}
Moreover, by the Cauchy-Riemann equation we have 
\begin{equation}
\det H(\Re f^w)(z) = -\abs{(f^w)^{\prime\prime}(z)}^2
\quad (z\in V_j),
\end{equation}
where the left hand side is the determinant of 
the Hesse matrix of $\Re f^w$ at $z\in V_j$.
Hence, for any $w\in V$ and any 
critical point $\gamma \in V_j$ of $f^w$ 
 ($f\in N_i^{>0}(V_j)$) we obtain 
\begin{equation}
\det H(\Re f^w)(\gamma) <0
\end{equation} 
and conclude that the Morse index at $\gamma$ is $1$ (see also 
 (\ref{eq:Morseeq}) below).   
\end{proof}

For $w\in V$ and $a_i\in D^{\an}$  
we define a subset $Q(a_i)_+^w$ (resp. $Q(a_i)_-^w$) of 
$\partial D(a_i)\simeq S^1$ to be the set of 
the points $z\in\partial D(a_i)$ such that for 
some $f\in N_i^{>0}$ the (possibly multi-valued) real-valued function 
$\Re(f^w\vbar_{\partial D(a_i)})$ on 
$\partial D(a_i)\simeq S^1$ has a 
local maximum (resp. local minimum) at $z\in\partial D(a_i)$. 
Note that by our definition $f^w(z)=zw-f(z)$ in 
\eqref{funcfw} 
if $w \rightarrow \beta \infty$ in the sector $V$ 
for some non-zero $\beta \in \CC$, then the points in 
$Q(a_i)_+^w$ (resp. $Q(a_i)_-^w$) converge to 
a point in $\partial D(a_i)\simeq S^1$. 
Hence by shrinking the punctured disk 
$B(b_{\infty})^{\circ}$ centered at 
$b_{\infty} = \infty\in\var{Y}^{\an}$ if necessary, 
we may assume that there 
exists a simply connected 
sector $V_+$ 
(resp. $V_-$) along the point $a_i$ such that 
$Q(a_i)_+^w\subset V_+$ (resp. $Q(a_i)_-^w\subset V_-$) 
for any $w\in V$. 
We may assume also that for any $w\in V$ and $f\in N_i^{>0}$ 
the (possibly multi-valued) real-valued function 
$\Re(f^w\vbar_{\partial D(a_i)})$ on $\partial D(a_i)$ has no critical point on 
$\partial D(a_i)\bs(Q(a_i)_+^w\sqcup Q(a_i)_-^w)$. 
Specifically, by the exponential factors  
$f_1,\dots,f_{m_i}\in N_i^{>0}(V_+)$ of $\SM^{\an}$ on the 
sector $V_+$ we define real-valued functions 
$\phi_{+,1}^w,\dots,\phi_{+,m_i}^w$ on 
$\partial D(a_i)\cap V_+$ by
\begin{align}
\phi_{+,k}^w(z)\coloneq\Re f_k^w(z)=\Re(zw)-\Re f_k(z) 
\quad (z\in\partial D(a_i)\cap V_+, 1 \leq k \leq m_i)
\end{align}
and set 
\begin{align}
N_i^+(k)\coloneq N_i(f_k) \, \in\ZZ_{>0} \quad 
(1 \leq k \leq m_i).
\end{align}
Similarly we define real-valued functions 
$\phi_{-,1}^w,\dots,\phi_{-,m_i}^w$ on 
$\partial D(a_i)\cap V_-$ and 
$N_i^-(k)\in\ZZ_{>0}$ $(1\leq k \leq m_i)$.
For $1\leq k\leq m_i$ and $w\in V$ let 
$a_{ik}^+(w)\in\partial D(a_i)\cap V_+$ 
(resp. $a_{ik}^-(w)\in\partial D(a_i)\cap V_-$) be the (unique) 
point where the function $\phi_{+,k}^w$ (resp. $\phi_{-,k}^w$) 
takes its maximum (resp. minimum) and set 
\begin{align}
A_{ik}^+(w)\coloneq\phi_{+,k}^w(a_{ik}^+(w)) \quad 
\textrm{(resp. $A_{ik}^-(w)\coloneq\phi_{-,k}^w(a_{ik}^-(w)$)}.
\end{align}
Also for the point $a_\infty=\infty\in\tl{D}$ we define real-valued 
functions $A_{\infty k}^+(w)$, $A_{\infty k}^-(w)$ 
($1 \leq k \leq m_{\infty}$) 
on the sector $V\subset Y^{\an}$ along the point 
$b_\infty=\infty\in\var{Y}^{\an}$ and 
$N_\infty^+(k),N_\infty^-(k)\in\ZZ_{>0}$ ($1 \leq k \leq m_{\infty}$) similarly.

In what follows, we use also $\xi^w\colon \Omega =
 X^{\an}\bs \bigl(D(a_i)\cup \dots \cup D(a_l)\cup 
D(a_\infty)\bigr) \rightarrow \RR$ as a 
Morse function to calculate ${}^{\sfL}G$.
Let $\tl{\xi^w}\colon\var{\Omega} \rightarrow \RR$ be the 
(unique) continuous extension of $\xi^w\colon\Omega\to\RR$ to 
$\var{\Omega}$ and for $a_i\in\tl{D}=D^{\an}\sqcup\{\infty\}$ set 
\begin{align}
L_i^+(w)&\coloneq\max_{z\in\partial D(a_i)}\tl{\xi^w}(z), \\
L_i^-(w)&\coloneq\min_{z\in\partial D(a_i)}\tl{\xi^w}(z).
\end{align}
Then by our choice of $c\in\RR$ in the construction 
of $G$, we have the following result.

\begin{lemma}\label{lem-bounded}
For any point $a_i\in\tl{D}=D^{\an}\sqcup\{\infty\}$ 
and $1 \leq k \leq m_i$ we have 
\begin{align}
A_{ik}^-(w)<L_i^-(w)<A_{ik}^+(w)<L_i^+(w).
\end{align}
Moreover the functions $L_i^+(w)-A_{ik}^+(w)$, 
$L_i^-(w)-A_{ik}^-(w)$ on $V$ are bounded.
\end{lemma}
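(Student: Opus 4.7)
The plan is to exploit the elementary identity
\[
\phi^w_{\pm,k}(z)\;=\;\xi^w(z)\;-\;\bigl(c+\Re f_k(z)\bigr)
\]
on $\partial D(a_i)\cap V_\pm$, which follows at once from the definitions of $\xi^w$ and $\phi^w_{\pm,k}$. By the defining property of the constant $c\gg 0$ recalled in Section~\ref{uni-sec:T1}, we have $c+\Re f_k(z)>0$ on the annulus $A(a_i)\supset\partial D(a_i)$; enlarging $c$ at the outset if necessary (harmless, as $\tl D$ and the exponential factors at each point are finite in number), one may further assume that $c+\Re f_k(z)\geq\delta$ for a uniform $\delta>0$. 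This gives the pointwise bound $\phi^w_{\pm,k}\leq\xi^w-\delta$, which will provide the strict inequalities once the extrema have been compared.

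Next I would run a short asymptotic analysis to locate the extremal points. Writing $\xi^w(z)=\Re((z-a_i)w)+\Re(a_iw)+c$, the first summand is a rescaled cosine on the circle $\partial D(a_i)\simeq S^1$ of amplitude $|w|\varepsilon$, whereas $\Re f_k$ is bounded uniformly on this compact set (which stays away from $a_i$). A direct computation of critical points then shows that the extremum $a_{ik}^\pm(w)$ of $\phi^w_{\pm,k}$ lies within angular distance $O(1/|w|)$ of the (unique) corresponding extremum of $\xi^w$ on $\partial D(a_i)$, and hence
\[
|\xi^w(a_{ik}^\pm(w))-L_i^\pm(w)|=O(1/|w|).
\]
The defining property of $V_\pm$ (chosen to contain all local maxima, resp.\ minima, of $\Re f^w|_{\partial D(a_i)}$ as $w$ varies in $V$) ensures that both extrema lie in a common arc for $|w|$ large, making the comparison legitimate.

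Combining the two ingredients yields
\[
L_i^+(w)-A_{ik}^+(w)\;=\;\bigl[L_i^+(w)-\xi^w(a_{ik}^+(w))\bigr]\;+\;\bigl[c+\Re f_k(a_{ik}^+(w))\bigr],
\]
a sum of a term of size $O(1/|w|)$ and a term lying in a compact subinterval $[\delta,\mathrm{const}]\subset(0,\infty)$. A final shrinking of $B(b_\infty)^\circ$ (which enlarges the lower bound on $|w|$) makes the first term smaller than $\delta$ uniformly in $w\in V$, so the sum is positive and uniformly bounded. The mirror computation with reversed signs proves $A_{ik}^-(w)<L_i^-(w)$ together with the boundedness of $L_i^-(w)-A_{ik}^-(w)$. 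For the point $a_\infty$ the same scheme applies in the local chart at infinity: $\partial D(a_\infty)$ is a fixed circle of large radius $R$, on which $\Re f_k$ is uniformly bounded in $w$ while $\Re(zw)$ has amplitude $R|w|$, so the dominance argument carries over unchanged regardless of the pole order of $f_k$ at infinity.

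The hard part is not any single inequality but the uniformity: the constant $c$, the sectors $V_\pm$, and the disk $B(b_\infty)^\circ$ must be chosen consistently so that the positive term $c+\Re f_k$ dominates the $O(1/|w|)$ correction simultaneously for every index $i,k$ and every $w\in V$. This is handled by fixing $c$ large first (using the finiteness of $\tl D$ and of the sets $N_i^{>0}$) and only afterwards shrinking $B(b_\infty)^\circ$.
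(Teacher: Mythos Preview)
Your proof is correct and in fact considerably more detailed than the paper's treatment: the paper states Lemma~\ref{lem-bounded} without proof, simply noting that it follows ``by our choice of $c\in\RR$ in the construction of $G$''. Your approach---exploiting the identity $\phi^w_{\pm,k}(z)=\tl{\xi^w}(z)-(c+\Re f_k(z))$ together with the positivity of $c+\Re f_k$ guaranteed by the choice of $c$, and then running the asymptotic comparison of critical points on $\partial D(a_i)$ for $|w|\gg 0$---is exactly the kind of elementary verification the paper leaves implicit.

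One small omission: you establish $A_{ik}^+(w)<L_i^+(w)$ and $A_{ik}^-(w)<L_i^-(w)$ (with the required boundedness), but you do not explicitly address the middle inequality $L_i^-(w)<A_{ik}^+(w)$. This follows immediately from what you have already shown: since $L_i^+(w)-A_{ik}^+(w)$ is bounded while $L_i^+(w)-L_i^-(w)=2\varepsilon|w|$ (or $2R|w|$ at $a_\infty$) tends to $+\infty$, one has $A_{ik}^+(w)=L_i^+(w)-O(1)>L_i^-(w)$ for $|w|$ large. You should state this explicitly for completeness.
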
 

By applying the Morse theoretical method in the proof of 
\cite[Theorem 4.4]{IT20a} to the Morse functions 
${\rm Re} f^w$ ($f \in N_i^{>0}$) 
and $\xi^w$ we obtain the following result.

\begin{proposition}\label{prop-infty-enh}
We have an isomorphism
\begin{equation}
\begin{split}
\pi^{-1}\CC_V \otimes {}^{\sfL}G \quad 
\simeq \quad &\bigoplus_{i=1}^l\biggl\{
\bigoplus_{j=1}^{n_i}\bigl(\sfE_{V\vbar \var{Y}^{\an}}^{-c_{ij}}\bigr)
^{N_{ij}}\oplus\bigl(\sfE_{V\vbar \var{Y}^{\an}}^{-c_i}\bigr)^{r_i}
\oplus
\bigoplus_{k=1}^{m_i} \Bigl(\sfE_{V\vbar \var{Y}^{\an}}
^{-A_{ik}^-\vartriangleright-L_i^-}[1]\Bigr)^{N_i^-(k)}\biggr\} \\ 
&\oplus \biggl\{\bigoplus_{j=1}^{n_\infty}
\bigl(\sfE_{V\vbar \var{Y}^{\an}}^{-c_{\infty j}}\bigr)^{N_{\infty j}} 
\oplus
\bigoplus_{k=1}^{m_{\infty}}
\Bigl(\sfE_{V\vbar \var{Y}^{\an}}
^{-A_{\infty k}^+\vartriangleright-L_\infty^+}\Bigr)^{N_\infty^+(k)}
\biggr\},
\end{split}
\end{equation}
where for $1\leq i\leq l$ we set 
$c_i(w)\coloneq\Re(a_iw)\quad (w\in V)$.
\end{proposition}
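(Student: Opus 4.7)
The plan is to compute the stalks of $\pi^{-1}\CC_V\otimes{}^{\sfL}G$ on $V\times\RR$ using the formulas \eqref{qisoms} and \eqref{stalkfs}, which identify them with $\rsect_c(X^{\an};G(w,t)[1])$ for $(w,t)\in V\times\RR$. Since ${}^{\sfL}G$ is $\RR$-constructible, tracking how this cohomology jumps as $t$ increases for fixed $w\in V$ suffices to identify ${}^{\sfL}G|_V$ with the claimed direct sum: each summand $\sfE_{V\vbar\var{Y}^{\an}}^{-\phi}$ has stalk at $(w,t)$ equal to $\CC$ exactly when $t\geq-\phi(w)$, so the jump locations of the stalk cohomology encode the values $-\phi(w)$, while each truncated summand $\sfE^{-\phi^+\vartriangleright-\phi^-}$ corresponds to a ``window'' $-\phi^+(w)\leq t<-\phi^-(w)$ of cohomology.

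First, I would apply a Mayer-Vietoris exact sequence to the covering $X^{\an}=\Omega\cup D(a_1)\cup\dots\cup D(a_l)\cup D(a_\infty)$ and reduce the computation to $\rsect_c(D(a_i);G(w,t)|_{D(a_i)}[1])$ for each $i\in\{1,\dots,l,\infty\}$ together with $\rsect_c(\Omega;G(w,t)|_\Omega[1])$. The local descriptions \eqref{locstru}--\eqref{locdesc} present $G(w,t)|_{D(a_i)^\circ}$ as (a sectorial gluing of) direct sums of constant sheaves on the sublevel sets $\{\Re f^w\leq t\}$ with multiplicity $N_i(f)$, and the restriction to $\Omega$ is $(L|_\Omega)\otimes\CC_{\{\xi^w\leq t\}}[1]$. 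These expressions are the exact inputs needed to run a Morse theoretic analysis.

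Second, I would apply the Morse theoretical method used in the proof of \cite[Theorem 4.4]{IT20a} to each disk. Lemma \ref{lem-vanish} supplies the starting vanishing $\rsect_c(X^{\an};G(w,t))\simeq0$ for $t\ll0$. By Lemma \ref{lem-Morseind}, every interior critical point $\gamma_{ij}(w)$ of $\Re f^w$ is Morse of index one; after the shift $[1]$ in $G(w,t)[1]$ and passage to compactly supported cohomology, each critical value $c_{ij}(w)$ therefore contributes (via the handle attachment formula for Morse index one on an oriented surface) a summand $\sfE_{V\vbar\var{Y}^{\an}}^{-c_{ij}}$ with multiplicity $N_{ij}$. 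For a finite singular point $a_i$ ($1\leq i\leq l$), the critical values $A_{ik}^-(w)$ of $\phi_{-,k}^w$ on the boundary circle $\partial D(a_i)$ produce the truncated summands $\sfE_{V\vbar\var{Y}^{\an}}^{-A_{ik}^-\vartriangleright-L_i^-}[1]$ of multiplicity $N_i^-(k)$: the window begins at the boundary minimum $-A_{ik}^-$ and is closed off at $-L_i^-$ once the level set crosses into $\Omega$. The zero exponential factor, whose sectorial multiplicity sums to $r_i=N_i(0)$, has no interior critical point; instead the Morse-theoretic transition between the $\xi^w$-computation on $\Omega$ and the constant sheaf piece on $D(a_i)$ occurs precisely at the value $c_i(w)=\Re(a_iw)$ (where $\xi^w$ restricted to $\partial D(a_i)$ attains the level dictated by the rank-$r_i$ local system), yielding $(\sfE_{V\vbar\var{Y}^{\an}}^{-c_i})^{r_i}$. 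The computation at $a_\infty$ is entirely analogous, but the orientation of $\partial D(a_\infty)$ relative to $D(a_\infty)$ is reversed, which is why only the ``plus'' boundary terms $A_{\infty k}^+(w)$ and $L_\infty^+(w)$ survive, giving the $\sfE_{V\vbar\var{Y}^{\an}}^{-A_{\infty k}^+\vartriangleright-L_\infty^+}$ summands of multiplicity $N_\infty^+(k)$ together with the interior contributions $\sfE_{V\vbar\var{Y}^{\an}}^{-c_{\infty j}}$ with multiplicity $N_{\infty j}$. The fact that $\Omega$ itself does not yield any further summand is forced by Lemma \ref{lem-K1} together with the boundedness assertion of Lemma \ref{lem-bounded}: the $\xi^w$-contributions from $\Omega$ are bounded and therefore trivial as enhanced sheaves, matching only the closing ``tails'' $-L_i^\pm$ of the truncated terms.

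The principal obstacle is precisely this last bookkeeping. One has to verify (i) that the sectorial gluing transition matrices $A_{jj^\prime}$ from Section \ref{uni-sec:T1}, though nontrivial on the underlying local systems, preserve the isomorphism class of the enhanced direct summands associated to each exponential factor (this uses that $A_{jj^\prime}$ is block upper triangular with respect to the ordering of $\Re f_k^w$, together with Lemma \ref{lem-K3}); (ii) that the truncation levels $-L_i^\pm$ match precisely so that the Mayer-Vietoris contributions from $\Omega$ and from the $D(a_i)$'s combine into the single decomposition stated; and (iii) that no spurious cohomology class is created when the sublevel sets $\{\Re f^w\leq t\}$ cross each other across the Stokes lines inside $D(a_i)^\circ$. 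Once these points are settled, assembling the Morse contributions from the finite singular points, from infinity, and from $\Omega$ yields the decomposition asserted by the proposition.
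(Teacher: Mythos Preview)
Your overall strategy coincides with the paper's: compute the stalks $\rsect_c(X^{\an};G(w,t))$ via \eqref{qisoms}--\eqref{stalkfs}, start from the vanishing of Lemma \ref{lem-vanish}, and track the jumps in $t$ using the local descriptions \eqref{locstru}--\eqref{locdesc} together with Lemma \ref{lem-Morseind}. The paper organizes this slightly differently---rather than a global Mayer--Vietoris on $X^{\an}=\Omega\cup\bigcup_i D(a_i)$, it introduces the ``jump sheaf'' $G(w,t,t-\varepsilon)$ and observes that for $0<\varepsilon\ll1$ its support is concentrated near isolated points, so each case can be treated locally---but this is a cosmetic difference.

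There is, however, one genuine error in your account of the term $(\sfE_{V\vbar\var{Y}^{\an}}^{-c_i})^{r_i}$. You attribute it to a boundary transition with $\Omega$, claiming that $\xi^w|_{\partial D(a_i)}$ attains the value $c_i(w)=\Re(a_iw)$. It does not: $\xi^w(z)=\Re(zw)+c$ with $c\gg0$, so on $\partial D(a_i)$ its values lie near $L_i^\pm(w)\approx\Re(a_iw)+c$, far from $c_i(w)$. The correct mechanism is purely interior to $D(a_i)$: the zero exponential factor $f=0$ gives $\Re f^w(z)=\Re(zw)$ on the \emph{punctured} disk $D(a_i)^\circ$, and the sublevel set $\{z\in D(a_i)^\circ:\Re(zw)\leq t\}$ undergoes a topological change precisely when the level line $\{\Re(zw)=t\}$ passes through the excluded point $a_i$, i.e.\ at $t=\Re(a_iw)=c_i(w)$. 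The paper makes this explicit by taking a small disk $D_i^0\subset\Int D(a_i)$ around $a_i$, setting $M_{i,s}=\{z\in D_i^0\setminus\{a_i\}:\Re(zw)\leq s\}$, and computing $\rsect_c$ of the constant sheaf $K_i\simeq\CC^{r_i}$ on $M_{i,t}\setminus M_{i,t-\varepsilon}$ via a short exact sequence; the answer is $\CC^{r_i}$ in degree $0$. Once you replace your boundary explanation by this puncture argument, the rest of your outline goes through and matches the paper.
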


\begin{proof}
We fix $w\in V$ and calculate 
$\rsect_c(X^{\an};G(w,t))$ for all $t\in\RR$.
First, by Lemma \ref{lem-vanish} for $t\ll0$ we have 
\begin{align}
\rsect_c(X^{\an};G(w,t))\simeq0
\end{align}
By our local descriptions of $G(w,t)$ the cohomology 
groups of $\rsect_c(X^{\an};G(w,t))$ may jump only at 
$t\in\RR$ in the finite set 

\begin{equation}
\begin{split}
\{c_{ij}(w)\mid a_i\in\tl{D},1\leq j\leq n_i\}\cup
\{c_i(w)\mid a_i\in D^{\an}\} \\
\cup\{A_{ik}^\pm(w)\mid a_i\in\tl{D}, 1 \leq k \leq m_i 
\}\cup\{L_i^\pm(w)\mid a_i\in\tl{D}\}.
\end{split}
\end{equation}
On the other hand, by Lemma \ref{lem-T2-1}, 
as $\abs{w}\to+\infty$ in the sector 
$V\subset Y^{\an}$ we have 
\begin{align}
\abs{\gamma_{ij}(w)-a_i}\longrightarrow0 
\quad (1\leq i\leq l,1\leq j\leq n_i)
\end{align}
and 
\begin{align}
\abs{\gamma_{\infty j}(w)}\longrightarrow+
\infty\quad(1\leq j\leq n_\infty).
\end{align}
This implies that for the sufficiently small sector 
$V\subset Y^{\an}$ along the point 
$b_\infty=\infty\in\var{Y}^{\an}$ we can deal with 
the cohomology jumps coming from the critical points 
$\gamma_{ij}(w)$ and those from the points 
$a_{ik}^\pm(w)$ etc. in $\partial D(a_i)$ separately. 
Then as in the proof of 
\cite[Theorem 4.4]{IT20a}, by Lemma \ref{lem-Morseind} we can 
prove the assertion as follows.
For $t^+,t^-\in\RR$ such that $t^-<t^+$ we define 
an $\RR$-constructible sheaf $G(w,t^+,t^-)$ on $X^{\an}$ 
by the exact sequence
\begin{equation}
0 \longrightarrow G(w,t^+,t^-) \longrightarrow 
G(w,t^+) \longrightarrow G(w,t^-)  \longrightarrow 0.
\end{equation}
Then for any $t\in\RR$ and $\varepsilon>0$ there 
exists a distinguished triangle 
\begin{equation}
\rsect_c(X^{\an};G(w,t,t-\varepsilon)) \longrightarrow  
\rsect_c(X^{\an};G(w,t)) \longrightarrow
\rsect_c(X^{\an};G(w,t-\varepsilon)) \overset{+1}{\longrightarrow}.
\end{equation}
Hence it suffices to calculate the cohomology groups 
$H^p\rsect_c(X^{\an};G(w,t,t-\varepsilon))$ $(p\in\ZZ)$ for each 
$t\in\RR$ and $0<\varepsilon\ll1$. 

First let us consider the case where $t\in\RR$ is 
not contained in the finite set 
\begin{align}
\{c_i(w)\mid a_i\in D^{\an}\}\cup
\{A_{ik}^\pm(w)\mid a_i\in\tl{D}, 1 \leq k \leq m_i \}\cup
\{L_i^\pm(w)\mid a_i\in\tl{D}\}.
\end{align}
For $a_i\in\tl{D}=D^{\an}\sqcup\{\infty\}$ and 
$1\leq j\leq n_i$ let $D_{ij}$ be a sufficiently small closed disk 
centered at the point $\gamma_{ij}(w)\in\Int D(a_i)^\circ$. 
Recall that by the (unique) non-zero exponential 
factor $f\in N_i^{>0}$ such that 
$(f^w)^\prime(\gamma_{ij}(w))=0$ we set 
$N_{ij}= N_i(f)$ and let $K_{ij} \simeq 
\CC_{D_{ij}}^{N_{ij}}$ be a constant subsheaf of 
the local system $L|_{D_{ij}}$ on $D_{ij}$ 
associated to $f$ (see \eqref{locstru}). 
For $s\in\RR$ we set 
\begin{align}M_{ij,s}\coloneq
\Set*{z\in D_{ij}}{\Re f^w(z)\leq s} \quad \subset D_{ij}.
\end{align}
Then the restriction of $G(w,s)$ to the disk $D_{ij}$ 
has a direct summand isomorphic to 
$(K_{ij})_{M_{ij,s}} [1] \simeq \CC_{M_{ij,s}}^{N_{ij}} [1]$ 
(see \eqref{locstru}). 
Since $\gamma_{ij}(w)$ is a non-degenerate 
critical point of $f^w$ by the proof of 
Lemma \ref{lem-Morseind} i.e.  
\begin{align}
(f^w)^{\prime}( \gamma_{ij}(w) )=0, \qquad 
(f^w)^{\prime \prime}( \gamma_{ij}(w) ) \not= 0, 
\end{align}
there exists a holomorphic coordinate 
$\zeta=x+\sqrt{-1}y$ $(x,y\in\RR)$ on $D_{ij}$ such 
that $\gamma_{ij}(w)=\{\zeta=0\}$ and 
\begin{align}
f^w(\zeta)=f^w(\gamma_{ij}(w))+\zeta^2.
\end{align}
This implies that we have 
\begin{align}\label{eq:Morseeq}
\Re f^w(\zeta)=c_{ij}(w)+x^2-y^2
\end{align}
on $D_{ij}$. 
We thus now clearly see that the function 
$\Re f^w$ has a Morse (non-degenerate) critical 
point of Morse index $1$ at the point 
$\gamma_{ij}(w)=\{\zeta=0\} = 
\{ x=y=0 \}$ (see also Lemma \ref{lem-Morseind}). 
Hence for $0<\varepsilon\ll1$ there exist isomorphisms
\begin{align}
H^p\rsect_c(X^{\an}; (K_{ij})_{M_{ij,t}\bs M_{ij,t-\varepsilon}})\simeq
\begin{cases*}
\ \CC^{N_{ij}}  & $(p=1$ and $c_{ij}(w)=t)$, \\
\\
\ 0 & (otherwise).
\end{cases*}
\end{align}
Then by our assumption on $t\in\RR$ for $0< \varepsilon \ll 1$ 
we obtain isomorphisms
\begin{align}
\rsect_c(X^{\an};G(w,t,t-\varepsilon)) 
&\simeq \bigoplus_{(i,j)\colon c_{ij}(w)=t}
\rsect_c(X^{\an}; (K_{ij} )_{M_{ij,t}\bs M_{ij,t-\varepsilon}}[1] ) \\
&\simeq \bigoplus_{(i,j)\colon c_{ij}(w)=t} \CC^{N_{ij}}.
\end{align}
This explains the reason why we have the factors 
$(\sfE_{V\vbar \var{Y}^{\an}}^{-c_{ij}})^{N_{ij}}$ 
$(a_i \in\tl{D}, 1\leq j\leq n_i)$ in the assertion.

Next we consider $t\in\RR$ such that there exist 
$a_i\in D^{\an}$ and $1 \leq k \leq m_i$ such that $A_{ik}^+(w)=t$.
For such $1 \leq i \leq l$ and $1 \leq k \leq m_i$ let 
$D_{ik}^+\subset V_+$ be a sufficiently small closed 
disk centered at $a_{ik}^+(w)\in\partial D(a_i)\cap V_+$.
Recall that by the exponential factor 
$f_k\in N_i^{>0}(V_+)$ on $V_+$ such that the function 
$\phi_{+,k}^w=\Re(f_k^w\vbar_{\partial D(a_i)\cap V_{+}})$ 
on $\partial D(a_i)\cap V_{+}$ takes its maximum 
$t=A_{ik}^+(w)$ at $a_{ik}^+(w)$ we set $N_i^+(k)=N_i(f_k)$ and 
let $K_{ik}^+ \simeq 
\CC_{D_{ik}^+}^{N_i^+(k)}$ be a constant subsheaf of  
the local system $L|_{D_{ik}^+}$ on $D_{ik}^+$ 
associated to $f_k$ (see \eqref{locstru}). For $s\in\RR$ we set 
\begin{align}
M_{ik,s}^+\coloneq
\Set*{z\in D(a_i) \cap D_{ik}^+}{\Re f_k^w(z)\leq s} 
\cup \{ z \in \Omega \cap D_{ik}^+ \ | \ \xi^w(z)\leq s \} 
\quad \subset  D_{ik}^+.
\end{align}
Then by the condition $t= A_{ik}^+(w)=\phi_{+,k}^w(a_{ik}^+(w))
<L_i^+(w)$, for 
$0<\varepsilon\ll1$ we obtain a vanishing 
\begin{align}
\rsect_c(X^{\an}; (K_{ik}^+)_{M_{ik,t}^+\bs M_{ik,t-\varepsilon}^+})\simeq 0
\end{align}
(see Figure \ref{fig:Aplus}). 
\begin{figure}
    \centering
\begin{tikzpicture}[scale=0.4]
    \draw (0,0) node{$\longrightarrow$};
    \begin{scope}[xshift=-9.5cm]
        \draw (5,3) node[right]{$\partial D(a_i)$};
        \draw (0,3.5) node{$D_{ik}^+$};
        \draw (5,-5) circle[radius=0.075] node[above]{$a_i$};
        \clip (5,5)--(5,-5)--(-5,-5)--(-5,5)--(5,5);
        \begin{scope}[rotate=-45]
            \coordinate (O) at(0,0);
            \coordinate (R) at(9,0);
            \coordinate (A) at(0,9);
            \coordinate (L) at(-9,0);
            \coordinate (B) at(0,-9);
            \begin{scope}
                    \clip (R) circle[radius=9];
                    \fill[lightgray] (O) circle [radius=4.5];
                    \fill[white] (0.5,-6) rectangle (-1,6);
                    \draw (0.5,-6)--(0.5,6);   
            \end{scope}
                \draw (R) circle[radius=9];
                \draw (O) circle[radius=4.5];
                \draw (O) node{\scriptsize $+$}; 
                \draw (O) node[above left]{$a_{ik}^+(w)$}; 
                \draw (2.75,0) node{$M_{ik,t-\varepsilon}^+$};
        \end{scope}
    \end{scope}
    
    \begin{scope}[xshift=7cm]
        \draw (5,3) node[right]{$\partial D(a_i)$};
        \draw (0,3.5) node{$D_{ik}^+$};
        \draw (5,-5) circle[radius=0.075] node[above]{$a_i$};
        \clip (5,5)--(5,-5)--(-5,-5)--(-5,5)--(5,5);
        \begin{scope}[rotate=-45]
            \coordinate (O) at(0,0);
            \coordinate (R) at(9,0);
            \coordinate (A) at(0,9);
            \coordinate (L) at(-9,0);
            \coordinate (B) at(0,-9);
            \begin{scope}
                    \clip (R) circle[radius=9];
                    \fill [lightgray] (O) circle [radius=4.5];   
            \end{scope}
            \draw (R) circle[radius=9];
            \draw (O) circle [radius=4.5];
            \draw (O) node{\scriptsize $+$}; 
            \draw (O) node[above left]{$a_{ik}^+(w)$};     
            \draw (2.5,0) node{$M_{ik,t}^+$};
        \end{scope}    
    \end{scope}
\end{tikzpicture}
\caption{$t=A_{ik}^+(w)$}
\label{fig:Aplus}
\end{figure}
This explains the reason why the functions 
$A_{ik}^+$ $(1\leq i\leq l, 1 \leq k \leq m_i)$ do not appear in the assertion. 
Similarly, we can neglect the functions 
$L_i^+$ $(1\leq i\leq l)$.

Now let us consider $t\in\RR$ such that there 
exist $a_i\in D^{\an}$ and $1 \leq k \leq m_i$ 
such that $A_{ik}^-(w)=t$.
For such $1\leq i\leq l$ and 
$1 \leq k \leq m_i$ let $D_{ik}^-\subset V_{-}$ be a sufficiently 
small closed disk centered at 
$a_{ik}^-(w)\in\partial D(a_i)\cap V_{-}$. 
Recall that by the exponential factor 
$f_k\in N_i^{>0}(V_-)$ on $V_{-}$ such that 
the function $\phi_{-,k}^w=\Re (f_k^w\vbar_{\partial D(a_i)\cap V_-})$ on
$\partial D(a_i)\cap V_-$ takes its minimum $t=A_{ik}^-(w)$ at $a_{ik}^-(w)$ 
we set $N_i^-(k)=N_i(f_k)$ and 
let $K_{ik}^- \simeq 
\CC_{D_{ik}^-}^{N_i^-(k)}$ be a constant subsheaf of 
the local system $L|_{D_{ik}^-}$ on $D_{ik}^-$ 
associated to $f_k$ (see \eqref{locstru}).
\begin{figure}
    \centering    
    \begin{tikzpicture}[scale=0.4]
        \draw (0,0) node{$\longrightarrow$};
        \begin{scope}[xshift=-7cm]
            \draw (-5,-3) node[left]{$\partial D(a_i)$};
            \draw (0,3.5) node{$D_{ik}^-$};
            \draw (-5,5) circle[radius=0.075] node[below]{$a_i$};
            \draw (-0.5,-3) node(Z) {$M_{ik,t-\varepsilon}^-$};
            \clip (5,5)--(5,-5)--(-5,-5)--(-5,5)--(5,5);
            \begin{scope}[rotate=135]
                \coordinate (O) at(0,0);
                \coordinate (R) at(9,0);
                \coordinate (A) at(0,9);
                \coordinate (L) at(-9,0);
                \coordinate (B) at(0,-9);
                \begin{scope}
                    \clip (O) circle[radius=4.5];
                    \fill[lightgray] (-4,-9)rectangle(-6,9);
                    \draw (-4,-9)--(-4,9);  
                \end{scope}
                    \draw (R) circle[radius=9];
                    \draw (O) circle[radius=4.5];
                    \draw (O) node{\scriptsize $+$}; 
                    \draw (O) node[above left]{$a_{ik}^-(w)$}; 
            \end{scope}
            \coordinate (S)at(2.75,-3.25);
            \draw (Z)--(S);
        \end{scope}
        
        \begin{scope}[xshift=9.5cm]
            \draw (-5,-3) node[left]{$\partial D(a_i)$};
            \draw (0,3.5) node{$D_{ik}^-$};
            \draw (-5,5) circle[radius=0.075] node[below]{$a_i$};
            \draw (-1.5,-3) node(Z) {$M_{ik,t}^-$};
            \clip (5,5)--(5,-5)--(-5,-5)--(-5,5)--(5,5);
            \begin{scope}[rotate=135]
                \coordinate (O) at(0,0);
                \coordinate (R) at(9,0);
                \coordinate (A) at(0,9);
                \coordinate (L) at(-9,0);
                \coordinate (B) at(0,-9);
                \begin{scope}
                        \clip (O) circle[radius=4.5];
                        \fill[lightgray] (-3,-9)rectangle(-6,9);
                        \draw (-3,-9)--(-3,9);  
                \end{scope}
                \draw (R) circle[radius=9];
                \draw (O) circle[radius=4.5];
                \fill[black] (O) circle[radius=0.14];
                \draw (O) node{\scriptsize $+$};
                \draw (O) node[above left]{$a_{ik}^-(w)$} node[above] (P) {}; 
            \end{scope}
            \draw (Z)--(0,0);
            \coordinate (S)at(2.5,-2.5);
            \draw (Z)--(S);
        \end{scope}
    \end{tikzpicture}
    \caption{$t=A_{ik}^-(w)$}
    \label{fig:Aminus}
\end{figure}
For $s\in\RR$ we set 
\begin{align}
M_{ik,s}^-\coloneq\Set*{z\in D(a_i) \cap D_{ik}^-}{\Re f^w(z)\leq s}
\cup \{ z \in \Omega \cap D_{ik}^- \ | \ \xi^w(z)\leq s \} 
\quad \subset D_{ik}^-.
\end{align}
Then by the condition $t=A_{ik}^-(w)=\phi_{-,k}^w(a_{ik}^-(w))<L_i^-(w)$, 
for $0<\varepsilon\ll1$ we have $M_{ik,t}^-=\{a_{ik}^-(w)\}
\sqcup \{ z \in \Omega \cap D_{ik}^+ \ | \ \xi^w(z)\leq t \}$, 
$M_{ik,t-\varepsilon}^- = 
\{ z \in \Omega \cap D_{ik}^+ \ | \ \xi^w(z)\leq t -\varepsilon \}$ 
(see Figure \ref{fig:Aminus}) and obtain an isomorphism
\begin{align}
\rsect_c(X^{\an};  (K_{ik}^-)_{M_{ik,t}^-\bs 
M_{ik,t-\varepsilon}^-})\simeq\CC.
\end{align}
This means that the rank of 
$H^{-1}\rsect_c(X^{\an};G(w,s))$ increases by one 
over the disk $D_{ik}^-$ 
at $s=t$. Set $t^\prime\coloneq L_i^-(w)>t=A_{ik}^-(w)$. 
Then for any $t\leq s<t^\prime$ there is no 
jump in the cohomology groups of $\rsect_c(X^{\an};G(w,s))$ 
from those of $\rsect_c(X^{\an};G(w,t))$ caused by 
the changes of the level sets of the Morse functions 
$\Re f^w$ and $\xi^w$ on a neighborhood of the point 
$a_{ik}^-(w)\in\partial D(a_i)$.
Indeed, for $t\leq s<t^\prime$ we have 
\begin{equation}
\{z\in D(a_i) \cap D_{ik}^- \ | \ \Re f^w(z)\leq s \} \cap 
\{ z \in \Omega \cap D_{ik}^- \ | \ \xi^w(z)\leq s \} 
=\emptyset. 
\end{equation}
But for $s=t^\prime$ this intersection is 
a one point set (see Figure \ref{fig:Lminus})
\begin{figure}
    \centering
    \begin{tikzpicture}[scale=0.4]
        \draw (0,0) node{$\longrightarrow$};
        \begin{scope}[xshift=-7cm]
            \draw (-5,-3) node[left]{$\partial D(a_i)$};
            \draw (-5,4) circle[radius=0.075] node[below]{$a_i$};
            \draw (-4,-6) node(Z) {$\{ z \in \Omega \cap D_{ik}^- \ | \ \xi^w(z)\leq t^\prime-\varepsilon \} $};
            \draw (-3,6.5) node(W) {$\{z\in D(a_i) \cap D_{ik}^- \ | \ \Re f^w(z)\leq t^\prime-\varepsilon \}$};
            \coordinate (S)at(-1,3);
            \coordinate (T)at(0,-4);
            \draw (W)--(S);
            \clip (5,5)--(5,-5)--(-5,-5)--(-5,5)--(5,5);
            \begin{scope}[rotate=135]
                \coordinate (O) at(0,0);
                \coordinate (R) at(9,0);
                \coordinate (A) at(0,9);
                \coordinate (L) at(-9,0);
                \coordinate (B) at(0,-9);
                \begin{scope}
                    \clip (O) circle[radius=4.5];
                    \fill[lightgray] (R) circle[radius=9]; 
                    \fill[lightgray] (-2.25,-9)rectangle(-6,9);
                    \draw (-2.25,-9)--(-2.25,9);  
                \end{scope}
                    \draw (R) circle[radius=9];
                    \draw (O) circle[radius=4.5];
                    \draw (O) node{\scriptsize $+$}; 
            \end{scope}
            \draw (-3,0) node{$D_{ik}^-$};
            \draw (W)--(S);
            \draw (Z)--(T);
        \end{scope}
        
        \begin{scope}[xshift=9.5cm]
            \draw (-5,-3) node[left]{$\partial D(a_i)$};
            \draw (-5,4) circle[radius=0.075] node[below]{$a_i$};
            \draw (4,-6) node(Z) {$\{ z \in \Omega \cap D_{ik}^- \ | \ \xi^w(z)\leq t^\prime \} $};
            \draw (1,6.5) node(W) {$\{z\in D(a_i) \cap D_{ik}^- \ | \ \Re f^w(z)\leq t^\prime \}$};
            \coordinate (S)at(0,3);
            \coordinate (T)at(2,-3);
            \draw (W)--(S);
            \clip (5,5)--(5,-5)--(-5,-5)--(-5,5)--(5,5);
            \begin{scope}[rotate=135]
                \coordinate (O) at(0,0);
                \coordinate (R) at(9,0);
                \coordinate (A) at(0,9);
                \coordinate (L) at(-9,0);
                \coordinate (B) at(0,-9);
                \begin{scope}
                    \clip (O) circle[radius=4.5];
                    \fill[lightgray] (R) circle[radius=9]; 
                    \fill[lightgray] (0,-9)rectangle(-6,9);
                    \draw (0,-9)--(0,9);   
                \end{scope}
                    \draw (R) circle[radius=9];
                    \draw (O) circle[radius=4.5];
                    \fill[black] (O) circle[radius=0.1];
                    \draw (O) node{\scriptsize $+$}; 
            \end{scope}
            \draw (-3,0) node{$D_{ik}^-$};
            \draw (W)--(S);
            \draw (Z)--(T);
        \end{scope}
    \end{tikzpicture}
    \caption{$t^\prime=L_i^-(w)$}
    \label{fig:Lminus}
\end{figure}
and the rank of 
$H^{-1}\rsect_c(X^{\an}\\;G(w,s))$ decreases 
by one over the disk $D_{ik}^-$ 
at $s=t^\prime$. 
Namely at $s=t^\prime$ we get back to 
the situation at $s=t-\epsilon$ $(0<\epsilon\ll1)$. 
This explains the reason why we have the factors 
$(\sfE_{V\vbar \var{Y}^{\an}}^{-A_{ik}^-\vartriangleright-L_i^-}[1])
^{N_i^-(k)}$ $(1\leq i\leq l, 1 \leq k \leq m_i)$ in the assertion.
We can treat the other functions  
$A_{\infty k}^\pm$ ($1 \leq k \leq m_{\infty}$) 
and $L_\infty^\pm$ similarly. 

Finally, we consider 
$t\in\RR$ such that there exists 
$a_i\in D^{\an}$ satisfying the conditions 
$r_i>0$ and  $t=c_i(w)= \Re (a_iw)$. 
For $a_i\in D^{\an}$ such that $r_i>0$ and 
$t=c_i(w)$, we take a sufficiently small closed disk 
$D_i^0 \subset {\rm Int}D(a_i)$ centered at 
$a_i\in D^{\an}$ and for $s\in\RR$ we set 
\begin{equation}
M_{i,s} \coloneq\Set*{z\in D_i^0\bs\{a_i\}}{\Re(zw)\leq s}, 
\qquad 
M_{i,s}^{\circ} \coloneq\Set*{z\in D_i^0\bs\{a_i\}}{\Re(zw) < s}.
\end{equation}
Let $K_i \simeq \CC_{M_{i,t}}^{r_i}$ be a constant subsheaf 
of the local system $L|_{M_{i,t}}$ on $M_{i,t}$ associated 
to the exponential factor $0 \in N_i^{>0}$ 
(see \eqref{locdesc}). Then for $0< \varepsilon \ll 1$ 
the restriction of $G(w,t, t- \varepsilon )$ to 
$M_{i,t}$ has a direct summand isomorphic to 
$(K_i)_{M_{i,t} \setminus M_{i, t- \varepsilon }} [1] 
\simeq 
\CC^{r_i}_{M_{i,t} \setminus M_{i, t- \varepsilon }} [1]$ 
(see \eqref{locdesc}). 
As in the proof of Lemma \ref{lem-vanish} 
by a Mayer-Vietoris exact sequence, we can 
easily show 
\begin{equation}
\rsect_c( M_{i,t} ; (K_i)_{M_{i,t} \setminus M_{i, t- \varepsilon }^{\circ}} ) 
\simeq 0. 
\end{equation}
For the set $L_{i, t- \varepsilon }:= 
M_{i,t- \varepsilon } \setminus M_{i, t- \varepsilon }^{\circ}$ 
isomorphic to the closed interval $[0,1] \subset \RR$, 
let us consider the exact sequence 
\begin{equation}
0 \longrightarrow 
(K_i)_{M_{i,t} \setminus M_{i, t- \varepsilon }} 
\longrightarrow 
(K_i)_{M_{i,t} \setminus M_{i, t- \varepsilon }^{\circ}} 
\longrightarrow 
(K_i)_{L_{i, t- \varepsilon }} \longrightarrow 0. 
\end{equation}
Then we obtain an isomorphism 
\begin{equation}
\rsect_c( M_{i,t} ; (K_i)_{M_{i,t} \setminus M_{i, t- \varepsilon }} [1]) 
\simeq 
\rsect_c( M_{i,t} ; (K_i)_{L_{i, t- \varepsilon }}) 
\simeq \CC^{r_i}. 
\end{equation}
This explains the reason why we have the factors 
$\bigl(\sfE_{V\vbar \var{Y}^{\an}}^{-c_i}\bigr)^{r_i}$ $(1\leq i\leq l)$ 
in the assertion.
This completes the proof.
\end{proof}

Now we apply Lemma \ref{lem-K1} and 
Lemma \ref{lem-bounded} to Proposition \ref{prop-infty-enh}.
Then we obtain an isomorphism
\begin{align}
\pi^{-1}\CC_V\otimes Sol_{\var{Y}}^{\rmE}(\tl{\SM^\wedge}) \quad &\simeq \quad 
\CC_{\var{Y}^{\an}}^{\rmE} 
\Potimes \( \pi^{-1}\CC_V\otimes{}^{\sfL}G \) \\
&\simeq \quad \bigoplus_{i=1}^l\biggl\{
\bigoplus_{j=1}^{n_i}
\bigl(\EE_{V\vbar \var{Y}^{\an}}^{-c_{ij}}\bigr)^{N_{ij}}
\oplus\bigl(\EE_{V\vbar \var{Y}^{\an}}^{-c_i}\bigr)^{r_i}\biggr\} 
\oplus \bigoplus_{j=1}^{n_\infty}
\bigl(\EE_{V\vbar \var{Y}^{\an}}^{-c_{\infty j}}\bigr)^{N_{\infty j}}.
\end{align}
We can also rewrite the right hand side to 
\begin{align}
\bigoplus_{i=1}^n(\EE_{V\vbar\var{Y}^{\an}}^{\Re g_i})^{\oplus d_i}
\end{align}
by Lemma \ref{lem-T7}. This completes the proof of Theorem \ref{Thm-T3}.
\qed \\ 

We can describe the generic rank $d(\SM)$ of
the Fourier transform $\SM^\wedge$
in terms of the irregularities of $\SM$ as follows.
First, note that for
$\theta\in S_{a_i}X^{\an}$ $(1\leq i\leq l)$
the non-negative number
\begin{align}
\sum_{f\in(N_i^{>0})_\theta}N_i(f)\cdot\mathrm{ord}_{a_i}(f)\,\geq0
\end{align}
does not depend on the choice of $\theta\in S_{a_i}X^{\an}$.
We know moreover that it is an integer and
call it the irregularity of the meromorphic connection $\SM^{\an}$
at $a_i\in D^{\an}$ (see e.g. Sabbah \cite{Sab93}).
We denote it by $\mathrm{irr}_{a_i}(\SM)$.
Note also that for $\theta\in S_\infty\var{X}^{\an}$
the non-negative number
\begin{align}
\sum_{f\in(N_\infty^{>0})_\theta}N_\infty(f)\cdot
\min\left\{\mathrm{ord}_\infty(f)-1,0\right\}\,\geq0
\end{align}
does not depend on $\theta\in S_\infty\var{X}^{\an}$.
We can easily show that it is an integer
(see e.g. \cite[Lemma 2.4]{Sab08}).
Then we denote it by $e_\infty(\SM)\in\ZZ_{\geq0}$ and
obtain the following corollary of Theorem \ref{Thm-T3}.

\begin{corollary}\label{Cor-T8}
The generic rank $\rk(\SM^\wedge)$ of
the Fourier transform $\SM^\wedge$ of $\SM$ is equal to
\begin{align}
\left\{\sum_{i=1}^{l}
\(\irr_{a_i}(\SM)+\rk(\SM)\)\right\}
+e_\infty(\SM).
\end{align}
\end{corollary}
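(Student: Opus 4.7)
The plan is to combine Theorem \ref{Thm-T3} with a local degree analysis of the projection from $\CCirr(\SM)$ to $Y^{\an}$. By Theorem \ref{Thm-T3} we have $\rk(\SM^\wedge) = d(\SM)$, the covering degree of $\chi(\CCirr(\SM))$ over the punctured disk $B(b_\infty)^\circ \subset Y^{\an}$. Since the symplectic transform $\chi\colon (z,w) \mapsto (w,-z)$ interchanges the base and cotangent-fiber directions of $T^\ast X^{\an} \simeq X^{\an} \times Y^{\an}$, the cover $\chi(\CCirr(\SM)) \to Y^{\an}$ has the same degree as the second-factor projection $\pi_2 \colon \CCirr(\SM) \subset X^{\an} \times Y^{\an} \longrightarrow Y^{\an}$ near $b_\infty = \infty$. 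I would then decompose $\CCirr(\SM) = \CCirr(\SM)_\infty + \sum_{i=1}^l \CCirr(\SM)_i$ and compute the local $\pi_2$-degree contribution of each summand over $B(b_\infty)^\circ$.

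For a finite point $a_i \in D^{\an}$, the piece $r_i \cdot [T_{\{a_i\}}^\ast X^{\an}] = r_i \cdot [\{a_i\} \times Y^{\an}]$ maps isomorphically onto $Y^{\an}$ under $\pi_2$, and therefore contributes $r_i$ to the covering degree. For each non-zero exponential factor $f$ at $a_i$ of pole order $m = \ord_{a_i}(f) > 0$, the Lagrangian $\Lambda_i^f$ projects to $Y^{\an}$ by $z \mapsto f'(z)$; since $f'$ has pole order $m+1$ at $a_i$, this map has local degree $m+1$ from a punctured neighborhood of $a_i$ onto a neighborhood of $\infty$ in $Y^{\an}$. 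For rational $m = p/q$ this degree count is verified by passing to the $q$-fold ramified cover on which $f$ becomes single-valued, and the summation over the stalk $(N_i^{>0})_\theta$ at any direction $\theta$ automatically accounts for all conjugate Puiseux branches with their correct multiplicities. Using that $\sum_{f \in (N_i^{>0})_\theta} N_i(f) = \rk(\SM)$ (from the Hukuhara--Levelt--Turrittin decomposition) and $\sum_{f \in (N_i^{>0})_\theta,\, f \neq 0} N_i(f)\cdot\ord_{a_i}(f) = \irr_{a_i}(\SM)$, the contribution from $a_i$ equals
\begin{equation*}
r_i + \sum_{f \in (N_i^{>0})_\theta,\, f \neq 0} N_i(f)\bigl(\ord_{a_i}(f) + 1\bigr) = r_i + \irr_{a_i}(\SM) + \bigl(\rk(\SM) - r_i\bigr) = \rk(\SM) + \irr_{a_i}(\SM).
\end{equation*}

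For the point $a_\infty = \infty$, an exponential factor $f$ with $m = \ord_\infty(f) > 0$ gives the Lagrangian $\Lambda_\infty^f$, and near $z = \infty$ we have $f'(z) \sim z^{m-1}$; hence the image of $z \mapsto f'(z)$ reaches a neighborhood of $\infty$ in $Y^{\an}$ only when $m > 1$, with local degree $m-1$ per branch, while for $0 < m \leq 1$ the image is bounded and contributes nothing near $b_\infty$. Summing with multiplicities gives exactly $e_\infty(\SM)$, and combining with the finite-point contributions produces the claimed formula for $\rk(\SM^\wedge)$. The main technical hurdle is the local degree computation for Puiseux germs of fractional pole order; this is handled cleanly by the ramified-cover parameterization above together with Lemma \ref{lem-T2-1}, which already sets up the Legendre transform between étalé spaces of Puiseux germs in a way that matches the local structure of $\CCirr(\SM)$ at $a_i$ and $a_\infty$ with the prescribed behavior of $\chi(\CCirr(\SM))$ at $b_\infty$.
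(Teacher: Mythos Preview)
Your proposal is correct and follows essentially the same approach the paper intends: the paper states Corollary~\ref{Cor-T8} as an immediate consequence of Theorem~\ref{Thm-T3} (i.e.\ $\rk(\SM^\wedge)=d(\SM)$) together with a direct count of the covering degree of $\CCirr(\SM)\to Y^{\an}$ over $B(b_\infty)^\circ$, and you have spelled out that count carefully for each piece $\CCirr(\SM)_i$ and $\CCirr(\SM)_\infty$. Your handling of fractional pole orders via the ramified cover and the observation that summing over the stalk $(N_i^{>0})_\theta$ already accounts for all conjugate Puiseux branches is exactly the right bookkeeping; note also that the paper's $\min\{\ord_\infty(f)-1,0\}$ in the definition of $e_\infty(\SM)$ is evidently a typo for $\max$, and your analysis is consistent with the intended meaning.
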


\subsection{The proofs of Theorems 
\ref{thm-on-bnd} and \ref{thm-CharCyc}
and related results} 
\label{sec:newT4}

First, let us prove Theorem \ref{thm-on-bnd}. 
Since the sector $W$ is simply connected, 
in view of the proof of Proposition \ref{prop:hariaw} 
it suffices to treat the case where it is 
sufficiently narrow. 
For $w\in W\subset B(b)^\circ$ let
$\gamma_{\infty j}^b(w)\in
{\rm Int}D (a_\infty)^\circ$ $(1\leq j\leq n_\infty(b))$ be the points in the punctured
disk
${\rm Int}D (a_\infty)^\circ$ such that
\begin{equation}
(f^w)^\prime(\gamma_{\infty j}^b(w))=0
\end{equation}
for some (non-zero) exponential factor
$f\in N_{\infty, b}^{>0}$ such that $f\neq bz$ and set
\begin{equation}
c_{\infty j}^b\coloneq
\Re (f^w)(\gamma_{\infty j}^b(w))\in\RR
\quad (1\leq j\leq n_\infty(b))
\end{equation}
and
\begin{equation}
N_{\infty j}^b\coloneq N_\infty(f)>0 \quad (1\leq j\leq n_\infty(b)).
\end{equation}
Then we have 
\begin{equation}
    \sum_{j=1}^{n_\infty(b)} N_{\infty j}^b
    =\sum_{i=1}^m e_i
    =d(\SM)^b.
\end{equation}
For a point $a_i\in D^{\an}$ and
$w\in W\subset B(b)^\circ$ let us count
the total number of the critical points of
the real-valued functions
$\Re f^w\vert_{\partial D(a_i)}\colon
\partial D(a_i)\to\RR$ $(f\in N_i^{>0},f\neq0)$.
Namely we count them with
the multiplicities $N_i(f)$.
Since $\Re f^w\vert_{\partial D(a_i)}$ is
a linear perturbation of $- \Re f
\vert_{\partial D(a_i)}$,
after shrinking the disk $D(a_i)$
if necessary, it suffices to count the total number of
the critical points of the real-valued functions
$\Re f\vert_{\partial D(a_i)}\colon
\partial D(a_i)\to\RR$ $(f\in N_i^{>0},f\neq0)$.
For a point $\theta\in S_{a_i}X^{\an}$
we say that $f_1,f_2\in (N_i^{>0})_\theta$ are
equivalent if $f_1$ is analytically continued to
$f_2$ along some path in the punctured disk $B(a_i)^\circ$.
Let $(N_i^{>0})_\theta^\sim$ be
the quotient set of $(N_i^{>0})_\theta$
obtained by this equivalence relation.
By analytic continuations,
for $\theta_1,\theta_2\in S_{a_i}X^{\an}$ there exists
a (unique) bijection between
$(N_i^{>0})_{\theta_1}^\sim$ and $(N_i^{>0})_{\theta_2}^\sim$.
Hence by fixing a point
$\theta\in S_{a_i}X^{\an}$ we set
$(N_i^{>0})^\sim\coloneq(N_i^{>0})_\theta^\sim$ for short.
For $[f]\in(N_i^{>0})^\sim$ $(f\in(N_i^{>0})_\theta)$
denote by $v([f])$ the number of the elements of
$(N_i^{>0})_\theta$ equivalent to $f$.
Note that if $f_1,f_2\in(N_i^{>0})_\theta$ are
equivalent then $N_i(f_1)=N_i(f_2)$ and
$\ord_{a_i}(f_1)=\ord_{a_i}(f_2)$.
We thus obtain morphisms
\begin{equation}
N_i\colon(N_i^{>0})^\sim
\longrightarrow\ZZ_{>0},\quad
\ord_{a_i}(\cdot)\colon(N_i^{>0})^\sim\bs\{0\}
\longrightarrow\QQ_{>0}.
\end{equation}
Then we can easily show that for
$w\in W\subset B(b)^\circ$ the total number of
the critical points of the functions
$\Re f^w\vert_{\partial D(a_i)}\colon
\partial D(a_i)\to\RR$ $(f\in N_i^{>0},f\neq0)$ is equal to
\begin{equation}
2\times\sum_{[f]\in (N_i^{>0})^\sim\bs\{0\}}
N_i([f])\cdot v([f])\cdot\ord_{a_i}([f]).
\end{equation}
The number of the critical points with
local maximal value is the half of it i.e.
\begin{equation}
m_i(b)\coloneq\sum_{[f]\in (N_i^{>0})^\sim\bs\{0\}}
N_i([f])\cdot v([f])\cdot\ord_{a_i}([f])
=\irr_{a_i}(\SM)
\end{equation}
and we denote by $B_{ik}^+(w)$ $(1\leq k\leq m_i(b))$
the corresponding local maximal values. Note that 
they are bounded continuous functions on $W$. 

Similarly, also for the point $a_\infty=\infty\in\tl{D}$,
we define $(N_\infty^{>0})^\sim$ and
the morphisms $v\colon(N_\infty^{>0})^\sim\to\ZZ_{>0}$,
$N_{\infty}\colon(N_\infty^{>0})^\sim\to\ZZ_{>0}$,
$\ord_{a_\infty}(\cdot)\colon
(N_\infty^{>0})^\sim\bs\{0\}\to\QQ_{>0}$.
Let $(N_{\infty,b}^{>0})^\sim$ be
the image of $N_{\infty,b}^{>0}$ in
$(N_\infty^{>0})^\sim$.
Then we can easily show that
the total number of the critical points with
local maximal value of the functions
$\Re f^w\vert_{\partial D(a_\infty)}\colon
\partial D(a_\infty)\to\RR$ $(f\in N_\infty^{>0})$
is equal to
\begin{align}
m_\infty(b)&\coloneq
\sum_{[f]\in(N_\infty^{>0})^\sim,\ \ord_{\infty}(f)>1}
N_\infty([f])\cdot v([f])\cdot\ord_\infty([f]) \notag \\
&+ \sum_{[f]\in(N_\infty^{>0})^\sim\bs(N_{\infty,b}^{>0})^\sim,
\ \ord_\infty([f])\leq1}
N_\infty([f])\cdot v([f]) \notag \\
&+ \sum_{[f]\in(N_{\infty,b}^{>0})^\sim,\ f\neq bz}
N_\infty([f])\cdot v([f])\cdot\ord_\infty([f-bz]) \notag \\
&+N_\infty(bz).
\end{align}
Note that we have $m_\infty(b)\geq N_\infty(bz)$.
We denote by
$B_{\infty k}^+(w)$ $(1\leq k\leq m_\infty(b))$
the corresponding local maximal values. 
As in the proof of Theorem \ref{Thm-T3},
by the Morse function $\xi^w\colon\Omega\to\RR$ $(w\in W)$
we define also real-valued functions
$c_i\colon W\to\RR$, $L_i^\pm\colon
W\to\RR$ $(1\leq i\leq l)$ and
$L_\infty^\pm\colon W\to\RR$. Also these functions 
are bounded and continuous on $W$. 
\begin{figure}
    \centering    
    \begin{tikzpicture}[scale=0.45]
        \begin{scope}
            \draw (-4.5,3) node[above]{$B(a_\infty)^\circ$};
            \draw (5,5) circle[radius=0.075] node[below]{$a_\infty$};
            \draw (5,3) node[right]{$U(W)$};
            \clip (5,5)--(5,-5)--(-5,-5)--(-5,5)--(5,5);
            \begin{scope}[rotate=225]
                \coordinate (O) at(0,0);
                \coordinate (R) at(9,0);
                \coordinate (A) at(0,9);
                \coordinate (L) at(-9,0);
                \coordinate (B) at(0,-9);
                \begin{scope}
                    \clip (O) circle[radius=4.5];
                    \fill[lightgray] (-3.5,-9)rectangle(-6,9);
                    \draw[dash pattern=on 3pt off 3pt] (-3.5,-9)--(-3.5,9);  
                \end{scope}
                    \draw[dash pattern=on 3pt off 3pt] (O) circle[radius=4.5];
                    \fill[white] (0,-2) circle[radius=0.075];
                    \draw (0,-2) circle[radius=0.075]; 
                    \draw[dash pattern=on 3pt off 3pt] (0,-2) circle[radius=0.5];

                    \fill[white] (2,1) circle[radius=0.075];
                    \draw[dash pattern=on 3pt off 3pt] (2,1) circle[radius=1.2];
                    \draw (2,1) circle[radius=0.075] node[below]{$a_i$};
                    \draw (1.2,-1.7) node[left,below]{$B(a_i)^\circ$};

                    \fill[white] (-1,1) circle[radius=0.075] ;
                    \draw (-1,1) circle[radius=0.075] ;
                    \draw[dash pattern=on 3pt off 3pt] (-1,1) circle[radius=0.5];
            \end{scope}
            \draw (5,3)--(3.25,2.5);
        \end{scope}
    \end{tikzpicture}
    \caption{$U(W)$}
    \label{fig:UW}
\end{figure}
Let $\beta\in\CC^\ast=\CC\bs\{0\}$ be
a non-zero complex number such that the sector
$W\subset B(b)^\circ$ is a sectorial neighborhood of
the point $b+\beta\cdot0\in S_bY^{\an}$ and
fix a point $w_0\in(b+\RR_{>0}\beta)\cap W$.
For $w\in W$ let $P(a_\infty)_+^w\in\partial D(a_\infty)$
be the point where the real-valued function
$\tl{\xi^w}\vert_{\partial D(a_\infty)}\colon
\partial D(a_\infty)\to\RR$ takes
its maximal value $L_\infty^+(w)$.
Then for a sufficiently small $\varepsilon_0>0$
we define an open subset
$U(W)\subset\Omega=X^{\an}\bs(D(a_1)\cup
\dots\cup D(a_l)\cup D(a_\infty))\subset U^{\an}$ by
\begin{equation}
U(W)\coloneq\Set*{z\in\Omega}
{\tl{\xi^{w_0}}(z)>L_\infty^+(w_0)-\varepsilon_0}.
\end{equation}
It is clear that $U(W)$ is convex and
$P(a_\infty)_+^{w_0}\in\overline{U(W)}
\cap\partial D(a_\infty)$ (see Figure \ref{fig:UW}).
Moreover, shrinking the sector
$W\subset B(b)^\circ$ if necessary,
we may assume that $W$ is a
sectorial open neighborhood of the point
$b+\beta\cdot0\in S_bY^{\an}$ and
for any $w\in W$ we have
$P(a_\infty)_+^w\in\overline{U(W)} \cap\partial D(a_\infty)$.
In order to describe the restriction
$\sfE_W$ of the enhanced sheaf
$\pi^{-1}\CC_W\otimes{}^\sfL G\in\BEC(\CC_{\var{Y}^{\an}})$
to $Y^{\an}\subset\var{Y}^{\an}$ we set
\begin{align}
\begin{cases}
\ \sfE_W^\prime\coloneq\pi^{-1}\CC_W\otimes
\rmR p_{2!}(p_1^{-1}G_{(X^{\an}\bs U(W))\times\RR}^\circ
\otimes\CC_{\{t-s-\Re zw\geq0\}}[1]), \\
\ \sfE_W^{\prime\prime}\coloneq\pi^{-1}\CC_W\otimes
\rmR p_{2!}(p_1^{-1}G_{U(W)\times\RR}^\circ
\otimes\CC_{\{t-s-\Re zw\geq0\}}[1]), \\
\end{cases}
\end{align}
and consider the distinguished triangle
\begin{equation}
\sfE_W^{\prime\prime}\longrightarrow\sfE_W
\longrightarrow\sfE_W^\prime\overset{+1}{\longrightarrow}.
\end{equation}
We need this decomposition of $\sfE_W$, because 
we do not know so far how to calculate it directly. 
Then by applying the Morse theoretical method in
the proof of Theorem \ref{Thm-T3} to our situation,
we obtain an isomorphism
\begin{align}\label{eq:sfEW}
\sfE_W^\prime&\simeq
\bigoplus_{i=1}^l\Biggl\{\bigoplus_{k=1}^{m_i(b)}
\sfE_{W\vbar Y^{\an}}^{-B_{ik}^+}
\oplus\bigl(\sfE_{W\vbar Y^{\an}}
^{-L_i^-+c\,\vartriangleright -L_i^-}[1]
\bigr)^{r_i} \notag \\
&\oplus\bigl(\sfE_{W\vbar Y^{\an}}^{-L_i^-}\bigr)^{\rk(\SM)-r_i}
\oplus\bigl(\sfE_{W\vbar Y^{\an}}^{-c_i}\bigr)^{r_i}\Biggr\}
\oplus\Biggl(\bigoplus_{k=1}^{m_\infty(b)}
\sfE_{W\vbar Y^{\an}}^{-B_{\infty k}^+}\Biggr) \notag \\
&\oplus \Biggl\{\bigoplus_{j=1}^{n_\infty(b)}
\bigl(\sfE_{W\vbar Y^{\an}}^{-c_{\infty j}^b}\bigr)
^{N_{\infty j}^b}\Biggr\}.
\end{align}
Let us set
\begin{align}
\begin{cases}
(\sfE_W^\prime)_\reg \coloneq
&\displaystyle\bigoplus_{i=1}^l\Biggl\{
\bigoplus_{k=1}^{m_i(b)}\sfE_{W\vbar Y^{\an}}^{-B_{ik}^+}
\oplus\bigl(\sfE_{W\vbar Y^{\an}}
^{-L_i^-+c\,\vartriangleright -L_i^-}[1]\bigr)^{r_i} \\
&\oplus\bigl(\sfE_{W\vbar Y^{\an}}^{-L_i^-}\bigr)^{\rk(\SM)-r_i}
\oplus\bigl(\sfE_{W\vbar Y^{\an}}^{-c_i}\bigr)
^{r_i}\Biggr\}
\oplus\Biggl(
\displaystyle\bigoplus_{k=1}^{m_\infty(b)}
\sfE_{W\vbar Y^{\an}}^{-B_{\infty k}^+}\Biggr),
 \\
(\sfE_W^\prime)_\irr \coloneq
&\Biggl\{\displaystyle\bigoplus_{j=1}^{n_\infty(b)}
\bigl(\sfE_{W\vbar Y^{\an}}^{-c_{\infty j}^b}\bigr)
^{N_{\infty j}^b}\Biggr\}
\end{cases}
\end{align}
so that we have an isomorphism
$\sfE_W^\prime\simeq(\sfE_W^\prime)_\reg
\oplus(\sfE_W^\prime)_\irr$.
Then by Lemma \ref{lem-K1} it is easy to see that
the enhanced ind-sheaf
$\CC_{Y^\an}^\rmE\Potimes(\sfE_W^\prime)_\reg$
is isomorphic to $( \EE_{W\vbar Y^{\an}}^0)^{\nu_b}$,
where we set
\begin{equation}\label{eq:natural}
\nu_b\coloneq
\Biggl\{\sum_{i=1}^l(\irr_{a_i}(\SM)+\rk(\SM))\Biggr\}
+m_\infty(b).
\end{equation}
We can also show that
\begin{equation}\label{eq:star}
m_\infty(b)+
\biggl(\sum_{j=1}^{n_\infty(b)}N_{\infty j}^b\biggr)
=e_\infty(b)+\rk(\SM).
\end{equation}
Indeed, for $f\in N_{\infty,b}^{>0}$ such that $f\neq bz$ we set 
\begin{equation}
    \lambda\coloneq\ord_\infty(f-bz) \quad \in(0,1).
\end{equation}
Then there exists a non-zero complex number $\alpha\in\CC$ such that
\begin{equation}
    f(z)=bz+\alpha z^\lambda+
    (\textit{lower order terms})
\end{equation}
and hence for $w\in W$ and $z\in B(a_\infty)^\circ$ the condition
$(f^w)^\prime(z)=0$ is equivalent to 
\begin{equation}
    w-b=\alpha\lambda z^{\lambda-1}+(\textit{lower order terms}).
\end{equation}
This implies that we have 
\begin{equation}
    \sum_{j=1}^{n_\infty(b)}N_{\infty j}^b
    =\sum_{[f]\in(N_{\infty,b}^{>0})^\sim,\ f\neq bz}
    N_\infty([f])\cdot v([f])\cdot(1-\ord_\infty([f-bz]))
\end{equation}
from which (\ref{eq:star}) immediately follows.
Similarly, for $\sfE_W^{\prime\prime}$
we obtain an isomorphism
\begin{equation}\label{eq:Ewprpr}
\sfE_W^{\prime\prime}[1]\simeq
(\sfE_{W\vbar Y^{\an}}^{-L_\infty^+})^{\rk(\SM)}.
\end{equation}
Then it follows from the distinguished triangle
\begin{equation}
\sfE_W\longrightarrow\sfE_W^\prime\longrightarrow
\sfE_W^{\prime\prime} [1] \overset{+1}{\longrightarrow}
\end{equation}
that there exists an exact sequence
\begin{equation}
0\longrightarrow H^0\sfE_W\simeq\sfE_W\longrightarrow
H^0\sfE_W^\prime\overset{\Phi_W}{\longrightarrow}
H^0\sfE_W^{\prime\prime}[1]\simeq\sfE_W^{\prime\prime}[1]
\longrightarrow0
\end{equation}
of enhanced sheaves on $Y^{\an}$.
Note that by Lemma \ref{lem-T7} (ii) we have 
\begin{equation}
\CC_{Y^\an}^\rmE\Potimes H^0(\sfE_W^\prime)_\irr\simeq
\bigoplus_{j=1}^{n_\infty(b)}
(\EE_{W\vbar Y^\an}^{-c_{\infty j}^b})^{N_{\infty j}^b}
\simeq\bigoplus_{i=1}^m(\EE_{W\vbar Y^\an}^{\Re h_i})^{e_i}
\end{equation}
and
\begin{equation}\label{eq:dMb}
\sum_{j=1}^{n_{\infty}(b)}N_{\infty j}^b=
\sum_{i=1}^m e_i=d(\SM)^b.
\end{equation}
Moreover there exist isomorphisms
\begin{equation}
\CC_{Y^\an}^\rmE\Potimes 
H^0(\sfE_W)\simeq\pi^{-1}\CC_W\otimes Sol_Y^\rmE(\SM^\wedge)
\end{equation}
and
\begin{equation}
\CC_{Y^\an}^\rmE\Potimes H^0(\sfE_W^{\prime\prime}[1])
\simeq(\EE_{W\vbar Y^\an}^0)^{\rk(\SM)}.
\end{equation}
We thus obtain an exact sequence
\begin{equation}
\begin{split}
0\longrightarrow\pi^{-1}\CC_W\otimes 
Sol_Y^\rmE(\SM^\wedge)\longrightarrow
\biggl\{\bigoplus_{i=1}^m
(\EE_{W\vbar Y^\an}^{\Re h_i})^{e_i}\biggr\}
\oplus(\EE_{W\vbar Y^\an}^0)^{\nu_b} \\
\longrightarrow (\EE_{W\vbar Y^\an}^0)^{\rk(\SM)}\longrightarrow0
\end{split}
\end{equation}
of enhanced ind-sheaves on $Y^\an$.
Then in view of (\ref{eq:natural}) and (\ref{eq:star}) and 
Lemma \ref{lem-ITa}, we can apply 
the multiplicity test functor 
in \cite[Section 6.3]{DK18} to 
obtain the first assertion of Theorem \ref{thm-on-bnd}.
If $b\neq0$ and $N_{\infty,b}^{>0}=\emptyset$,
then we can replace $W\subset B(b)$ by
the open disk $B(b)$ to obtain
the last assertion of the theorem.
This completes the proof of Theorem \ref{thm-on-bnd}.
\qed \\ 

Next, let us prove Theorem \ref{thm-CharCyc}. 
Recall that we have 
\begin{equation}
    H^j_{\{b\}}(\SM^\wedge)\simeq0 \quad (j\neq0,1)
\end{equation}
and for $j=0,1$ the holonomic 
$\SD_Y$-modules $H^j_{\{b\}}(\SM^\wedge)$ are 
direct sums of some copies of the standard one 
\begin{equation}
    \SB_{\{b\}\vbar Y}\coloneq 
    H^1_{\{b\}}(\SO_Y)\simeq\SD_Y / \SD_Y(w-b).
\end{equation}
Let $k_0,k_1\in\ZZ_{\geq0}$ be the 
non-negative integers such that 
\begin{equation}
    H_{\{b\}}^j(\SM^\wedge)\simeq 
    \SB_{\{b\}\vbar Y}^{\oplus k_j}\quad (j=0,1).
\end{equation}
Then by applying the functor $Sol_Y(\cdot)$ 
to the distinguished triangle
\begin{equation}\label{eq:dt-rsectb1}
    \tau^{\leq0}\rsect_{\{b\}}(\SM^\wedge)
    \simeq\Gamma_{\{b\}}(\SM^\wedge)\longrightarrow
    \rsect_{\{b\}}(\SM^\wedge)\longrightarrow
    \tau^{\geq1}\rsect_{\{b\}}(\SM^\wedge)
    \simeq H_{\{b\}}^1(\SM^\wedge)[-1]
    \overset{+1}{\longrightarrow},
\end{equation} 
we obtain 
\begin{equation}
    \chi_b(Sol_Y(\rsect_{\{b\}}(\SM^\wedge)))=k_1-k_0
\end{equation}
where $\chi_b(Sol_Y(\rsect_{\{b\}}(\SM^\wedge)))$ is the local Euler-Poincar\'{e} index 
\begin{equation}
    \chi_b(Sol_Y(\rsect_{\{b\}}(\SM^\wedge)))\coloneq
    \sum_{j\in\ZZ}(-1)^j\dim_\CC H^j Sol_Y(\rsect_{\{b\}}(\SM^\wedge))_b
\end{equation}
of $Sol_Y(\rsect_{\{b\}}(\SM^\wedge))$ at $b\in Y^\an$.
On the other hand, it follows from the distinguished triangle 
\begin{equation}
    \rsect_{\{b\}}(\SM^\wedge)\longrightarrow
    \SM^\wedge\longrightarrow
    \rsect_{Y\setminus\{b\}}(\SM^\wedge)
    \simeq\Gamma_{Y\setminus\{b\}}
    (\SM^\wedge)\overset{+1}{\longrightarrow}
\end{equation}
that we have 
\begin{equation}
    \chi_b(Sol_Y(\SM^\wedge))
    =\chi_b(Sol_Y(\Gamma_{Y\setminus\{b\}}(\SM^\wedge)))
    +\chi_b(Sol_Y(\rsect_{\{b\}}(\SM^\wedge))).
\end{equation}
Moreover by Kashiwara's index theorem for holonomic 
$\SD$-modules (see \cite{Kas83}) we have 
\begin{align}
    \chi_b(Sol_Y(\SM^\wedge)) 
    &= \mult_{T_Y^\ast Y}(\SM^\wedge)
    -\mult_{T_b^\ast Y}(\SM^\wedge) \notag \\
    &=\rk(\SM^\wedge)-\mult_{T_b^\ast Y}(\SM^\wedge)  \notag \\
    &=d(\SM)-\mult_{T_b^\ast Y}(\SM^\wedge)
\end{align}
and by applying Proposition \ref{prop:IT20a-3.14} 
to the meromorphic connection 
$\Gamma_{Y\setminus\{b\}}(\SM^\wedge)$ along $b\in Y$ we obtain 
\begin{equation}
    \chi_b(Sol_Y(\Gamma_{Y\setminus\{b\}}(\SM^\wedge)))
    =-\irr_b(\Gamma_{Y\setminus\{b\}}(\SM^\wedge)).
\end{equation}
Combining these results together, we have 
\begin{equation}\label{eq:indexthm}
    \mult_{T_b^\ast Y}(\SM^\wedge) 
    =d(\SM)+\irr_b(\Gamma_{Y\setminus\{b\}}(\SM^\wedge))
    -(k_1-k_0).
\end{equation}
Therefore, for the proof of the first assertion, it suffices to compute 
the local Euler-Poincar\'{e} index 
\begin{equation}
    \chi_b(Sol_Y(\rsect_{\{b\}}(\SM^\wedge)))=k_1-k_0
\end{equation}
of $Sol_Y(\rsect_{\{b\}}(\SM^\wedge))$ at $b\in Y^\an$.
Moreover by \cite[Lemma 4.1]{IT20b}, we may assume that $b\neq0$.
As in the proof of Theorem \ref{thm-on-bnd}, 
for a sufficiently small $\varepsilon_0>0$ we define an open subset $U(b)\subset\Omega$ by 
\begin{equation}
    U(b)\coloneq\Set*{z\in\Omega}
    {\tl{\xi^b}(z)>L_\infty^+(b)-\varepsilon_0}
\end{equation}
and set
\begin{align}
    \begin{cases}
        \ \sfE_b\coloneq\pi^{-1}\CC_{\{b\}}\otimes{}^\sfL G, \\
        \ \sfE_b^\prime\coloneq\pi^{-1}\CC_{\{b\}}\otimes
        \rmR p_{2!}(p_1^{-1}G_{(X^{\an}\bs U(b))\times\RR}^\circ
        \otimes\CC_{\{t-s-\Re zw\geq0\}}[1]), \\
        \ \sfE_b^{\prime\prime}\coloneq\pi^{-1}\CC_{\{b\}}\otimes
        \rmR p_{2!}(p_1^{-1}G_{U(b)\times\RR}^\circ
        \otimes\CC_{\{t-s-\Re zw\geq0\}}[1]). 
    \end{cases}
\end{align}
Note that we have 
\begin{align}
    \CC_{Y^\an}^\rmE\Potimes\sfE_b&\simeq\pi^{-1}\CC_{\{b\}}\otimes Sol_Y^\rmE(\SM^\wedge) \notag \\
    &\simeq Sol_Y^\rmE(\rsect_{\{b\}}(\SM^\wedge)).
\end{align}
Let us consider the distinguished triangle
\begin{equation}\label{eq:sfEb}
    \sfE_b^{\prime\prime}\longrightarrow\sfE_b
    \longrightarrow\sfE_b^\prime\overset{+1}{\longrightarrow}.
\end{equation}
By applying the Morse theoretical method 
in the proof of Theorem \ref{Thm-T3} to our situation,
we can show that there exist isomorphisms
\begin{equation}\label{eq:sfEb1}
    \CC_{Y^\an}^\rmE\Potimes\sfE_b^\prime\simeq
    \pi^{-1}\CC_{\{b\}}\otimes(\CC_{Y^\an}^\rmE)^{\nu_b-N_\infty(bz)}
\end{equation}
and 
\begin{equation}\label{eq:sfEb2}
    \CC_{Y^\an}^\rmE\Potimes(\sfE_b^{\prime\prime}[1])\simeq
    \pi^{-1}\CC_{\{b\}}\otimes(\CC_{Y^\an}^\rmE)^{\rk(\SM)}.
\end{equation}
Indeed, for $f\in N_{\infty,b}^{>0}$ such that 
$f\neq bz$ there exists a non-zero Puiseux germ 
$f_\red\in\SP_{S_\infty\var{Y}^\an}^\prime$ 
such that $0<\ord_\infty(f_\red)<1$ and 
\begin{equation}
    f(z)=bz+f_\red(z).
\end{equation}
This implies that for a sufficiently small 
punctured disk $D(a_\infty)^\circ$ 
centered at the point $a_\infty=\infty\in\var{Y}^\an$, 
the Morse function 
\begin{equation}
    \Re f^b(z)=-\Re f_\red(z)
\end{equation}
has no critical point in 
$B(a_\infty)^\circ=\Int D(a_\infty)^\circ$.
Moreover, for the linear factor 
$f=bz\in N_{\infty,b}^{>0}$ the Morse function $\Re f^b(z)$ 
is identically zero on $D(a_\infty)^\circ$ and hence for any $t\in\RR$ 
its level set at $t$ is equal to 
\begin{equation}
    \Set*{z\in D(a_\infty)^\circ}{\Re f^b(z)\leq t}=
    \begin{cases}
        \ D(a_\infty)^\circ & (t\geq0), \\
        \ \emptyset & (t<0).
    \end{cases}
\end{equation}
As in the proof of Lemma \ref{lem-vanish}, 
for any local system $L_0$ on $D(a_\infty)^\circ$, we can show the vanishing
\begin{equation}
    \rsect_c(D(a_\infty)^\circ; L_0)\simeq0.
\end{equation}
Therefore, the isomorphism (\ref{eq:sfEb1}) 
is obtained in the same way as (\ref{eq:sfEW}).
We can show the isomorphism (\ref{eq:sfEb2}) 
as we showed (\ref{eq:Ewprpr}).
By applying the functor $Sol_Y^\rmE(\cdot)$ to 
the distinguished triangle (\ref{eq:dt-rsectb1}) 
we obtain a distinguished triangle 
\begin{equation}\label{eq:dt-rsectb2}
    Sol_Y^\rmE(\SB_{\{b\}\vbar Y}^{\oplus k_1})[1]\longrightarrow
    Sol_Y^\rmE(\rsect_{\{b\}}(\SM^\wedge))\longrightarrow
    Sol_Y^\rmE(\SB_{\{b\}\vbar Y}^{\oplus k_0})
    \overset{+1}{\longrightarrow}.
\end{equation}
Note that for the regular holonomic 
$\SD_Y$-modules $\SB_{\{b\}\vbar Y}$ by 
Proposition \ref{prop-K2} (iv) there exists an isomorphism
\begin{equation}
    Sol_Y^\rmE(\SB_{\{b\}\vbar Y})
    \simeq \pi^{-1}\CC_{\{b\}}\otimes \CC_{Y^\an}^\rmE[-1].
\end{equation}
Then we obtain
\begin{align}
    H^j(\CC_{Y^\an}^\rmE\Potimes\sfE_b)
    &\simeq H^j Sol_Y^\rmE(\rsect_{\{b\}}(\SM^\wedge)) \notag \\
    &\simeq 
    \begin{cases}
        \ \pi^{-1}\CC_{\{b\}}\otimes(\CC_{Y^\an}^\rmE)^{k_{1-j}} 
        & (j=0,1), \\
        \ 0 & (otherwise).
    \end{cases}
\end{align}
Therefore, it follows from 
the distinguished triangle (\ref{eq:sfEb})
that we have 
\begin{equation}
    k_1-k_0=\nu_b-N_\infty(bz)-\rk(\SM).
\end{equation}
Combining this equation with 
Corollary \ref{Cor-T8}, (\ref{eq:natural}), (\ref{eq:star}) and (\ref{eq:dMb}), we obtain 
the first assertion of Theorem \ref{thm-on-bnd}.
Now let us consider the case where $N_{\infty,b}^{>0}$ 
does not contain the linear factor $bz$. 
In this case, as in the proof of (\ref{eq:sfEb1}),
we can directly show 
\begin{align}\label{eq:nolinear}
    Sol_Y^\rmE(\rsect_{\{b\}}(\SM^\wedge))
    &\simeq \CC_{Y^\an}^\rmE\Potimes\sfE_b \notag \\
    &\simeq \pi^{-1}\CC_{\{b\}}\otimes(\CC_{Y^\an}^\rmE)^{d(\SM)-d(\SM)^b}.
\end{align}
Then by comparing (\ref{eq:nolinear}) with (\ref{eq:dt-rsectb2}) 
we obtain $k_0=0$, $k_1=d(\SM)-d(\SM)^b$ and 
hence the second assertion. 
This completes the proof of Theorem \ref{thm-CharCyc}.
\qed \\ 

We recall the following definition of Verdier \cite{Ver83} in 
the simplest case of $N=1$.

\begin{definition}[Verdier \cite{Ver83}]
We say that a $\CC$-constructible sheaf $\SG\in\BDC_{\mathrm{c}}(Y^{\an})$ 
on $Y^{\an}=\CC$ is monodromic if 
$H^j\SG\vbar_{\CC\bs\{0\}}$ is a local system on 
$\CC\bs\{0\}\subset Y^{\an}=\CC$ for any $j\in\ZZ$.
\end{definition}

Then we obtain the following very simple consequence of 
Theorem \ref{thm-CharCyc} (see \cite{IT20b} for a related 
result in higher dimensions). 

\begin{corollary}\label{cor:monod}
For the meromorphic connection $\SM$ on $X=\CC$, the solution 
complex $Sol_Y(\SM^\wedge)\in\BDC_{\mathrm{c}}(Y^\an)$ of 
its Fourier transform $\SM^\wedge$ is monodromic if and only if 
$N_{\infty,b}^{>0}=\emptyset$ for any $b\in\CC\bs\{0\}$. 
\end{corollary}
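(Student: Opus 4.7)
The plan is to translate monodromicity of $Sol_Y(\SM^\wedge)$ into the condition that $\SM^\wedge$ be an integrable connection on $\CC\setminus\{0\}$, and then to recognise this pointwise condition as $N_{\infty,b}^{>0}=\emptyset$ for every $b\neq 0$, using Theorems \ref{thm-on-bnd} and \ref{thm-CharCyc} together with Kashiwara's index theorem.

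For the implication $(\Leftarrow)$, I would invoke the second assertion of Theorem \ref{thm-on-bnd} directly. It already states that whenever $b\neq 0$ and $N_{\infty,b}^{>0}=\emptyset$ one has an isomorphism
\begin{equation*}
\pi^{-1}\CC_{B(b)}\otimes Sol_{\var{Y}}^{\rmE}(\tl{\SM^\wedge})\simeq\bigl(\EE_{B(b)\vbar\var{Y}^{\an}}^0\bigr)^{d(\SM)},
\end{equation*}
so $\SM^\wedge$ is an integrable connection on a neighborhood of $b$. Under the hypothesis of $(\Leftarrow)$ this holds for every $b\in\CC\setminus\{0\}$, hence $\SM^\wedge$ is an integrable connection on the whole of $\CC\setminus\{0\}$; accordingly $Sol_Y(\SM^\wedge)$ restricted there is, up to a degree shift, the local system of horizontal sections, and monodromicity follows.

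For the converse $(\Rightarrow)$, I would argue by contradiction. Suppose $N_{\infty,b}^{>0}\neq\emptyset$ for some $b\in\CC\setminus\{0\}$. Reading off the formula from Theorem \ref{thm-CharCyc},
\begin{equation*}
\mult_{T_b^\ast Y}(\SM^\wedge)=\sum_{i=1}^m e_i\cdot\bigl\{1+\ord_b(h_i)\bigr\}+N_\infty(bz),
\end{equation*}
I would show that the right-hand side is strictly positive by splitting on cases: if the linear factor $bz$ itself lies in $N_\infty^{>0}$ then $N_\infty(bz)\geq 1$; otherwise $N_{\infty,b}^{>0}$ contains a genuinely non-linear element, and the Legendre transform $\sfL_{(\infty,b)}$ of Lemma \ref{lem-T2-1} produces at least one germ $h_i\in\SP_{S_b Y^{\an}}^{\prime}$ with $\ord_b(h_i)>0$, so some $e_i\cdot\{1+\ord_b(h_i)\}\geq 1$ appears on the right. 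Granted this strict positivity, Kashiwara's index theorem yields $\chi_b(Sol_Y(\SM^\wedge))=\rk(\SM^\wedge)-\mult_{T_b^\ast Y}(\SM^\wedge)<\rk(\SM^\wedge)$, so the stalk of $Sol_Y(\SM^\wedge)$ at $b$ has Euler characteristic strictly less than the generic one, and the cohomology sheaves fail to be locally constant at $b$, contradicting monodromicity. The main obstacle I anticipate is precisely the bookkeeping in the middle step, where one must cleanly separate the linear and non-linear contributions of $N_{\infty,b}^{>0}$ between $N_\infty(bz)$ and $\sum e_i\{1+\ord_b(h_i)\}$ in the characteristic-cycle formula, via the Legendre correspondence $\sfL_{(\infty,b)}$.
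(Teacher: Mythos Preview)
Your proposal is correct and follows essentially the same route as the paper. The paper argues both directions uniformly through Theorem \ref{thm-CharCyc}: it observes (implicitly via Kashiwara's index theorem, exactly as you do) that monodromicity is equivalent to $\mult_{T_b^\ast Y}(\SM^\wedge)=0$ for every $b\neq 0$, writes this multiplicity as the sum of three non-negative integers $d(\SM)^b+\irr_b(\Gamma_{Y\setminus\{b\}}(\SM^\wedge))+N_\infty(bz)$, and checks that each vanishes precisely when $N_{\infty,b}^{>0}=\emptyset$. Your version differs only cosmetically: you shortcut the $(\Leftarrow)$ direction by quoting the second assertion of Theorem \ref{thm-on-bnd} rather than reading it off the multiplicity formula, and for $(\Rightarrow)$ you use the equivalent expression $\sum_i e_i\{1+\ord_b(h_i)\}+N_\infty(bz)$; your case split on whether $bz\in N_\infty^{>0}$ is exactly the paper's separation into the $N_\infty(bz)$ term versus the $d(\SM)^b$ and $\irr_b$ terms.
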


\begin{proof}
By Theorem \ref{thm-CharCyc}, $Sol_Y(\SM^\wedge)\in 
\BDC_{\mathrm{c}}(Y^\an)$ is monodromic if and only if for any $b\in\CC\bs\{0\}$
\begin{equation}
    d(\SM)^b+\irr_b(\Gamma_{Y\setminus\{b\}}(\SM^\wedge))+N_\infty(bz)=0.
\end{equation}
We fix $b\in\CC\bs\{0\}$. 
Note that $d(\SM)^b$, $\irr_b(\Gamma_{Y\setminus\{b\}}(\SM^\wedge))$ 
and $N_\infty(bz)$ are non-negative integers.
Clearly, $N_\infty(bz)=0$ is equivalent to $bz\notin N_{\infty,b}^{>0}$.
Moreover we can easily check that $d(\SM)^b=
\irr_b(\Gamma_{Y\setminus\{b\}}(\SM^\wedge))=0$ if and only if $N_{\infty,b}^{>0}\bs\{bz\}=\emptyset$. 
This completes the proof.
\end{proof}

We also obtain the following consequence of the results of this section.

\begin{corollary}\label{cor:regular}
For the meromorphic connection $\SM$ on $X=\CC$, 
the extension $\tl{\SM^\wedge}$ of 
its Fourier transform $\SM^\wedge$ is 
a regular holonomic $\SD_{\var{Y}}$-module 
if and only if the following conditions are satisfied.
\begin{enumerate}
    \item [\rm (i)] The solution complex 
    $Sol_X(\SM)\in \BDC_{\mathrm{c}}(X^\an)$ 
    of $\SM$ is monodromic.
    \item [\rm (ii)] The regular rank  
    of $\SM$ at $0\in X^\an=\CC$ is 
    equal to the generic rank of $\SM$.
    \item [\rm (iii)] The set $N_\infty^{>0}$ of exponential factors 
    of $\SM$ at $\infty\in\var{X}^\an$ 
    consists only of linear factors.
\end{enumerate}
\end{corollary}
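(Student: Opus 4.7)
The plan is to characterize regularity of $\tl{\SM^\wedge}$ via the fully faithful enhanced Riemann-Hilbert correspondence of \cite{DK16}: $\tl{\SM^\wedge}$ is regular holonomic if and only if the normal form of $Sol_{\var{Y}}^{\rmE}(\tl{\SM^\wedge})$ (which exists by Proposition \ref{prop-K4}) has only the zero exponential factor at every point of $\var{Y}^{\an}$. This must be verified at each finite $b\in Y^{\an}=\CC$ via Theorem \ref{thm-on-bnd} and at $b_\infty=\infty$ via Theorem \ref{Thm-T3}.

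First I would handle the finite points $b\in\CC$. By Theorem \ref{thm-on-bnd} the exponential summands $\bigoplus_i(\EE_{W|\var{Y}^{\an}}^{\Re h_i})^{e_i}$ vanish iff all the germs $h_i\in\SP^{(0,+\infty)}_{S_bY^{\an}}$ are absent. By Lemma \ref{lem-T2-1}(ii)(a) these germs are precisely the Legendre transforms of the elements of $N_{\infty,b}^{>0}\setminus\{bz\}$, so their absence for every $b\in\CC$ is equivalent to every $f\in N_\infty^{>0}$ being a linear factor $bz$ for some $b$, which is condition (iii).

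Assuming (iii), I would then analyze $b_\infty=\infty$ via Theorem \ref{Thm-T3}. Each surviving linear factor $bz\in N_\infty^{>0}$ has horizontal Lagrangian $\Lambda_\infty^{bz}=\{(z,b)\}$, which $\chi$ sends to the vertical fiber $T^\ast_bY^{\an}$; this does not cover any sector $V\subset B(b_\infty)^\circ$ and therefore contributes no germ $g_i$. The remaining pieces of $\CCirr(\SM)$ come from the finite points $a_i\in D^{\an}$: the regular summand $r_i[T^\ast_{\{a_i\}}X^{\an}]$ is sent by $\chi$ to the graph of the constant $w\mapsto -a_i$ and gives the linear germ $-a_iw$ at $\infty$, which is non-trivial iff $a_i\neq 0$; by Lemma \ref{lem-T2-1}(i), each $\Lambda_i^f$ with non-zero $f\in N_i^{>0}$ yields a germ in $-a_iw+\SP^{(0,1)}_{S_\infty\var{Y}^{\an}}$ of strictly positive order at infinity, hence always non-trivial. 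Thus triviality of all $g_i$ forces $r_i=0$ for every $a_i\neq 0$ and $N_i(f)=0$ for every $a_i\in D^{\an}$ and every non-zero $f$; the first yields $D^{\an}\subseteq\{0\}$, i.e.\ (i), while the second at $a_i=0$, combined with the identity $r_0+\sum_{f\neq 0}N_0(f)=\rk\SM$, yields $r_0=\rk\SM$, i.e.\ (ii).

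The converse runs the same analysis backward: under (i), (ii), (iii) one has $\CCirr(\SM)=r_0[T^\ast_{\{0\}}X^{\an}]+\sum_kN_\infty(b_kz)[\Lambda_\infty^{b_kz}]$ and $\chi(\CCirr(\SM))$ is contained in the union of the zero section $\chi(T^\ast_0X^{\an})$ and the vertical fibers $T^\ast_{b_k}Y^{\an}$, so neither Theorem \ref{Thm-T3} nor Theorem \ref{thm-on-bnd} produces any non-trivial Puiseux germ and $\tl{\SM^\wedge}$ is regular. The main subtlety will be to verify cleanly that the linear factors $bz\in N_\infty^{>0}$ do not contribute to the coverings $d(\SM)$ or $d(\SM)^b$ nor to any of the germs $g_i,h_i$ (they surface only as the term $N_\infty(bz)$ in Theorem \ref{thm-CharCyc}), so that the conditions for regularity at $\infty\in\var{Y}^{\an}$ split cleanly from those at finite $b$.
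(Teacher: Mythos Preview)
Your approach is essentially the paper's: split into finite $b\in\CC$ and $b_\infty=\infty$, invoke Theorems \ref{thm-on-bnd} and \ref{Thm-T3}, and read off the conditions via the Legendre transform of Lemma \ref{lem-T2-1}. However, there is a genuine gap in your derivation of (iii).

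Your claim that regularity at all finite $b$ alone is \emph{equivalent} to (iii) is false. Regularity at a given $b\in\CC$ gives $N_{\infty,b}^{>0}\subseteq\{bz\}$, i.e.\ every $f\in N_\infty^{>0}$ whose leading term is $bz$ must equal $bz$. But an exponential factor $f\in N_\infty^{>0}$ with $\ord_\infty(f)>1$ (say $f(z)=z^2$) lies in no $N_{\infty,b}^{>0}$ whatsoever, so it is invisible to the finite-point analysis; yet such an $f$ violates (iii). Hence the equivalence you assert fails in one direction.

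These super-linear factors are instead detected at $b_\infty$: by Lemma \ref{lem-T2-1}(ii) the homeomorphism $\sfL_{(\infty,\infty)}$ sends $f\in\SP_{S_\infty\var{X}^{\an}}^{(1,+\infty)}$ to a nonzero germ $g\in\SP_{S_\infty\var{Y}^{\an}}^{(1,+\infty)}$, so any $f\in N_\infty^{>0}$ with $\ord_\infty(f)>1$ produces a nontrivial $g_i$ in Theorem \ref{Thm-T3}. Because you assume (iii) \emph{before} carrying out the $b_\infty$ analysis, you never treat this case. The fix is simple: drop that assumption, and conclude from regularity at $b_\infty$ that (i), (ii) hold and that $N_\infty^{>0}$ contains no factor of order $>1$; combine this with the finite-$b$ condition $N_{\infty,b}^{>0}\subseteq\{bz\}$ (factors of order $\leq 1$ are linear) to obtain (iii). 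This is exactly how the paper organizes the argument: it shows $r_\infty'=\rk(\SM^\wedge)$ is equivalent to (i), (ii) together with $N_\infty^{>0}=\bigsqcup_{b\in\CC}N_{\infty,b}^{>0}$, and then combines with the finite-$b$ conditions.

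A minor remark: your phrase ``the first yields $D^{\an}\subseteq\{0\}$'' is not quite right on its own, since $r_i=0$ for $a_i\neq 0$ does not by itself exclude $a_i$ from $D$; you need it together with the vanishing of all nonzero $N_i(f)$ (so that $\rk\SM=0$ unless no such $a_i$ exists).
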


\begin{proof}
Recall that $\tl{\SM^\wedge}$ is regular if and only if for any $b\in\var{Y}^\an$ 
the regular rank $r_b^\prime\in\ZZ_{\geq0}$ of $\tl{\SM^\wedge}$ at $b$ is 
equal to the generic rank $\rk(\SM^\wedge)$ of $\SM^\wedge$.
For $b\in Y^\an=\CC$, by Theorems \ref{thm-on-bnd} and \ref{thm-CharCyc} 
we can show that $r_b^\prime=\rk(\SM^\wedge)$ is equivalent to $N_{\infty,b}^{>0}\subset\{bz\}$.
Moreover, it follows from Theorem \ref{Thm-T3} that 
$r_\infty^\prime=\rk(\SM^\wedge)$ if and only if (i), (ii) and 
\begin{equation}
    N_\infty^{>0}=\bigsqcup_{b\in \CC} N_{\infty,b}^{>0}.    
\end{equation}
This completes the proof. 
\end{proof}

\subsection{Generalizations of Theorems \ref{Thm-T3} and \ref{thm-on-bnd} 
to arbitrary holonomic $\SD$-modules}\label{sec:holD}
In this subsection, 
we prove results similar to 
Theorems \ref{Thm-T3} and \ref{thm-on-bnd} 
in the case where the algebraic holonomic $\SD$-module 
$\SM\in\Modhol(\SD_X)$ on $X=\CC_z$ is 
not necessarily a meromorphic connection. 
Let $D\subset X$ be its 
singular support $\mathrm{sing.supp}(\SM)$ and 
$a_1,\dots a_l\in D^\an$ the points 
in $D^\an\subset X^\an$ and set $U\coloneq X\bs D$. 
Recall that there exists a distinguished triangle
\begin{equation}\label{eq:locM}
\rsect_D(\SM)\longrightarrow \SM \longrightarrow
\Gamma_U(\SM)\overset{+1}{\longrightarrow}.
\end{equation}
For $1\leq i\leq l$, let $k_0(a_i)$, $k_1(a_i)\geq0$ be 
the non-negative integers such that 
\begin{equation}
H_D^j(\SM)\simeq \bigoplus_{i=1}^l \SB_{\{a_i\}\vbar X}^{\oplus k_j(a_i)} \quad (j=0,1).
\end{equation}
Then it follows from the distinguished triangle 
\begin{equation}
\Gamma_D(\SM)\longrightarrow\rsect_D(\SM)
\longrightarrow H_D^1(\SM)[-1]\overset{+1}{\longrightarrow},
\end{equation}
the isomorphisms
\begin{equation}
\(H_D^j(\SM)\)^\wedge\simeq \bigoplus_{i=1}^l \(\SO_Y e^{-a_i w}\)^{\oplus k_j(a_i)} \quad (j=0,1)
\end{equation}
and Proposition \ref{prop-K2} (iii) 
that there exists a distinguished triangle 
\begin{equation}\label{eq:holD1}
\bigoplus_{i=1}^l\bigl(\EE_{Y^\an\vbar \var{Y}^\an}^{-\Re a_i w}\bigr)^{\oplus k_1(a_i)}[1]
\longrightarrow Sol_{\var{Y}}^\rmE\bigl(\tl{\rsect_D(\SM)^\wedge}\bigr)
\longrightarrow
\bigoplus_{i=1}^l\bigl(\EE_{Y^\an\vbar \var{Y}^\an}^{-\Re a_i w}\bigr)^{\oplus k_0(a_i)}
\overset{+1}{\longrightarrow}.
\end{equation}
Applying the Fourier transform and 
the functor $\Sol_{\var{Y}}^\rmE(\tl{(\cdot)})$ 
to (\ref{eq:locM}), we also obtain 
\begin{equation}\label{eq:holD2}
Sol_{\var{Y}}^\rmE\bigl(\tl{\Gamma_U(\SM)^\wedge}\bigr)
\longrightarrow
Sol_{\var{Y}}^\rmE(\tl{\SM^\wedge})
\longrightarrow
Sol_{\var{Y}}^\rmE\bigl(\tl{\rsect_D(\SM)^\wedge}\bigr)
\overset{+1}{\longrightarrow}.
\end{equation}
Note that $\Gamma_U(\SM)$ is an algebraic 
meromorphic connection along 
the divisor $D\subset X$.
Then we obtain the following lemma.

\begin{lemma}\label{eq:ineq}
For any $1\leq i\leq l$ we have
\begin{equation}
\mult_{T_{a_i}^\ast X}(\SM) + r_i - \rk(\SM) - \irr_{a_i}(\Gamma_U(\SM))
\geq 0, 
\end{equation}
where $r_i\geq0$ is the regular rank of 
the meromorphic connection $\Gamma_U(\SM)$ at $a_i\in X$.
\end{lemma}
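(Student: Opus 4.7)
The plan is to reduce the stated inequality to the bound $k_1(a_i)\le r_i$ on the $\SD$-length of $H^1_{\{a_i\}}(\SM)$, and then to establish this bound using the slope filtration of the meromorphic connection $\Gamma_U(\SM)$ together with the Riemann--Hilbert correspondence for its regular part.

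First I would apply $Sol_X(-)$ to the distinguished triangle $\rsect_D(\SM)\to\SM\to\Gamma_U(\SM)\overset{+1}{\longrightarrow}$ and compute local Euler--Poincar\'e indices at $a_i$, using $Sol_X(\SB_{\{a_i\}\vbar X})\simeq\CC_{\{a_i\}}[-1]$ to obtain $\chi_{a_i}(Sol_X(\rsect_D(\SM)))=k_1(a_i)-k_0(a_i)$, Proposition~\ref{prop:IT20a-3.14} to obtain $\chi_{a_i}(Sol_X(\Gamma_U(\SM)))=-\irr_{a_i}(\Gamma_U(\SM))$, and Kashiwara's local index theorem to obtain $\chi_{a_i}(Sol_X(\SM))=\rk(\SM)-\mult_{T^*_{a_i}X}(\SM)$. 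Additivity of the Euler--Poincar\'e index along the triangle yields
\[
\mult_{T^*_{a_i}X}(\SM)=\rk(\SM)+\irr_{a_i}(\Gamma_U(\SM))+k_0(a_i)-k_1(a_i),
\]
rewriting the stated inequality as $k_1(a_i)\le r_i+k_0(a_i)$; since $k_0(a_i)\ge 0$, it suffices to prove the stronger $k_1(a_i)\le r_i$.

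Setting $\SM':=\SM/\Gamma_{\{a_i\}}(\SM)\hookrightarrow\SN:=\Gamma_U(\SM)$, we have $k_1(a_i)=\mathrm{length}_{\SD}(\SN/\SM')$. I would then invoke the slope (Levelt--Turrittin) filtration to produce a short exact sequence $0\to\SN^{\reg}\to\SN\to\SN^{\irr}\to 0$ with $\SN^{\reg}$ a sub-meromorphic connection of rank $r_i$ and $\SN^{\irr}$ purely irregular. The crucial structural input is that purely irregular meromorphic connections are \emph{rigid}: they admit no proper $\SD$-submodule of full generic rank. This is a local statement at $a_i$, verified after formal completion via the Hukuhara--Levelt--Turrittin decomposition $\hat\SN^{\irr}\simeq\bigoplus_{\phi\neq 0}\hat{\mathcal{R}}_\phi\otimes\hat\SE^\phi$. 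For each isotypic summand any nonzero element $v=f\cdot(e\otimes\hat\SE^\phi)$ $\SD$-generates the whole summand, because $\partial$ acts by $v\mapsto(f'+f\phi'+\text{regular})\,(e\otimes\hat\SE^\phi)$ and $\phi'$ has a pole of order strictly larger than $1$, so iterated application of $\partial$ combined with multiplication by the local coordinate produces all meromorphic multiples of $e\otimes\hat\SE^\phi$; the vanishing of $\hom$ between summands with different exponential factors then gives $\hat\SM=\bigoplus_\phi(\hat\SM\cap\hat\SR_\phi\otimes\hat\SE^\phi)$, and faithful flatness of formal completion transfers the rigidity to the analytic setting. Consequently the composition $\SM'\hookrightarrow\SN\twoheadrightarrow\SN^{\irr}$ is surjective, and the snake lemma gives $\SN/\SM'\simeq\SN^{\reg}/(\SM'\cap\SN^{\reg})$.

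Finally, for the regular meromorphic connection $\SN^{\reg}$ of rank $r_i$ I would invoke Riemann--Hilbert to write $\SN^{\reg}\simeq j_*L$ for a local system $L$ of rank $r_i$ on $X\setminus\{a_i\}$. Since the middle extension $j_{!*}L\subset j_*L$ has no nonzero quotient supported at $\{a_i\}$, any $\SD$-submodule of $j_*L$ of full generic rank necessarily contains $j_{!*}L$; hence
\[
\mathrm{length}_{\SD}(\SN^{\reg}/(\SM'\cap\SN^{\reg}))\le\mathrm{length}_{\SD}(j_*L/j_{!*}L)=\dim_{\CC}L_{\mathrm{coinv}}\le r_i,
\]
giving $k_1(a_i)\le r_i$ and completing the argument. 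The main obstacle is the rigidity of purely irregular meromorphic connections, which relies essentially on the formal Hukuhara--Levelt--Turrittin structure; the remaining arguments are standard characteristic-cycle bookkeeping and the Riemann--Hilbert correspondence for regular connections.
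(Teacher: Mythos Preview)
Your reduction of the inequality to $k_1(a_i)\le r_i+k_0(a_i)$ via Kashiwara's index theorem and Proposition~\ref{prop:IT20a-3.14} is exactly the paper's second step. From there the two arguments diverge. The paper does not try to bound $k_1(a_i)$ directly: it feeds Theorem~\ref{Thm-T3} (applied to $\Gamma_U(\SM)$) and the distinguished triangles \eqref{eq:holD1}, \eqref{eq:holD2} into the multiplicity test functor of \cite[\S6.3]{DK18} and reads off that the multiplicity of the exponential factor $-a_iw$ of $\tl{\SM^\wedge}$ at $\infty\in\var{Y}^{\an}$ equals $r_i+k_0(a_i)-k_1(a_i)$; non-negativity of multiplicities gives the inequality. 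Your route is a purely local $\SD$-module argument that never touches the Fourier transform and in fact yields the sharper bound $k_1(a_i)\le r_i$; this is a genuinely different and more elementary approach.

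There is, however, a real gap in your argument as written. The short exact sequence $0\to\SN^{\reg}\to\SN\to\SN^{\irr}\to 0$ of \emph{convergent} meromorphic connections need not exist. For the rank-two connection with $\partial e_1=e_2$, $\partial e_2=z^{-2}e_2$, a convergent regular rank-one sub would require $b$ with $b'+z^{-2}b=-1$, whose unique formal solution has Taylor coefficients $(-1)^{n-1}(n-2)!$ and diverges; conversely, for $\partial e_1=0$, $\partial e_2=z^{-2}e_2+e_1$ it is the irregular part that fails to be a convergent sub. The Levelt--Turrittin splitting $\hat\SN=\hat\SN^{\reg}\oplus\hat\SN^{\irr}$ holds only over $\CC((z))$. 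The repair is the one your own remarks about formal completion suggest: since $\SN/\SM'$ is supported at $a_i$, its $\SD$-length equals the $\hat\SD$-length of $\hat\SN/\hat\SM'$, so run the entire argument formally. Your rigidity input then becomes the clean Jordan--H\"older statement that $\hat\SN^{\irr}$ has no composition factor of type $\hat\SB$ (each rank-one purely irregular piece is a simple $\hat\SD$-module because $\phi'$ has pole order $\ge 2$), whence $\hat\SM'\twoheadrightarrow\hat\SN^{\irr}$; and the final bound on $\hat\SN^{\reg}/(\hat\SM'\cap\hat\SN^{\reg})$ follows either from the formal analogue of $j_{!*}$ or directly from counting $\hat\SB$-factors, rather than from the analytic Riemann--Hilbert correspondence you invoke.
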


\begin{proof}
By applying Theorem \ref{Thm-T3} 
to $Sol_{\var{Y}}^\rmE\bigl(\tl{\Gamma_U(\SM)^\wedge}\bigr)$ 
and the multiplicity test functor 
in \cite[Section 6.3]{DK18} to 
(\ref{eq:holD1}) and (\ref{eq:holD2}), we 
can easily show that the multiplicity of 
the exponential factor $- a_i w$ of 
$\tl{\SM^\wedge}$ at $\infty\in \var{Y}^\an$ is 
equal to $r_i+k_0(a_i)-k_1(a_i)$.
This in particular implies that 
\begin{equation}
r_i+k_0(a_i)-k_1(a_i)\geq0.
\end{equation}
Similarly to the proof of (\ref{eq:indexthm}), by Kashiwara's index theorem 
we also obtain an equality 
\begin{equation}
k_0(a_i)-k_1(a_i)=\mult_{T_{a_i}^\ast X}(\SM) -\rk(\SM) - \irr_{a_i}(\Gamma_U(\SM)).
\end{equation}
Then the assertion immediately follows. 
\end{proof}

By the multiplicities $N_i$ ($1\leq i\leq l$) and 
$N_{\infty}$ of the meromorphic connection $\Gamma_U(\SM)$ we set 
\begin{align}
\CCirr(\SM)_i &\coloneq
\sum_{f\in N_i^{>0}}N_i(f)\cdot[\Lambda_i^f] \\
&+ \Bigl\{ \mult_{T_{a_i}^\ast X}(\SM) + r_i -
\rk(\SM) - \irr_{a_i}(\Gamma_U(\SM)) \Bigr\} \cdot[T_{\{a_i\}}^\ast X^{\an}] 
\quad (1\leq i\leq l)
\end{align} 
and 
\begin{equation}
\CCirr(\SM)_\infty \coloneq
\sum_{f\in N_\infty^{>0}}N_\infty(f)\cdot[\Lambda_\infty^f] 
=\CCirr(\Gamma_U(\SM))_\infty.
\end{equation}
Then we define the irregular characteristic 
cycle $\CCirr(\SM)$ of the 
holonomic $\SD$-module $\SM\in\Modhol(\SD_X)$ by 
\begin{equation}
\CCirr(\SM)\coloneq\CCirr(\SM)_\infty +
\sum_{i=1}^l\CCirr(\SM)_i.
\end{equation}
Define $g_i, d_i$ ($1\leq i\leq n$) and $d(\SM)= \sum_{i=1}^n d_i$ 
by $\CCirr(\SM)$ as in 
the case of meromorphic connections. 
Then, as in the proof of Lemma \ref{eq:ineq}, we obtain 
the following generalization of Theorem \ref{Thm-T3} to 
holonomic $\SD$-modules.

\begin{theorem}\label{thm:holD_inf}
For any simply connected open sector $V\subset B( b_\infty )^\circ$
along the point $b_{\infty}= \infty\in\var{Y}^{\an}$, 
there exists an isomorphism
\begin{align}
\pi^{-1}\CC_{V}\otimes Sol_{\var{Y}}^{\rmE}(\tl{\SM^\wedge})
\simeq \bigoplus_{i=1}^n
\(\EE_{V\vbar\var{Y}^{\an}}^{\Re g_i}\)^{\oplus d_i}.
\end{align}
In particular, the generic rank $\rk(\SM^\wedge)$ of $\SM^\wedge$ is equal to 
\begin{equation}
    d(\SM)=\biggl\{\sum_{i=1}^l \mult_{T_{a_i}^\ast X}(\SM)\biggr\}
    + e_\infty(\SM).
\end{equation}
\end{theorem}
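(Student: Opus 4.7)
The plan is to reduce the statement to Theorem \ref{Thm-T3} applied to the meromorphic connection $\Gamma_U(\SM)$, using the distinguished triangles \eqref{eq:holD1} and \eqref{eq:holD2} to compare $Sol_{\var Y}^\rmE(\tl{\SM^\wedge})$ with $Sol_{\var Y}^\rmE(\tl{\Gamma_U(\SM)^\wedge})$. First, by Theorem \ref{Thm-T3} applied to the meromorphic connection $\Gamma_U(\SM)$, there exist Puiseux germs $\tl g_1,\dots,\tl g_{\tl n}\in\SP'_{S_\infty\var Y^{\an}}$ holomorphic on $V$ and positive integers $\tl d_1,\dots,\tl d_{\tl n}$ such that
\begin{equation}
\pi^{-1}\CC_V\otimes Sol_{\var Y}^\rmE(\tl{\Gamma_U(\SM)^\wedge})\simeq
\bigoplus_{k=1}^{\tl n}\(\EE_{V\vbar\var Y^{\an}}^{\Re\tl g_k}\)^{\oplus\tl d_k}.
\end{equation}
Under the Legendre transform of Lemma \ref{lem-T2-1}, the linear factors $\tl g_k(w)=-a_iw$ correspond exactly to the conormal components $r_i\cdot[T^*_{\{a_i\}}X^{\an}]$ in $\CCirr(\Gamma_U(\SM))_i$, so the multiplicity of $-a_iw$ among the $\tl g_k$'s is $r_i$.

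Next, restricting \eqref{eq:holD1} to $V$ and using Proposition \ref{prop-K2} (iii) together with $\EE_{Y^{\an}\vbar\var Y^{\an}}^{-\Re a_iw}\vert_V\simeq\EE_{V\vbar\var Y^{\an}}^{-\Re a_iw}$, I obtain a distinguished triangle whose first and third terms are direct sums of $\EE_{V\vbar\var Y^{\an}}^{-\Re a_iw}$ (up to a shift by $1$ in the first term, coming from the cohomological degree of $H^1_D(\SM)$). Splicing this with the restriction of \eqref{eq:holD2} to $V$, I obtain a distinguished triangle in $\BEC(\I\CC_{V})$ relating $\pi^{-1}\CC_V\otimes Sol_{\var Y}^\rmE(\tl{\SM^\wedge})$ with the direct sum above and with direct sums of the linear exponentials $\EE_{V\vbar\var Y^{\an}}^{-\Re a_iw}$ coming from the $\mathbb{R}\Gamma_D(\SM)^\wedge$ part.

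The crucial step is to invoke the multiplicity test functor of D'Agnolo-Kashiwara \cite[Section 6.3]{DK18}, which is exact and separates the isotypic components indexed by Puiseux germs modulo the regular ones. Applied to the above distinguished triangle, for a non-linear Puiseux germ $g$ the multiplicity of $\EE_{V\vbar\var Y^{\an}}^{\Re g}$ in $\pi^{-1}\CC_V\otimes Sol_{\var Y}^\rmE(\tl{\SM^\wedge})$ equals the one in $\pi^{-1}\CC_V\otimes Sol_{\var Y}^\rmE(\tl{\Gamma_U(\SM)^\wedge})$, while for the linear factor $-a_iw$ it gets shifted by $k_0(a_i)-k_1(a_i)$, yielding the value $r_i+k_0(a_i)-k_1(a_i)=\mult_{T^*_{a_i}X}(\SM)+r_i-\rk(\SM)-\irr_{a_i}(\Gamma_U(\SM))$ (this is precisely the computation carried out in the proof of Lemma \ref{eq:ineq}). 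By construction of $\CCirr(\SM)$ for the general holonomic case and Lemma \ref{lem-T2-1}, this is exactly the multiplicity $d_i$ attached to the germ $g_i$ of the irregular characteristic cycle of $\SM$; hence the claimed normal form
\begin{equation}
\pi^{-1}\CC_V\otimes Sol_{\var Y}^\rmE(\tl{\SM^\wedge})\simeq\bigoplus_{i=1}^n\(\EE_{V\vbar\var Y^{\an}}^{\Re g_i}\)^{\oplus d_i}
\end{equation}
holds.

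The rank formula follows by summing the multiplicities: $\rk(\SM^\wedge)=\sum_i d_i=d(\SM)$. Using Corollary \ref{Cor-T8} for $\Gamma_U(\SM)$, Kashiwara's index theorem $\mult_{T^*_{a_i}X}(\Gamma_U(\SM))=\rk(\SM)+\irr_{a_i}(\Gamma_U(\SM))$ (equivalently Proposition \ref{prop:IT20a-3.14}), and the additivity $\mult_{T^*_{a_i}X}(\SM)=\mult_{T^*_{a_i}X}(\Gamma_U(\SM))+k_0(a_i)-k_1(a_i)$ coming from the exact sequence associated to \eqref{eq:locM}, one computes $\sum_i d_i=\sum_i\mult_{T^*_{a_i}X}(\SM)+e_\infty(\SM)$. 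The main obstacle is the multiplicity test step: one must verify that the connecting morphisms in the distinguished triangles are compatible with the decomposition by exponential factors, but since the only exponential factors appearing in the $\mathbb{R}\Gamma_D(\SM)^\wedge$ part are the linear ones $-a_iw$, the test functor cleanly isolates the linear components and leaves the non-linear ones unchanged, so this compatibility is automatic.
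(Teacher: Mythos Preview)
Your proof is correct and follows essentially the same approach as the paper: both reduce to Theorem~\ref{Thm-T3} for the meromorphic connection $\Gamma_U(\SM)$, then use the distinguished triangles \eqref{eq:holD1} and \eqref{eq:holD2} together with the multiplicity test functor of \cite[Section~6.3]{DK18} to track how the linear exponential factors $-a_iw$ change by $k_0(a_i)-k_1(a_i)$, exactly as in the proof of Lemma~\ref{eq:ineq}. The paper's own argument is stated in one line (``as in the proof of Lemma~\ref{eq:ineq}''), and your write-up simply makes that reduction explicit, including the derivation of the rank formula via Corollary~\ref{Cor-T8} and the index-theoretic identity $k_0(a_i)-k_1(a_i)=\mult_{T_{a_i}^\ast X}(\SM)-\rk(\SM)-\irr_{a_i}(\Gamma_U(\SM))$.
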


For a point $b\in Y^\an$ we also set 
\begin{equation}
\CCirr(\SM)^b\coloneq\sum_{f\in N_{\infty,b}^{>0}}N_\infty(f)\cdot[\Lambda_\infty^f]
=\CCirr(\Gamma_U(\SM))^b.
\end{equation}
Then by defining $h_i, e_i$ ($1\leq i\leq m$) and $d(\SM)^b= \sum_{i=1}^m e_i$ 
by $\CCirr(\SM)^b$, Theorem \ref{thm-on-bnd} is also extended to the case of 
holonomic $\SD$-modules as follows.

\begin{theorem}\label{thm:holD_bnd}
For any simply connected  
open sector $W\subset B(b)^\circ$ along the point $b\in Y^{\an}$, 
there exists an isomorphism
\begin{equation}
\pi^{-1}\CC_W\otimes Sol_{\var{Y}}^{\rmE}(\tl{\SM^\wedge}) 
\quad \simeq \quad 
\Bigl\{\bigoplus_{i=1}^m\bigl(
\EE_{W\vbar\var{Y}^{\an}}^{\Re h_i}\bigr)^{\oplus e_i}\Bigr\}
\oplus\bigl(\EE_{W\vbar\var{Y}^{\an}}^0\bigr)^{d(\SM)-d(\SM)^b}.
\end{equation}
\end{theorem}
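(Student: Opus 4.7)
The strategy is to reduce to Theorem \ref{thm-on-bnd} applied to the meromorphic connection $\Gamma_U(\SM)$, and then account for the contribution of $\rsect_D(\SM)^\wedge$ by analyzing the distinguished triangle
\begin{equation}
Sol_{\var{Y}}^\rmE(\tl{\Gamma_U(\SM)^\wedge})\longrightarrow Sol_{\var{Y}}^\rmE(\tl{\SM^\wedge})\longrightarrow Sol_{\var{Y}}^\rmE(\tl{\rsect_D(\SM)^\wedge})\overset{+1}{\longrightarrow}
\end{equation}
obtained from \eqref{eq:holD2}. Applying Theorem \ref{thm-on-bnd} to $\Gamma_U(\SM)$ gives, on $W$,
\begin{equation}
\pi^{-1}\CC_W\otimes Sol_{\var{Y}}^\rmE(\tl{\Gamma_U(\SM)^\wedge})\simeq\Bigl\{\bigoplus_{i=1}^m\bigl(\EE_{W\vbar\var{Y}^\an}^{\Re h_i}\bigr)^{e_i}\Bigr\}\oplus\bigl(\EE_{W\vbar\var{Y}^\an}^0\bigr)^{d(\Gamma_U(\SM))-d(\Gamma_U(\SM))^b},
\end{equation}
with the same $h_i$, $e_i$ since $\CCirr(\SM)^b=\CCirr(\Gamma_U(\SM))^b$ and $d(\SM)^b=d(\Gamma_U(\SM))^b$.

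Next I compute $\pi^{-1}\CC_W\otimes Sol_{\var{Y}}^\rmE(\tl{\rsect_D(\SM)^\wedge})$. Each linear function $-\Re a_iw$ is bounded on the small sector $W\subset B(b)^\circ\subset\CC$, so Lemma \ref{lem-K1} yields $\pi^{-1}\CC_W\otimes\EE_{Y^\an\vbar\var{Y}^\an}^{-\Re a_iw}\simeq\EE_{W\vbar\var{Y}^\an}^0$. Restricting \eqref{eq:holD1} to $W$ therefore gives a distinguished triangle
\begin{equation}
(\EE_{W\vbar\var{Y}^\an}^0)^{k_1}[1]\longrightarrow\pi^{-1}\CC_W\otimes Sol_{\var{Y}}^\rmE(\tl{\rsect_D(\SM)^\wedge})\longrightarrow(\EE_{W\vbar\var{Y}^\an}^0)^{k_0}\overset{+1}{\longrightarrow},
\end{equation}
where $k_j:=\sum_{i=1}^l k_j(a_i)$. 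Taking the long exact cohomology sequence of the distinguished triangle above, and using Proposition \ref{prop-K4} to assert that $Sol_{\var{Y}}^\rmE(\tl{\SM^\wedge})$ has a normal form at $b$ (hence $\pi^{-1}\CC_W\otimes Sol_{\var{Y}}^\rmE(\tl{\SM^\wedge})$ is concentrated in degree $0$ on $W$), I obtain a short exact sequence of $\rmI\CC_{\var{Y}^\an}^\rmE$-objects
\begin{equation}
0\longrightarrow(\EE_{W\vbar\var{Y}^\an}^0)^{k_1}\overset{\delta}{\longrightarrow}H^0(G_1)\longrightarrow\pi^{-1}\CC_W\otimes Sol_{\var{Y}}^\rmE(\tl{\SM^\wedge})\longrightarrow(\EE_{W\vbar\var{Y}^\an}^0)^{k_0}\longrightarrow0,
\end{equation}
where $H^0(G_1)$ denotes the $\Gamma_U(\SM)^\wedge$ expression above.

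Since $\Hom(\EE_{W\vbar\var{Y}^\an}^0,\EE_{W\vbar\var{Y}^\an}^{\Re h_i})=0$ for each nonzero Puiseux germ $h_i$ (by Lemma \ref{lem-K3}), the image of $\delta$ lies entirely in the regular summand $(\EE_{W\vbar\var{Y}^\an}^0)^{d(\Gamma_U(\SM))-d(\Gamma_U(\SM))^b}$, so the irregular factors $\bigoplus_{i=1}^m(\EE_{W\vbar\var{Y}^\an}^{\Re h_i})^{e_i}$ pass intact to $\pi^{-1}\CC_W\otimes Sol_{\var{Y}}^\rmE(\tl{\SM^\wedge})$. The remaining regular summand has rank equal to
\begin{equation}
\bigl(d(\Gamma_U(\SM))-d(\Gamma_U(\SM))^b\bigr)+(k_0-k_1)=d(\SM)-d(\SM)^b,
\end{equation}
using the identity $d(\SM)=d(\Gamma_U(\SM))+\sum_i(k_0(a_i)-k_1(a_i))$ from the definition of $\CCirr(\SM)_i$ together with Kashiwara's index theorem (as in the proof of Lemma \ref{eq:ineq}). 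Applying Lemma \ref{lem-K3} (uniqueness of normal form) to the regular part then gives the claimed isomorphism. The main technical obstacle is proving that $\delta$ is injective; this is what forces us to invoke the normal-form property via Proposition \ref{prop-K4}, since $a priori$ the cohomology $H^{-1}$ of the middle term of the first distinguished triangle could be nonzero, and only the intrinsic normal-form description of $Sol^\rmE$ of a holonomic $\SD$-module rules this out.
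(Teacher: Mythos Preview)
Your overall strategy---reduce to the meromorphic connection $\Gamma_U(\SM)$ via the distinguished triangles \eqref{eq:holD1} and \eqref{eq:holD2}, then account for the difference using the normal-form property of $Sol_{\var{Y}}^\rmE(\tl{\SM^\wedge})$---is exactly the paper's approach. The rank identity $d(\SM)=d(\Gamma_U(\SM))+(k_0-k_1)$ is correct and is precisely what the computation for the regular factor reduces to.

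However, there is a genuine gap in the step where you assert $\Hom(\EE_{W\vbar\var{Y}^\an}^0,\EE_{W\vbar\var{Y}^\an}^{\Re h_i})=0$. First, Lemma \ref{lem-K3} concerns uniqueness of a direct-sum decomposition into exponentials; it says nothing about individual Hom spaces. Second, and more seriously, the vanishing is actually false on many sectors: for a Puiseux germ $h_i$ with a genuine pole at $b$ there exist sectors $W$ along $b$ on which $\Re h_i\to-\infty$, and on such $W$ one has $\Hom(\EE_{W\vbar\var{Y}^\an}^0,\EE_{W\vbar\var{Y}^\an}^{\Re h_i})\simeq\CC$ (cf.\ \cite[Lemma 5.2.1]{DK18}). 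On these sectors you cannot conclude that $\delta$ lands in the regular summand, and once that fails the subsequent rank bookkeeping no longer pins down the exponential factors of the middle term. You also silently need the extension $0\to\mathrm{coker}(\delta)\to G_2\to(\EE_{W\vbar\var{Y}^\an}^0)^{k_0}\to0$ to split, which requires an $\mathrm{Ext}^1$ computation you do not supply.

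The paper avoids all of this by applying the multiplicity test functor of \cite[Section 6.3]{DK18} to the two triangles, exactly as in the proof of Lemma \ref{eq:ineq} and Theorem \ref{thm:holD_inf}. That functor is compatible with distinguished triangles, so it reads off the multiplicity of each exponential factor of $\pi^{-1}\CC_W\otimes Sol_{\var{Y}}^\rmE(\tl{\SM^\wedge})$ as an alternating sum, with no need to analyse the connecting map $\delta$. Combined with Proposition \ref{prop-K4} (normal form at $b$) and Lemma \ref{lem-K3} (uniqueness of the decomposition), this determines the restriction to $W$ completely. Replacing your Hom-vanishing paragraph by an appeal to the multiplicity test functor would close the gap.
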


\begin{remark}
In view of Theorems \ref{thm:holD_inf}, \ref{thm:holD_bnd} and 
\ref{thm-CharCyc}, Corollaries \ref{cor:monod} and \ref{cor:regular} 
hold true for any holonomic $\SD$-module on $X= \CC$.
\end{remark}

\section{Fourier transforms of
$\SD$-modules and rapid decay homology cycles}
\label{uni-sec:T5}

In this section, we first describe the stalks of
the solution complexes of
the Fourier transforms of holonomic $\SD$-modules at
generic points by the theory of
rapid decay homology groups developed by
Bloch-Esnault \cite{BE04a} and Hien \cite{Hi07,Hi09}.
Then we construct their natural bases via
a twisted Morse theory similar to
the one that we used in the proof of Theorem \ref{Thm-T3}. 
For the basic properties of twisted homology groups, 
we refer to Aomoto-Kita \cite{AK11}  
and Pajitnov \cite{Paj06}.

\subsection{Rapid decay homology groups}\label{sec:T6}

In this subsection, we recall
the theory of rapid decay homology groups developed by
\cite{BE04a}, \cite{Hi07} and \cite{Hi09}
in the simplest case of dimension one.
Let $U$ be a smooth algebraic curve
over $\CC$ and $(\SE,\nabla)$
$(\nabla\colon\SE\to\Omega_U^1\otimes_{\SO_U}\SE)$
an algebraic integrable connection on it.
Let $i\colon U\hookrightarrow Z$ be
a smooth compactification of $U$ and
set $D\coloneq Z\bs U$.
Then for the underlying complex manifold
$Z^{\an}$ of $Z$ and the subset $D^{\an}\subset Z^{\an}$
let $\varpi\colon\tl{Z}\to Z^{\an}$ be
the real oriented blow-up of $Z^{\an}$ along $D^{\an}$.
Recall that for a point $a\in D^{\an}$ the subset
$\varpi^{-1}(a)\subset\tl{Z}$ of $\tl{Z}$ is
isomorphic to a circle $S^1$.
For $p\geq0$ and a subset $B\subset\tl{Z}$
denote by $S_p(B)$ the $\CC$-vector space generated
by the piecewise smooth maps $C\colon\Delta^p\to B$ from
the $p$-dimensional simplex $\Delta^p$.
We denote by $\SC_{\tl{Z},\varpi^{-1}(D^{\an})}^{-p}$
the sheaf on $\tl{Z}$ associated to the presheaf
\begin{align}
V \longmapsto S_p\bigl(\tl{Z},(\tl{Z}\bs V)
\cup\varpi^{-1}(D^{\an})\bigr)
= S_p\bigl(\tl{Z}\bigr)/S_p\bigl((\tl{Z}\bs V)
\cup\varpi^{-1}(D^{\an})\bigr).
\end{align}
Now let
\begin{align}
L\coloneq H^{-1}DR_U(\SE)=
\Ker\Bigl\{\nabla^{\an}\colon\SE^{\an}\rightarrow
\Omega_{U^{\an}}^1\otimes_{\SO_{U^{\an}}}\SE^{\an}\Bigr\}
\end{align}
be the sheaf of the horizontal sections of
the analytic connection $(\SE^{\an},\nabla^{\an})$
associated to $(\SE,\nabla)$ and
$\iota\colon U^{\an}\hookrightarrow\tl{Z}$ the inclusion map.
Then $\tl{L}\coloneq\iota_\ast L$ is a local system on $\tl{Z}$.
We define the sheaf $\SC_{\tl{Z},
\varpi^{-1}(D^{\an})}^{-p}(\tl{L})$ of
relative twisted $p$-chains on the pair
$\bigl(\tl{Z},\varpi^{-1}(D^{\an})\bigr)$
with value in $\tl{L}$ by
\begin{align}
\SC_{\tl{Z},\varpi^{-1}(D^{\an})}^{-p}(\tl{L})
\coloneq \SC_{\tl{Z},\varpi^{-1}(D^{\an})}^{-p}
\otimes_{\CC_{\tl{Z}}}\tl{L}.
\end{align}

\begin{definition}[{\cite{BE04a},
\cite{Hi07} and \cite{Hi09}}]\label{def-rdh}
A section $\displaystyle\sigma=\sum_{i=1}^{m}C_i\otimes s_i\,
\in\Gamma(V;\SC_{\tl{Z},\varpi^{-1}(D^{\an})}^{-p}(\tl{L}))$ is
called a rapid decay $p$-chain on $V$
if for any $1\leq i\leq m$ and any point
$q\in C_i(\Delta^p)\cap\varpi^{-1}(D^{\an})\cap V$
the following condition holds :

For a local coordinate $x$ on a neighborhood of
$q$ in $Z$ such that $\varpi (q)=\{x=0\}\subset D^{\an}$ by
taking a local trivialization
\begin{align}
(i_\ast\SE)^{\an}\simeq
\bigoplus_{j=1}^r\SO_{Z^{\an}}(\ast D^{\an}) \ e_j
\end{align}
of the analytic meromorphic connection
$(i_\ast\SE)^{\an}$ with respect to
a basis $e_1,e_2,\dots,e_r\in(i_\ast\SE)^{\an}$ and
setting $\displaystyle s_i=\sum_{j=1}^{r}f_j(x)\cdot\iota_\ast
i^{-1}e_j$ $(f_j(x)\in\iota_\ast\SO_{U^{\an}})$,
for any $1 \leq j\leq r$ and $N\in\ZZ_{>0}$
there exists $M\gg0$ such that
\begin{align}
\abs{f_j(x)}\leq M\abs{x}^N
\end{align}
for any $x\in
\left(C_i(\Delta^p)\bs\varpi^{-1}(D^{\an})\right)\cap V$
with small $\abs{x}$.
\end{definition}

Note that this definition does not depend on
the local coordinate $x$ nor
the local trivialization of $(i_\ast\SE)^{\an}$.
We denote by $\SC_{\tl{Z},\varpi^{-1}(D^{\an})}^{\rd,-p}(\tl{L})$
the subsheaf of $\SC_{\tl{Z},\varpi^{-1}(D^{\an})}^{-p}(\tl{L})$
consisting of rapid decay $p$-chains and set
\begin{align}
S_p^{\rd}(U^{\an};\SE,\nabla) \coloneq
\Gamma(\tl{Z};\SC_{\tl{Z},\varpi^{-1}(D^{\an})}^{\rd,-p}(\tl{L})).
\end{align}
We thus obtain a complex
\begin{align}
S_{\bullet}^{\rd}(U^{\an};\SE,\nabla) :=
\Bigl[ 0 \longleftarrow S_{0}^{\rd}(U^{\an};\SE,\nabla)
\longleftarrow S_{1}^{\rd}(U^{\an};\SE,\nabla)
\longleftarrow \cdots \cdots \Bigr]. 
\end{align}
We call
\begin{align}
H_p^{\rd}(U^{\an};\SE,\nabla) \coloneq
H_p\left[S_{\bullet}^{\rd}(U^{\an};\SE,\nabla)\right]
\quad (p\in\ZZ)
\end{align}
the rapid decay homology groups of
the integrable connection $(\SE,\nabla)$ on $U$.
For $(\SE,\nabla)$ we denote by
$H_{\dR}^p(U;\SE,\nabla)$ $(p\in\ZZ)$
the algebraic de Rham cohomology groups associated to it.
Then we have the following celebrated theorem of
\cite{BE04a}, \cite{Hi07} and \cite{Hi09}.

\begin{theorem}[{\cite{BE04a},\cite{Hi07} and \cite{Hi09}}]
\label{RDH} For any $p\in\ZZ$ there exists a perfect pairing
\begin{align}
H_{\dR}^p(U;\SE,\nabla)\times
H_p^{\rd}(U^{\an};\SE^\ast,\nabla^\ast)\longrightarrow\CC,
\end{align}
where $(\SE^\ast,\nabla^\ast)$ is
the dual connection of $(\SE,\nabla)$ on $U$.
\end{theorem}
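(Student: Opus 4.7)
The plan is to construct the period pairing explicitly by integration of twisted forms against rapid decay chains and then to establish its non-degeneracy by a local-to-global duality on the real oriented blow-up $\tl{Z}$. Given a $\nabla$-closed representative $\omega$ of a de Rham class and a rapid decay cycle $\sigma = \sum_i C_i \otimes s_i$ with $s_i$ horizontal sections of $\SE^\ast$, I would set
\begin{equation}
\langle [\omega], [\sigma] \rangle := \sum_i \int_{C_i} \langle \omega, s_i \rangle.
\end{equation}
The convergence of each integral follows from Definition \ref{def-rdh}: near $\varpi^{-1}(D^{\an})$, the algebraic form $\omega$ has at worst a pole of finite order in a local trivialization of $(i_\ast \SE)^{\an}$, while $s_i$ decays faster than every power of the local boundary defining function, so the contraction $\langle \omega, s_i \rangle$ is integrable along $C_i$. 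Independence of the de Rham representative comes from Stokes' theorem applied to the chains truncated away from $\varpi^{-1}(D^{\an})$, together with the identity $\langle \nabla \eta, s \rangle = d \langle \eta, s \rangle$ (which holds since $\nabla^\ast s = 0$); the boundary contributions near $\varpi^{-1}(D^{\an})$ vanish in the limit by the same rapid decay estimate, and independence on the homology side is analogous.

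To prove perfectness I would reinterpret both sides as hypercohomology of dual constructible complexes on $\tl{Z}$. Let $\SA^{\mathrm{mod}}_{\tl{Z}}$, respectively $\SA^{\mathrm{rd}}_{\tl{Z}}$, denote the sheaves on $\tl{Z}$ of holomorphic functions on $U^{\an}$ whose extensions across $\varpi^{-1}(D^{\an})$ have moderate growth, respectively rapid decay. By the GAGA-type comparison theorem of Deligne in the regular case, extended to the irregular case by Malgrange, Sabbah and Mochizuki, there is a canonical isomorphism
\begin{equation}
H^p_{\dR}(U; \SE, \nabla) \simeq \mathbb{H}^p\bigl(\tl{Z};\, DR^{\mathrm{mod}}(\SE)\bigr),
\end{equation}
where $DR^{\mathrm{mod}}(\SE) := [\SA^{\mathrm{mod}}_{\tl{Z}} \otimes \SE^{\an} \to \SA^{\mathrm{mod}}_{\tl{Z}} \otimes \Omega^1_{U^{\an}} \otimes \SE^{\an}]$. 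A standard smoothing of singular chains identifies the rapid decay chain complex $S_\bullet^{\rd}(U^{\an}; \SE^\ast, \nabla^\ast)$ with (a degree shift of) $\mathrm{R}\Gamma(\tl{Z};\, DR^{\mathrm{rd}}(\SE^\ast))$, and the integration pairing above is intertwined with the cup product induced by the multiplication $\SA^{\mathrm{mod}}_{\tl{Z}} \otimes \SA^{\mathrm{rd}}_{\tl{Z}} \to \SA^{\mathrm{rd}}_{\tl{Z}}$ composed with integration along the fundamental class of $\tl{Z}$.

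The hard step is the local duality on a neighborhood of each boundary circle $\varpi^{-1}(a) \simeq S^1$ with $a \in D^{\an}$: one has to show that the induced sheaf-theoretic pairing
\begin{equation}
DR^{\mathrm{mod}}(\SE) \otimes^{\rmL} DR^{\mathrm{rd}}(\SE^\ast) \longrightarrow \omega_{\tl{Z}}[-2]
\end{equation}
is perfect in the derived category of constructible sheaves on $\tl{Z}$. Pulling back to a finite ramified cover of a disk around $a$ and invoking the Hukuhara-Levelt-Turrittin theorem, $\SE$ decomposes as a direct sum of elementary summands $\SE^f \otimes \mathcal{R}$ with $f \in \SP_{S_aX}^\prime$ a Puiseux germ and $\mathcal{R}$ a regular meromorphic connection, reducing the problem to the rank one elementary case. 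In that case an explicit Stokes sector analysis finishes the job: the moderate growth horizontal sections of $\SE^f$ live on the open sectors where $\Re f$ stays bounded above, whereas the rapid decay horizontal sections of the dual $\SE^{f,\ast} = \SE^{-f}$ live on the complementary sectors where $\Re f \to -\infty$, and the two Stokes decompositions pair perfectly under integration. Glueing the local dualities via a Mayer-Vietoris argument on the compact real surface with boundary $\tl{Z}$ (the boundary being $\varpi^{-1}(D^{\an})$) then yields the perfectness of the global period pairing.
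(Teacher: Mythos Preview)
The paper does not prove Theorem \ref{RDH}; it is quoted as a result of Bloch--Esnault \cite{BE04a} and Hien \cite{Hi07,Hi09} and used as a black box in Section \ref{sec:T7}. So there is no ``paper's own proof'' to compare against. Your sketch is broadly faithful to the strategy in those references: define the period pairing by integration, use the rapid decay condition to secure convergence and Stokes' theorem for well-definedness, pass to the moderate growth and rapid decay de Rham complexes on the real blow-up $\tl{Z}$, and reduce the perfectness to a local duality statement via Hukuhara--Levelt--Turrittin plus a sectorial analysis of the elementary models.

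One point deserves more care. Your identification of $H_p^{\rd}(U^{\an};\SE^\ast,\nabla^\ast)$ with a hypercohomology of $DR^{\rd}(\SE^\ast)$ on $\tl{Z}$ is not a ``standard smoothing of singular chains'' alone: what is actually needed is a rapid decay de Rham theorem (i.e.\ that the rapid decay de Rham complex is a resolution of a suitable constructible sheaf on $\tl{Z}$, whose compactly supported or relative cohomology then matches the rapid decay homology via Poincar\'e--Lefschetz duality). In dimension one this is not hard, but it is a genuine ingredient, and in Hien's higher-dimensional work \cite{Hi09} it is one of the substantial technical points. If you intend this as more than a sketch, you should make that step explicit rather than folding it into ``standard smoothing.''
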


By (the proof of) \cite[Proposition 3.4]{ET15},
we have also a purely topological reinterpretation of
$H_p(U^{\an};\SE,\nabla)$ $(p\in\ZZ)$ in terms of
relative twisted homology groups,
which will be used in Section \ref{sec:T8}.

\subsection{Fourier transforms of $\SD$-modules and
rapid decay homologies}\label{sec:T7}
In this subsection, we describe the stalks of
the solution complexes of the Fourier transforms of
holonomic $\SD$-modules at generic points. 
Our results below are inspired by those in 
Hien-Roucairol \cite{HR08}. 
Assume that $X=\CC_z^N$ and $Y=\CC_w^N$ and
regard them algebraic varieties over
$\CC$ endowed with the Zariski topology.
Let $U\subset X$ be an affine open subset and
$j\colon U\hookrightarrow X$ the inclusion map.
Then for an integrable connection
$\SN \in \Modhol(\SD_U)$ on $U$ we set
$\SM\coloneq\bfD j_\ast(\SN)\simeq j_\ast\SN\in\Modhol(\SD_X)$
and consider its Fourier transform
$\SM^\wedge\in\Modhol(\SD_Y)$.
Let
\begin{align}
X \overset{\,p}{\longleftarrow} X\times Y
\overset{q}{\longrightarrow} Y
\end{align}
be the projections.
Then by Lemma \ref{lem-DFS} we have an isomorphism
\begin{align}
\SM^\wedge \simeq \bfD q_\ast
(\bfD p^\ast\SM\Dotimes\SO_{X\times Y}e^{-\inprod{z,w}}).
\end{align}
Let $b\in Y$ be a point such that
$b\notin\mathrm{sing.supp}(\SM^\wedge)$.
Since $\SM^\wedge$ is an integrable connection and
hence regular on a Zariski open neighborhood of $b$,
by \cite[Theorem 7.1.1]{HTT08} there exist isomorphisms
\begin{align}\label{eq-T9}
Sol_Y(\SM^\wedge)_b &\simeq
\Bigl\{DR_Y(\DD_Y(\SM^\wedge))\Bigr\}_b[-N] \notag \\
&\simeq DR_{\{b\}}(\DD_{\{b\}}\circ\bfD i_b^\ast\circ\DD_Y)
(\DD_Y(\SM^\wedge)) \notag \\
&\simeq \bigl[\bfD i_b^\ast(\SM^\wedge)\bigr]^\ast.
\end{align}
Let $\tl{i_b}\colon X\times\{b\}\hookrightarrow X\times Y$ be
the inclusion map and consider the Cartesian diagram
\begin{equation}
\vcenter{
\xymatrix@M=7pt{
X\times\{b\} \ar@{^{(}->}[r]^-{\tl{i_b}}
\ar@{->}[d]_-{q_b} \ar@{}[dr]|\square &
X\times Y \ar@{->}[d]^-{q} \\
\{b\}\ar@{^{(}->}[r]_-{i_b} & Y.       
}}\end{equation}
Let us identity the projection
$q_b\colon X\times\{b\}\rightarrow\{b\}$ with
the map $a_X\colon X\rightarrow \{\pt\}$ to a point.
Then for the map $a_U\colon U\rightarrow\{\pt\}$ to
a point by \cite[Theorem 1.7.3 and Corollary 1.7.5]{HTT08}
we obtain isomorphisms
\begin{align}
\bfD i_b^\ast(\SM^\wedge) &\simeq
\bfD i_b^\ast\bfD q_\ast
(\bfD p^\ast\SM\Dotimes\SO_{X\times Y}e^{-\inprod{z,w}}) \notag \\
&\simeq \bfD a_{X\ast}\bfD\tl{i_b}^\ast
(\bfD p^\ast\SM\Dotimes\SO_{X\times Y}e^{-\inprod{z,w}}) \notag \\
&\simeq \bfD a_{X\ast}(\SM\Dotimes\SO_Xe^{-\inprod{z,b}}) \notag \\
&\simeq \bfD a_{X\ast}
(\bfD j_\ast(\SN)\Dotimes\SO_Xe^{-\inprod{z,b}}) \notag \\
&\simeq \bfD a_{X\ast}\bfD j_\ast
(\SN\Dotimes\SO_Ue^{-\inprod{z,b}}) \notag \\
&\simeq \bfD a_{U\ast}(\SN\Dotimes\SO_Ue^{-\inprod{z,b}}).
\end{align}
Combining this with (\ref{eq-T9}) and Theorem \ref{RDH} we finally get
\begin{align}
Sol_Y(\SM^\wedge)_b &\simeq H^0Sol_Y(\SM^\wedge)_b \notag \\
&\simeq \Bigl[H^0\bfD a_{U\ast}
(\SN\Dotimes\SO_Ue^{-\inprod{z,b}})\Bigr]^\ast \notag \\
&\simeq H_N^{\rd}(U^{\an};\SE_b^\ast,\nabla_b^\ast),
\end{align}
where we used the standard fact that
\begin{align}
H^j\bfD a_{U\ast}(\SN\Dotimes\SO_Ue^{-\inprod{z,b}})
\simeq H^j\rmR\Gamma(U;\SD_{\{\pt\}\leftarrow U}
\Lotimes{\SD_U} (\SN\Dotimes\SO_Ue^{-\inprod{z,b}}))
\quad (j\in\ZZ)
\end{align}
are the algebraic de Rham cohomology groups of
the integrable connection
$(\SE_b,\nabla_b)$ on $U$ defined by
\begin{align}
\SE_b \coloneq \SN\Dotimes\SO_Ue^{-\inprod{z,b}}
\simeq \SN\otimes_{\SO_U}\SO_Ue^{-\inprod{z,b}}
\end{align}
and denoted $(\SE_b^\ast,\nabla_b^\ast)$ its dual connection.
See \cite[Proposition 1.5.28 (i)]{HTT08} for the details.

\subsection{A Morse theoretical construction of
rapid decay cycles in dimension one}\label{sec:T8}

In this subsection, inheriting the notations in
Subsection \ref{sec:T7} we assume also that
the dimension $N$ of $X=\CC^N$ is one and
construct a natural basis of
the rapid decay homology groups
$H_1^{\rd}(U^{\an};\SE_b^\ast,\nabla_b^\ast)$ by
using a twisted Morse theory similar to
the one that we used in the proof of Theorem \ref{Thm-T3}.
For the meromorphic connection
$\SM=\bfD j_\ast\SN\simeq
j_\ast\SN\in\Modhol(\SD_X)$ set
$D\coloneq {\rm sing.supp} ( \SM ) \subset X$ and let
$a_1,a_2, \dots,a_l$ be
the points in $D^{\an}\subset X^{\an}$. Moreover we set
$a_\infty\coloneq\infty\in\var{X}^{\an}$ and
$\tl{D}\coloneq D^{\an}\sqcup\{a_\infty\}$ as in
Section \ref{uni-sec:T2} and inherit
the notations related to
$\SM$ there.
Set $Z\coloneq\var{X}^{\an}$ and let
$\varpi\colon\tl{Z}\rightarrow Z$ be
the real oriented blow-up of $Z$ along
the divisor $\tl{D}=\{a_1,a_2,\dots,a_l,a_\infty\}$.
Let $B(b_{\infty})^\circ \subset Y^{\an}=\CC$ be
a sufficiently small punctured disk in
$\var{Y}^{\an}$ centered at the point
$b_\infty=\infty\in\var{Y}^{\an}$ and
assume that $b\in B(b_{\infty})^\circ$.
Then in order to construct a basis of
the rapid decay homology group
$H_1^{\rd}(U^{\an};\SE_b^\ast,\nabla_b^\ast)$
for the integrable connection
\begin{align}
\SE_b = \SN\otimes_{\SO_U}\SO_Ue^{-bz}
\end{align}
on $U=X \setminus D$ we first consider the problem on
a neighborhood of each point $a_i\in D^{\an}$.
Let $D(a_{i})^\circ \subset V_1\cup\dots\cup V_d$ be the open covering of
the punctured disk $D(a_i)^\circ$ centered at $a_i$ in
the proof of Theorem \ref{Thm-T3} and for the inclusion map
$\iota \colon U^{\an} \hookrightarrow\tl{Z}$ consider
the local system $\tl{L}\coloneq \iota_\ast Sol_U(\SN)$ on $\tl{Z}$.
Here we assume also that 
\begin{align}
B(a_i)^\circ = V_1 \cup V_2 \cup \dots \cup V_d 
\end{align}
and set $B(a_i):= 
B(a_i)^\circ \sqcup \{ b \}$. For the sector $V_j$ set
$\tl{V_j}\coloneq\Int\bigl(\var{i(V_j)}\bigr)\subset\tl{Z}$
and let $f_1, f_2, \dots,f_{m_i} \in N_i^{>0}(V_j)$ be
the exponential factors of $\SM$ on $V_j$.
Moreover for each $f_k$ $(1\leq k\leq m_i)$ we set
\begin{align}
P_k \coloneq \varpi^{-1}(a_i)\cap
\var{ \iota \Set*{z\in V_j}{\Re (f_k(z)-bz)\geq0}} \qquad
\subset \varpi^{-1}(a_i) \simeq S^1 
\end{align}
and
\begin{align}
Q_k\coloneq ( \varpi^{-1} (a_i) \cap \tl{V_j} )
\setminus P_k \quad  \subset  \varpi^{-1} (a_i) \cap \tl{V_j}
\quad \subset \varpi^{-1}(a_i)\simeq S^1.
\end{align}
Then in view of (the proof of) \cite[Proposition 3.4]{ET15}, 
for the reinterpretation of 
$H_1^{\rd}(U^{\an};\SE_b^\ast,\nabla_b^\ast)$
in terms of relative twisted homology groups as in it
on the open subset $\tl{V_j}\subset\tl{Z}$ of $\tl{Z}$,
it suffices to consider the relative twisted chains
\begin{align}
\bigoplus_{k=1}^{m_i}S_p(V_j\cup Q_k ,Q_k; \CC_{\tl{Z}}^{N_i(f_k)} )
\qquad (p \in \ZZ ).
\end{align}
Namely we consider the singular 1-chains on
$V_j\cup Q_k$ modulo those on $Q_k$.
Set $R_i:= \sum_{k=1}^{m_i} N_i(f_k)$. Then we recall that
 for two adjacent sectors $V_j$ and
$V_{j^\prime}$ such that $V_j\cap V_{j^\prime}\neq\emptyset$
after renumbering the exponential factors
$f_1, f_2, \dots,f_{m_i} \in N_i^{>0}(V_j\cap V_{j^\prime})$ we have the condition
\begin{align}
\Re f_1(z)<\Re f_2(z)<\dots<\Re f_{m_i}(z)
\quad (z\in V_j\cap V_{j^\prime})
\end{align}
and the transition matrix $A_{jj^\prime}\in\mathrm{GL}_{R_i}(\CC)$ is
block upper triangular with respect to the
decomposition $R_i= \sum_{k=1}^{m_i} N_i(f_k)$ of $R_i$.
In particular, for $1\leq k_1<k_2\leq m_i$ we have
\begin{align}
Q_{k_1}\cap( \tl{V_j} \cap  \tl{V}_{j^\prime})
\supset Q_{k_2}\cap(  \tl{V_j} \cap  \tl{V}_{j^\prime}).
\end{align}
Since the local system ${\tl{L} \vbar_{\varpi^{-1}B(a_i)}}$
on $\varpi^{-1} B(a_i) \subset \tl{Z}$ is obtained by
gluing the constant sheaves $\CC_{\tl{V}_j}^{R_i}$ on
$\tl{V_j}$ by the transition matrices $A_{jj^\prime}\in
\mathrm{GL}_{R_i}(\CC)$ and the condition on singular
chains for $Q=(Q_1,Q_2, \ldots, Q_{m_i})$ is preserved
by it, we can thus define relative twisted chains
modulo $Q$ that we denote by
\begin{align}
S_p(B(a_i)^{\circ} \cup Q,Q ;\tl{L}) \qquad (p \in \ZZ )
\end{align}
for short. The same is true also over the disk
$B(a_{\infty})= B(a_{\infty})^{\circ} \sqcup \{ a_{\infty} \} 
\subset \var{X}^{\an}$ in $\var{X}^{\an}$
centered at the point $a_\infty=\infty\in\var{X}^{\an}$.
As we do not impose any condition on twisted chains
outside $D= {\rm sing.supp} ( \SM ) \subset X$,
similarly we obtain relative twisted chains
modulo $Q$
\begin{align}
S_p(U^{\an} \cup Q,Q ;\tl{L}) \qquad (p \in \ZZ )
\end{align}
globally defined over $\tl{Z}$. We denote by
$H_p(U^{\an}\cup Q,Q;\tl{L})$ ($p \in \ZZ$)
the homology groups associated to them.
Then by a Mayer-Vietoris exact sequence and
the proof of \cite[Proposition 3.4]{ET15} we obtain
isomorphisms
\begin{align}
H_p^{\rd}(U^{\an};\SE_b^\ast,\nabla_b^\ast)
\simeq H_p(U^{\an}\cup Q,Q;\tl{L}) \qquad
(p \in \ZZ ).
\end{align}
Now our basic idea for the construction of a basis of
$H_1(U^{\an}\cup Q,Q;\tl{L})$ is to use
a Morse theory (with several Morse functions)
as in the proof of Theorem \ref{Thm-T3}.
But this time, for $i=1,2, \ldots,l$ or $\infty$
and the exponential factors $f_1,f_2, \ldots, f_m \in
N_i^{>0}(V_j)$ on a sector $V_j$ along $a_i$
we consider the Morse functions
$\psi_k(z):= -\Re f_k^b(z)$ instead of $\phi_k^b(z)= \Re f_k^b(z)$
on the open sector $V_j \subset B(a_i)^{\circ}$ along $a_i$.
For $t \in \RR$ we set
$W_t^{\psi_k} := \{ z \in V_j \ | \ \psi_{k}(z)<t \}
\subset V_j$. We define also
a function $\psi: B(a_1)^{\circ} \cup  \cdots \cup
B(a_l)^{\circ} \longrightarrow \RR$ by
\begin{align}
\psi (z):= -\Re (bz) \qquad
(z \in B(a_1)^{\circ} \cup  \cdots \cup B(a_l)^{\circ}).
\end{align}
Note that if $1 \leq i \leq l$ and $f_k=0$ for
some $1 \leq k \leq m_i$, we have $r_i>0$ and
$\psi_k(z)= \psi (z)$ on $V_j$.
Moreover, on the closed subset
\begin{align}
K \coloneq X^{\an}\bs\left\{ B(a_1) \cup\dots\cup B(a_l) 
\cup B(a_{\infty}) \right\}
\end{align}
of $X^{\an}$ we use the Morse function
\begin{align}
\eta (z) \coloneq - \Re (bz)+c \quad (z\in K),
\end{align}
where $c>0$ is a sufficient large real number.
For $t \in \RR$ we set
$W_t^{\eta} := \{ z \in K \ | \ \eta (z)<t \} \subset K$.
Then for $i=1,2, \ldots,l$ or $\infty$ and $t \in \RR$
on the open subset $\tl{V_j}\subset\tl{Z}$ of $\tl{Z}$,
we consider the relative twisted chains
\begin{align}
\bigoplus_{k=1}^{m_i}S_p( W_t^{\psi_k} \cup Q_k ,Q_k; 
\CC_{\tl{Z}}^{N_i(f_k)})
\qquad (p \in \ZZ ). 
\end{align}
It also follows from the condition
\begin{align}
\Re f_1(z)<\Re f_2(z)<\dots<\Re f_{m_i}(z)
\quad (z\in V_j\cap V_{j^\prime})
\end{align}
that for $1\leq k_1<k_2\leq m_i$ we have
\begin{align} 
W_t^{\psi_{k_1}}
\cap( \tl{V_j} \cap  \tl{V}_{j^\prime})
\supset W_t^{\psi_{k_2}}
\cap(  \tl{V_j} \cap  \tl{V}_{j^\prime}).
\end{align}
We thus can define relative twisted chains
with value in the local system $\tl{L}$
contained in the level set
$W_t^i= ( W_t^{\psi_1},  W_t^{\psi_2},
\ldots,  W_t^{\psi_{m_i}} )$
modulo $Q=(Q_1,Q_2, \ldots, Q_{m_i})$ that we denote by
\begin{align}
S_p( W_t^i \cup Q,Q ;\tl{L}) \qquad (p \in \ZZ ) 
\end{align}
for short. We can regard them also as
relative twisted chains with value 
in an $\RR$-constructible subsheaf of $\tl{L}$.
For any $t \in \RR$
we thus can define relative twisted chains
with value in the local system $\tl{L}$
contained in the level set $W_t=
(W_t^1, \ldots, W_t^l,
W_t^{\infty}, W_t^{\eta} )$
modulo $Q$
\begin{align}
S_p( W_t \cup Q,Q ;\tl{L}) \qquad (p \in \ZZ ). 
\end{align}
We denote by
$H_p( W_t \cup Q,Q;\tl{L})$ ($p \in \ZZ$)
the homology groups associated to them.
Then as in the proof of Lemma \ref{lem-vanish} for
any $p \in \ZZ$ and
$t \ll 0$ we can easily show the vanishing
\begin{align}
H_p(W_t \cup Q,Q;\tl{L}) \simeq 0
\end{align}
Moreover for any $p \in \ZZ$ and
$t \gg 0$ there exists an isomorphism
\begin{align}
H_p(W_t \cup Q,Q;\tl{L}) \simeq
H_p(U^{\an}\cup Q,Q;\tl{L}). 
\end{align}
Hence we can now apply the arguments
in \cite[Section 5]{ET15} to construct a basis of
$H_1(U^{\an}\cup Q,Q;\tl{L})$ indexed by
the critical points of the Morse functions $\psi_k$ as follows.
As in the proof of Theorem \ref{Thm-T3},
for a point $a_i\in D^{\an}$ let
$\gamma_{i j}(b)\in B(a_i)^{\circ}$ $(1\leq j\leq n_i)$ be
the (non-degenerate) critical points of
the (possibly multi-valued) functions
$\Re(f^b)$ $(f\in N_i^{>0},f\neq0)$ and set
\begin{align}
c_{ij}(b) \coloneq \Re(f^b)(\gamma_{i j}(b))
\ \in\RR \quad (1\leq j\leq n_i).
\end{align}
Then for the point $\gamma_{i j}(b)\in B(a_i)^{\circ}$ and
the (non-zero) exponential factor $f\in N_i^{>0}$
such that $(f^b)^\prime(\gamma_{i j}(b))=0$
there exists a holomorphic Morse coordinate
$\zeta =x +\sqrt{-1} y$ $(x,y \in\RR)$ on
a neighborhood $\Omega_{i j}$ of $\gamma_{i j}(b)$ such that
$\gamma_{i j}(b)=\{\zeta=0\}$ and
\begin{align}
f^b(\zeta)=f^b(\gamma_{i j}(b))+\zeta^2.
\end{align}
This implies that we have
\begin{align}
-\Re(f^b)(\zeta)=-c_{i j}(b)+y^2-x^2
\end{align}
on $\Omega_{i j}$.
Hence we can regard the 1-dimensional smooth submanifold
$S_{i j}\coloneq\{ y=0\}\subset \Omega_{i j}$ of $\Omega_{i j}$
as the stable submanifold of
the gradient flow of our Morse function
\begin{align}
-\Re(f^b) \colon \Omega_{i j}\longrightarrow\RR.
\end{align}
Shrinking it if necessary we may assume that it is
homeomorphic to an open interval
$(-\varepsilon_{i j},\varepsilon_{i j})
\subset\RR$ $(\varepsilon_{i j}>0)$.
Also for the point $a_\infty=\infty\in \var{X}^{\an}$
we define points
$\gamma_{\infty j}(b)\in B(a_{\infty})^\circ
\subset X^{\an}$ $(1\leq j\leq n_\infty)$,
$N_{\infty j}>0$ and $c_{\infty j}(b)\in\RR$
$(1\leq j\leq n_{\infty})$ and obtain
1-dimensional submanifolds
$S_{\infty j}\subset B(a_{\infty})^\circ$ similarly.
For $1 \leq i \leq l$, let
$D_i^\prime\subset X^{\an}$ be
a sufficiently small closed disk in
$X^{\an}$ centered at $a_i$ such that
$D_i^\prime\subset B(a_i)$ and set
$c_i(b)\coloneq\psi(a_i)\in\RR$ and
\begin{align}
S_i \coloneq \partial D_i^\prime\cap
\psi^{-1}\bigl((c_i(b),+\infty)\bigr)\
\subset \partial D_i^\prime\simeq S^1.
\end{align}
Then as in the proof of \cite[Theorem 5.5]{ET15} for
any $t\in\RR$ and $0<\varepsilon\ll1$ there exist sequence
\begin{equation}
\begin{split}
0 \longrightarrow &H_1(W_{t-\varepsilon}\cup Q,Q;\tl{L})
\longrightarrow H_1(W_{t+\varepsilon}\cup Q,Q;\tl{L})
\longrightarrow \\
&\Biggl\{\xyoplus_{(i,j)\colon-c_{i j}(b)=t}
H_1(\var{S_{i j}},\partial S_{i j};
\CC_{X^{\an}}^{N_{i j}})\Biggr\}
\oplus \Biggl\{\xyoplus_{i\colon c_i(b)=t}
H_1(\var{S_i},\partial S_i;
\CC_{X^{\an}}^{r_i})\Biggr\} \longrightarrow 0,
\end{split}
\end{equation}
where in the first sum $\oplus$ the pair $(i,j)$
ranges through the set
\begin{align}
\Set*{(i,j)}{1\leq i\leq l,1\leq j\leq n_i,
-c_{i j}(b)=t} \cup
\Set*{(\infty,j)}{1\leq j\leq n_\infty,
-c_{\infty j}(b)=t}.
\end{align}
Moreover, for each such pair $(i,j)$
we have an isomorphism
\begin{align}
H_1(\var{S_{i j}},\partial S_{i j};
\CC_{X^{\an}}^{N_{i j}}) \simeq \CC^{N_{i j}}
\end{align}
and can take a basis
\begin{align}
\sigma_{i j k}^0 \in
H_1(\var{S_{i j}},\partial S_{i j};\CC_{X^{\an}}^{N_{i j}})
\quad (1\leq k\leq N_{i j})
\end{align}
of $H_1(\var{S_{i j}},\partial S_{i j};
\CC_{X^{\an}}^{N_{i j}})$.
Similarly, for each $1\leq i\leq l$ such that
$c_i(b)=t$ we can take a basis
\begin{align}
\sigma_{i j}^0 \in
H_1(\var{S_i},\partial S_i;\CC_{X^{\an}}^{r_i})
\simeq \CC^{r_i} \quad (1\leq j\leq r_i)
\end{align}
of $H_1(\var{S_i},\partial S_i;\CC_{X^{\an}}^{r_i})$.
From now, we will show that $\sigma_{i j k}^0$ and
$\sigma_{i j}^0$ can be lifted to some elements of
the rapid decay homology group
$H_1(W_{t+\varepsilon}\cup Q,Q;\tl{L})$.
To explain our idea better, first we consider
the case where we have the condition
\begin{align}
\mathrm{ord}_\infty(f)\leq1
\end{align}
for any exponential factor $f\in N_\infty^{>0}$ of
$\SM^{\an}$ at $a_\infty=\infty$.
In this case, by our assumption
$\abs{b}\gg1$ we have $n_\infty=0$.
For $1\leq i\leq l$, $1\leq j\leq n_i$
and $f \in N_i^{>0}$ such that
$(f^b)^{\prime}( \gamma_{ij}(b))=0$,
let $C_{i j}^\circ\subset B(a_i)^\circ$ be
the maximal integral curve of the gradient flow of
the function $\Re(f^b)$ on $B(a_i)^\circ$ such that
$C_{i j}^\circ\supset S_{i j}$.
Then each boundary point of $C_{i j}^\circ$ is
either $a_i$ or contained in
$\partial B(a_i) 
\simeq S^1$
and we can naturally extend
$\sigma_{i j k}^0$ $(1\leq k\leq N_{i j})$ to
some twisted 1-chains
\begin{align}
\sigma_{i j k}^\circ =
\var{C_{i j}^\circ}\otimes s_{i j k}
\quad (s_{i j k}\in\tl{L})
\end{align}
with value in the local system $\tl{L}$ where
$\var{C_{i j}^\circ}$ is the closure in
the real oriented blow-up $\tl{Z}$.
Since we assume here that $\abs{b}\gg 0$,
if a boundary point of $\var{C_{i j}^\circ}$ is
contained in $\partial B(a_i)$
the tangent vector of $\var{C_{i j}^\circ}$ at it is
almost parallel to the vector $\grad\Re(bz)$ and
we can add a half line $(\simeq\RR_{>0})$ in $U^{\an}$
emanating from it to $\var{C_{i j}^\circ}$ to
extend $\sigma_{i j k}^\circ$ to
rapid decay 1-chains $\sigma_{i j k}$ such that
\begin{align}
[\sigma_{i j k}] \in
H_1(W_{t+\varepsilon}\cup Q,Q;\tl{L})
\quad (1\leq k\leq N_{i j}).
\end{align}
Similarly for $1\leq i\leq l$,
at the two boundary points of the curve
$S_i\subset\partial D_i^\prime\simeq S^1$
the tangent vectors of $\var{S_i}$ are parallel to
the vector $\grad\Re(bz)$ and we can add
two half lines $(\simeq\RR_{>0})$ in $U^{\an}$
emanating from them to $\var{S_i}$
to extend $\sigma_{i j}^0$ $(1\leq j\leq r_i)$ to
rapid decay 1-chains $\sigma_{i j}$ such that
\begin{align}
[\sigma_{i j}] \in
H_1(W_{t+\varepsilon}\cup Q,Q;\tl{L})
\quad (1\leq j\leq r_i).
\end{align}
We can remove the condition
\begin{align}
\mathrm{ord}_\infty(f) \leq 1 \quad (f\in N_\infty^{>0})
\end{align}
as follows.
Assume that it does not hold.
Then for $1\leq i\leq l$ and $1\leq j\leq n_i$
if a boundary point of $C_{i j}^\circ$ is contained in
$\partial B(a_i) \simeq S^1$ we first extend
$\sigma_{i j k}^\circ$ $(1\leq k\leq N_{i j})$
to some twisted 1-chains
\begin{align}
\sigma_{i j k}^\prime =
C_{i j}^\prime\otimes s_{i j k}
\quad (s_{i j k}\in\tl{L}),
\end{align}
where $C_{i j}^\prime$ is a curve in
$\tl{Z}$ such that $C_{i j}^\circ\subset C_{i j}^\prime$
and one of the boundary points of it is in
$\partial B(a_{\infty}) \subset X^{\an}$.
Fix $1\leq i\leq l$, $1\leq j\leq n_i$ and
$1\leq k\leq N_{i j}$ and set
\begin{align}
C^\prime\coloneq C_{i j}^\prime,
\quad \sigma^\prime\coloneq \sigma_{i j k}^\prime
= C^\prime\otimes s \quad (s\in\tl{L})
\end{align}
for short.
Then at the boundary point
$C^\prime\cap\partial  B(a_{\infty})$ of $C^\prime=C_{i j}^\prime$
we have a decomposition
\begin{align}
s = \sum_{j=1}^{q}s_j \quad (s_j\in\tl{L})
\end{align}
of the section $s\in\tl{L}$ such that there exist curves
$\Gamma_j\subset  B(a_{\infty})^\circ$ $(1\leq j\leq q)$ in
$ B(a_{\infty})^\circ$ starting from it and ending at
a point in $\varpi^{-1}(\infty)\simeq S^1$ along
which the extensions $\tl{s_j} \in\tl{L}$ of
the sections $s_j \in\tl{L}$ satisfy the rapid decay condition.
Then the twisted 1-chain
\begin{align}
\sigma \coloneq \sigma^\prime+
\sum_{j=1}^{q}\Gamma_j\otimes \tl{s_j}
\end{align}
with value in the local system $\tl{L}$ satisfies
the desired condition
\begin{align}
[\sigma] \in H_1(W_{t+\varepsilon}\cup Q,Q;\tl{L}).
\end{align}
The same arguments can be applied to extend
$\sigma_{\infty j k}^0$ $(resp.\ \sigma_{i j}^0)$ to
rapid decay 1-chains
$\sigma_{\infty j k}$ $(resp.\ \sigma_{i j})$ such that
\begin{align}
[\sigma_{\infty j k}], [\sigma_{i j}]
\in H_1(W_{t+\varepsilon}\cup Q,Q;\tl{L}).
\end{align}
Considering all $t\in\RR$,
we thus obtain elements
$[\sigma_{i j k}]$ $(1\leq i\leq l,
1\leq j\leq n_i, 1\leq k\leq N_{i j})$,
$[\sigma_{\infty j k}]$ $(1\leq j\leq n_\infty,
1\leq k\leq N_{\infty j})$ and
$[\sigma_{i j}]$ $(1\leq i\leq l, 1\leq j \leq r_i)$
of the rapid decay homology group
\begin{align}
H_1^{\rd}(U^{\an};\SE_b^\ast,\nabla_b^\ast)
\simeq H_1(U^{\an}\cup,Q, Q;\tl{L}).
\end{align}

\begin{theorem}\label{thm-T10}
The elements
$[\sigma_{i j k}]$ $(1\leq i\leq l,
1\leq j\leq n_i, 1\leq k\leq N_{i j})$,
$[\sigma_{\infty j k}]$ $(1\leq j\leq n_\infty,
1\leq k\leq N_{\infty j})$ and
$[\sigma_{i j}]$ $(1\leq i\leq l, 1\leq j \leq r_i)$ that
we constructed above form a basis of
rapid decay homology group
\begin{align}
H_1^{\rd}(U^{\an};\SE_b^\ast,\nabla_b^\ast)
\simeq H_1(U^{\an}\cup,Q, Q;\tl{L}).
\end{align}
Moreover for any $p \not= 1$ we have the vanishing
\begin{align}
H_p^{\rd}(U^{\an};\SE_b^\ast,\nabla_b^\ast)
\simeq H_p(U^{\an}\cup,Q, Q;\tl{L}) \simeq 0.
\end{align}
\end{theorem}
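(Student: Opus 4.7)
The plan is to carry out a Morse-theoretic sweep in the variable $t \in \RR$ using the family $W_t$ of sublevel sets for the collection of Morse functions $\psi_k$, $\psi$, and $\eta$, and to inductively build up the rapid decay homology from the bottom. By the already-established vanishing $H_p(W_t \cup Q, Q; \tl{L}) \simeq 0$ for $t \ll 0$, combined with the fact that $H_p(W_t \cup Q, Q; \tl{L}) \simeq H_p^{\rd}(U^{\an};\SE_b^\ast,\nabla_b^\ast)$ for $t \gg 0$, it suffices to analyze how the homology groups change across each critical value, using the exact sequence displayed just before the theorem statement. Note that this exact sequence begins with $0$ and ends with $0$, and its right-hand term is concentrated in degree $1$ (since, by Lemma \ref{lem-Morseind}, all critical points of the Morse functions $-\Re f^b$ have Morse index $1$, and similarly for the auxiliary boundary contributions $\var{S_i}$). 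Thus an induction on $t$ passing through the finite list of critical values of $-c_{ij}(b)$ and $c_i(b)$ gives at once the vanishing $H_p^{\rd} \simeq 0$ for $p \neq 1$, and reduces the basis question to a splitting statement.

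Next I would check that each of the explicitly constructed rapid decay $1$-cycles $[\sigma_{ijk}]$, $[\sigma_{\infty jk}]$, $[\sigma_{ij}]$ provides a section of the surjection in the exact sequence at its own critical level. Concretely, at the critical value $t_0 = -c_{ij}(b)$ (resp. $t_0 = c_i(b)$), the cycle $\sigma_{ijk}$ (resp. $\sigma_{ij}$) is obtained by extending the local stable-manifold representative $\sigma^0_{ijk}$ (resp. $\sigma^0_{ij}$) by adding curves whose interiors lie entirely in $W_{t_0 - \varepsilon}$ together with endpoints in $Q$ or on the boundary circles $\varpi^{-1}(a_i)$ satisfying the rapid decay condition. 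In the quotient $H_1(W_{t_0+\varepsilon} \cup Q, Q; \tl{L}) / H_1(W_{t_0-\varepsilon} \cup Q, Q; \tl{L})$, these additions become zero, so the image of $[\sigma_{ijk}]$ is precisely $\sigma^0_{ijk}$ in the appropriate summand $H_1(\var{S_{ij}}, \partial S_{ij}; \CC^{N_{ij}}_{X^{\an}}) \simeq \CC^{N_{ij}}$. Since the $\sigma^0_{ijk}$'s form a basis of that summand, this yields a splitting, and the induction then shows that the family of lifted cycles at all critical levels forms a basis of $H_1(W_{t}\cup Q, Q; \tl{L})$ for every $t$, hence of $H_1^{\rd}$.

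The main technical obstacle will be the construction of the rapid decay completion in the case when $N_\infty^{>0}$ contains exponential factors with $\ord_\infty(f) > 1$. In that case, a half-line to infinity along $\grad \Re(bz)$ no longer suffices to guarantee rapid decay of the flat section, and one must instead route the tail curves $\Gamma_j$ into $B(a_\infty)^{\circ}$ along directions $\theta \in \varpi^{-1}(\infty) \simeq S^1$ for which the relevant flat section $\tl{s_j}$ has rapid decay. The point to be verified is that, after decomposing the flat section $s \in \tl{L}$ at the boundary point into pieces $s = \sum_j s_j$ adapted to the (finitely many) Stokes sectors at $\infty$, one can choose for each $s_j$ at least one admissible direction along which rapid decay holds — this is guaranteed by the Hukuhara–Levelt–Turrittin decomposition (Lemma \ref{lem-ITa}) and the definition of the sets $Q_k$ at $a_\infty$, which encode precisely the asymptotic sectors of rapid decay.

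Finally, a dimension count closes the argument: by Theorem \ref{Thm-T3} (via the Fourier–Sato / rapid decay identification of Section \ref{sec:T7}) the total dimension of $H_1^{\rd}(U^{\an};\SE_b^\ast,\nabla_b^\ast)$ equals $\sum_{i,j} N_{ij} + \sum_{j} N_{\infty j} + \sum_i r_i$, which matches the cardinality of our constructed family, confirming that the lifted cycles form a basis and not merely a spanning set.
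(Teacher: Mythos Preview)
Your approach is essentially the same Morse-theoretic filtration argument that the paper uses (and that it attributes to \cite[Theorem~5.5]{ET15}): start from the vanishing for $t\ll 0$, step through the critical values of the functions $-\Re f^b$ and $\psi$, and use the short exact sequences at each critical level to build the basis inductively. The splitting via the lifted cycles and the $p\neq 1$ vanishing are handled correctly.

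There is one point you gloss over that the paper singles out explicitly. Your induction ``passes through the finite list of critical values of $-c_{ij}(b)$ and $c_i(b)$'', implicitly assuming that nothing happens at the values of $t$ where $\partial W_t$ becomes tangent to one of the boundary circles $\partial B(a_i)$ or $\partial B(a_\infty)$. Recall from the proof of Proposition~\ref{prop-infty-enh} that in the enhanced-sheaf setting these tangency times \emph{do} contribute (the factors $\sfE^{-A_{ik}^\pm\vartriangleright -L_i^\pm}$), and only disappear after applying $\CC_Y^{\rmE}\Potimes(\cdot)$. The paper's point is that in the present relative twisted homology setting, one can see directly that $H_p(W_{t_1}\cup Q,Q;\tl L)\simto H_p(W_{t_2}\cup Q,Q;\tl L)$ across such a tangency time by a geometric observation (a relative deformation retraction), so these values can simply be skipped. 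You should state and justify this; otherwise the list of ``critical values'' you induct over is incomplete.

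A minor remark: your final dimension count via Theorem~\ref{Thm-T3} is redundant. The short exact sequences already show that the lifted cycles are linearly independent (at each step they map to a basis of the quotient), so the inductive argument alone gives a basis, not merely a spanning set.
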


\begin{proof}
The proof is similar to that of \cite[Theorem 5.5]{ET15} and
relies on the twisted Morse theory for the Morse functions
$-\Re f^b\colon B(a_i)^\circ\longrightarrow\RR$
$(1\leq i\leq l,f\in N_i^{>0})$,
$-\Re f_\infty^b\colon B(a_{\infty})^\circ\longrightarrow\RR$
$(f\in N_\infty^{>0})$ and
$\eta \colon K\rightarrow\RR$.
Recall that for $t\ll0$ we have the vanishing
\begin{align}
H_1(W_{t}\cup Q,Q;\tl{L}) \simeq 0.
\end{align}
Then the proof proceeds as in that of
Theorem \ref{Thm-T3} except for the point that
if for some $t_1<t_2$ we know
\begin{align}
H_1(W_{t_1}\cup Q,Q;\tl{L})
\simto H_1(W_{t_2}\cup Q,Q;\tl{L})
\end{align}
by some geometric observation we do not have to
calculate $H_1(W_{t}\cup Q,Q;\tl{L})$
for each $t\in(t_1,t_2)$.
Namely we can skip the times $t\in\RR$
such that $\partial W_t$ is tangent to the circles
$\partial B(a_i)$ ($1 \leq i \leq l$) or $\partial B(a_{\infty})$.
In this sense, the proof is much simpler than
that of Theorem \ref{Thm-T3}.
This completes the proof.
\end{proof}

\begin{bibdiv}
    \begin{biblist}
        

\bib{AK11}{book}{
   author={Aomoto, Kazuhiko},
   author={Kita, Michitake},
   title={Theory of hypergeometric functions},
   series={Springer Monographs in Mathematics},
   note={With an appendix by Toshitake Kohno;
   Translated from the Japanese by Kenji Iohara},
   publisher={Springer-Verlag, Tokyo},
   date={2011},
   pages={xvi+317},
}

\bib{BE04a}{article}{
   author={Bloch, Spencer},
   author={Esnault, H\'{e}l\`ene},
   title={Homology for irregular connections},
   journal={J. Th\'{e}or. Nombres Bordeaux},
   volume={16},
   date={2004},
   number={2},
   pages={357--371},
}

\bib{BE04b}{article}{
   author={Bloch, Spencer},
   author={Esnault, H\'{e}l\`ene},
   title={Local Fourier transforms and rigidity for $\scr D$-modules},
   journal={Asian J. Math.},
   volume={8},
   date={2004},
   number={4},
   pages={587--605},
}

\bib{Bry86}{article}{
   author={Brylinski, Jean-Luc},
   title={Transformations canoniques, dualit\'{e} projective, th\'{e}orie de
   Lefschetz, transformations de Fourier et sommes trigonom\'{e}triques},
   note={G\'{e}om\'{e}trie et analyse microlocales},
   journal={Ast\'{e}risque},
   number={140-141},
   date={1986},
   pages={3--134, 251},
}

\bib{DHMS20}{article}{
   author={D'Agnolo, Andrea},
   author={Hien, Marco},
   author={Morando, Giovanni},
   author={Sabbah, Claude},
   title={Topological computation of some Stokes phenomena on the affine
   line},
   journal={Ann. Inst. Fourier},
   volume={70},
   date={2020},
   number={2},
   pages={739--808},
}

\bib{DK16}{article}{
   author={D'Agnolo, Andrea},
   author={Kashiwara, Masaki},
   title={Riemann-Hilbert correspondence for holonomic D-modules},
   journal={Publ. Math. Inst. Hautes \'{E}tudes Sci.},
   volume={123},
   date={2016},
   pages={69--197},
}

\bib{DK18}{article}{
   author={D'Agnolo, Andrea},
   author={Kashiwara, Masaki},
   title={A microlocal approach to the enhanced Fourier-Sato transform in
   dimension one},
   journal={Adv. Math.},
   volume={339},
   date={2018},
   pages={1--59},
}

\bib{DK23}{article}{
   author={D'Agnolo, Andrea},
   author={Kashiwara, Masaki},
   title={Enhanced nearby and vanishing cycles in dimension one and Fourier
   transform},
   journal={Publ. Res. Inst. Math. Sci.},
   volume={59},
   date={2023},
   number={3},
   pages={543--570},
}

\bib{Dai00}{article}{
   author={Daia, Liviu},
   title={La transformation de Fourier pour les $\scr D$-modules},
   journal={Ann. Inst. Fourier},
   volume={50},
   date={2000},
   number={6},
   pages={1891--1944},
}

\bib{ET15}{article}{
   author={Esterov, Alexander},
   author={Takeuchi, Kiyoshi},
   title={Confluent $A$-hypergeometric functions and rapid decay homology
   cycles},
   journal={Amer. J. Math.},
   volume={137},
   date={2015},
   number={2},
   pages={365--409},
}

\bib{Fan09}{article}{
   author={Fang, JiangXue},
   title={Calculation of local Fourier transforms for formal connections},
   journal={Sci. China Ser. A},
   volume={52},
   date={2009},
   number={10},
   pages={2195--2206},
}

\bib{Gar04}{article}{
   author={Garc\'{\i}a L\'{o}pez, Ricardo},
   title={Microlocalization and stationary phase},
   journal={Asian J. Math.},
   volume={8},
   date={2004},
   number={4},
   pages={747--768},
}

\bib{Gra13}{article}{
   author={Graham-Squire, Adam},
   title={Calculation of local formal Fourier transforms},
   journal={Ark. Mat.},
   volume={51},
   date={2013},
   number={1},
   pages={71--84},
}

\bib{Hi07}{article}{
   author={Hien, Marco},
   title={Periods for irregular singular connections on surfaces},
   journal={Math. Ann.},
   volume={337},
   date={2007},
   number={3},
   pages={631--669},
}

\bib{Hi09}{article}{
   author={Hien, Marco},
   title={Periods for flat algebraic connections},
   journal={Invent. Math.},
   volume={178},
   date={2009},
   number={1},
   pages={1--22},
}

\bib{HR08}{article}{
   author={Hien, Marco},
   author={Roucairol, C\'{e}line},
   title={Integral representations for solutions of exponential Gauss-Manin
   systems},
   journal={Bull. Soc. Math. France},
   volume={136},
   date={2008},
   number={4},
   pages={505--532},
}

\bib{Hoh22}{article}{
   author={Hohl, Andreas},
   title={D-modules of pure Gaussian type and enhanced ind-sheaves},
   journal={Manuscripta Math.},
   volume={167},
   date={2022},
   number={3-4},
   pages={435--467},
}

\bib{HTT08}{book}{
   author={Hotta, Ryoshi},
   author={Takeuchi, Kiyoshi},
   author={Tanisaki, Toshiyuki},
   title={$D$-modules, perverse sheaves, and representation theory},
   series={Progress in Mathematics},
   volume={236},
   edition={Japanese edition},
   publisher={Birkh\"{a}user Boston, Inc., Boston, MA},
   date={2008},
   pages={xii+407},
}

\bib{IT20a}{article}{
   author={Ito, Yohei},
   author={Takeuchi, Kiyoshi},
   title={On irregularities of Fourier transforms of regular holonomic
   $\Cal{D}$-modules},
   journal={Adv. Math.},
   volume={366},
   date={2020},
   pages={107093, 62},
}

\bib{IT20b}{article}{
   author={Ito, Yohei},
   author={Takeuchi, Kiyoshi},
   title={On some topological properties of Fourier transforms of regular
   holonomic $\Cal D$-modules},
   journal={Canad. Math. Bull.},
   volume={63},
   date={2020},
   number={2},
   pages={454--468},
}

\bib{Kas83}{book}{
   author={Kashiwara, Masaki},
   title={Systems of microdifferential equations},
   series={Progress in Mathematics},
   volume={34},
   note={Based on lecture notes by Teresa Monteiro Fernandes translated from
   the French;
   With an introduction by Jean-Luc Brylinski},
   publisher={Birkh\"{a}user Boston, Inc., Boston, MA},
   date={1983},
   pages={xv+159},
}

\bib{Kas03}{book}{
   author={Kashiwara, Masaki},
   title={$D$-modules and microlocal calculus},
   series={Translations of Mathematical Monographs},
   volume={217},
   note={Translated from the 2000 Japanese original by Mutsumi Saito;
   Iwanami Series in Modern Mathematics},
   publisher={American Mathematical Society, Providence, RI},
   date={2003},
   pages={xvi+254},
}

\bib{Kas16}{article}{
   author={Kashiwara, Masaki},
   title={Riemann-Hilbert correspondence for irregular holonomic
   $\scr{D}$-modules},
   journal={Jpn. J. Math.},
   volume={11},
   date={2016},
   number={1},
   pages={113--149},
}

\bib{KS90}{book}{
   author={Kashiwara, Masaki},
   author={Schapira, Pierre},
   title={Sheaves on manifolds},
   series={Grundlehren der mathematischen Wissenschaften},
   volume={292},
   note={With a chapter in French by Christian Houzel},
   publisher={Springer-Verlag, Berlin},
   date={1990},
   pages={x+512},
}

\bib{KS97}{article}{
   author={Kashiwara, Masaki},
   author={Schapira, Pierre},
   title={Integral transforms with exponential kernels and Laplace
   transform},
   journal={J. Amer. Math. Soc.},
   volume={10},
   date={1997},
   number={4},
   pages={939--972},
}

\bib{KS01}{article}{
   author={Kashiwara, Masaki},
   author={Schapira, Pierre},
   title={Ind-sheaves},
   journal={Ast\'{e}risque},
   number={271},
   date={2001},
   pages={136},
}

\bib{KS06}{book}{
   author={Kashiwara, Masaki},
   author={Schapira, Pierre},
   title={Categories and sheaves},
   series={Grundlehren der mathematischen Wissenschaften},
   volume={332},
   publisher={Springer-Verlag, Berlin},
   date={2006},
   pages={x+497},
}

\bib{KS16a}{article}{
   author={Kashiwara, Masaki},
   author={Schapira, Pierre},
   title={Irregular holonomic kernels and Laplace transform},
   journal={Selecta Math.},
   volume={22},
   date={2016},
   number={1},
   pages={55--109},
}

\bib{KS16b}{book}{
   author={Kashiwara, Masaki},
   author={Schapira, Pierre},
   title={Regular and irregular holonomic D-modules},
   series={London Mathematical Society Lecture Note Series},
   volume={433},
   publisher={Cambridge University Press, Cambridge},
   date={2016},
   pages={vi+111},
}

\bib{KL85}{article}{
   author={Katz, Nicholas M.},
   author={Laumon, G\'{e}rard},
   title={Transformation de Fourier et majoration de sommes exponentielles},
   journal={Inst. Hautes \'{E}tudes Sci. Publ. Math.},
   number={62},
   date={1985},
   pages={361--418},
}

\bib{KT24}{arXiv}{
    title={On the monodromies at infinity of Fourier transforms of holonomic D-modules}, 
    author={Kudomi, Kazuki},
    author={Takeuchi, Kiyoshi},
    year={2024},
    eprint={2409.00423.}
}

\bib{Mal88}{article}{
   author={Malgrange, Bernard},
   title={Transformation de Fourier g\'{e}ometrique},
   note={S\'{e}minaire Bourbaki, Vol. 1987/88},
   journal={Ast\'{e}risque},
   number={161-162},
   date={1988},
   pages={Exp. No. 692, 4, 133--150},
}

\bib{Mal91}{book}{
   author={Malgrange, Bernard},
   title={\'{E}quations diff\'{e}rentielles \`a coefficients polynomiaux},
   series={Progress in Mathematics},
   volume={96},
   publisher={Birkh\"{a}user Boston, Inc., Boston, MA},
   date={1991},
   pages={vi+232},
}

\bib{Mochi10}{article}{
   author={Mochizuki, Takuro},
   title={Note on the Stokes structure of Fourier transform},
   journal={Acta Math. Vietnam.},
   volume={35},
   date={2010},
   number={1},
   pages={107--158},
}

\bib{Mochi18}{arXiv}{
    title={Stokes shells and Fourier transforms}, 
    author={Mochizuki, Takuro},
    year={2018},
    eprint={1808.01037.},
}

\bib{Mochi22}{article}{
   author={Mochizuki, Takuro},
   title={Curve test for enhanced ind-sheaves and holonomic $D$-modules, I},
   journal={Ann. Sci. \'{E}c. Norm. Sup\'{e}r. (4)},
   volume={55},
   date={2022},
   number={3},
   pages={575--679},
}

\bib{Paj06}{book}{
   author={Pajitnov, Andrei V.},
   title={Circle-valued Morse theory},
   series={De Gruyter Studies in Mathematics},
   volume={32},
   publisher={Walter de Gruyter \& Co., Berlin},
   date={2006},
   pages={x+454},
}

\bib{Sab93}{article}{
   author={Sabbah, Claude},
   title={Introduction to algebraic theory of linear systems of differential
   equations},
   conference={
      title={\'{E}l\'{e}ments de la th\'{e}orie des syst\`emes
      diff\'{e}rentiels. $\scr D$-modules coh\'{e}rents et holonomes},
      address={Nice},
      date={1990},
   },
   book={
      series={Travaux en Cours},
      volume={45},
      publisher={Hermann, Paris},
   },
   date={1993},
   pages={1--80},
}

\bib{Sab08}{article}{
   author={Sabbah, Claude},
   title={An explicit stationary phase formula for the local formal
   Fourier-Laplace transform},
   conference={
      title={Singularities I},
   },
   book={
      series={Contemp. Math.},
      volume={474},
      publisher={Amer. Math. Soc., Providence, RI},
   },
   isbn={978-0-8218-4458-8},
   date={2008},
   pages={309--330},
}

\bib{Tak22}{arXiv}{
    title={Fourier transforms of irregular holonomic D-modules, singularities at infinity of meromorphic functions and irregular characteristic cycles}, 
    author={Takeuchi, Kiyoshi},
    year={2022},
    eprint={2211.04113.}
}

\bib{Tam18}{article}{
   author={Tamarkin, Dmitry},
   title={Microlocal condition for non-displaceability},
   conference={
      title={Algebraic and analytic microlocal analysis},
   },
   book={
      series={Springer Proc. Math. Stat.},
      volume={269},
      publisher={Springer, Cham},
   },
   date={2018},
   pages={99--223},
}

\bib{Ver83}{article}{
   author={Verdier, J.-L.},
   title={Sp\'{e}cialisation de faisceaux et monodromie mod\'{e}r\'{e}e},
   conference={
      title={Analysis and topology on singular spaces, II, III},
      address={Luminy},
      date={1981},
   },
   book={
      series={Ast\'{e}risque},
      volume={101-102},
      publisher={Soc. Math. France, Paris},
   },
   date={1983},
   pages={332--364},
}

\bib{Wasow}{book}{
   author={Wasow, Wolfgang},
   title={Asymptotic expansions for ordinary differential equations},
   series={Pure and Applied Mathematics},
   volume={Vol. XIV},
   publisher={Interscience Publishers John Wiley \& Sons, Inc., New
   York-London-Sydney},
   date={1965},
   pages={ix+362},
}
    \end{biblist}
\end{bibdiv}

\end{document}